\numberwithin{equation}{section}
\newtheorem{Theorem}{Theorem}[section]
\newtheorem{Corollary}[Theorem]{Corollary}
\newtheorem{Lemma}[Theorem]{Lemma}
\newtheorem{Proposition}[Theorem]{Proposition}
\newtheorem{Conjecture}[Theorem]{Conjecture}
\newcommand{\Int}{\int\limits}
\renewcommand{\leq}{\leqslant}
\renewcommand{\geq}{\geqslant}
\DeclareMathOperator*{\pf}{pf}
\DeclareMathOperator{\core}{2\text{-}core}
\newcommand{\twocore}[1]{\core(#1)}
\newcommand{\spec}[1]{\langle#1\rangle}
\renewcommand{\subseteq}{\subset}
\DeclareMathOperator{\symp}{sp}
\DeclareMathOperator{\ortho}{o}
\DeclareMathOperator{\so}{so}
\DeclareMathOperator{\sln}{sl}
\newcommand{\iup}{\hspace{0.5pt}\mathrm{i}}
\DeclareMathOperator{\dup}{d\hspace{-1.5pt}}
\DeclareMathOperator{\odd}{o}
\newcommand{\floor}[1]{\lfloor#1\rfloor}
\newcommand{\qhypc}[2]{\fourIdx{}{#1}{}{#2}\phi}
\newcommand{\abs}[1]{\vert#1\vert}
\newcommand{\la}{\lambda}
\newcommand{\La}{\Lambda}
\newcommand{\bla}{{\boldsymbol\la}}
\newcommand{\bmu}{{\boldsymbol\mu}}
\newcommand{\bnu}{{\boldsymbol\nu}}
\newcommand{\qbin}[2]{\genfrac{[}{]}{0pt}{}{#1}{#2}}
\newcommand{\obinomE}[2]{\genfrac{\langle}{\rangle}{0pt}{}{#1}{#2}}
\DeclareMathOperator{\mult}{mult}
\newcommand{\bart}{\underline{t}}
\DeclareMathAlphabet{\mathpzc}{OT1}{pzc}{m}{it}
\newcommand{\cc}{\mathpzc{c}}
\newcommand{\CC}{\mathpzc{C}}
\newcommand{\dd}{\mathpzc{d}}
\newcommand{\DD}{\mathpzc{D}}
\newcommand{\leeg}{\hspace{0.8pt}\text{--}\,}
\begin{document}

\newcommand{\arXivNumber}{2007.03174}

\renewcommand{\thefootnote}{}

\renewcommand{\PaperNumber}{142}

\FirstPageHeading

\ShortArticleName{An Elliptic Hypergeometric Function Approach to Branching Rules}

\ArticleName{An Elliptic Hypergeometric Function Approach\\ to Branching Rules\footnote{This paper is a~contribution to the Special Issue on Elliptic Integrable Systems, Special Functions and Quantum Field Theory. The full collection is available at \href{https://www.emis.de/journals/SIGMA/elliptic-integrable-systems.html}{https://www.emis.de/journals/SIGMA/elliptic-integrable-systems.html}}}

\Author{Chul-hee LEE~$^\dag$, Eric M.~RAINS~$^\ddag$ and S.~Ole WARNAAR~$^\S$}

\AuthorNameForHeading{C.-h.~Lee, E.M.~Rains and S.O.~Warnaar}

\Address{$^\dag$~School of Mathematics, Korea Institute for Advanced Study, Seoul 02455, Korea}
\EmailD{\href{mailto:chlee@kias.re.kr}{chlee@kias.re.kr}}

\Address{$^\ddag$~Department of Mathematics, California Institute of Technology, Pasadena, CA 91125, USA}
\EmailD{\href{mailto:rains@caltech.edu}{rains@caltech.edu}}

\Address{$^\S$~School of Mathematics and Physics, The University of Queensland,\\
\hphantom{$^\S$}~Brisbane, QLD 4072, Australia}
\EmailD{\href{mailto:o.warnaar@maths.uq.edu.au}{o.warnaar@maths.uq.edu.au}}

\ArticleDates{Received July 08, 2020, in final form December 09, 2020; Published online December 23, 2020}

\Abstract{We prove Macdonald-type deformations of a number of well-known classical branching rules by employing identities for elliptic hypergeometric integrals and series. We also propose some conjectural branching rules and allied conjectures exhibiting a novel type of vanishing behaviour involving partitions with empty 2-cores.}

\Keywords{branching formulas; elliptic hypergeometric series; elliptic Selberg integrals; interpolation functions; Koornwinder polynomials; Littlewood identities; Macdonald polynomials}

\Classification{05E05; 05E10; 20C33; 33D05; 33D52; 33D67}

\renewcommand{\thefootnote}{\arabic{footnote}}
\setcounter{footnote}{0}

\section{Branching rules}\label{Sec_branching}

Let $\mathfrak{g}$ be a semisimple complex Lie algebra of rank $r$,
with fundamental weights $\omega_1,\dots,\omega_r$.
Denote by $V(\lambda;\mathfrak{g})$ the irreducible
$\mathfrak{g}$-module of highest weight
$\lambda=\sum_{i=1}^r \la_i \omega_i$, $\la_i\in\mathbb{Z}_{\geq 0}$.
For~$\mathfrak{h}$ a subalgebra of $\mathfrak{g}$, let
$V(\lambda;\mathfrak{g})|_{\mathfrak{h}}$ be the restriction of
$V(\lambda;\mathfrak{g})$ to $\mathfrak{h}$.
Generally, the $\mathfrak{h}$-module
$V(\lambda;\mathfrak{g})|_{\mathfrak{h}}$ is not irreducible, and the
branching problem refers to the problem of determining the multiplicities
$\mult_{\la}(\mu)$ in
\begin{gather}\label{Eq_branching-rule}
V(\lambda;\mathfrak{g})|_{\mathfrak{h}}
=\bigoplus_{\mu} \mult_{\la}(\mu) V(\mu;\mathfrak{h}),
\end{gather}
where $\mu$ runs over the weights indexing the
irreducible $\mathfrak{h}$-modules,
see, e.g., \cite{King75,KT87, Littlewood50}.
When $\mult_{\la}(\mu)\leq 1$ for all $\mu$ we say that
the branching rule~\eqref{Eq_branching-rule} is multiplicity free.

\begin{Proposition}
Let $\mathfrak{g}=\sln(2n,\mathbb{C})$,
$\mathfrak{h}=\symp(2n,\mathbb{C})$ with
canonical embedding $\mathfrak{h}\hookrightarrow\mathfrak{g}$.
For $m$,~$r$,~$p$ integers such that
$1\leq r\leq n$ and $0\leq p\leq m$,
\begin{gather}
V\big(p\omega_{r-1}+(m-p)\omega_r;\mathfrak{g}\big)\big|_{\mathfrak{h}}
=\bigoplus_{\substack{m_0,\dots,m_r\geq 0 \\ m_0+m_1+\cdots+m_r=m \\
m_{r-1}+m_{r-3}+\cdots=p}}
V\big(m_1 \omega_1+\cdots+m_r \omega_r;\mathfrak{h}).
\notag
\end{gather}
\end{Proposition}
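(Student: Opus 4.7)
The plan is to translate the branching rule into a character identity and then apply Littlewood's classical branching formula for $GL(2n)\downarrow \symp(2n)$, reducing the statement to a Littlewood--Richardson calculation that exploits the near-rectangular shape $\lambda=(m^{r-1},m-p)$.

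First, I would pass to characters. The highest weight $p\omega_{r-1}+(m-p)\omega_r$ of $\sln(2n,\mathbb{C})$ corresponds to the partition $\lambda=(m^{r-1},m-p)$ (equivalently $\lambda'=(r^{m-p},(r-1)^p)$), whose character is the Schur polynomial $s_\lambda$. The $\symp(2n,\mathbb{C})$-weight $m_1\omega_1+\cdots+m_r\omega_r$ corresponds to the partition $\mu=(\mu_1,\dots,\mu_r)$ with $\mu_i=m_i+m_{i+1}+\cdots+m_r$, whose symplectic Schur character I denote $\symp_\mu$. Under the canonical embedding the maximal torus is identified so that the restricted character equals $s_\lambda\big(x_1^{\pm 1},\dots,x_n^{\pm 1}\big)$, so the proposition is equivalent to the symmetric function identity
\begin{gather*}
s_{(m^{r-1},m-p)}\big(x_1^{\pm 1},\dots,x_n^{\pm 1}\big)
=\sum_{\substack{m_0,\dots,m_r\geq 0\\ m_0+\cdots+m_r=m\\ m_{r-1}+m_{r-3}+\cdots=p}}\symp_\mu(x_1,\dots,x_n).
\end{gather*}

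Second, I would invoke Littlewood's branching formula, applicable since $\ell(\lambda)\leq r\leq n$,
\begin{gather*}
s_\lambda\big(x_1^{\pm 1},\dots,x_n^{\pm 1}\big)
=\sum_\mu\bigg(\sum_{\nu\in\mathcal{E}}c^\lambda_{\mu\nu}\bigg)\symp_\mu(x_1,\dots,x_n),
\end{gather*}
where $\mathcal{E}$ is the set of partitions with all even column lengths (equivalently $\nu_{2i-1}=\nu_{2i}$ for every $i$) and $c^\lambda_{\mu\nu}$ is the Littlewood--Richardson coefficient. The task then reduces to the combinatorial claim that, for $\lambda=(m^{r-1},m-p)$, the inner sum equals $1$ precisely when $\mu$ arises from an admissible composition, and vanishes otherwise.

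Third, I would carry out the Littlewood--Richardson bookkeeping using the rigid two-column-height structure of $\lambda$. Because every column of $\lambda$ has length $r-1$ or $r$, the strictly increasing column rule forces each column of a skew LR tableau of shape $\lambda/\mu$ with content $\nu$ to carry an initial segment of $\{1,2,\dots\}$, and the Yamanouchi condition on the reverse reading word then admits at most one filling, so $c^\lambda_{\mu\nu}\in\{0,1\}$. The paired-row property of $\nu\in\mathcal{E}$ pins $\nu$ down uniquely from $\mu$ whenever any filling exists, and tracing the column constraints shows that an admissible pair $(\mu,\nu)$ exists iff the alternating sum $\sum_{i=1}^r(-1)^{i-1}\mu_i$ equals $m-p$ (when $r$ is odd) or $p$ (when $r$ is even). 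Under the correspondence $m_i=\mu_i-\mu_{i+1}$ for $i\geq 1$ (with $\mu_{r+1}=0$) and $m_0=m-\mu_1$, the constraint $\mu_1\leq m$ becomes $m_0\geq 0$, and the alternating-sum condition translates exactly into $m_{r-1}+m_{r-3}+\cdots=p$. Small cases (e.g.\ $r=3$, $m=3$, $p=1$, where both sides yield six summands in natural bijection) confirm the correspondence.

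The principal obstacle is producing the uniform bijection in the final step, and in particular matching the parity statistic $m_{r-1}+m_{r-3}+\cdots$ against the height-parity data extracted from $(\mu,\nu)$. While the $\{0,1\}$-nature of the LR coefficients is immediate from the rigid column structure, describing the bijection in closed form across all parameters requires care. An alternative would be an induction on $m$, based on the classical branching $V(\omega_i;\sln)|_\symp=\bigoplus_{k\geq 0}V(\omega_{i-2k};\symp)$ (the $m=1$ case of the proposition) together with Pieri-type decompositions for the inductive step; however, this reroutes the difficulty into controlling multiplicities that appear in intermediate tensor products, which Littlewood's formula handles in a single stroke.
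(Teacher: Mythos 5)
Your first two steps track the paper's own route: reduce the branching rule to the symmetric-function identity \eqref{Eq_Kratt33} and feed it into the Littlewood--Koike--Terada formula \eqref{Eq_Litt-KT}, so that everything comes down to showing $\sum_{\nu'\text{ even}} c^{\la}_{\mu\nu}\in\{0,1\}$ for $\la=(m^{r-1},m-p)$, with the value $1$ occurring exactly under the stated parity condition. The gap is in how you propose to establish this. The device the paper uses, and which is absent from your argument, is the complementation symmetry \eqref{Eq_LR-complementation}: since the complement of $\la$ in $(m^r)$ is the single row $(p)$, it gives $c^{\la}_{\mu\nu}=c^{(m^r)-\mu}_{(p),\nu}$, and the Pieri rule turns this into the indicator that $\kappa/\nu$ is a horizontal strip of size $p$, where $\kappa:=(m^r)-\mu$. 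The remaining count is then a one-line computation: the interlacing $\kappa_i\geq\nu_i\geq\kappa_{i+1}$ combined with $\nu_{2i-1}=\nu_{2i}$ forces $\nu_{2i-1}=\nu_{2i}=\kappa_{2i}$, so at most one $\nu$ with all columns of even length can occur, and for that $\nu$ one has $\abs{\kappa/\nu}=\kappa_1-\kappa_2+\kappa_3-\cdots=l^{\odd}(\kappa')=m_{r-1}+m_{r-3}+\cdots$, which is precisely the constraint in the proposition.

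Your substitute for this step does not stand as written. The claim that the strictly increasing column condition alone forces every column of an LR tableau of shape $\la/\mu$ to carry an initial segment of $\{1,2,\dots\}$ is false (a two-cell column may contain the entries $1$ and $3$); the rigidity you want comes from playing semistandardness against the lattice-word condition, and making that precise for all $\mu$, $\nu$ is essentially equivalent to rediscovering the complementation argument. More seriously, the ``uniform bijection'' matching the admissible pairs $(\mu,\nu)$ to compositions $(m_0,\dots,m_r)$ with $m_{r-1}+m_{r-3}+\cdots=p$ is exactly the content of the proposition once Littlewood's formula has been invoked, and you explicitly leave it unconstructed. So the proposal sets up the correct framework, but the decisive combinatorial step is missing and the partial justification offered for it is incorrect.
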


\looseness=-1 For $p=0$ this is Proctor's branching rule \cite[Lemma~4]{Proctor83}
(see also \cite[Theorem~2.6]{Okada98}),
and for general~$p$ it is equivalent to the following combinatorial
identity of Krattenthaler \cite[equation~(3.3)]{Krattenthaler98}.
Given a partition $\la$, let $\la'$ be its conjugate and $l^{\odd}(\la)$
the number of odd parts of~$\la$.
Moreover, for $\la\subseteq (m^r)$ (i.e., $\la=(\la_1,\dots,\la_r)$
such that $0\leq\la_r\leq\cdots\leq\la_1\leq m$), let
$(m^r)-\la$ be shorthand for the complement of $\la$ with respect
to $(m^r)$ (i.e., $(m^r)-\la=(m-\la_r,\dots,m-\la_1)$.
Then
\begin{gather}\label{Eq_Kratt33}
s_{(m^{r-1},m-p)}\big(x_1^{\pm},\dots,x_n^{\pm}\big)=
\sum_{\substack{\la\subseteq (m^r) \\[1pt] l^{\odd}(\la')=p}}
\symp_{2n,(m^r)-\la}(x_1,\dots,x_n).
\end{gather}
Here $s_{\la}$ and $\symp_{2n,\la}$ are the Schur function and
symplectic Schur function indexed by the partition~$\la$
respectively \cite{Littlewood50,Macdonald95}, and
$f\big(x_1^{\pm},\dots,x_n^{\pm}\big):=f\big(x_1,x_1^{-1},\dots,x_n,x_n^{-1}\big)$.
For a more general, not necessarily multiplicity-free
$V(\lambda;\sln(2n,\mathbb{C}))|_{\symp(2n,\mathbb{C})}$-branching
rule, which is consistent with~\eqref{Eq_Kratt33} and which is
expressed in terms of Littelmann paths, we refer to~\cite{NS05,ST18}.

Krattenthaler's Schur function identity \eqref{Eq_Kratt33} follows
from a more general identity of his for the universal symplectic
characters $\symp_{\la}=\symp_{\la}(x_1,x_2,\dots)$ introduced by
Koike and Terada \cite{KT87}.
The latter specialise to the symplectic Schur functions as
\begin{gather}\label{Eq_ss}
\symp_{\la}\big(x_1^{\pm},\dots,x_n^{\pm},0,0,\dots\big)
=\begin{cases}
\symp_{2n,\la}(x_1,x_2,\dots,x_n) & \text{if $l(\la)\leq n$}, \\
0 & \text{otherwise},
\end{cases}
\end{gather}
and the lift of \eqref{Eq_Kratt33} to universal characters
is given by \cite[equation~(3.1)]{Krattenthaler98}
\begin{gather}\label{Eq_Kratt31}
s_{(m^{r-1},m-p)}=
\sum_{\substack{\la\subseteq (m^r) \\ l^{\odd}(\la')=p}}
\symp_{(m^r)-\la}.
\end{gather}

The branching problem is an important problem in representation theory,
algebraic combinatorics and orthogonal polynomials on root systems, see e.g.,
\cite{vDE18b,vDE18a,HS18,HS20,King75,KT87,Krattenthaler98,
Kwon18,LLMS13,LW11,Littlewood50,Macdonald95,NS05,Okada98,Proctor83,Rains05,ST18}.
In this paper we are interested in $q,t$-analogues of multiplicity-free
branching rules such as \eqref{Eq_Kratt33} and \eqref{Eq_Kratt31}.
We prove non-trivial new examples of what appears to be a very general
phenomenon: the natural $q,t$-analogues of multiplicity-free formulas
arising in the representation theory of the classical groups,
be it branching formulas, tensor product decompositions or
other types of multiplicity formulas, are usually `nice'.
More precisely, the $q,t$-analogue of a multiplicity $1$ appears to
almost always factor as ratio of products of binomials of the
form $1-q^k t^{\ell}$.
For some representative examples of this phenomenon, see, e.g.,
\cite{vDE11,Macdonald95,Rains05,RV07,RW15}.

The natural $q,t$-analogues of the Schur functions $s_{\la}$ are
the Macdonald polyno\-mials $P_{\la}(q,t)$ \cite{Macdonald95}.
Similarly, the natural analogues of the
symplectic Schur function $\symp_{2n,\la}$
are the $\mathrm{C}_n$ Macdonald polynomials~\cite{Macdonald00}
\begin{gather}
P_{\la}^{(\mathrm{C}_n,\mathrm{B}_n)}(q,t,s)
\qquad\text{and}\qquad
P_{\la}^{(\mathrm{C}_n,\mathrm{C}_n)}(q,t,s).
\notag
\end{gather}
It will be convenient to view these two families of
($\mathrm{BC}_n$ symmetric Laurent)
polynomials as special instances of the Koornwinder polynomials
$K_{\la}(q,t;\bart)=K_{\la}(q,t;t_0,t_1,t_2,t_3)$,~\cite{Koornwinder92}.
Specifically, see, e.g., \cite{RW15},
\begin{subequations}\label{Eq_PCB-PCC}
\begin{gather}
P_{\la}^{(\mathrm{C}_n,\mathrm{B}_n)}(q,t,s) =
K_{\la}\big(q,t;s^{1/2},-s^{1/2},q^{1/2},-q^{1/2}\big), \\
P_{\la}^{(\mathrm{C}_n,\mathrm{C}_n)}(q,t,s) =
K_{\la}\big(q,t;s^{1/2},-s^{1/2},(qs)^{1/2},-(qs)^{1/2}\big).
\end{gather}
\end{subequations}
In the following we will also need the $\mathrm{B}_n$ Macdonald
polynomial{\samepage
\begin{gather}\label{Eq_PBC}
P_{\la}^{(\mathrm{B}_n,\mathrm{C}_n)}(q,t,s)=
K_{\la}\big(q,t;-1,-q^{1/2},s,s q^{1/2}\big),
\end{gather}
where we restrict ourselves to partitions
$\la$.\footnote{The Macdonald polynomials
$P_{\la}^{(\mathrm{B}_n,\mathrm{C}_n)}(q,t,t_2)$
may also be defined for half-partitions $\la=(\la_1,\dots,\la_n)$
where $\la_1\geq\cdots\geq\la_n\geq 0$ and
$\la_i\in\frac{1}{2}+\mathbb{Z}$, in which case the right-hand side
of \eqref{Eq_PBC} needs to be slightly modified \cite{RW15}.}}

Finally, in view of \eqref{Eq_KK}, the $q,t$-analogues of the universal
symplectic characters are specialisations of the lifted Koornwinder
polynomials $\tilde{K}_{\la}(q,t,T;\bart)$, \cite{Rains05}.
These symmetric functions, which contain the extra parameter $T$,
specialise to the Koornwinder polynomials as
\begin{gather}
\tilde{K}_{\la}\big(x_1^{\pm},\dots,x_n^{\pm},0,0,\dots;q,t,t^n;\bart\big)
=\begin{cases}
K_{\la}(x_1,\dots,x_n;q,t;\bart) & \text{if $l(\la)\leq n$}, \\
0 & \text{otherwise}.
\end{cases}\label{Eq_KK}
\end{gather}
In fact, the lifted Koornwinder polynomials (which are \emph{not} polynomials)
are the unique symmetric functions such that~\eqref{Eq_KK} holds, so that the
above may serve as the definition of the~$\tilde{K}_{\la}$.

The first main result of this paper is the following triple of $q,t$-branching
rules.
Let $C^0_{\la}(z;q,t)$, $C^{-}_{\la}(z;q,t)$ and $C^{+}_{\la}(z;q,t)$ be the
standard three families of $q,t$-shifted factorials indexed by partitions (see~\eqref{Eq_qshiftE} and~\eqref{Eq_qshift} below), and, for $\la$ a partition,
let $2\la:=(2\la_1,2\la_2,\dots)$ and $\la^2:=(\la_1,\la_1,\la_2,\la_2,\dots)$.

\begin{Theorem}\label{Thm_universal}
For $m$, $r$ nonnegative integers,
\begin{subequations}\label{Eq_universal}
\begin{gather}\label{Eq_qt-sp}
P_{(m^r)}(q,t) =\sum_{\substack{\la\subset (m^r) \\ \la' \text{ even}}}
c_{\la}\big(q^{-m},t^r/T;q,t\big) \tilde{K}_{(m^r)-\la}
\big(q,t,T;t^{1/2},-t^{1/2},(qt)^{1/2},-(qt)^{1/2}\big), \\
P_{(m^r)}(q,t) =\sum_{\substack{\la\subset (m^r) \\ \la \text{ even}}}
d_{\la}\big(q^{-m},t^r/T;q,t\big)\tilde{K}_{(m^r)-\la}
\big(q,t,T;1,-1,t^{1/2},-t^{1/2}\big), \label{Eq_qt-o} \\
P_{(m^r)}\big(q^2,t^2\big) =\sum_{\la\subset (m^r)}
e_{\la}\big(q^{-m},t^r/T;q,t\big)\tilde{K}_{(m^r)-\la}
\big(q^2,t^2,T^2;-1,-q,-t,-qt\big), \label{Eq_qt-unknown}
\end{gather}
\end{subequations}
where
\begin{subequations}\label{Eq_cde-def}
\begin{gather}
c_{\la^2}(w,z;q,t):=\left(\frac{q}{t}\right)^{\abs{\la}}
\frac{C^0_{\la^2}(w;q,t)}{C^0_{\la^2}(qw/t;q,t)}
 \frac{C^{-}_{\la}\big(t;q,t^2\big)}{C_{\la}^{-}\big(q;q,t^2\big)}
 \frac{C_{\la}^{+}\big(qw^2z^2/t^4;q,t^2\big)}
{C_{\la}^{+}\big(w^2z^2/t^3;q,t^2\big)}, \label{Eq_c-def} \\
d_{2\la}(w,z;q,t) :=\left(\frac{q}{t}\right)^{\abs{\la}}
\frac{C_{2\la}^0(w;q,t)}{C_{2\la}^0(qw/t;q,t)}
\frac{C^{-}_{\la}\big(qt;q^2,t\big)}{C_{\la}^{-}\big(q^2;q^2,t\big)}
\frac{C_{\la}^{+}\big(q^2w^2z^2/t^2;q^2,t\big)}
{C_{\la}^{+}\big(qw^2z^2/t;q^2,t\big)}, \\
e_{\la}(w,z;q,t) :=\left(\frac{q}{t}\right)^{\abs{\la}}
\frac{C_{\la}^0\big(w^2;q^2,t^2\big)}{C_{\la}^0\big(q^2w^2/t^2;q^2,t^2\big)}
\frac{C^{-}_{\la}(-t;q,t)}{C_{\la}^{-}(q;q,t)}
\frac{C_{\la}^{+}\big(qw^2z^2/t^2;q,t\big)}{C_{\la}^{+}\big({-}w^2z^2/t;q,t\big)}.
\end{gather}
\end{subequations}
\end{Theorem}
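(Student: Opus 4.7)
My strategy is to reduce to finite rank via the stability characterisation \eqref{Eq_KK} of the lifted Koornwinder polynomials, and then to establish each of the three finite-rank identities as a trigonometric specialisation of an elliptic hypergeometric identity of Cauchy/Littlewood type. Since \eqref{Eq_KK} characterises $\tilde{K}_{\la}$ uniquely, and each of \eqref{Eq_qt-sp}--\eqref{Eq_qt-unknown} is a finite linear combination of $\tilde{K}$'s, it suffices to verify the identity after specialising $T\to t^n$ and $x\to\big(x_1^{\pm},\dots,x_n^{\pm},0,0,\dots\big)$ for every $n\geq r$; the universal statement then lifts automatically.

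At finite rank, \eqref{Eq_qt-sp} becomes an expansion of the rectangular Macdonald polynomial $P_{(m^r)}\big(x_1^{\pm},\dots,x_n^{\pm};q,t\big)$ in symplectic Koornwinder polynomials $P^{(\mathrm{C}_n,\mathrm{C}_n)}_{(m^r)-\la}(q,t,t)$ summed over $\la\subset (m^r)$ with $\la'$ even; \eqref{Eq_qt-o} becomes an expansion in an orthogonal-type Koornwinder family indexed by even $\la$; and \eqref{Eq_qt-unknown} gives an expansion in a twisted family at $(q^2,t^2)$. These are the natural $q,t$-deformations of the $p=0$ case of \eqref{Eq_Kratt31} and of its $\ortho$-type analogues, with the parity constraints on $\la$ playing the role of $l^{\odd}(\la')$ in \eqref{Eq_Kratt33}. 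Each such finite-rank statement fits the template of Rains' elliptic hypergeometric/biorthogonal framework \cite{Rains05}: one inserts an elliptic Cauchy kernel into the Koornwinder weight and expands it in the elliptic interpolation basis. The principal specialisation producing the rectangular shape $(m^r)$ on the left is forced by truncating the kernel at a single rectangle, while the three quadratic choices of $(t_0,t_1,t_2,t_3)$ are precisely those at which the integrand acquires extra symmetries causing terms with non-matching parity to vanish, and are therefore the source of the three different vanishing conditions on $\la$.

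The main obstacle is matching the expansion coefficients produced by the elliptic machinery against the closed forms $c_{\la}$, $d_{\la}$, $e_{\la}$ of \eqref{Eq_cde-def}. Each of these is a product of four $C^{0}/C^{\pm}$-factor ratios: one encoding the principal specialisation at $(m^r)-\la$, one the norm of the biorthogonal basis element, and two the argument shifts dictated by the four $t_i$'s. Matching these exactly, and correctly reindexing across the $(q,t)\to(q,t^2)$, $(q^2,t)$, $(q^2,t^2)$ parameter rescalings that accompany the partition substitutions $\la=\mu^2$, $\la=2\mu$ and general $\la$ respectively, is an intricate but essentially direct arm/leg computation. Once this bookkeeping is pinned down and the trigonometric limit of the underlying elliptic identity verified, the stability lift from $K_{\la}$ to $\tilde{K}_{\la}$ via \eqref{Eq_KK} finishes the proof.
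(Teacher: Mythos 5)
Your opening move --- reducing to finite rank via the uniqueness characterisation \eqref{Eq_KK} of the lifted Koornwinder polynomials, so that it suffices to prove \eqref{Eq_non-universal} for all $n\geq r$ --- is exactly how the paper begins, and is correct. After that, however, the proposal does not contain the argument; it contains a description of the kind of argument one would like to have. The first missing step is the dualisation: the paper does not ``insert an elliptic Cauchy kernel into the Koornwinder weight'', but rather combines the dual Cauchy identity \eqref{Eq_Cauchy-Mac} for Macdonald polynomials with Mimachi's Cauchy identity \eqref{Eq_Mimachi} for Koornwinder polynomials to convert the branching coefficient $\big[K_{(m^r)-\la}(x;q,t;\bart)\big]P_{(m^r)}(x^{\pm};q,t)$ into a \emph{single} transition coefficient $\dd^{(n)}_{(N^n),\la+(N^n)}(q,t;\bart)=\big[P_{(N^n)}\big]K_{\la+(N^n)}$. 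Without some such reduction you are left expanding a Macdonald polynomial in a full Koornwinder basis, for which no usable orthogonality or kernel identity is available in the form you invoke.

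The deeper gap is in your final paragraph, where you describe matching the coefficients $c_\la$, $d_\la$, $e_\la$ as ``an intricate but essentially direct arm/leg computation''. It is not: after applying Okounkov's binomial formula \eqref{Eq_binomial-formula} one needs (i) the closed form of the transition coefficients $\CC^{(n)}_{(N^n),\mu+(N^n)}(q,t,t_0)$ at $t_0^2=q$, which is Theorem~\ref{Thm_PPbarast} and is itself proved via a new elliptic Selberg integral over interpolation functions (Theorem~\ref{Thm_integral} and Proposition~\ref{Prop_integral}); and (ii) closed-form evaluations of the resulting series $\Phi_\la$ at the three special parameter points, which are the new quadratic summation formulas of Corollaries~\ref{Cor_Littlewood-basic} and~\ref{Cor_Kawanaka-basic}, obtained by inverting the quadratic elliptic beta integrals coming from the dual Littlewood and Kawanaka kernels. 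These summations are where both the product formulas \eqref{Eq_cde-def} \emph{and} the parity restrictions ($\la'$ even, $\la$ even) actually come from --- e.g.\ $\Phi_{\la}=0$ unless $\la=2\mu$ in Corollary~\ref{Cor_Littlewood-basic} --- not from a symmetry of an integrand killing non-matching parities. Since these identities are new results proved in the paper rather than bookkeeping, a proof that defers them to ``direct computation'' has not established the theorem.
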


To highlight the combinatorial and factorised nature of the
coefficients in \eqref{Eq_cde-def} we have rewritten
each one of them in terms of the ordinary (or type-$\mathrm{A}$)
arm-(co)lengths and leg-(co)lengths of the squares $s\in\la$ as well
as their type-$\mathrm{C}$ analogues (see Section~\ref{Sec_part}):
\begin{gather*}\allowdisplaybreaks
\begin{split}&
c_{\la^2}(w,z;q,t) =\prod_{s\in\la} \left(
\frac{q \big(1-wq^{a'(s)}t^{-2l'(s)}\big)\big(1-wq^{a'(s)}t^{-2l'(s)-1}\big)}
{t\big(1-wq^{a'(s)+1}t^{-2l'(s)-1}\big)\big(1-wq^{a'(s)+1}t^{-2l'(s)-2}\big)}\right. \\
& \left. \hphantom{c_{\la^2}(w,z;q,t) =\prod_{s\in\la}}{} \times
\frac{\big(1-q^{a(s)}t^{2l(s)+1}\big)\big(1-w^2z^2q^{\hat{a}(s)+1}t^{-2\hat{l}(s)-2}\big)}
{\big(1-q^{a(s)+1}t^{2l(s)}\big)\big(1-w^2z^2q^{\hat{a}(s)}t^{-2\hat{l}(s)-1}\big)}\right),
\end{split}\\
d_{2\la}(w,z;q,t) =
\prod_{s\in\la} \left(
\frac{q\big(1-wq^{2a'(s)}t^{-l'(s)}\big)\big(1-wq^{2a'(s)+1}t^{-l'(s)}\big)}
{t\big(1-wq^{2a'(s)+1}t^{-l'(s)-1}\big)\big(1-wq^{2a'(s)+2}t^{-l'(s)-1}\big)} \right. \\
\left. \hphantom{d_{2\la}(w,z;q,t) =\prod_{s\in\la}}{} \times
\frac{\big(1-q^{2a(s)+1}t^{l(s)+1}\big)\big(1-w^2z^2q^{2\hat{a}(s)+2}t^{-\hat{l}(s)-1}\big)}
{\big(1-q^{2a(s)+2}t^{l(s)}\big)\big(1-w^2z^2q^{2\hat{a}(s)+1}t^{-\hat{l}(s)}\big)}\right),
\end{gather*}
and
\begin{gather*}
e_{\la}(w,z;q,t)=\prod_{s\in\la}
\frac{q\big(1-w^2q^{2a'(s)}t^{-2l'(s)}\big)\big(1+q^{a(s)}t^{l(s)+1}\big)
\big(1-w^2z^2q^{\hat{a}(s)+1}t^{-\hat{l}(s)-1}\big)}
{t\big(1-w^2q^{2a'(s)+2}t^{-2l'(s)-2}\big)
\big(1-q^{a(s)+1}t^{l(s)}\big)\big(1+w^2z^2q^{\hat{a}(s)}t^{-\hat{l}(s)}\big)}.
\end{gather*}

For $t=q$, \eqref{Eq_qt-sp} and \eqref{Eq_qt-o}
simplify to the $p=0$ instances of \eqref{Eq_Kratt31} and
\cite[equation~(3.2)]{Krattenthaler98}
\begin{gather}\label{Eq_Kratt32}
s_{(m^{r-p},(m-1)^p)}=
\sum_{\substack{\la\subseteq (m^r) \\ l^{\odd}(\la)=p}}
\ortho_{(m^r)-\la},
\end{gather}
respectively.
Here $\ortho_{\la}$ is a universal orthogonal character
(see Section~\ref{Sec_SF} for details).
Finally, \eqref{Eq_qt-unknown} for $t=-q$ yields
\begin{gather}\label{Eq_s-so}
s_{(m^r)}=\sum_{\la\subset (m^r)}(-1)^{\abs{\la}} \so_{(m^r)-\la},
\end{gather}
where $\so_{\la}$ is a universal special orthogonal character.
The fact that the parameters $q$ and $T$ (the latter
only occurs on the right-hand side of \eqref{Eq_qt-sp}--\eqref{Eq_qt-unknown})
is a consequence of Lemma~\ref{Lem_Lifted} proved in Section~\ref{Sec_Lifted}.

By \eqref{Eq_PCB-PCC}--\eqref{Eq_KK},
Theorem~\ref{Thm_universal} has the following corollary.

\begin{Corollary}\label{Cor_non-universal}
Let $m$, $r$, $n$ be nonnegative integers such that $r\leq n$, and set
\begin{gather*}
x:=(x_1,\dots,x_n) \qquad\text{and}\qquad
x^{\pm}:=\big(x_1,x_1^{-1},\dots,x_n,x_n^{-1}\big).
\end{gather*}
Then
\begin{subequations}\label{Eq_non-universal}
\begin{gather}
P_{(m^r)}\big(x^{\pm};q,t\big)
=\sum_{\substack{\la\subset (m^r) \\ \la' \text{ even}}}
c_{\la}\big(q^{-m},t^{-(n-r)};q,t\big)
P^{(\mathrm{C}_n,\mathrm{C}_n)}_{(m^r)-\la}(x;q,t,t), \label{Eq_CC} \\
P_{(m^r)}\big(x^{\pm};q,t\big)
=\sum_{\substack{\la\subset (m^r) \\ \la \text{ even}}}
d_{\la}\big(q^{-m},t^{-(n-r)};q,t\big)
K_{(m^r)-\la}\big(x;q,t;1,-1,t^{1/2},-t^{1/2}\big), \label{Eq_K} \\
P_{(m^r)}\big(x^{\pm};q^2,t^2\big)
=\sum_{\la\subset (m^r)}
e_{\la}\big(q^{-m},t^{-(n-r)};q,t\big)
P^{(\mathrm{B}_n,\mathrm{C}_n)}_{(m^r)-\la}\big(x;q^2,t^2,-t\big).\label{Eq_BC}
\end{gather}
\end{subequations}
\end{Corollary}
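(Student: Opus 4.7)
The plan is to obtain Corollary \ref{Cor_non-universal} directly from Theorem \ref{Thm_universal} by principal specialization. In each identity of \eqref{Eq_universal} I would set $T=t^n$ (and $T^2=t^{2n}$ in \eqref{Eq_qt-unknown}, where the base parameters of the lifted Koornwinder polynomial are $(q^2,t^2)$) and then evaluate the underlying symmetric functions at the alphabet $\big(x_1^{\pm},\dots,x_n^{\pm},0,0,\dots\big)$. The left-hand sides $P_{(m^r)}(q,t)$ and $P_{(m^r)}(q^2,t^2)$ then become $P_{(m^r)}(x^{\pm};q,t)$ and $P_{(m^r)}(x^{\pm};q^2,t^2)$, while each $\tilde{K}_{(m^r)-\la}$ on the right collapses to an ordinary Koornwinder polynomial via the defining property \eqref{Eq_KK}. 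No summand is lost in this step: since $l((m^r)-\la)\leq r\leq n$ for every $\la\subseteq (m^r)$, the length condition in \eqref{Eq_KK} is automatically met.

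Next I would match the resulting Koornwinder parameter tuples against \eqref{Eq_PCB-PCC} and \eqref{Eq_PBC}. For \eqref{Eq_qt-sp} the tuple $\big(t^{1/2},-t^{1/2},(qt)^{1/2},-(qt)^{1/2}\big)$ is the $s=t$ instance of $P^{(\mathrm{C}_n,\mathrm{C}_n)}$, yielding the right-hand side of \eqref{Eq_CC}. For \eqref{Eq_qt-o} the tuple $\big(1,-1,t^{1/2},-t^{1/2}\big)$ is not one of the distinguished families, so the Koornwinder polynomial is kept in unreduced form, giving \eqref{Eq_K}. For \eqref{Eq_qt-unknown} the base $(q^2,t^2)$ together with $(-1,-q,-t,-qt)$ matches \eqref{Eq_PBC} under the substitutions $q\mapsto q^2$, $t\mapsto t^2$, $s\mapsto -t$, producing $P^{(\mathrm{B}_n,\mathrm{C}_n)}_{(m^r)-\la}(x;q^2,t^2,-t)$ and hence \eqref{Eq_BC}.

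Finally, under $T=t^n$ the argument $t^r/T$ appearing in the coefficient functions becomes $t^{-(n-r)}$, so $c_\la$, $d_\la$, $e_\la$ carry over unchanged to the statement of Corollary \ref{Cor_non-universal}. No further combinatorial identity is required: the deduction is essentially a bookkeeping exercise, and the only step with any genuine content is the observation that the length constraint in \eqref{Eq_KK} is automatic under the hypothesis $r\leq n$. I therefore do not anticipate a serious obstacle, as all the substantive work resides in the parent theorem.
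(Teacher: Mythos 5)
Your deduction is exactly the one the paper intends: specialise $T=t^n$, apply \eqref{Eq_KK} (noting $l((m^r)-\la)\leq r\leq n$), and identify the parameter tuples via \eqref{Eq_PCB-PCC} and \eqref{Eq_PBC}. The only point worth adding is that the paper's actual logical flow runs the other way -- it proves Corollary \ref{Cor_non-universal} for all $n\geq r$ and recovers Theorem \ref{Thm_universal} from the uniqueness of the lifted Koornwinder polynomials -- but as a derivation of the corollary from the theorem your bookkeeping is correct and matches the paper's stated justification.
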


For $t=q$, \eqref{Eq_CC} and \eqref{Eq_K} simplify to the
$p=0$ case of \eqref{Eq_Kratt33} and its dual
\cite[equation~(3.5); $p=0$]{Krattenthaler98}
\begin{gather*}
s_{(m^r)}\big(x_1^{\pm},\dots,x_n^{\pm}\big)=
\sum_{\substack{\la\subset (m^r) \\ \la \text{ even}}}
\ortho_{2n,(m^r)-\la}(x_1,\dots,x_n),
\end{gather*}
respectively.

We conjecture one more branching rule of type $(\mathrm{C}_n,\mathrm{B}_n)$.
For the notation used in this conjecture we refer to
Sections~\ref{Sec_part} and~\ref{Sec_factorials}.

\begin{Conjecture}\label{Con_selfdualcase}
For $m$, $r$ nonnegative integers,
\begin{gather*}
P_{(m^r)}(q,t)=\sum_{\substack{\la\subset (m^r) \\
\twocore{\la}=0}}
f_{\la}\big(q^{-m},q^r/T;q,t\big)\tilde{K}_{(m^r)-\la}
\big(q,t,T;\pm q^{1/2},\pm t^{1/2}\big)
\end{gather*}
and for $m$, $r$, $n$ nonnegative integers such that $r\leq n$,
\begin{gather*}
P_{(m^r)}(x^{\pm};q,t)=\sum_{\substack{\la\subset (m^r) \\
\twocore{\la}=0}}
f_{\la}\big(q^{-m},q^{-(n-r)};q,t\big)
P^{(\mathrm{C}_n,\mathrm{B}_n)}_{(m^r)-\la}(x;q,t,t),
\end{gather*}
where
\begin{gather}
f_{\la}(w,z;q,t) :=
\left(\frac{q}{t}\right)^{\abs{\la}/2}
q^{2\hat{n}^{\textup{o}}(\la')-2\hat{n}^{\textup{e}}(\la')}
t^{n^{\textup{e}}(\la)-n^{\textup{o}}(\la)} \notag \\
\hphantom{f_{\la}(w,z;q,t) :=}{} \times
\frac{C_{\la}^{0}(w;q,t)}{C_{\la}^{0}(qw/t;q,t)}
\frac{C^{-,\textup{e}}_{\la}(t;q,t)}{C_{\la}^{-,\textup{o}}(q;q,t)}
\frac{C_{\la}^{+,\textup{e}}\big(qw^2z^2/t^2;q,t\big)}
{C_{\la}^{+,\textup{o}}\big(w^2z^2/t;q,t\big)}.\label{Eq_f-def}
\end{gather}
\end{Conjecture}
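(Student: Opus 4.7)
The plan is to follow the same strategy as in Theorem~\ref{Thm_universal}: reduce the universal identity to its finite-variable specialisation via~\eqref{Eq_KK}, and then attack the latter with elliptic hypergeometric tools. By the uniqueness property stated after~\eqref{Eq_KK}, once the non-universal version
$$P_{(m^r)}\big(x^{\pm};q,t\big)=\sum_{\substack{\la\subset (m^r) \\ \twocore{\la}=0}} f_{\la}\big(q^{-m},q^{-(n-r)};q,t\big)\, P^{(\mathrm{C}_n,\mathrm{B}_n)}_{(m^r)-\la}(x;q,t,t)$$
is known for every $n\geq r$, the universal identity follows automatically, so the bulk of the work is concentrated on the finite-dimensional version. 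The parameter pattern $\bart=\big(\pm q^{1/2},\pm t^{1/2}\big)$ is precisely the self-dual Koornwinder locus, at which the interpolation kernel of Rains~\cite{Rains05} is invariant under $q\leftrightarrow t$; an elliptic parent for the conjecture should therefore be sought inside this self-dual interpolation framework. Mimicking the derivation of \eqref{Eq_qt-sp}--\eqref{Eq_qt-unknown}, I would express the left-hand side as a limit or residue of a $\tilde{K}$ of rectangular shape and expand the right-hand side via a connection-coefficient identity between two specialisations of $\tilde{K}_{\la}$; equating residues should reduce the problem to a single elliptic evaluation at the self-dual parameters.

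The second step is to account for the constraint $\twocore{\la}=0$. Partitions with empty 2-core are indexed by pairs $(\la^{(0)},\la^{(1)})$ via the 2-quotient, and the parity-graded $C^{\pm,\textup{e}}/C^{\pm,\textup{o}}$ products in~\eqref{Eq_f-def} are the combinatorial devices that separate out these two pieces. For $\la$ with non-empty 2-core, a box lying on the 2-core rim should force a numerator factor of $f_{\la}$ to vanish (or a matching denominator factor to blow up), so that $f_{\la}=0$ automatically. This is the mechanism behind the novel vanishing advertised in the abstract, and verifying it is a standalone combinatorial check independent of the full elliptic argument.

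The third step is to evaluate what remains. For $\twocore{\la}=0$, the 2-quotient translates the parity-graded products into ordinary $C^{0},C^{\pm}$ products indexed by $\la^{(0)}$ and $\la^{(1)}$, and the prefactors $(q/t)^{\abs{\la}/2}q^{2\hat{n}^{\textup{o}}(\la')-2\hat{n}^{\textup{e}}(\la')}t^{n^{\textup{e}}(\la)-n^{\textup{o}}(\la)}$ are tuned to absorb the combinatorial constants produced by that translation. The identity should then collapse to a bilinear elliptic Selberg-type evaluation in the pair $(\la^{(0)},\la^{(1)})$, which is the type of object handled by the techniques already used in the proof of Theorem~\ref{Thm_universal}.

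The hard part is the first step: the self-dual specialisation $\big(\pm q^{1/2},\pm t^{1/2}\big)$ is genuinely new relative to the three cases that drive Theorem~\ref{Thm_universal}, and the quadratic, 2-core structure of the summation does not arise from any of the elliptic identities exploited there. In practice I would first verify the conjecture numerically for small $m$, $r$, $n$, confirm that the vanishing $f_{\la}=0$ for $\twocore{\la}\neq 0$ lifts to the elliptic level, and then search among the bilinear and contiguous transformations of the Rains interpolation kernel for the one that realises the self-dual evaluation; this is where I expect the conjecture either to yield or to make precise exactly what new elliptic identity is needed.
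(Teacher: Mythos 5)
The statement you are addressing is Conjecture~\ref{Con_selfdualcase}, which the paper does not prove: it is left open, and Section~\ref{Sec_Open} is devoted to explaining exactly what is missing. Your proposal is a research plan rather than a proof. Its first step does mirror the authors' own machinery correctly --- dualising via the two Cauchy identities and Okounkov's binomial formula reduces the branching coefficient $\dd^{(n)}_{(N^n),\la+(N^n)}(q,t;\bart)$ to the basic hypergeometric function $\Phi_{\la}$ of \eqref{Eq_PK-Phi}, here at the specialisation $\Phi_{\la}\big((aq)^{1/2};-(aq)^{1/2},(at)^{1/2},-(at)^{1/2};q,t\big)$ --- but the required closed-form evaluation of that sum is precisely Conjecture~\ref{Con_Phi-conj}, which is itself unproven. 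The ``single elliptic evaluation at the self-dual parameters'' that you defer to is therefore the entire content of the problem, not a routine final step. Moreover, your hope of locating an elliptic parent inside the interpolation-kernel framework runs into the obstruction the authors themselves point out: unlike the three cases underlying Theorem~\ref{Thm_universal}, this specialisation of $\Phi_{\la}$ is not very-well-poised, so it does not fit the quadratic elliptic summations (Theorems~\ref{Thm_Littlewood} and~\ref{Thm_Kawanaka}) that drive the proved cases, and the authors state explicitly that no elliptic analogue of Conjecture~\ref{Con_Phi-conj} is known.

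Your second step misdescribes the vanishing mechanism. The restriction $\twocore{\la}=0$ is not enforced by a single factor of $f_{\la}$ vanishing on partitions with nonempty $2$-core; $f_{\la}$ as defined in \eqref{Eq_f-def} is a nonzero product for generic $w$, $z$, $q$, $t$, and the sum in the conjecture is simply restricted by fiat. What must actually vanish is the branching coefficient itself, i.e., the alternating \emph{sum} over $\mu\subseteq\la$ defining $\Phi_{\la}$ in \eqref{Eq_PhiB}, and this is a genuinely nontrivial cancellation: even in the $t=q$ case, where the sum can be rewritten via the determinantal formula \eqref{Eq_determinantal}, the authors remark that the vanishing off the empty-$2$-core locus remains unclear (it is established only for the related integral of Proposition~\ref{Prop_ab-vanishing} by a Pfaffian argument). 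There is consequently no ``standalone combinatorial check'' of the kind you describe. Your third step (passing to the $2$-quotient) is a plausible heuristic for why the answer should factor, but it appears nowhere in the paper and you give no indication of how it would produce the required summation. In short, your strategy coincides with the authors' own reduction, but the decisive identity is absent from your argument exactly where it is absent from the literature, so nothing has been proved.
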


The remainder of the paper is organised as follows.
The next section covers some introductory material on
partitions, various kinds of shifted factorial, and
elliptic hypergeometric series.
Then, in Section~\ref{Sec_SF}, we introduce some of the standard
bases of the ring of symmetric functions and discuss the various
types of classical Schur functions and classical branching rules.
The final introductory section is Section~\ref{Sec_MK}, in which we
survey material from Macdonald--Koornwinder theory,
including the elliptic generalisation of this theory.
In Sections~\ref{Sec_beta-integrals}--\ref{Sec_Phi} we prove a number of
new results needed for our proof of Theorem~\ref{Thm_universal}
and Corollary~\ref{Cor_non-universal}, which is
presented in Section~\ref{Sec_proof}.
These new results include the evaluations of two quadratic elliptic beta
integrals over elliptic interpolation functions
(Theorems~\ref{Thm_integral} and \ref{Thm_integral-Kawa}),
their corresponding discrete analogues
(Corollaries~\ref{Cor_quadratic-sum} and \ref{Cor_Kawanaka-sum}),
a formula for the transition coefficients between Okounkov's
$\mathrm{BC}_n$-symmetric Macdonald interpolation polynomials
and ordinary Macdonald polynomials
(Theorem~\ref{Thm_PPbarast}),
and a number of quadratic summations for a new type of elliptic
hypergeometric series (Theorems~\ref{Thm_Littlewood} and \ref{Thm_Kawanaka}).
Finally, in Section~\ref{Sec_Open} we propose a number of conjectures
in the spirit of Conjecture~\ref{Con_selfdualcase}.
As a simple example, we conjecture the new Littlewood-type identity
\begin{gather*}
\sum_{\la} \kappa^{(1)}_{\la}(q,t) P_{\la}(x;q,t)=
\prod_{i\geq 1} \frac{\big(tx_i^2;q^2\big)_{\infty}}{\big(x_i^2;q^2\big)_{\infty}}
\prod_{i<j} \frac{(tx_ix_j;q)_{\infty}}{(x_ix_j;q)_{\infty}},
\end{gather*}
where $(a;q)_{\infty}:=(1-a)(1-aq)\big(1-aq^2\big)\cdots$,
\begin{gather*}
\kappa^{(1)}_{\la}(q,t):=
\begin{cases}\displaystyle
\frac{\prod_{(i,j)\in\la}^{\textup{e}}
\big(q^{1-\la_i}t^{i-1}-q^{1-j}t^{\la'_j}\big)}
{\prod_{(i,j)\in\la}^{\textup{o}}
\big(q^{1-\la_i}t^{i-1}-q^{2-j}t^{\la'_j-1}\big)} & \text{if $\twocore{\la}=0$}, \vspace{2mm}\\
0 & \text{otherwise},
\end{cases}
\end{gather*}
and
\begin{gather*}
\sideset{}{^\textup{e/o}} \prod_{(i,j)\in\la}f_{ij} :=
\prod_{\substack{s=(i,j)\in\la \\ a(s)+l(s) \textup{ even/odd}}} f_{ij}.
\end{gather*}
To answer a question by the referee regarding the connection between the branching
rule~\eqref{Eq_CC} and two conjectural branching rules by Hoshino and Shiraishi from~\cite{HS18} we have added a postsript on
pages~\pageref{page_PS-start}--\pageref{page_PS-end}.

\section{Preliminaries}

\subsection{Partitions}\label{Sec_part}

A partition $\la=(\la_1,\la_2,\dots)$ is a sequence of weakly decreasing
nonnegative integers such that $\abs{\la}:=\la_1+\la_2+\cdots$ is finite.
If $\abs{\la}=n$ we say that $\la$ is a partition of $n$, written as
$\la\vdash n$.
The number of strictly positive $\la_i$ (the parts of $\la$) is
called the length of the partition $\la$ and denoted by $l(\la)$.
We also use $l^{\odd}(\la)$ to denote the number of odd parts of $\la$.
If $l^{\odd}(\la)=0$ we say that~$\la$ is an even partition.
The set of all partitions of length at most $n$ is denoted by $P_{+}(n)$,
and typically we write $\la=(\la_1,\dots,\la_n)$ for partitions in $P_{+}(n)$.
The multiplicity of parts of size~$i$ in the partition $\la$ is denoted by
$m_i=m_i(\la)$.
We sometimes use the multiplicities to write a~partition $\la$ as
$\big(1^{m_1}2^{m_2}\dots\big)$.
When only a single multiplicity arises, i.e., a partition is of the
form~$(m^n)$, we refer to it as a rectangle.
When $m_i(\la)\leq 1$ for all $i$ we say that $\la$ is a distinct partition.
Given a partition $\la\in P_{+}(n)$ such that $\la_1\leq m$, we write
$(m^n)-\la$ for the complement of $\la$ with respect to the rectangle
$(m^n)$.
That is, $(m^n)-\la:=(m-\la_n,\dots,m-\la_1)\in P_{+}(n)$.
The partition $\la'=(\la'_1,\la'_2,\dots)$ such that
$\la'_i=\sum_{j\geq i} m_j(\la)$ is called the conjugate of $\la$.
Perhaps more simply, if we identify a partition $\la$ with its Young diagram
(in which the parts are represented by~$l(\la)$ left-aligned rows of
boxes or squares, with~$i$th row containing $\la_i$ squares)
then the parts~$\la'$ correspond to the columns of $\la$.
Other special notation for partitions that we will employ is
$2\la:=(2\la_1,2\la_2,\dots)$ and $\la^2:=(\la_1,\la_1,\la_2,\la_2,\dots)$,
so that $2\la$ (resp.\ $\la^2$) corresponds to the partition in which
the length of each row (resp.\ column) of $\la$ has been doubled.
The partition $\mu$ is contained in $\la$, denoted as
$\mu\subset\la$, if $\mu_i\leq\la_i$ for all $i$, i.e., if the
diagram of $\mu$ fits in the diagram of $\la$.
We write $\mu\prec\la$ if $\mu\subset\la$ such that the interlacing
condition $\la_1\geq\mu_1\geq\la_2\geq\mu_2\geq\cdots$ holds.
(Alternatively, $\mu\prec\la$ if the skew shape $\la/\mu$ is a horizontal
strip, see~\cite{Macdonald95}.)
A partition~$\la$ has empty $2$-core, written as $\twocore{\la}=0$,
if its diagram can be tiled by dominoes.
For example, the partition $(5,4,4,1)$ has empty $2$-core since it admits
the tiling

\begin{center}
\begin{tikzpicture}[scale=0.3,line width=0.3pt]
\draw (0,0)--(5,0);
\draw (0,-1)--(5,-1);
\draw (0,-2)--(4,-2);
\draw (0,-3)--(4,-3);
\draw (0,-4)--(1,-4);
\draw (0,0)--(0,-4);
\draw (1,0)--(1,-4);
\draw (2,0)--(2,-3);
\draw (3,0)--(3,-3);
\draw (4,0)--(4,-3);
\draw (5,0)--(5,-1);
\draw[thick,red] (0.5,-0.5)--(0.5,-1.5);
\draw[thick,red] (0.5,-2.5)--(0.5,-3.5);
\draw[thick,red] (1.5,-0.5)--(2.5,-0.5);
\draw[thick,red] (3.5,-0.5)--(4.5,-0.5);
\draw[thick,red] (1.5,-1.5)--(1.5,-2.5);
\draw[thick,red] (2.5,-1.5)--(3.5,-1.5);
\draw[thick,red] (2.5,-2.5)--(3.5,-2.5);
\end{tikzpicture}
\end{center}
as well as four other such tilings.

\begin{Lemma}\label{Lem_2core}
Let $\la\in P_{+}(n)$ and $m$ an even integer such that $m\geq n$.
Then $\twocore{\la}=0$ if and only if the ordered set
\begin{gather*}
\mathcal{A}_{\la}:=\{\la_1+m-1,\la_2+m-2,\dots,\la_n+m-n,m-n-1,\dots,0\}
\end{gather*}
contains $m$ even and $m$ odd integers.
\end{Lemma}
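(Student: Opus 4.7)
The plan is to recognise $\mathcal{A}_\lambda$ as the standard $m$-element beta-set of $\lambda$, obtained by extending $\lambda$ to length $m$ with trailing zeros and forming $\{\lambda_i + m - i : 1 \leq i \leq m\}$; the entries $\lambda_{n+k} + m - (n+k) = m - n - k$ for $1 \leq k \leq m - n$ are exactly $m - n - 1, m - n - 2, \ldots, 0$, so this set coincides with $\mathcal{A}_\lambda$ as defined. From here I would invoke the well-known compatibility between domino moves on~$\lambda$ and shifts by $2$ within the beta-set.

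The first step is to verify that if $\mu$ is obtained from $\lambda$ by removing a single domino then $\mathcal{A}_\mu$ arises from $\mathcal{A}_\lambda$ by replacing some element $a$ with $a - 2$, so that in particular the numbers of even and odd entries are preserved. For a horizontal domino in row $i$ this is immediate, since only $\lambda_i$ changes, and does so by $-2$. For a vertical domino in rows $i$ and $i+1$ one necessarily has $\lambda_i = \lambda_{i+1} = v$, and a short computation shows that the consecutive pair $\{v + m - i,\ v + m - i - 1\} \subset \mathcal{A}_\lambda$ is replaced by $\{v + m - i - 1,\ v + m - i - 2\} \subset \mathcal{A}_\mu$; set-theoretically this amounts to removing the single element $a = v + m - i$ and inserting $a - 2$. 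This vertical case is the only slightly subtle point of the proof, since each of the two entries individually shifts by~$1$, yet their net effect on the multiset is a single shift by~$2$.

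Iterating the above reduces $\lambda$ to its $2$-core, which by definition is a staircase $\delta_k := (k, k-1, \ldots, 1)$ for some integer $k \geq 0$. Since parity counts in $\mathcal{A}$ are preserved under domino removal, it suffices to evaluate them for $\mathcal{A}_{\delta_k}$. A direct calculation gives
\[
\mathcal{A}_{\delta_k} = \{m + k - 1,\ m + k - 3, \ldots, m - k + 1\} \sqcup \{m - k - 1,\ m - k - 2, \ldots, 0\},
\]
in which the first block consists of $k$ integers of a single parity (opposite to that of $k$, since $m$ is even) while the second is a consecutive interval of $m - k$ integers. Splitting on the parity of $k$, a quick count shows that the even and odd counts of $\mathcal{A}_{\delta_k}$ coincide if and only if $k = 0$, that is, precisely when $\twocore{\lambda}$ is empty, which gives the claimed equivalence.
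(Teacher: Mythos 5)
Your proof is correct and follows essentially the same route as the paper's: induction via domino removal, with a horizontal domino shifting one beta-set element by $2$ and a vertical domino shifting two consecutive elements by $1$ (equivalently, as you note, one element by $2$ set-theoretically). The only difference is that you carry out explicitly the parity count for the staircase base case $\delta_k$, which the paper leaves implicit.
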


\begin{proof}
The claim is (almost) trivially true by induction on the size of $\la$.
Either we cannot remove a single domino from the border of $\la$, in which
case $\la$ does not have a trivial $2$-core (it is in fact a $2$-core itself)
and is of the form $(n,n-1,\dots,1)$ for some positive~$n$, or it is possible
to remove a~domino from the border of $\la$ to form a~partition of size
$\abs{\la}-2$.
The removal of a~domino of shape\!\!
\raisebox{-1pt}{
\begin{tikzpicture}[scale=0.23,line width=0.3pt]
\draw (0,0)--(2,0)--(2,-1)--(0,-1)--cycle;
\draw (1,0)--(1,-1);
\end{tikzpicture}}
simply decreases one of the elements of $\mathcal{A}_{\la}$ by $2$,
whereas the removal of a~domino of shape\!\!
\raisebox{-3pt}{
\begin{tikzpicture}[scale=0.23,line width=0.3pt]
\draw (0,0)--(1,0)--(1,-2)--(0,-2)--cycle;
\draw (0,-1)--(1,-1);
\end{tikzpicture}}
decreases two consecutive elements of $\mathcal{A}_{\la}$ by $1$.
The claim thus follows.
\end{proof}

Given a square $s=(i,j)\in\la$ the
arm-length, arm-colength, leg-length and leg-colength of
$s$ are defined as
\begin{alignat*}{3}
& a(s)=a_{\la}(s):=\la_i-j, \qquad&& a'(s)=a'_{\la}(s):=j-1, & \\
& l(s)=l_{\la}(s):=\la'_j-i, \qquad && l'(s)=l'_{\la}(s):=i-1.&
\end{alignat*}
Extending this to type-$\mathrm{C}$, we also set
\begin{gather*}
\hat{a}(s)=\hat{a}_{\la}(s):=\la_i+j-1,\qquad
\hat{l}(s)=\hat{l}_{\la}(s):=\la'_j+i-1.
\end{gather*}

The rationale for denoting the set of partitions of length at
most~$n$ as $P_{+}(n)$ is that we identify such partitions with
the dominant (integral) weights of $\mathrm{GL}(n,\mathbb{C})$.
Frequently we also require the superset
\begin{gather*}
P(n):=\big\{(\la_1,\dots,\la_n)\in\mathbb{Z}^n\colon
\la_1\geq\la_2\geq\cdots\geq\la_n\big\}
\end{gather*}
of all (integral) weights.
By mild abuse of notation, we sometimes write for $\mu\in P(n+1)$ with
$\mu_{n+1}=0$ that $\mu\in P_{+}(n)$, i.e., we consider $P_{+}(n)$ not just
as a subset of~$P(n)$ but also of~$P(n+1)$.

For $\la$ a partition, define the statistic
\begin{gather*}
n(\la):=\sum_{s=(i,j)\in\la} (i-1)=
\sum_{i\geq 1} (i-1)\la_i
=\sum_{s=(i,j)\in\la} (\la'_j-1)/2=
\sum_{i\geq 1} \binom{\la'_i}{2}.
\end{gather*}
This extends to skew shapes $\la/\mu$ in the obvious manner:
$n(\la/\mu):=\sum_i (i-1)(\la_i-\mu_i)=n(\la)-n(\mu)$.
By further abuse of notation (since conjugation no longer makes sense)
we will also use $n(\la)$ and $n(\la')$ for $\la\in P(n)$,
defined as~\label{P_part}
\begin{gather*}
n(\la)=\sum_{i=1}^n (i-1)\la_i \qquad\text{and}\qquad
n(\la')=\sum_{i=1}^n \binom{\la_i}{2}.
\end{gather*}

Two describe some of our conjectures we also require three types of
even and odd analogues of $n(\la)$ for~$\la$ a partition:
\begin{subequations}\label{Eq_n-eo}
\begin{gather}
n^{\textrm{e/o}}(\la) :=
\sum_{\substack{s=(i,j)\in\la \\ a(s)+l(s) \textrm{ even/odd}}}
(i-1) \\
\hat{n}^{\textrm{e/o}}(\la) :=
\sum_{\substack{s=(i,j)\in\la \\ a(s)+l(s) \textrm{ even/odd}}}
(\la'_j-1)/2
\intertext{and}
\bar{n}^{\textrm{e/o}}(\la) :=
\sum_{\substack{s=(i,j)\in\la \\ i+j \textrm{ even/odd}}}(i-1).
\end{gather}
\end{subequations}
Although
\begin{gather*}
n(\la)=n^{\textrm{e}}(\la)+n^{\textrm{o}}(\la)=
\hat{n}^{\textrm{e}}(\la)+\hat{n}^{\textrm{o}}(\la)=
\bar{n}^{\textrm{e}}(\la)+\bar{n}^{\textrm{o}}(\la),
\end{gather*}
it is generally not true that $n^{\textrm{e/o}}(\la)$,
$\hat{n}^{\textrm{e/o}}(\la)$ and
$\bar{n}^{\textrm{e/o}}(\la)$ (for fixed parity) coincide.
In fact, $\hat{n}^{\textrm{e/o}}(\la)$ can take
half-integer values.

We will in fact only use the above six functions for partitions
that have empty $2$-core.
In that case, we have the following simple relation.

\begin{Lemma}\label{Lem_two-core}
For $\la$ a partition such that $\twocore{\la}=0$,
\begin{gather*}
\bar{n}^{\textup{e/o}}(\la)=
2\hat{n}^{\textup{o/e}}(\la)-n^{\textup{e/o}}(\la).
\end{gather*}
\end{Lemma}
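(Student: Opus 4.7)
The two identities sum to $n(\la)=2n(\la)-n(\la)$, a trivial statement, so they are jointly equivalent to the single signed identity obtained by their subtraction. Writing $(-1)^{a(s)+l(s)}=(-1)^{\la_i+\la'_j+i+j}$, this signed identity becomes
\[
\sum_{(i,j)\in\la}(-1)^{i+j}(i-1)+\sum_{(i,j)\in\la}(-1)^{\la_i+\la'_j+i+j}(\la'_j+i-2)=0,
\]
which is the form I would prove.

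I plan to proceed by induction on $|\la|/2$, with the base case $\la=\emptyset$ immediate. Since $\twocore{\la}=0$ ensures the existence of a removable domino whose removal gives a smaller partition $\mu$ with $\twocore{\mu}=0$, the inductive hypothesis applies to $\mu$. For a horizontal removable domino at $(i_0,j_0),(i_0,j_0+1)$ (so $j_0+1=\la_{i_0}$), the removability condition forces $\la'_{j_0}=\la'_{j_0+1}=i_0$, and passing from $\la$ to $\mu$ decreases the arm-length of every square in row $i_0$ by $2$ (preserving parity) while decreasing the leg-length of every square $(i,j)$ with $i<i_0$ and $j\in\{j_0,j_0+1\}$ by $1$ (flipping parity). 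The direct contributions of the two removed squares to each summand cancel in pairs; the crucial indirect cancellation is that for each fixed $i<i_0$ the signs $(-1)^{a(s)+l(s)}$ at $(i,j_0)$ and $(i,j_0+1)$ are negatives of each other, so the coupled parity-flip contributions from the two adjacent columns cancel column by column, independently of any global hypothesis on $\la$.

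The vertical case, at $(i_0,j_0),(i_0+1,j_0)$ with $\la_{i_0}=\la_{i_0+1}=j_0$ and $\la'_{j_0}=i_0+1$, requires a parallel but subtler analysis. After accounting for the direct and indirect contributions, the inductive step reduces to the auxiliary identity
\[
\sum_{j=1}^{j_0-1}(-1)^{\la'_j+j}+\sum_{i=1}^{i_0-1}(-1)^{\la_i+i}=\frac{(-1)^{i_0+j_0}-1}{2},
\]
which I would derive from the balanced-parity characterisation of $\twocore{\la}=0$ in Lemma~\ref{Lem_2core} together with the removability relations. This auxiliary identity is the main obstacle: unlike the horizontal case, the indirect contributions do not cancel automatically but depend on the 2-core hypothesis on~$\la$. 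A less computational alternative would be to express each of $n^{\textup{e/o}}$, $\hat n^{\textup{e/o}}$, $\bar n^{\textup{e/o}}$ directly in terms of the 2-quotient $(\la^{(0)},\la^{(1)})$ of $\la$ and verify the lemma from that decomposition, but the inductive approach above is self-contained and elementary once the auxiliary partial-sum identity is established.
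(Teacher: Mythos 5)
Your reduction to the single signed identity is correct, and your horizontal case is sound: the two removed squares carry opposite signs and equal weights in both summands, and for each $i<i_0$ the squares $(i,j_0)$ and $(i,j_0+1)$ sit in columns of equal height and carry opposite hook parities both before and after removal, so their contributions cancel pairwise in $\la$ and in $\mu$ separately. I have also checked that your auxiliary identity is exactly what the vertical inductive step requires (the direct contributions of the two removed cells total $1-(-1)^{i_0+j_0}$, and the indirect contributions from column $j_0$ above the domino and from rows $i_0,i_0+1$ to its left total $-2(-1)^{i_0+j_0}$ times your two partial sums), and that the identity is true. The genuine gap is that this identity is asserted, not proved: it is the entire content of the vertical case, and ``derive it from Lemma~\ref{Lem_2core} together with the removability relations'' is a promise, not an argument. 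To close it, the efficient input is not the beta-number criterion but the equivalent statement that $\sum_{(i,j)\in\la}(-1)^{i+j}=0$ whenever $\twocore{\la}=0$ (each domino covers one cell of either parity of $i+j$). Writing $(-1)^{\la_i+i}=(-1)^{j_0+i}+2\sum_{j=j_0+1}^{\la_i}(-1)^{i+j}$ for $i<i_0$ and $(-1)^{\la'_j+j}=(-1)^{i_0+1+j}+2\sum_{i=i_0+2}^{\la'_j}(-1)^{i+j}$ for $j<j_0$, and noting that the position and removability of the domino force the cells of $\la$ with $i\le i_0+1$, $j\le j_0$ to fill an $(i_0+1)\times j_0$ rectangle while all remaining cells lie either strictly right of column $j_0$ (hence in rows $<i_0$) or strictly below row $i_0+1$ (hence in columns $<j_0$), your left-hand side becomes two explicit geometric sums plus twice the alternating content sum over $\la$ minus twice that over the rectangle; the former vanishes by the 2-core hypothesis and the rest evaluates to $((-1)^{i_0+j_0}-1)/2$.

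It is worth seeing why the paper's proof, which is the same domino-removal induction, never meets this obstacle. The paper removes a horizontal domino whenever one exists, so the vertical case only arises when $\la'$ is a distinct partition; it then removes the domino from the first admissible column, say the $j$th, which forces $\la'_i-\la'_{i+1}=1$ for $1\le i\le j-1$. The partial sums to the left of the domino are then over an explicit staircase and are computed outright by a parity case analysis, with no appeal to $\twocore{\la}=0$ (the paper even remarks that its vertical-case computation is valid for partitions that need not have empty 2-core). Your version buys the freedom to remove an arbitrary vertical domino at the price of an identity that genuinely uses the global hypothesis; once you supply the derivation above, both routes are complete.
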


\begin{proof}
We again prove the claim by induction on the size of the partition $\la$.
For $\la=0$ the claim is trivially true.
Now let $\la$ be a partition of size at least two.
Because $\la$ can be tiled by dominoes, it is always possible to
remove a domino from its border to form a partition $\mu$ of size
$\abs{\la}-2$.

First assume it is possible to remove a domino of shape\!\!
\raisebox{-1pt}{
\begin{tikzpicture}[scale=0.23,line width=0.3pt]
\draw (0,0)--(2,0)--(2,-1)--(0,-1)--cycle;
\draw (1,0)--(1,-1);
\end{tikzpicture}}
such that the row-coordinate of the two boxes of the domino is $i$.
Then
\begin{gather*}
n^{\textup{e/o}}(\la)-n^{\textup{e/o}}(\mu)=
\bar{n}^{\textup{e/o}}(\la)-\bar{n}^{\textup{e/o}}(\mu)=
2\hat{n}^{\textup{e/o}}(\la)-2\hat{n}^{\textup{e/o}}(\mu)=i-1.
\end{gather*}
The above gives
\begin{gather}\label{Eq_nnn}
n^{p_1}(\la)+\bar{n}^{p_2}(\la)-2\hat{n}^{p_3}(\la)=
n^{p_1}(\mu)+\bar{n}^{p_2}(\mu)-2\hat{n}^{p_3}(\mu)
\end{gather}
irrespective of the choice of the parities $p_1$, $p_2$ and~$p_3$.
(This is consistent with the trivial fact that for $\la$ an
even partition, $n^{p_1}(\la)=\bar{n}^{p_2}(\la)=\hat{n}^{p_3}(\la)$.)

Next assume that $\la'$ is a distinct partition so that it is impossible
to remove a domino of shape\!\!
\raisebox{-1pt}{
\begin{tikzpicture}[scale=0.23,line width=0.3pt]
\draw (0,0)--(2,0)--(2,-1)--(0,-1)--cycle;
\draw (1,0)--(1,-1);
\end{tikzpicture}}\,.
We now only need to consider the first column of $\la$ from which
a domino of shape\!\!
\raisebox{-3pt}{
\begin{tikzpicture}[scale=0.23,line width=0.3pt]
\draw (0,0)--(1,0)--(1,-2)--(0,-2)--cycle;
\draw (0,-1)--(1,-1);
\end{tikzpicture}}
can be removed.
If this is the $j$th column, then $\la'$ is a distinct partition
such that $\la'_i-\la'_{i+1}=1$ for $1\leq i\leq j-1$ and
$\la'_j-\la'_{j+1}\geq 2$. (Of course, not each such a partition
necessarily has an empty $2$-core.)
From a case-by-case analysis it follows that
\begin{gather*}
n^{\textup{e}}(\la)-n^{\textup{e}}(\mu)=\la'_1-1, \qquad
n^{\textup{o}}(\la)-n^{\textup{o}}(\mu)=\la'_1-2j,
\\
\bar{n}^{\textup{e}}(\la)-\bar{n}^{\textup{e}}(\mu) =
\begin{cases}
\la'_1-j & \text{if $\la'_1$ is odd}, \\
\la'_1-j-1 & \text{if $\la'_1$ is even},
\end{cases} \\
\bar{n}^{\textup{o}}(\la)-\bar{n}^{\textup{o}}(\mu) =
\begin{cases}
\la'_1-j & \text{if $\la'_1$ is even}, \\
\la'_1-j-1 & \text{if $\la'_1$ is odd},
\end{cases}
\end{gather*}
and
\begin{gather*}
2\hat{n}^{\textup{e}}(\la)-2\hat{n}^{\textup{e}}(\mu) =
\begin{cases}
2\la'_1-3j & \text{if $\la'_1$ is even}, \\
2\la'_1-3j-1 & \text{if $\la'_1$ is odd},
\end{cases} \\
2\hat{n}^{\textup{o}}(\la)-2\hat{n}^{\textup{o}}(\mu) =
\begin{cases}
2\la'_1-j-1 & \text{if $\la'_1$ is odd}, \\
2\la'_1-j-2 & \text{if $\la'_1$ is even}.
\end{cases}
\end{gather*}
Hence \eqref{Eq_nnn} again holds, but now with $p_1=p_2\neq p_3$.
\end{proof}

\subsection{Generalised shifted factorials}\label{Sec_factorials}

In this paper we require several types of shifted factorials.
For complex $q$ such that $\abs{q}<1$ the ordinary $q$-shifted
factorial $(z;q)_{\infty}$ is defined as
\begin{gather}\label{Eq_q-shifted}
(z;q)_{\infty}:=\prod_{k\geq 1} \big(1-zq^{k-1}\big).
\end{gather}
This may be used to define $(z;q)_N$ for arbitrary
integer $N$ as
\begin{gather}\label{Eq_q-shiftedn}
(z;q)_N:=\frac{(z;q)_{\infty}}{(zq^N;q)_{\infty}}.
\end{gather}
In particular, if $N$ is nonnegative, $(z;q)_N=\prod_{k=1}^N\big(1-zq^{k-1}\big)$
and if $N$ is a negative integer $1/(q;q)_N=0$.
To generalise both definitions to the elliptic case, we need the
elliptic gamma function~\cite{Ruijsenaars97}
\begin{gather*}
\Gamma_{p,q}(z):=\prod_{i,j=0}^{\infty}\frac{1-p^{i+1}q^{j+1}/z}{1-zp^iq^j},
\end{gather*}
where $z\in\mathbb{C}^{\ast}$ and $p,q\in\mathbb{C}$
such that $\abs{p},\abs{q}<1$.
This function is symmetric in $p$ and $q$, satisfies the reflection formula
$\Gamma_{p,q}(z)\Gamma_{p,q}(pq/z)=1$ and functional equation
\begin{gather}\label{Eq_Gamma-fun}
\Gamma_{p,q}(qz)=\theta(z;p) \Gamma_{p,q}(z),
\end{gather}
where $\theta(z;p)$ is the modified theta function
\begin{gather*}
\theta(z;p):=(z;p)_{\infty}(p/z;p)_{\infty}.
\end{gather*}
Since $\lim\limits_{p\to 0} 1/\Gamma_{p,q}(z)=(z;q)_{\infty}$, the reciprocal
of the elliptic gamma function can be viewed as an elliptic analogue of~\eqref{Eq_q-shifted}.
The elliptic analogue of \eqref{Eq_q-shiftedn} is then
\begin{gather}\label{Eq_e-shifted-factorial}
(z;q,p)_N:=\frac{\Gamma_{p,q}\big(zq^N\big)}{\Gamma_{p,q}(z)},
\end{gather}
which for nonnegative $N$ can also be expressed as
\begin{gather*}
(z;q,p)_N=\prod_{k=1}^N \theta\big(zq^{k-1};p\big).
\end{gather*}
Clearly, $(z;q,0)_N=(z;q)_N$.

Three important generalisations of $(z;q,p)_N$ to the case of
partitions are given by~\cite{Rains06,W02}
\begin{subequations}\label{Eq_qshiftE}
\begin{gather}
C^0_{\la}(z;q,t;p) :=
\prod_{(i,j)\in\la}\theta\big(zq^{j-1}t^{1-i};p\big), \\
C^{-}_{\la}(z;q,t;p) :=
\prod_{(i,j)\in\la}\theta\big(zq^{\la_i-j}t^{\la'_j-i};p\big), \\
C^{+}_{\la}(z;q,t;p) :=\prod_{(i,j)\in\la}
\theta\big(zq^{\la_i+j-1}t^{2-\la'_j-i};p\big),
\end{gather}
\end{subequations}
where it is noted that
\begin{gather*}
C^0_{(N)}(z;q,t;p)=C^{-}_{(N)}(z;q,t;p)=(z;q,p)_N
\end{gather*}
and
\begin{gather*}
C^{+}_{(N)}(z;q,t;p)=\frac{(z;q,p)_{2N}}{(z;q,p)_N}.
\end{gather*}

From the simple functional equations for the theta function
\begin{gather}\label{Eq_theta-fun}
\theta(pz;p)=\theta(1/z;p)=-z^{-1} \theta(z;p),
\end{gather}
it follows that the elliptic $C$-symbols satisfy the
quasi-periodicities
\begin{subequations}\label{Eq_quasi-C}
\begin{gather}
C^0_{\la}(pz;q,t;p) =
(-z)^{-\abs{\la}} q^{-n(\la')} t^{n(\la)} C^0_{\la}(z;q,t;p),
\label{Eq_quasi-C0} \\
C^{-}_{\la}(pz;q,t;p) =
(-z)^{-\abs{\la}} q^{-n(\la')} t^{-n(\la)} C^{-}_{\la}(z;q,t;p),
\label{Eq_quasi-Cm} \\
C^{+}_{\la}(pz;q,t;p) =(-zq)^{-\abs{\la}}
q^{-3n(\la')} t^{3n(\la)} C^{+}_{\la}(z;q,t;p),
\end{gather}
\end{subequations}
as well as a long list of other simple identities, such as
\begin{subequations}\label{Eq_conj}
\begin{gather}
C^0_{\la'}(z;t,q;p) =C^0_{\la}(p/z;q,t;p), \label{Eq_C0-conj} \\
C^{-}_{\la'}(z;t,q;p) =C^{-}_{\la}(z;q,t;p), \label{Eq_Cm-conj} \\
C^{+}_{\la'}(z;t,q;p) =C^{+}_{\la}(p/zqt;q,t;p), \label{Eq_Cp-conj}
\end{gather}
\end{subequations}\vspace{-10mm}
\begin{gather}\label{Eq_recip}
C^{0,\pm}_{\la}(1/z;1/q,1/t;p)=C^{0,\pm}_{\la}(pz;q,t;p),
\end{gather}
\vspace{-10mm}
\begin{subequations}\label{Eq_double-square}
\begin{gather}
C^{0,\pm}_{2\la}(z;q,t;p)=C^{0,\pm}_{\la}\big(z,zq;q^2,t;p\big),\label{Eq_double} \\
C^0_{\la^2}(z;q,t;p)=C^0_{\la}\big(z,z/t;q,t^2;p\big), \\
C^{-}_{\la^2}(z;q,t;p)=C^{-}_{\la}\big(z,zt;q,t^2;p\big), \\
C^{+}_{\la^2}(z;q,t;p)=C^{+}_{\la}\big(z/t,z/t^2;q,t^2;p\big), \\
C^{0,\pm}_{\la}(z,-z;q,t;p)=C^{0,\pm}_{\la}\big(z^2;q^2,t^2;p^2\big),\label{Eq_pm}
\end{gather}
\end{subequations}
and
\begin{subequations}\label{Eq_add-square}
\begin{gather}
C^0_{\la+(N^n)}(z;q,t;p)=
C^0_{(N^n)}(z;q,t;p) C^0_{\la}\big(zq^N;q,t;p\big), \\
C^{-}_{\la+(N^n)}(z;q,t;p)
=C^{-}_{(N^n)}(z;q,t;p) C^{-}_{\la}(z;q,t;p)
\frac{C^0_{\la}\big(zq^Nt^{n-1};q,t;p\big)}{C^0_{\la}\big(zt^{n-1};q,t;p\big)}, \\
C^{+}_{\la+(N^n)}(z;q,t;p)
=C^{+}_{(N^n)}(z;q,t;p)C^{+}_{\la}\big(zq^{2N};q,t;p\big)
\frac{C^0_{\la}\big(zq^{2N}t^{1-n};q,t;p\big)}{C^0_{\la}\big(zq^Nt^{1-n};q,t;p\big)},
\end{gather}
\end{subequations}
where in the final set of identities it is assumed that $\la\in P_{+}(n)$
and $N$ is a nonnegative integer.
Expressing $C^{0,\pm}_{(N^n)}(z;q,t;p)$ in terms of the elliptic
shifted-factorial~\eqref{Eq_e-shifted-factorial}, we may use~\eqref{Eq_add-square} to extend the elliptic $C$-symbols to arbitrary
weights $\la\in P(n)$:
\begin{subequations}\label{Eq_C-weights}
\begin{gather}
C^0_{\la}(z;q,t;p) =C^0_{\mu}\big(zq^{\la_n};q,t;p\big)
\prod_{i=1}^n \big(zt^{1-i};q,p\big)_{\la_n}, \\
C^{-}_{\la}(z;q,t;p) =C^{-}_{\mu}(z;q,t;p)
\frac{C^0_{\mu}\big(zq^{\la_n}t^{n-1};q,t;p\big)}{C^0_{\mu}\big(zt^{n-1};q,t;p\big)}
\prod_{i=1}^n \big(zt^{n-i};q,p\big)_{\la_n}, \\
C^{+}_{\la}(z;q,t;p)
 =C^{+}_{\mu}\big(zq^{2\la_n};q,t;p\big)
\frac{C^0_{\mu}\big(zq^{2\la_n}t^{1-n};q,t;p\big)}
{C^0_{\mu}\big(zq^{\la_n}t^{1-n};q,t;p\big)}
\prod_{i=1}^n \big(zq^{\la_n} t^{2-n-i};q,p\big)_{\la_n},
\end{gather}
\end{subequations}
where $\mu:=(\la_1-\la_n,\dots,\la_{n-1}-\la_n,0)\in P_{+}(n)$.
All of the above identities, with the exception of
\eqref{Eq_conj} remain valid for non-dominant weights.

For all three elliptic $C$-symbols we also use their non-elliptic
specialisations
\begin{gather}\label{Eq_qshift}
C^{0,\pm}_{\la}(z;q,t):=C^{0,\pm}_{\la}(z;q,t;0).
\end{gather}
They satisfy the obvious analogues of \eqref{Eq_conj}--\eqref{Eq_add-square},
where it is noted that in the case of \eqref{Eq_C0-conj}, \eqref{Eq_Cp-conj}
and~\eqref{Eq_recip} one first needs to eliminate an explicit $p$ in the
argument on the right using~\eqref{Eq_quasi-C} before setting $p$ to $0$.
To avoid having to produce another five identities, we
will always refer to the above relations~-- even in the non-elliptic case~--
when manipulating $C$-symbols.
The reader should have no trouble writing down the explicit $p=0$ versions.
For instance, in the case of~\eqref{Eq_C0-conj} on finds
\begin{gather*}
C^0_{\la'}(z;t,q)=(-z)^{\abs{\la}}q^{-n(\la')}t^{n(\la)}C^0_{\la}(1/z;q,t),
\end{gather*}
and so on.

For all the shifted factorials as well as the elliptic gamma and modified
theta functions adopt the usual multiplicative and plus-minus notations,
such as
\begin{gather*}
C_{\la}^0(z_1,\dots,z_k;q,t;p):=
C_{\la}^0(z_1;q,t;p) \cdots C_{\la}^0(z_k;q,t;p)
\end{gather*}
and
\begin{gather*}
C_{\la}^0(z^{\pm};q,t;p):=C_{\la}^0\big(z,z^{-1};q,t;p\big), \\
C_{\la}^0(w^{\pm}z^{\pm};q,t;p):=
C_{\la}^0\big(wz,wz^{-1},w^{-1}z,w^{-1}z^{-1};q,t;p\big).
\end{gather*}
To further shorten some of our expressions we also introduce
the multiplicative well-poised ratio
\begin{gather*}
\Delta^0_{\la}(a\vert b_1,\dots,b_k;q,t;p):=
\frac{C^0_{\la}(b_1,\dots,b_k;q,t;p)}
{C^0_{\la}(apq/b_1,\dots,apq/b_k;q,t;p)}
\end{gather*}
and the non-multiplicative
\begin{gather*}
\Delta_{\la}(a\vert b_1,\dots,b_k;q,t;p)
:=\frac{C^0_{2\la^2}(apq;q,t;p)}
{C^{-}_{\la}(pq,t;q,t;p)C^{+}_{\la}(a,apq/t;q,t;p)}
\Delta^0_{\la}(a\vert b_1,\dots,b_k;q,t;p).
\end{gather*}

Finally, there are six more non-elliptic $C$-symbols needed to describe some
of our conjectures.
They are defined as
\begin{subequations}
\begin{gather}
C^{0,\textrm{e/o}}_{\la}(z;q,t) :=
\prod_{\substack{s=(i,j)\in\la \\ i+j \textrm{ even/odd}}}
\big(1-zq^{j-1}t^{1-i}\big), \\
C^{-,\textrm{e/o}}_{\la}(z;q,t) :=
\prod_{\substack{s=(i,j)\in\la \\ a(s)+l(s) \textrm{ even/odd}}}
\big(1-zq^{\la_i-j}t^{\la'_j-i}\big), \\
C^{+,\textrm{e/o}}_{\la}(z;q,t) :=
\prod_{\substack{s=(i,j)\in\la \\ a(s)+l(s) \textrm{ even/odd}}}
\big(1-zq^{\la_i+j-1}t^{2-\la'_j-i}\big).
\label{Eq_Cplus-eo}
\end{gather}
\end{subequations}
Clearly,
$C^{\alpha}_{\la}(z;q,t)=C^{\alpha,\textrm{e}}_{\la}(z;q,t)
C^{\alpha,\textrm{o}}_{\la}(z;q,t)$ for $\alpha\in\{0,+,-\}$.

\begin{Lemma}\label{Lem_C-conjugate-eo}
For $\la$ a partition,
\begin{gather*}
C^{0,\textup{e/o}}_{\la'}(z;t,q) =
(-z)^{\|\la\|^{\textup{e/o}}}
q^{-\bar{n}^{\textup{e/o}}(\la')}
t^{\bar{n}^{\textup{e/o}}(\la)}
C^{0,\textup{e/o}}_{\la}(1/z;q,t), \\
C^{-,\textup{e/o}}_{\la'}(z;t,q) =C^{-,\textup{e/o}}_{\la}(z;q,t), \\
C^{+,\textup{e/o}}_{\la'}(z;t,q) =(-tz)^{\abs{\la}^{\textup{e/o}}}
q^{-n^{\textup{e/o}}(\la')-2\hat{n}^{\textup{e/o}}(\la')}
t^{n^{\textup{e/o}}(\la)+2\hat{n}^{\textup{e/o}}(\la)}
C^{+,\textup{e/o}}_{\la}(1/qtz;q,t),
\end{gather*}
where $\|\la\|^{\textup{e/o}}:=\abs{\{(i,j)\in\la\colon i+j \textup{ even/odd}\}}$
and $\abs{\la}^{\textup{e/o}}:=
\abs{\{s\in\la\colon a(s)+l(s) \textup{ even/odd}\}}$.
\end{Lemma}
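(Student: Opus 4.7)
The plan is to prove all three identities in parallel by the same mechanism used for the unrefined conjugation identities \eqref{Eq_conj}: implement the bijection $\la \ni s=(i,j) \leftrightarrow s'=(j,i) \in \la'$ between the squares of $\la$ and those of $\la'$, and check that this bijection restricts correctly to the even/odd sub-products. The first essential observation is that under $(i,j)\mapsto(j,i)$ we clearly have $i+j$ invariant, and moreover
\begin{gather*}
a_{\la'}(s')+l_{\la'}(s')=(\la'_j-i)+(\la_i-j)=a_\la(s)+l_\la(s),
\end{gather*}
so the partition of squares according to the parity of $i+j$, as well as the partition according to the parity of $a(s)+l(s)$, are both preserved under conjugation. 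Hence replacing $\la$ by $\la'$ and simultaneously swapping $q\leftrightarrow t$ in any of the three products \eqref{Eq_qshiftE} permutes factors within each parity class, rather than mixing the two classes.

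With this in place, the second identity is immediate. Writing
\begin{gather*}
C^{-,\textup{e/o}}_{\la'}(z;t,q)=\prod_{\substack{(i',j')\in\la'\\ a(s')+l(s') \textup{ e/o}}}\bigl(1-zt^{\la'_{i'}-j'}q^{\la_{j'}-i'}\bigr)
\end{gather*}
and applying $(i',j')\mapsto(j,i)$ gives exactly $C^{-,\textup{e/o}}_{\la}(z;q,t)$. For the $C^0$ identity one pulls out the factor $-zt^{i-1}q^{1-j}$ from each term in order to replace the argument $z$ by its reciprocal, and then recognises
\begin{gather*}
\sum_{\substack{(i,j)\in\la\\ i+j \textup{ e/o}}}(i-1)=\bar{n}^{\textup{e/o}}(\la), \qquad
\sum_{\substack{(i,j)\in\la\\ i+j \textup{ e/o}}}(j-1)=\bar{n}^{\textup{e/o}}(\la'),
\end{gather*}
where the second equality again uses $(i,j)\leftrightarrow(j,i)$ applied to the definition \eqref{Eq_n-eo} of $\bar n^{\textup{e/o}}$. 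The number of terms in each product is $\|\la\|^{\textup{e/o}}$, giving the stated prefactor.

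For the $C^+$ identity the calculation is of the same shape but bookkeeping-heavier, and I expect this to be the main obstacle. After the involution the typical factor reads $1-zt^{\la'_j+i-1}q^{2-\la_i-j}$; extracting $-zt^{\la'_j+i-1}q^{2-\la_i-j}$ converts the argument $z$ into $1/(qtz)$ and produces the target product $C^{+,\textup{e/o}}_\la(1/(qtz);q,t)$. It then only remains to check the bookkeeping of the prefactor, by summing $\la'_j+i-1$ and $\la_i+j-2$ over $(i,j)\in\la$ with $a(s)+l(s)$ of fixed parity. Using the defining formulas \eqref{Eq_n-eo} together with the conjugation identities
\begin{gather*}
n^{\textup{e/o}}(\la')=\sum_{\substack{(i,j)\in\la\\ a(s)+l(s) \textup{ e/o}}}(j-1), \qquad
2\hat n^{\textup{e/o}}(\la')=\sum_{\substack{(i,j)\in\la\\ a(s)+l(s) \textup{ e/o}}}(\la_i-1),
\end{gather*}
(which themselves come from the observation $(\la')'=\la$ and the parity-preserving bijection), one obtains exactly the exponents $n^{\textup{e/o}}(\la)+2\hat n^{\textup{e/o}}(\la)$ of $t$ and $-n^{\textup{e/o}}(\la')-2\hat n^{\textup{e/o}}(\la')$ of $q$ appearing in the claim, and the count of terms is $\abs{\la}^{\textup{e/o}}$, accounting for the $(-tz)^{\abs{\la}^{\textup{e/o}}}$. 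The only slight care needed is in recognising that after factoring $-zt^{\la'_j+i-1}q^{2-\la_i-j}$ the residual $t$-exponent is exactly $\la'_j+i-1=(\la'_j-1)+(i-1)+1$, whose sum over the parity-restricted set of squares supplies the ``+1 per square'' that upgrades $n^{\textup{e/o}}(\la)+2\hat n^{\textup{e/o}}(\la)$ to $\abs{\la}^{\textup{e/o}}+n^{\textup{e/o}}(\la)+2\hat n^{\textup{e/o}}(\la)$; the analogous ``$-1$ per square'' converts $-n^{\textup{e/o}}(\la')-2\hat n^{\textup{e/o}}(\la')$ to $-\abs{\la}^{\textup{e/o}}-n^{\textup{e/o}}(\la')-2\hat n^{\textup{e/o}}(\la')$, which combines with $q^{-\abs{\la}^{\textup{e/o}}}$ to give $-qtz$ inside the prefactor, as required.
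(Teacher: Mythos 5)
Your proposal is correct and follows essentially the same route as the paper's proof: transpose the index of the product via $(i,j)\mapsto(j,i)$, observe that both parity statistics ($i+j$ and $a(s)+l(s)$) are conjugation-invariant, factor the appropriate monomial out of each term, and evaluate the resulting exponent sums via \eqref{Eq_n-eo}. (The paper only writes out the $C^+$ case; your exponent totals there are right, though the closing clause about a ``$-1$ per square'' in the $q$-exponent is superfluous and slightly garbled, since $2-\la_i-j=-(\la_i-1)-(j-1)$ exactly and the extracted monomial per square is simply $(-tz)\,t^{(\la'_j-1)+(i-1)}q^{-(\la_i-1)-(j-1)}$, which immediately yields the stated prefactor.)
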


We will again only be using the above for $\la$ a partition with
empty $2$-core, in which case we simply have
$\|\la\|^{\textup{e/o}}=\abs{\la}^{\textup{e/o}}=\abs{\la}/2$.

\begin{proof}
We will only show the last of the three identities.
Applying definition \eqref{Eq_Cplus-eo} to its left-hand side
and interchanging $i$ and $j$ in the product leads to
\begin{gather*}
C^{+,\textup{e/o}}_{\la'}(z;t,q)
 =\prod_{\substack{s=(i,j)\in\la \\ a(s)+l(s) \textrm{ even/odd}}}
\big(1-(qtz) q^{1-\la_i-j}t^{\la'_j+i-2}\big) \\
\hphantom{C^{+,\textup{e/o}}_{\la'}(z;t,q)}{}
=C^{+,\textup{e/o}}_{\la}(1/qtz;q,t)
\prod_{\substack{s=(i,j)\in\la \\ a(s)+l(s) \textrm{ even/odd}}}
\big((-tz) q^{2-\la_i-j}t^{\la'_j+i-2}\big).
\end{gather*}
By \eqref{Eq_n-eo} the result now follows.
\end{proof}

In a similar manner it may be shown that (the $p=0$ case of)~\eqref{Eq_recip} dissects into even and odd cases as follows.

\begin{Lemma}\label{Lem_C-recip-eo}
For $\la$ a partition,
\begin{gather*}
C^{0,\textup{e/o}}_{\la}(1/z;1/q,1/t) =(-z)^{-\|\la\|^{\textup{e/o}}}
q^{-\bar{n}^{\textup{e/o}}(\la')}t^{\bar{n}^{\textup{e/o}}(\la)}
C^{0,\textup{e/o}}_{\la}(z;q,t), \\
C^{-,\textup{e/o}}_{\la}(1/z;1/q,1/t) =(-z)^{-\abs{\la}^{\textup{e/o}}}
q^{n^{\textup{e/o}}(\la')-2\hat{n}^{\textup{e/o}}(\la')}
t^{n^{\textup{e/o}}(\la)-2\hat{n}^{\textup{e/o}}(\la)}
C^{-,\textup{e/o}}_{\la}(z;q,t), \\
C^{+,\textup{e/o}}_{\la}(1/z;1/q,1/t) =(-qz)^{-\abs{\la}^{\textup{e/o}}}
q^{-n^{\textup{e/o}}(\la')-2\hat{n}^{\textup{e/o}}(\la')}
t^{n^{\textup{e/o}}(\la)+2\hat{n}^{\textup{e/o}}(\la)}
C^{+,\textup{e/o}}_{\la}(z;q,t).
\end{gather*}
\end{Lemma}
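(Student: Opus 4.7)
The plan is to mimic the short calculation used to prove the last identity of Lemma~\ref{Lem_C-conjugate-eo}: factor each term in the defining product via the elementary identity $1-(xy)^{-1}=-(xy)^{-1}(1-xy)$, and then evaluate the resulting monomial prefactor by summing exponents over the cells of $\la$ of the prescribed parity.

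Concretely, for the $C^{0,\textup{e/o}}$ identity the definition yields
$$C^{0,\textup{e/o}}_{\la}(1/z;1/q,1/t)=\prod_{\substack{s=(i,j)\in\la\\ i+j\,\textup{even/odd}}}\bigl(-z^{-1}q^{-(j-1)}t^{i-1}\bigr)\bigl(1-zq^{j-1}t^{1-i}\bigr).$$
The second factor reassembles into $C^{0,\textup{e/o}}_{\la}(z;q,t)$. For the prefactor, the number of contributing cells is $\|\la\|^{\textup{e/o}}$, the sum of $i-1$ over such cells is $\bar{n}^{\textup{e/o}}(\la)$ by definition, and the sum of $j-1$ equals $\bar{n}^{\textup{e/o}}(\la')$ since the conjugation bijection $(i,j)\in\la\leftrightarrow(j,i)\in\la'$ preserves the parity of $i+j$.

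For the $C^{-,\textup{e/o}}$ and $C^{+,\textup{e/o}}$ identities the same factorisation applies, but now with the parity of $a(s)+l(s)$, which is likewise preserved by conjugation because $a_{\la'}((j,i))=l_{\la}((i,j))$ and $l_{\la'}((j,i))=a_{\la}((i,j))$. The exponents pulled out of the prefactor are sums of $\la_i-j$ and $\la'_j-i$ in the $C^-$ case, and of $\la_i+j-1$ and $2-\la'_j-i$ in the $C^+$ case. Expanding each linear combination and applying $\sum(i-1)=n^{\textup{e/o}}(\la)$, $\sum(j-1)=n^{\textup{e/o}}(\la')$, $\sum(\la'_j-1)=2\hat{n}^{\textup{e/o}}(\la)$, together with its conjugate $\sum(\la_i-1)=2\hat{n}^{\textup{e/o}}(\la')$, reproduces the claimed powers of $q$ and $t$. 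The extra factor $q^{-\abs{\la}^{\textup{e/o}}}$ that turns $(-z)^{-\abs{\la}^{\textup{e/o}}}$ into $(-qz)^{-\abs{\la}^{\textup{e/o}}}$ in the $C^{+,\textup{e/o}}$ formula comes from the constant $-1$ inside the exponent $\la_i+j-1$.

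There is no genuine obstacle beyond careful bookkeeping; the only slightly non-obvious ingredient is the conjugate identity $\sum(\la_i-1)=2\hat{n}^{\textup{e/o}}(\la')$, which again follows from the conjugation bijection upon noting that it swaps $\la'_j$ with $\la_i$ while preserving the parity of $a(s)+l(s)$.
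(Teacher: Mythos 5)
Your proof is correct and is exactly the argument the paper intends: it states only that Lemma~\ref{Lem_C-recip-eo} follows ``in a similar manner'' to Lemma~\ref{Lem_C-conjugate-eo}, i.e., by the same term-by-term factorisation $1-w^{-1}=-w^{-1}(1-w)$ and bookkeeping of exponent sums over cells of fixed parity, which is what you carry out. Your identifications $\sum(j-1)=\bar{n}^{\textup{e/o}}(\la')$ resp.\ $n^{\textup{e/o}}(\la')$ and $\sum(\la_i-1)=2\hat{n}^{\textup{e/o}}(\la')$ via the parity-preserving conjugation bijection all check out.
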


From \eqref{Eq_conj}, \eqref{Eq_recip} and
Lemmas~\ref{Lem_C-conjugate-eo}, \ref{Lem_C-recip-eo}
it follows that the rational functions defined in
\eqref{Eq_cde-def} and \eqref{Eq_f-def} satisfy the dualities
\begin{gather*}
c_{\la^2}(w,z;q,t) =c_{\la^2}(1/w,1/z;1/q,1/t)=
\left(\frac{q}{t}\right)^{\abs{\la}}
\frac{C^{-}_{\la^2}(t;q,t)}{C_{\la^2}^{-}(q;q,t)}
d_{2\la'}(1/w,1/z;t,q), \\
d_{2\la}(w,z;q,t) =d_{2\la}(1/w,1/z;1/q,1/t)=
\left(\frac{q}{t}\right)^{\abs{\la}}
\frac{C^{-}_{2\la}(t;q,t)}{C_{2\la}^{-}(q;q,t)}
c_{(\la')^2}(1/w,1/z;t,q), \\
e_{\la}(w,z;q,t) =e_{\la}(1/w,1/z;1/q,1/t)=
\left(-\frac{q}{t}\right)^{\abs{\la}}
\frac{C^{-}_{\la}\big(t^2;q^2,t^2\big)}{C_{\la}^{-}\big(q^2;q^2,t^2\big)}
e_{\la'}(1/w,1/z;t,q), \\
f_{\la}(w,z;q,t) =f_{\la}(1/w,1/z;1/q,1/t)=
\left(\frac{q}{t}\right)^{\abs{\la}/2}
\frac{C^{-}_{\la}(t;q,t)}{C_{\la}^{-}(q;q,t)}
f_{\la'}(1/w,1/z;t,q),
\end{gather*}
where in the final line it is assumed that $\la$ is a partition with
empty $2$-core.

In Sections~\ref{Sec_beta-integrals} and \ref{Sec_Phi} we also use the
$p,q$-symmetric versions of many of the generalised $q$-shifted factorials.
For $\bla=\big(\la^{(1)},\la^{(2)}\big)$ a pair of partitions or weights
and $f_{\la}(q,t;p)$ one of
\begin{gather*}
C_{\la}^{0,\pm}(\,\cdot\,;q,t;p),\qquad
\Delta^0_{\la}(\,\cdot\,\vert\,\cdot\,;q,t;p) \qquad\text{or}\qquad
\Delta_{\la}(\,\cdot\vert\,\cdot\,;q,t;p),
\end{gather*}
we set
\begin{gather*}
f_{\bla}(t;p,q)=f_{\la^{(1)}}(p,t;q)f_{\la^{(2)}}(q,t;p).
\end{gather*}
Hence
\begin{gather*}
f_{(\la^{(1)},\la^{(2)})}(t;p,q)=f_{(\la^{(2)},\la^{(1)})}(t;q,p).
\end{gather*}
By slight abuse of notation we will also write
$(n,m)\bla:=\big(n\la^{(1)},m\la^{(2)}\big)$ for positive integers $n,m$,
so that, for example,
\begin{gather*}
C^0_{(1,2)\bla}(a;t;p,q)=C^0_{\la^{(1)}}(a;p,t;q)C^0_{2\la^{(2)}}(a;q,t;p).
\notag
\end{gather*}
Finally,
$2\bla:=(2,2)\bla$ and $\bla^2:=\big(\big(\la^{(1)}\big)^2,\big(\la^{(2)}\big)^2\big)$.

\subsection{Elliptic hypergeometric series}

Our proof of Theorem~\ref{Thm_universal} relies on
(the $p\to 0$ limit of) two higher-dimensional quadratic
summation formulas for elliptic hypergeometric series.
In the one-dimensional case the simplest form an elliptic
hypergeometric series can take is \cite{GR04,Spiridonov04,W02}
\begin{gather}\label{Eq_EHS}
\sum_{k=0}^{\infty}\Delta_{(k)}(a\vert t,b_1,\dots,b_r;q,t;p)=
\sum_{k=0}^{\infty}
\frac{(apq;q,p)_{2k}}{(a;q,p)_{2k}}
\frac{(a,b_1,\dots,b_r;q,p)_k}
{(pq,c_1,\dots,c_r;q,p)_k},
\end{gather}
where, for reasons of convergence, it is assumed that one of the~$b_i$ is of the form~$q^{-N}$ with $N$ a nonnegative integer
so that the series terminates.
If the upper and lower parameters satisfy
$ab_1\cdots b_r(pq)^3=c_1\cdots c_r$ the series~\eqref{Eq_EHS}
is said to be balanced, and if $b_ic_i=apq$ for all $i$ it is said
to be very-well poised.
If both these conditions are satisfied then~\eqref{Eq_EHS} is an
elliptic function (in multiplicative form) in each of the variables
$a,b_1,\dots,b_r$, see, e.g.,~\cite{Spiridonov04}.

The most important identity for one-dimensional elliptic hypergeometric
series corresponds to~\eqref{Eq_EHS} for $r=5$, and is given by Frenkel
and Turaev's elliptic analogue of Jackson's sum \cite{FT97}:
\begin{gather}
\sum_{k=0}^N \frac{(apq;q,p)_{2k}}{(a;q,p)_{2k}}
\frac{(a,b,c,d,e,q^{-N};q,p)_k}
{(pq,apq/b,apq/c,apq/d,apq/e,apq^{N+1};q,p)_k} \nonumber\\
\qquad{} =\frac{(apq,apq/bc,apq/bd,apq/cd;q,p)_N}
{(apq/b,apq/c,apq/d,apq/bcd;q,p)_N},\label{Eq_FT}
\end{gather}
where $bcdeq^{-N}=a^2pq$.
Four other balanced, very-well poised instances of~\eqref{Eq_EHS}
for $r=7$ that admit closed-form evaluations are given by
\begin{subequations}
\begin{gather}
\sum_{k=0}^N
\frac{(apq;q,p)_{2k}}{(a;q,p)_{2k}}
\frac{\big(a,bq,abq^N,q^{-N};q,p\big)_k}
{\big(pq,ap/b,pq^{1-N}/b,apq^{N+1};q,p\big)_k}
\frac{\big(ap/b;q,p^2\big)_{2k}}{\big(abpq;q,p^2\big)_{2k}} \notag\\
\qquad{}
=\frac{\theta\big(abp;p^2\big)}{\theta\big(abpq^{2N};p^2\big)}
\frac{(apq;q,p)_N}{(b;q,p)_N}
\frac{\big(pq,b^2q;q,p^2\big)_N}{\big(abp,ap^2/b;q,p^2\big)_N}\label{Eq_unknown} \\
\sum_{k=0}^N
\frac{(apq;q,p)_{2k}}{(a;q,p)_{2k}}
\frac{\big(a,bp,abq^N,q^{-N};q,p\big)_k}
{\big(pq,aq/b,pq^{1-N}/b,apq^{N+1};q,p\big)_k}
\frac{\big(aq/b;q^2,p\big)_k}{\big(abpq;q^2,p\big)_k} \notag\\
\qquad{}=\chi(N \text{ even})
\frac{\big(q,b^2;q^2,p\big)_{N/2}}{\big(abpq,apq^2/b;q^2,p\big)_{N/2}}
\frac{(apq;q,p)_N}{(b;q,p)_N}, \label{Eq_W110}\\
\sum_{k=0}^N
\frac{(apq;q,p)_{2k}}{(a;q,p)_{2k}}
\frac{\big(b,pq/b,q^{N+1},q^{-N};q,p\big)_k}
{\big(apq/b,ab,apq^{-N},apq^{N+1};q,p\big)_k}
\frac{(a^2;q^2,p)_k}
{(pq^2;q^2,p)_k} \notag\\
\qquad{}=\frac{(apq,b/a;q,p)_N}{(q/a,abp;q,p)_N}
\frac{\big(abpq^{-N};q^2,p\big)_N}{\big(bq^{-N}/a;q^2,p\big)_N} \label{Eq_W115}
\end{gather}
and
\begin{gather}
\sum_{k=0}^N
\frac{\big(a^2p^2q^2;q^2,p^2\big)_{2k}}{\big(a^2;q^2,p^2\big)_{2k}}
\frac{\big(a^2,b^2p^2q^2,a^2b^2q^{2N},q^{-2N};q^2,p^2\big)_k}
{\big(p^2q^2,a^2/b^2,p^2q^{2-2N}/b^2,a^2p^2q^{2N+2};q^2,p^2\big)_k}
\frac{(a/b;q,p)_{2k}}{(abpq;q,p)_{2k}} \nonumber\\
\qquad{} =\frac{(ab;q,p)_{2N}}{(abpq;q,p)_{2N}}
\frac{\big({-}pq,b^2pq;q,p\big)_N}{(ab,-a/b;q,p)_N}
\frac{\big(a^2p^2q^2;q^2,p^2\big)_N}{\big(b^2;q^2,p^2\big)_N}.\label{Eq_W14}
\end{gather}
\end{subequations}
The last three identities are equations
(1.10), (1.15) and (1.4) of~\cite{W05} respectively.
The identity~\eqref{Eq_unknown} does not appear to have been stated before.
It follows by inverting, using \cite[Lemma~3.1]{W03}, the elliptic Jackson
sum~\eqref{Eq_FT} with
\begin{gather*}
(b,c,d,e,p)\mapsto \big(b^2q,aq^N/b,apq^N/b,pq^{-N},p^2\big). \notag
\end{gather*}

Because of the occurrence of base $q$ and $q^2$ (and nomes~$p$ and~$p^2$),
the above identities are commonly referred to as quadratic summation
formulas.
In Section~\ref{Sec_Phi} we obtain higher-dimensional analogues of
\eqref{Eq_unknown}, \eqref{Eq_W110} and \eqref{Eq_W14}.
Two of these play a key role in our proof of Theorem~\ref{Thm_universal}.
We remark that higher-dimensional analogues of a different type of quadratic
elliptic hypergeometric series, in which the term
$(apq;q,p)_{2k}/(a;q,p)_{2k}$ in~\eqref{Eq_EHS} is replaced by
$(apq;q,p)_{3k}/(a;q,p)_{3k}$, were recently considered in~\cite{RS20}.

\section{Schur functions and classical branching rules}\label{Sec_SF}

In this section we briefly review the definitions of the Schur functions
of classical type as well as their occurrence in some of the branching rules
stated in Section~\ref{Sec_branching}.
For a more in-depth treatment we refer the reader to
\cite{KT87,Krattenthaler98,Littlewood50,Macdonald95,Okada98,RW15}.

Let $\Lambda_n:=\mathbb{Z}[x_1,\dots,x_n]^{S_n}$
denote the ring of symmetric functions in $n$ variables, and
$\Lambda$ the ring of symmetric functions in countably many
variables, see \cite{Macdonald95,Stanley99}.
The monomial symmetric functions $\{m_{\la}\}_{\la\in P_{+}(n)}$
and $\{m_{\la}\}_{\la}$, where
\begin{gather*}
m_{\la}(x_1,\dots,x_n)=\sum_{w\in S_n/S_n^{\la}}w\big(x^{\la}\big), \qquad \la\in P_{+}(n),
\end{gather*}
form $\mathbb{Z}$-bases of $\La_n$ and $\La$ respectively.
The elementary, complete and power-sum symmetric functions~$e_r$,~$h_r$
and~$p_r$ are defined in terms of the monomial symmetric functions
as
\begin{gather*}
e_r:=m_{(1^r)}=\sum_{1<i_1<i_2<\dots<i_r} x_{i_1} x_{i_2}\cdots x_{i_r}, \\
h_r:=\sum_{\la\vdash r} m_{\la}=\sum_{1\leq i_1\leq i_2\leq\cdots\leq i_r}
x_{i_1} x_{i_2}\cdots x_{i_r}, \\
p_r:=m_{(r)}=\sum_{i\geq 1} x_i^r.
\end{gather*}
These functions form algebraic bases of either $\Lambda$ (in the case of the
$e_r$ and $h_r$) or of
$\Lambda_{\mathbb{Q}}:=\Lambda\otimes_{\mathbb{Z}}\mathbb{Q}$ (in the case
of the power sums).

A number of classical branching rules for universal characters
discussed below are related by the involution $\omega$ on $\La$
defined by $\omega(h_r)=e_r$ or $\omega(p_r)=(-1)^{r-1} p_r$ for all $r\geq 1$.

The ordinary (or $\mathrm{GL}(n)$) Schur function indexed by the partition
$\la$ is defined as
\begin{gather}\label{Eq_Schur}
s_{\la}(x_1,\dots,x_n):=\frac{\det_{1\leq i,j\leq n}\big(x_i^{\la_j+n-j}\big)}
{\prod_{1\leq i<j\leq n} (x_i-x_j)}
\end{gather}
if $l(\la)\leq n$ and $0$ otherwise.
To simultaneously extend this to $\Lambda$ as well as skew shapes,
we use the Jacobi--Trudi identity or its dual
\cite[pp.~70--71]{Macdonald95}:
\begin{gather}\label{Eq_JT}
s_{\la/\mu}:=\det_{1\leq i,j\leq n}( h_{\la_i-\mu_j-i+j})
=\det_{1\leq i,j\leq m}(e_{\la'_i-\mu'_j-i+j}),
\end{gather}
where $n$ and $m$ are arbitrary integers such that
$n\geq l(\la)$ and $m\geq \la_1$.
Obviously, $\omega(s_{\la/\mu})=s_{\la'/\mu'}$.
The Littlewood--Richardson coefficients $c_{\mu\nu}^{\la}$
may now be defined by
\begin{gather}
s_{\la/\mu} = \sum_{\nu} c_{\mu\nu}^{\la} s_{\nu}. \notag
\end{gather}
From \eqref{Eq_JT} it follows that
$s_{\la/\mu}=s_{((m^n)-\mu)/((m^n)-\la)}$
for $\la\subseteq (m^n)$ so that the Littlewood--Richardson
coefficients satisfy the complementation symmetry
\begin{gather}\label{Eq_LR-complementation}
c_{\mu\nu}^{\la}=c_{(m^n)-\la,\nu}^{(m^n)-\mu}
\qquad \text{for $\la\subset (m^n)$}.
\end{gather}

For $\la$ a partition, the universal orthogonal and
symplectic characters indexed by $\la$ are given by
\cite[Definition~2.1.1]{KT87}
\begin{gather}
\ortho_{\la}:=\det_{1\leq i,j\leq n}
(h_{\la_i-i+j}-h_{\la_i-i-j}) \label{Eq_universal-ortho} \\
\symp_{\la}:=\det_{1\leq i,j\leq m}
(e_{\la'_i-i+j}-e_{\la'_i-i-j}), \label{Eq_universal-symp}
\end{gather}
with $n$ and $m$ as above.
Hence $\omega(\ortho_{\la})=\symp_{\la'}$.
In particular,
\begin{gather}\label{Eq_univeral-g}
g_{\la}\big(x_1^{\pm},\dots,x_n^{\pm},0,\dots\big)
=\begin{cases}
g_{2n,\la}(x_1,x_2,\dots,x_n) & \text{if $\la\in P_{+}(n)$}, \\
0 & \text{otherwise},
\end{cases}
\end{gather}
where $g=\symp$ (resp.\ $g=\ortho$) corresponds to an
orthogonal or symplectic character indexed by $\la$.
We add to the above the universal special orthogonal character
indexed by $\la$ as
\begin{gather}\label{Eq_universal-special-ortho}
\so_{\la}:=\det_{1\leq i,j\leq n} (h_{\la_i-i+j}+h_{\la_i-i-j+1})
=\det_{1\leq i,j\leq m}
(e_{\la'_i-i+j}+e_{\la'_i-i-j+1}),
\end{gather}
so that $\omega(\so_{\la})=\so_{\la'}$.
The character $\so_{\la}$ is the unique symmetric function such that
\begin{gather}\label{Eq_universal-so}
\so_{\la}\big(x_1^{\pm},\dots,x_n^{\pm},0,\dots\big)
=\begin{cases}
\so_{2n+1,\la}(x_1,x_2,\dots,x_n) & \text{if $\la\in P_{+}(n)$}, \\
0 & \text{otherwise},
\end{cases}
\end{gather}
where $\so_{2n+1,\la}$ is the odd-orthogonal Schur function
indexed by $\la$.
The character $\so_{\la}$ may readily be related to the
universal symplectic and orthogonal characters as
\begin{subequations}\label{Eq_sosymportho-relation}
\begin{gather}
\so_{\la}=\sum_{\mu'\prec\la'} \symp_{\mu}=
\sum_{\mu\prec\la} \ortho_{\mu}\label{Eq_so-symp-ortho}
\end{gather}
and
\begin{gather}\label{Eq_symp-so-ortho-so}
\symp_{\la}=\sum_{\mu\prec\la} (-1)^{\abs{\la/\mu}} \so_{\mu},
\qquad
\ortho_{\la}=\sum_{\mu'\prec\la'} (-1)^{\abs{\la/\mu}} \so_{\mu}.
\end{gather}
\end{subequations}
For the actual symplectic, orthogonal and odd-orthogonal Schur
functions we have \cite{Littlewood50}
\begin{gather*}
\symp_{2n,\la}(x_1,\dots,x_n)=
\frac{\det_{1\leq i,j\leq n}\big(
x_i^{\la_j+2n-j+1}-x_i^{-\la_j+j-1}\big)}
{\prod_{i=1}^n(x_i^2-1)
\prod_{1\leq i<j\leq n}(x_i-x_j)(x_ix_j-1)}, \\
\ortho_{2n,\la}(x_1,\dots,x_n)=
f_{l(\la),n} \frac{\det_{1\leq i,j\leq n}\big(
x_i^{\la_j+2n-j-1}+x_i^{-\la_j+j-1}\big)}
{\prod_{1\leq i<j\leq n}(x_i-x_j)(x_ix_j-1)}, \notag
\end{gather*}
where $f_{n,n}=1$ and $f_{i,n}=1/2$ if $i<n$, and
\begin{gather}\label{Eq_odd-orthogonal}
\so_{2n+1,\la}(x_1,\dots,x_n)=
\frac{\det_{1\leq i,j\leq n}
\big(x_i^{\la_j+2n-j}-x_i^{-\la_j+j-1}\big)}
{\prod_{i=1}^n (x_i-1)
\prod_{1\leq i<j\leq n} (x_i-x_j)(x_ix_j-1)}.
\end{gather}

Littlewood \cite{Littlewood50} and Koike and Terada~\cite{KT87} proved some very general branching
formulas for the classical groups.
For example, in the universal case \cite[Theorem~2.3.1]{KT87},
\begin{gather}\label{Eq_Litt-KT}
s_{\la}=\sum_{\mu} \bigg(\sum_{\nu \text{ even}} c_{\mu\nu}^{\la}\bigg)
\ortho_{\mu}
=\sum_{\mu}
\bigg(\sum_{\nu' \text{ even}} c_{\mu\nu}^{\la}\bigg) \symp_{\mu}.
\end{gather}
By \eqref{Eq_LR-complementation} it is not hard to show that
\eqref{Eq_Kratt31} and \eqref{Eq_Kratt32} follow from
\eqref{Eq_Litt-KT}.
Indeed,
\begin{gather*}
s_{(m^{r-1},m-p)}= \sum_{\mu}
\sum_{\nu \text{ even}} c_{\mu\nu}^{(m^{r-1},m-p)}
\ortho_{\mu} = \sum_{\mu\subset (m^r)}
\sum_{\nu \text{ even}} c_{(1^p),\nu}^{\mu} \ortho_{(m^r)-\mu}.
\end{gather*}
By the $e$-Pieri rule \cite[p.~73]{Macdonald95},
\begin{gather*}
c_{(1^p),\nu}^{\mu}=
\begin{cases}
1 & \text{if $\nu'\prec\mu'$ and $\abs{\mu/\nu}=p$}, \\
0 & \text{otherwise}.
\end{cases}
\end{gather*}
Hence
\begin{gather*}
s_{(m^{r-1},m-p)}=
\sum_{\mu\subset (m^r)}
\sum_{\substack{\nu'\prec\mu' \\ \nu \text{ even} \\
\abs{\nu}=\abs{\mu}-p}} \ortho_{(m^r)-\mu}.
\end{gather*}
Since{\samepage
\begin{gather*}
\sum_{\substack{\nu'\prec\mu' \\ \nu \text{ even} \\
\abs{\nu}=\abs{\mu}-p}} 1 =
\begin{cases}
1 & \text{if $l^{\odd}(\mu)=p$}, \\
0 & \text{otherwise},
\end{cases}
\end{gather*}
the branching rule \eqref{Eq_Kratt31} follows.}

We also remark that the three universal branching rules
\eqref{Eq_Kratt31}, \eqref{Eq_Kratt32} and \eqref{Eq_s-so}
are not independent.
Obviously, \eqref{Eq_Kratt32} follows from
\eqref{Eq_Kratt31} by application of $\omega$ and vice versa.
Also, the rectangular (i.e., $p=0$) cases of each of the
branching rules are related via \eqref{Eq_sosymportho-relation}.
For example,
from \eqref{Eq_s-so} and \eqref{Eq_so-symp-ortho}, and the fact
that $(m^r)-\la\prec (m^r)-\mu$ (for $\la,\mu\subset (m^r)$)
implies $\mu\succ\la$, it follows that
\begin{align*}
s_{(m^r)}&=\sum_{\la\subset(m^r)}(-1)^{\abs{\la}}\so_{(m^r)-\la}
=\sum_{\la\subset(m^r)}(-1)^{\abs{\la}}
\sum_{\mu\succ\la}\ortho_{(m^r)-\mu} \\
&=\sum_{\mu\subset (m^r)}
\sum_{\la\prec\mu}(-1)^{\abs{\la}}\ortho_{(m^r)-\mu}
=\sum_{\substack{\mu\subset (m^r) \\[1pt] \mu \text{ even}}}
\ortho_{(m^r)-\mu},
\end{align*}
where in the last step we have used
\begin{gather*}
\sum_{\la\prec\mu}(-1)^{\abs{\la}}=
\begin{cases}
1 & \text{if $\mu$ is even}, \\
0 & \text{otherwise}.
\end{cases}
\end{gather*}
Conversely, from \eqref{Eq_Kratt32} and \eqref{Eq_symp-so-ortho-so},
\begin{align*}
s_{(m^r)}&=\sum_{\substack{\la\subset(m^r) \\[1pt] \la \text{ even}}}
\ortho_{(m^r)-\la}
=\sum_{\substack{\la\subset(m^r) \\[1pt] \la \text{ even}}}
\sum_{\mu'\succ\la'}(-1)^{\abs{\mu/\la}} \so_{(m^r)-\mu} \\
&=\sum_{\mu\subset (m^r)}(-1)^{\abs{\mu}}
\sum_{\substack{\la'\prec\mu' \\[1pt] \la \text{ even}}}
\so_{(m^r)-\mu}
=\sum_{\mu\subset (m^r)}(-1)^{\abs{\mu}} \so_{(m^r)-\mu},
\end{align*}
since there is a unique even partition $\la$ such that
$\la'\prec\mu'$.

In the $q,t$-case, we neither have analogues of
\eqref{Eq_sosymportho-relation} nor of \eqref{Eq_Litt-KT},
making the proof of Theorem~\ref{Thm_universal} much harder
than in the classical case.
As we shall see, however, \eqref{Eq_qt-sp} and \eqref{Eq_qt-o}
are related by the $q,t$-analogue of the involution $\omega$.

\section{Macdonald--Koornwinder theory}\label{Sec_MK}

In this section we survey some necessary background material
from the theory of Macdonald and Koornwinder polynomials,
covering Macdonald polynomials, $\mathrm{BC}_n$-symmetric
(Macdonald) interpolation polynomials, (lifted) Koornwinder polynomials
and elliptic interpolation functions.

\subsection{Macdonald polynomials}

Let $0<\abs{q},\abs{t}<1$.
For $f$ an $S_n$-symmetric function (not necessarily an $S_n$-symmetric
Laurent polynomial) we define
\begin{gather}\label{Eq_f-Mac-density}
\big\langle f\big\rangle_{q,t}^{(n)}
:=\frac{1}{S_n(q,t)}
\frac{1}{n!(2\pi\iup)^n}
\Int_{\mathbb{T}^n} f(z) \prod_{1\leq i<j\leq n}
\frac{(z_i/z_j,z_j/z_i;q)_{\infty}}{(tz_i/z_j,tz_j/z_i;q)_{\infty}}
\frac{\dup z_1}{z_1}\cdots\frac{\dup z_n}{z_n},
\end{gather}
where $\mathbb{T}$ is the positively-oriented unit circle
and \cite{Macdonald95}
\begin{gather}\label{Eq_Snqt}
S_n(q,t) := \frac{1}{n!(2\pi\iup)^n}
\Int_{\mathbb{T}^n} \prod_{1\leq i<j\leq n}
\frac{(z_i/z_j,z_j/z_i;q)_{\infty}}{(tz_i/z_j,tz_j/z_i;q)_{\infty}}
\frac{\dup z_1}{z_1}\cdots\frac{\dup z_n}{z_n}
=\prod_{i=1}^n \frac{\big(t,qt^{i-1};q\big)_{\infty}}{\big(q,t^i;q\big)_{\infty}}.
\end{gather}

Recall the dominance (partial) order on partitions: for
$\la,\mu\vdash m$, $\la\geq\mu$ if
$\la_1+\dots+\la_i\geq \mu_1+\dots+\mu_i$ for all $i\geq 1$,
and $\la>\mu$ if $\la\geq\mu$ and $\la\neq\mu$.
Also let $x^{-1}:=\big(x_1^{-1},\dots,x_n^{-1}\big)$.
Then Macdonald polynomials $P_{\la}=P_{\la}(q,t)=P_{\la}(x;q,t)$ for
$\la\in P_{+}(n)$ are the unique homogeneous symmetric functions
in
$\Lambda_{\mathbb{Q}(q,t)}:=\Lambda\otimes_{\mathbb{Z}}\mathbb{Q}(q,t)=
\mathbb{Q}(q,t)[x_1,\dots,x_n]^{S_n}$ of the form
\begin{gather*}
P_{\la}=m_{\la}+\sum_{\mu<\la} c_{\la\mu}(q,t) m_{\mu}
\end{gather*}
such that (for $0<\abs{q},\abs{t}<1$)
\begin{gather}\label{Eq_P-ortho}
\big\langle P_{\la}(x;q,t) P_{\mu}\big(x^{-1};q,t\big) \big\rangle_{q,t}^{(n)}=0
\end{gather}
if $\la\neq\mu$, see \cite[pp.~368--376]{Macdonald95}.
For $\la=\mu$,
\begin{align}
\big\langle P_{\la}(x;q,t) P_{\la}\big(x^{-1};q,t\big) \big\rangle_{q,t}^{(n)}&=
\prod_{1\leq i<j\leq n}\frac{\big(t^{j-i+1},qt^{j-i-1};q\big)_{\la_i-\la_j}}
{\big(t^{j-i},qt^{j-i};q\big)_{\la_i-\la_j}} \notag \\
&=\frac{C^0_{\la}\big(t^n;q,t\big) C^{-}_{\la}(q;q,t)}
{C^0_{\la}\big(qt^{n-1};q,t\big) C^{-}_{\la}(t;q,t)}.\label{Eq_P-norm2}
\end{align}

The Macdonald polynomials satisfy the symmetry
\begin{gather}\label{Eq_Mac-recip}
P_{\la}(x;q,t)=P_{\la}(x;1/q,1/t)
\end{gather}
and (dual) Cauchy identity
\begin{gather}\label{Eq_Cauchy-Mac}
\sum_{\la\subset (m^n)} P_{\la}(x_1,\dots,x_n;q,t)P_{\la'}(y_1,\dots,y_m;t,q)
=\prod_{i=1}^n \prod_{j=1}^m (1+x_iy_j).
\end{gather}
Since
\begin{gather*}
P_{(\la_1+1,\dots,\la_n+1)}(x;q,t)=(x_1\cdots x_n)
P_{(\la_1,\dots,\la_n)}(x;q,t),
\end{gather*}
they can be extended from $\la\in P_{+}(n)$ to arbitrary weights
$\la\in P(n)$ via
\begin{gather}\label{Eq_gln-weights}
P_{\la}(x;q,t)=(x_1\cdots x_n)^{\la_n} P_{\mu}(x;q,t),
\end{gather}
where $\mu:=(\la_1-\la_n,\dots,\la_{n-1}-\la_n,0)\in P_{+}(n)$.
Then $\{P_{\la}(q,t)\}_{\la\in P(n)}$
forms a $\mathbb{Q}(q,t)$-basis of the ring of $S_n$-symmetric
Laurent polynomials, which in the following we will denote by
$\Lambda_{\mathrm{GL}(n)}$.
Since,
\begin{gather*}
P_{\la}(x;q,t) P_{\mu}\big(x^{-1};q,t\big)=
P_{(\la_1-m,\dots,\la_n-m)}(x;q,t)
P_{(\mu_1-m,\dots,\mu_n-m)}\big(x^{-1};q,t\big),
\end{gather*}
where $m=\min\{\la_n,\mu_n\}$,
the orthogonality \eqref{Eq_P-ortho} and evaluation \eqref{Eq_P-norm2}
extend to all $\la,\mu\in P(n)$.
In the case of \eqref{Eq_P-norm2} for $\la$ a non-dominant weight,
the first expression on the right should be used as both
$C^{-}_{\la}(zq;q,t)$ and $1/C^0_{\la}\big(zqt^{n-1};q,t\big)$ have a~pole (of order one) at $z=1$.

For later use we note that by \eqref{Eq_gln-weights} and the
complementation symmetry for Macdonald polynomials (see, e.g.,~\cite{BF99})
\begin{gather}\label{Eq_P-comp}
P_{(\la_1,\dots,\la_n)}\big(x^{-1};q,t\big)=
(x_1\cdots x_n)^{-\la_1}
P_{(\la_1-\la_n,\dots,\la_1-\la_2,0)}(x;q,t)
\end{gather}
we have the further symmetry
\begin{gather}\label{Eq_P-symmetry}
P_{(\la_1,\dots,\la_n)}\big(x^{-1};q,t\big)=P_{(-\la_n,\dots,-\la_1)}(x;q,t).
\end{gather}

If the involution $\omega$ on $\Lambda$ is extended to the following
automorphism of $\Lambda_{\mathbb{Q}(q,t)}$:
\begin{gather}\label{Eq_omegaqt}
\omega_{q,t}(p_r)=(-1)^{r-1} \frac{1-q^r}{1-t^r} p_r
\end{gather}
for all $r\geq 1$, then
\begin{gather}\label{Eq_omegaPQ}
\omega_{q,t}\big(P_{\la}(q,t)\big)=Q_{\la'}(t,q),
\end{gather}
where
\begin{gather*}
Q_{\la}(q,t):=\frac{C^{-}_{\la}(t;q,t)}{C^{-}_{\la}(q;q,t)} P_{\la}(q,t).
\notag
\end{gather*}

\subsection[BCn-symmetric interpolation polynomials]{$\boldsymbol{\mathrm{BC}_n}$-symmetric interpolation polynomials}

Let $R$ be a coefficient ring or field, such as
$\mathbb{Q}(q,t)\big[s^{\pm}\big]$ or $\mathbb{Q}(q,t,t_0,t_1,t_2,t_3)$.
A polynomial $f\in R\big[x_1^{\pm},\dots,x_n^{\pm}\big]$
is said to be $\mathrm{BC}_n$-symmetric if it is symmetric under
the canonical action of the hyperoctahedral group
$W:=S_n\ltimes (\mathbb{Z}/2\mathbb{Z})^n$ on
$R\big[x_1^{\pm},\dots,x_n^{\pm}\big]$.
Let $x:=(x_1,\dots,x_n)$.
The monomial basis in the ring of $\mathrm{BC}_n$-symmetric
polynomials, $\Lambda_{\textrm{BC}(n)}=R\big[x_1^{\pm},\dots,x_n^{\pm}\big]^W$,
is given by $\big\{ m^W_{\la}\big\}_{\la\in P_{+}(n)}$, where
\begin{gather*}
m^W_{\la}=m^W_{\la}(x):=\sum_{w\in W/W^{\la}} w\big(x^{\lambda}\big).
\end{gather*}

Any non-constant $\mathrm{BC}_n$-symmetric polynomial is necessarily
inhomogeneous.
It will thus be convenient to extend the dominance order from partitions
of the same size to all partitions in the obvious way:
$\la\geq\mu$, if $\la_1+\cdots+\la_i\geq\mu_1+\cdots+\mu_i$ for all
$i\geq 1$.

Let $R=\mathbb{Q}(q,t)\big[s^{\pm}\big]$, $\la,\mu\in P_{+}(n)$ and
\begin{gather*}
\spec{\la}_{n;q,t}:=\big(q^{\la_1}t^{n-1},\dots,q^{\la_{n-1}}t,q^{\la_n}\big)
\end{gather*}
a spectral vector.
Then the $\mathrm{BC}_n$-symmetric (Macdonald) interpolation
polynomial
\begin{gather*}
\bar{P}^{\ast}_{\mu}(q,t,s)=\bar{P}^{\ast}_{\mu}(x;q,t,s)
\end{gather*}
is the unique polynomial in $\Lambda_{\textrm{BC}(n)}$ of the form
\begin{gather*}
\bar{P}^{\ast}_{\mu}(q,t,s)=m^W_{\mu}+
\sum_{\substack{\la\in P_{+}(n) \\ \la<\mu}}
c_{\mu\la}(q,t,s)m^W_{\la} \notag
\end{gather*}
satisfying the vanishing conditions
\begin{gather}\label{Eq_vanishing}
P_{\mu}\big(s\spec{\la}_{n;q,t};q,t,s\big)=0
\qquad\text{if $\mu\not\subset\la$},
\end{gather}
see \cite{Okounkov98,Rains05}.
Since any triangular $\mathrm{BC}_n$-symmetric polynomial
with leading term~$m^W_{\mu}$ is uniquely determined by its values at
$z\spec{\la}_{n;q,t}$ for $\la<\mu$ and some arbitrary nonzero $z$,
the above vanishing conditions in fact lead to an overdetermined
linear system for the coefficients $c_{\mu\la}$.
One of the main results of~\cite{Okounkov98} is the actual existence
of the interpolation polynomials.\footnote{Alternatively one may
replace~\eqref{Eq_vanishing} by vanishing for $\la<\mu$ so that
uniqueness and existence are immediate and then prove the
extra vanishing conditions.}

The interpolation polynomial $\bar{P}^{\ast}_{\mu}(q,t,s)$,
whose top-degree term coincides with the Macdonald polynomial
$P_{\mu}(q,t)$,
satisfies the symmetries
\begin{gather}\label{Eq_interpolation-sym}
\bar{P}^{\ast}_{\mu}(x;q,t,s)
=\bar{P}^{\ast}_{\mu}(x;1/q,1/t,1/s)
=(-1)^{\abs{\mu}} \bar{P}^{\ast}_{\mu}(-x;q,t,-s)
\end{gather}
and
\begin{gather}\label{Eq_P-addsquare}
\bar{P}^{\ast}_{\mu+(N^n)}(x;q,t,s)
=(-s)^{-nN} q^{-n\binom{N}{2}}
\bar{P}^{\ast}_{\mu}\big(x;q,t,sq^N\big)
\prod_{i=1}^n \big(sx_i^{\pm};q\big)_N
\end{gather}
for $N$ an arbitrary integer such that $N\geq-\mu_n$.
Like the Macdonald polynomials, this can be used to extend
the~$\mathrm{BC}_n$ interpolation polynomials to arbitrary
weights $\mu\in P(n)$:
\begin{gather}\label{Eq_Pbar-weights}
\bar{P}^{\ast}_{\mu}(x;q,t,s)
=(-s)^{-n\mu_n} q^{-n\binom{\mu_n}{2}}
\bar{P}^{\ast}_{\nu}\big(x;q,t,sq^{\mu_n}\big)
\prod_{i=1}^n \big(sx_i^{\pm};q\big)_{\mu_n},
\end{gather}
where $\nu:=(\mu_1-\mu_n,\dots,\mu_{n-1}-\mu_n,0)\in P_{+}(n)$.
Of course, for $\mu$ not dominant, i.e., for $\mu\not\in P_{+}(n)$,
$\bar{P}^{\ast}_{\mu}(x;q,t,s)$ is not a Laurent polynomial
but a rational function in $x$.

The interpolation polynomials also admit a closed-form evaluation at
$x=s\spec{\mu}_{n;q,t}$
\begin{gather}
P_{\mu}\big(s\spec{\mu}_{n;q,t};q,t,s\big) \nonumber\\
\qquad{} =\big(sqt^{n-1}\big)^{-\abs{\mu}}q^{-2n(\mu')}t^{n(\mu)}
C^{-}_{\mu}(q;q,t)C^{+}_{\mu}\big(s^2t^{2n-2};q,t\big)
\qquad\text{for $\mu\in P_{+}(n)$},\label{Eq_Pbar-normalise}
\end{gather}
as well as a principal specialisation formula
\begin{gather}
\bar{P}^{\ast}_{\mu}\big(z\spec{0}_{n;q,t};q,t,s\big)\nonumber\\
\qquad{} =\big({-}st^{n-1}\big)^{-\abs{\mu}}q^{-n(\mu')}t^{2n(\mu)}
\frac{C^0_{\mu}\big(t^n,s/z,zst^{n-1};q,t\big)}{C^{-}_{\mu}(t;q,t)}
\qquad\text{for $\mu\in P(n)$}.\label{Eq_PS-Pbar}
\end{gather}
They may also be used to define generalised $q$-binomial
coefficients \cite[p.~81]{Rains05}
\begin{gather}\label{Eq_qbin-qts}
\qbin{\la}{\mu}_{q,t,s}:=
\frac{\bar{P}^{\ast}_{\mu}\big(st^{1-n} \spec{\la}_{n;q,t};q,t,st^{1-n}\big)}
{\bar{P}^{\ast}_{\mu}\big(st^{1-n} \spec{\mu}_{n;q,t};q,t,st^{1-n}\big)},
\end{gather}
where $\la$, $\mu$ are partitions and $n$ is an arbitrary integer
such that $n\geq l(\la),l(\mu)$.
The independence of $n$ readily follows from the fact that
for any $\mu\in P(n+1)$,
\begin{gather}\label{Eq_xniss}
\bar{P}^{\ast}_{\mu}(x_1,\dots,x_n,s;q,t,s)=
\begin{cases}
\bar{P}^{\ast}_{\mu}(x_1,\dots,x_n;q,t,st)
& \text{if $\mu\in P_{+}(n)$}, \\
0 & \text{otherwise}.
\end{cases}
\end{gather}
Clearly, $\qbin{\la}{\mu}_{q,t,s}=0$ unless $\mu\subset\la$
and $\qbin{\la}{0}_{q,t,s}=\qbin{\la}{\la}_{q,t,s}=1$.
By~\eqref{Eq_interpolation-sym} it also follows that
\begin{gather}\label{Eq_qbin-sym}
\qbin{\la}{\mu}_{q,t,s}=
\qbin{\la}{\mu}_{1/q,1/t,1/s}=\qbin{\la}{\mu}_{q,t,-s},
\end{gather}
so that, in particular, the generalised binomial coefficients are
a function of~$s^2$ only.
Less obvious is that the $q,t,s$-binomial coefficients
satisfy conjugation symmetry \cite[Corollary~4.3]{Rains05}
\begin{gather}\label{Eq_binom-conj}
\qbin{\la'}{\mu'}_{t,q,s}=\qbin{\la}{\mu}_{q,t,s^{-1}(qt)^{-1/2}}.
\end{gather}
As an immediate consequence of \eqref{Eq_P-addsquare},
\begin{gather}
\qbin{\la+(N^n)}{\mu+(N^n)}_{q,t,s} \nonumber\\
\qquad{} =
q^{-\abs{\la/\mu}N}
\frac{C^0_{\la}\big(s^2q^{2N}t^{1-n},q^{N+1}t^{n-1};q,t\big)
C^0_{\mu}\big(s^2q^Nt^{1-n},qt^{n-1};q,t\big)}
{C^0_{\mu}\big(s^2q^{2N}t^{1-n},q^{N+1}t^{n-1};q,t\big)
C^0_{\la}\big(s^2q^Nt^{1-n},qt^{n-1};q,t\big)}
\qbin{\la}{\mu}_{q,t,sq^N}.\label{Eq_binom-add-square}
\end{gather}
Similarly, from \eqref{Eq_Pbar-normalise} and \eqref{Eq_PS-Pbar},
\begin{gather*}
\qbin{(N^n)}{\mu}_{q,t,s}=(-q)^{\abs{\mu}}q^{n(\mu')}t^{n(\mu)}
\frac{C^0_{\mu}\big(t^n,s^2q^Nt^{1-n},q^{-N};q,t\big)}
{C^{-}_{\mu}(q,t;q,t)C^{+}_{\mu}\big(s^2;q,t\big)}.
\end{gather*}

For $t=q$ the $\mathrm{BC}_n$ Macdonald interpolation polynomials
simplify to the corresponding Schur functions, see, e.g.,~\cite{OO98}.
These are expressible as a simple Weyl-type determinant as
\begin{gather*}
\bar{P}^{\ast}_{\mu}(x;q,q,s)=\frac{\det_{1\leq i,j\leq n}
\big(\bar{P}^{\ast}_{(\mu_j+n-j)}(x_i;q,q,s)\big)}
{\prod_{i=1}^n x_i^{1-n} \prod_{1\leq i,j\leq n}(x_i-x_j)(x_ix_j-1)},
\end{gather*}
where
\begin{gather*}
\bar{P}^{\ast}_{(k)}(z;q,q,s)=(-s)^{-k} q^{\binom{k}{2}}\big(sz^{\pm};q\big)_k.
\end{gather*}
By \eqref{Eq_qbin-qts} this yields the following determinantal expression
for the generalised binomial coefficients when $t=q$:
\begin{gather}
\qbin{\la}{\mu}_{q,q,s}=
(-1)^{\abs{\nu}}q^{n(\nu')+n(\nu)-n(\kappa)-(n-2)\abs{\nu}+(n-1)\abs{\kappa}}
\prod_{i=1}^n \frac{\big(s^2q^{2-2n};q\big)_{\nu_i}}
{(q;q)_{\nu_i}\big(s^2q^{2-2n};q\big)_{2\nu_i}} \nonumber\\
\qquad{} \times
\prod_{1\leq i<j\leq n} \frac{\big(1-q^{\nu_i-\nu_j}\big)\big(1-s^2q^{\nu_i+\nu_j-2n+2}\big)}
{\big(1-q^{\kappa_i-\kappa_j}\big)\big(1-s^2q^{\kappa_i+\kappa_j-2n+2}\big)}
\det_{1\leq i,j\leq n}\big( \big(s^2q^{\kappa_i-2n+2},q^{-\kappa_i};q\big)_{\nu_j}
\big),\label{Eq_determinantal}
\end{gather}
where $n\geq l(\mu),l(\la)$ and $\kappa=(\kappa_1,\dots,\kappa_n)$,
$\nu=(\nu_1,\dots,\nu_n)$ are strict partitions defined by
$\kappa_i:=\la_i+n-i$, $\nu_i:=\mu_i+n-i$.

\subsection{(Lifted) Koornwinder polynomials}\label{Sec_Lifted}

Let $\bart:=(t_0,t_1,t_2,t_3)$ and $x:=(x_1,\dots,x_n)$.
For our purposes the most convenient way to define the
Koornwinder polynomials
\begin{gather}
K_{\la}(q,t;\bart)=K_{\la}(x;q,t;\bart) \notag
\end{gather}
for $\la\in P_{+}(n)$
-- which are $\mathrm{BC}_n$-symmetric polynomials
with coefficients in $\mathbb{Q}(q,t,\bart)$, see~\cite{Koornwinder92}~-- is through Okounkov's binomial formula~\cite{Okounkov98}:
\begin{gather}
K_{\la}(q,t;\bart) :=\sum_{\mu\subseteq\la}
(t_0 t^{n-1})^{-\abs{\la/\mu}} t^{n(\la/\mu)}
\frac{C_{\mu}^{-}(t;q,t)C_{\mu}^{+}\big(T^2;q,t\big)}
{C_{\la}^{-}(t;q,t)C_{\la}^{+}\big(T^2;q,t\big)} \notag \\
\hphantom{K_{\la}(q,t;\bart) :=\sum_{\mu\subseteq\la}}{} \times
\frac{C^0_{\la}\big(t^n,t^{n-1}t_0t_1,t^{n-1}t_0t_2,t^{n-1}t_0t_3;q,t\big)}
{C^0_{\mu}\big(t^n,t^{n-1}t_0t_1,t^{n-1}t_0t_2,t^{n-1}t_0t_3;q,t\big)}
\qbin{\la}{\mu}_{q,t,T}
\bar{P}^{\ast}_{\mu}(q,t,t_0),
\label{Eq_binomial-formula}
\end{gather}
where $T^2:=t^{2n-2}t_0t_1t_2t_3/q$, and where we recall that
$\qbin{\la}{\mu}_{q,t,T}$ is a function of $T^2$ only.
By \eqref{Eq_interpolation-sym} and \eqref{Eq_qbin-sym} it follows that
\begin{gather}\label{Eq_K-symmetry}
K_{\la}(x;q,t;\bart)=
K_{\la}(x;1/q,1/t;1/\bart)=
(-1)^{\abs{\la}} K_{\la}(-x;q,t;-\bart),
\end{gather}
where $1/\bart$ is shorthand for $(1/t_0,1/t_1,1/t_2,1/t_3)$.
One drawback of the above definition of the Koornwinder
polynomials is that it hides the $S_4$-symmetry in the parameters
$t_0$, $t_1$, $t_2$, $t_3$.
It however follows from the connection coefficient formula
for the interpolation polynomials \cite[Theorem~3.12]{Rains05}
combined with the multivariable $q$-Pfaff--Saalsch\"utz
summation \cite[Theorem~4.2]{Rains05} that the obvious
$S_3$-symmetry lifts to~$S_4$, see~\cite{Rains05} for details.

The main result for Koornwinder polynomial that we will be needing is
Mimachi's Cauchy identity \cite[Theorem~2.1]{Mimachi01}
\begin{gather}\label{Eq_Mimachi}
\sum_{\la\subseteq (m^n)} (-1)^{\abs{\la}}
K_{(m^n)-\la}(x;q,t;\bart) K_{\la'}(y;t,q;\bart)
=\prod_{i=1}^n\prod_{j=1}^m (x_iy_j)^{-1}(x_i-y_j)(x_iy_j-1),
\end{gather}
where $y:=(y_1,\dots,y_m)$.

As mentioned in the introduction, the lifted Koornwinder polynomial~\cite{Rains05}
\begin{gather}\label{Eq_liftedK}
K_{\la}(q,t,T;\bart)=K_{\la}(x_1,x_2,\dots;q,t,T;\bart)
\end{gather}
is the unique symmetric function such that
\begin{gather}\label{Eq_Lifted-unique}
\tilde{K}_{\la}\big(x_1^{\pm},\dots,x_n^{\pm},0,0,\dots;q,t,t^n;\bart\big)
=\begin{cases}
K_{\la}(x_1,\dots,x_n;q,t;\bart) & \text{if $l(\la)\leq n$}, \\
0 & \text{otherwise}.
\end{cases}
\end{gather}
Some care is required when dealing with this function since the above
equation requires~$\bart$ to be generic.
Issues may arise for $\bart$ such that
$C^{+}_{\la}\big(t^{2n-2}t_0t_1t_2t_3/q;q,t\big)=0$.
This for example happens for the parameter choice
$\bart=\big(1,-1,t^{1/2},-t^{1/2}\big)$ in which case it is
important to specialise $T=t^n$ before specialising~$\bart$.

Recall the universal orthogonal, symplectic and special orthogonal
characters, defined in~\eqref{Eq_universal-ortho},
\eqref{Eq_universal-symp} and~\eqref{Eq_universal-special-ortho}.

\begin{Lemma}\label{Lem_Lifted}
We have
\begin{gather*}
\symp_{\la}=\tilde{K}_{\la}\big(q,q,T;q^{1/2},-q^{1/2},q,-q\big), \\
\ortho_{\la}=\tilde{K}_{\la}\big(q,q,T;1,-1,q^{1/2},-q^{1/2}\big), \\
\so_{\la}=\tilde{K}_{\la}\big(q,q,T;-1,-q^{1/2},q^{1/2},q\big).
\end{gather*}
\end{Lemma}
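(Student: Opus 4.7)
The plan is to appeal to the uniqueness characterisation of the lifted Koornwinder polynomials stated in the sentence following \eqref{Eq_KK}: $\tilde K_\la(q,t,T;\bart)$ is the unique symmetric function such that for every $n\geq 1$ its restriction to $\big(x_1^{\pm},\dots,x_n^{\pm},0,0,\dots\big)$ with $T=t^n$ equals $K_\la(x_1,\dots,x_n;q,t;\bart)$ when $l(\la)\leq n$, and vanishes otherwise. For each of the three identities it thus suffices to verify that the left-hand side, regarded as a symmetric function independent of $T$, satisfies exactly this condition at $t=q$ and the indicated $\bart$; once this is done, the $T$-independence of $\tilde K_\la(q,q,T;\bart)$ at these three parameter choices follows automatically from uniqueness.

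By \eqref{Eq_ss}, \eqref{Eq_univeral-g} and \eqref{Eq_universal-so}, the restrictions of $\symp_\la$, $\ortho_\la$, $\so_\la$ to $\big(x_1^{\pm},\dots,x_n^{\pm},0,\dots\big)$ are $\symp_{2n,\la}$, $\ortho_{2n,\la}$, $\so_{2n+1,\la}$ respectively if $l(\la)\leq n$, and vanish otherwise. On the other hand, by \eqref{Eq_PCB-PCC} and \eqref{Eq_PBC},
\begin{gather*}
K_\la\big(x;q,q;q^{1/2},-q^{1/2},q,-q\big)=P^{(\mathrm{C}_n,\mathrm{C}_n)}_\la(x;q,q,q),\\
K_\la\big(x;q,q;1,-1,q^{1/2},-q^{1/2}\big)=P^{(\mathrm{C}_n,\mathrm{C}_n)}_\la(x;q,q,1),\\
K_\la\big(x;q,q;-1,-q^{1/2},q^{1/2},q\big)=P^{(\mathrm{B}_n,\mathrm{C}_n)}_\la\big(x;q,q,q^{1/2}\big).
\end{gather*}
The problem therefore reduces to the classical fact that at $t=q$ these three Koornwinder specialisations reproduce, respectively, the $\mathrm{Sp}(2n)$, $\mathrm{O}(2n)$ and $\mathrm{SO}(2n+1)$ Weyl characters.

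This last step is a direct computation of the Koornwinder density at $t=q$. The telescopings $\big(q^{1/2}x;q\big)_\infty\big({-}q^{1/2}x;q\big)_\infty(qx;q)_\infty(-qx;q)_\infty=\big(qx^2;q\big)_\infty$ and $(x;q)_\infty(-x;q)_\infty\big(q^{1/2}x;q\big)_\infty\big({-}q^{1/2}x;q\big)_\infty=\big(x^2;q\big)_\infty$, together with a small variant for the third case, show that the one-variable factor $(x^{\pm 2};q)_\infty/\prod_r(t_r x^\pm;q)_\infty$ of the density collapses to $(1-x^2)\big(1-x^{-2}\big)$, $1$, $(1-x)\big(1-x^{-1}\big)$ in the three cases, while the two-variable factor always collapses to $\prod_{i<j}\prod_{\epsilon_1,\epsilon_2\in\{\pm\}}\big(1-x_i^{\epsilon_1}x_j^{\epsilon_2}\big)$. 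The total product is precisely the square of the Weyl denominator of $\mathrm{C}_n$, $\mathrm{D}_n$, $\mathrm{B}_n$ respectively, so by Koornwinder's characterisation via the leading term $m^W_\la$ plus orthogonality, $K_\la$ coincides with the corresponding Weyl character; the vanishing for $l(\la)>n$ is read off from the determinantal formulas. The main technical obstacle is this weight-function computation; everything else is bookkeeping with \eqref{Eq_ss}, \eqref{Eq_univeral-g}, \eqref{Eq_universal-so}, \eqref{Eq_PCB-PCC}, \eqref{Eq_PBC}, and the uniqueness of $\tilde K_\la$.
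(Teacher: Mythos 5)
Your argument is correct and takes essentially the same route as the paper's proof: both reduce the claim, via the uniqueness characterisations \eqref{Eq_Lifted-unique}, \eqref{Eq_univeral-g} and \eqref{Eq_universal-so}, to the finite-variable identities $\symp_{2n,\la}=K_{\la}\big(q,q;q^{1/2},-q^{1/2},q,-q\big)$ etc., and both establish these by noting that at $t=q$ and the indicated $\bart$ the Koornwinder density collapses to the squared Weyl denominator of type $\mathrm{C}_n$, $\mathrm{D}_n$ and $\mathrm{B}_n$ respectively, so that triangularity plus orthogonality identifies $K_{\la}$ with the corresponding classical character. The only difference is cosmetic: the paper delegates the density computation to \cite[Section~2.6]{RW15}, whereas you carry out the telescoping of the $q$-shifted factorials explicitly.
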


\begin{proof}Given that the lifted Koornwinder polynomials are the unique symmetric
functions satisfying \eqref{Eq_Lifted-unique} and the universal
characters $\symp_{\la}$, $\ortho_{\la}$ and $\so_{\la}$ are the
unique symmetric functions satisfying~\eqref{Eq_univeral-g} (in the case of $\symp_{\la}$ and $\ortho_{\la}$)
or~\eqref{Eq_universal-so} (in the case of $\so_{\la}$),
it suffices to show that
\begin{gather*}
\symp_{2n,\la}(x_1,\dots,x_n)=K_{\la}\big(x_1,\dots,x_n;q,q;q^{1/2},-q^{1/2},q,-q\big), \\
\ortho_{2n,\la}(x_1,\dots,x_n)=K_{\la}\big(x_1,\dots,x_n;q,q;1,-1,q^{1/2},-q^{1/2}\big), \\
\so_{2n+1,\la}(x_1,\dots,x_n)=K_{\la}\big(x_1,\dots,x_n;q,q;-1,-q^{1/2},q^{1/2},q\big).
\end{gather*}
All three of the above identities follow directly from
\cite[Section~2.6]{RW15}.
For example, for $t=q$ and $\bart=\big(q^{1/2},-q^{1/2},q,-q\big)$ the
Koornwinder density $\Delta(z;q,t;\bart)$ simplifies
to the standard $\mathrm{C}_n$ density
\begin{gather*}
\Delta_{\mathrm{C}}(z)=\prod_{i=1}^n \big({-}z_i^{-2}\big)\big(1-z_i^2\big)^2
\prod_{1\leq i<j\leq n} z_i^{-2}(1-z_i/z_j)^2(1-z_iz_j)^2
\end{gather*}
for which
\begin{gather*}
\frac{1}{2^n n!(2\pi\iup)^n}
\Int_{\mathbb{T}^n} \symp_{2n,\la}(z)\symp_{2n,\mu}(z)\Delta_{\mathrm{C}}(z)
\frac{\dup z_1}{z_1}\cdots\frac{\dup z_n}{z_n} = \delta_{\la,\mu}.
\end{gather*}
Similar reductions to Weyl-type orthogonality relations hold for the other two cases.
\end{proof}

\subsection{Elliptic interpolation functions}

The ($\mathrm{BC}_n$-symmetric) elliptic interpolation
functions $R^{\ast}_{\mu}(a,b;q,t;p)$
\cite{CG06,Rains06,Rains12} are an elliptic analogue of the
$\mathrm{BC}_n$-symmetric interpolation polynomials
$\bar{P}^{\ast}_{\mu}(q,t,s)$.
Although they satisfy analogous vanishing conditions, their definition
is more complicated.
Below we follow the characterisation of these function given
in~\cite{Rains06}.

A $\mathrm{BC}_n$-symmetric theta function of degree $m$ is a
$\mathrm{BC}_n$-symmetric meromorphic function $f$ on
$(\mathbb{C}^{\ast})^n$ such that
\begin{gather*}
f(px_1,x_2,\dots,x_n)=\big(1/px_1^2\big)^m f(x_1,x_2,\dots,x_n).
\end{gather*}
For example, $\prod_{i=1}^n \theta\big(ux_i^{\pm};p\big)$
for $u\in\mathbb{C}^{\ast}$ is a $\mathrm{BC}_n$-symmetric theta
function of degree $1$.

Given two partitions $\la,\mu\subset (m^n)$ such that $\la\neq\mu$ let
\begin{gather*}
l_0=\max\{i\colon \la_i\neq \mu_i\} \qquad\text{and}\qquad l_1=\max\{i\colon \la_i=m\},
\end{gather*}
where $l_1=0$ if $\la_1<m$.
Given such $l_1$ and $l_2$, further let
\begin{gather*}
l=\begin{cases}
l_1 & \text{if $\la_{l_0}<\mu_{l_0}$}, \\
l_0 & \text{otherwise}.
\end{cases}
\end{gather*}
Note that $l=n$ if and only if $\la_n>\mu_n$ and
$l=0$ if $\la_i<\mu_i$ for all $1\leq i\leq n$.

Now fix a nonnegative integer $m$ and partition $\mu\subset (m^n)$.
Then the interpolation theta function
\begin{gather*}
P^{\ast(m)}_{\mu}(x_1,\dots,x_n;a,b;q,t;p)
\end{gather*}
is the unique $\mathrm{BC}_n$-symmetric theta function of degree $m$
such that for all $\la\subset (m^n)$, $\la\neq\mu$,\footnote{It is
assumed that the parameters $a$, $b$, $q$, $t$ of $P^{\ast(m)}_{\mu}(a,b;q,t;p)$
are chosen to be generic, and similarly for~$z$ in the normalisation~\eqref{Eq_Pmn-normalise}.}
\begin{gather*}
P^{\ast(m)}_{\mu}\big(bq^{m-\la_1},\dots,bq^{m-\la_l} t^{l-1},
aq^{\la_{l+1}} t^{n-l-1},\dots,aq^{\la_n};a,b;q,t;p\big)=0
\end{gather*}
and
\begin{gather}
P^{\ast(m)}_{\mu}(z\spec{0}_{n;q,t};a,b;q,t;p) \nonumber\\
\qquad{} =C^0_{(m^n)}\big(t^{n-1} bz,b/z;q,t;p\big)
\Delta^0_{\mu}\big(aq^{-m}t^{n-1}/b\vert t^{n-1}az,a/z;q,t;p\big).\label{Eq_Pmn-normalise}
\end{gather}
Since the interpolation theta functions satisfy
\begin{gather*}
P^{\ast(m+1)}_{\mu}(x;a,b;q,t;p)
=P^{\ast(m)}_{\mu}(x;a,bq;q,t;p) \prod_{i=1}^n \theta\big(bx_i^{\pm};q,p\big),\qquad \mu\subset (m^n),
\end{gather*}
(and thus $P^{\ast(m)}_0(x;a,b;q,t;p)=\prod_{i=1}^n(bx_i^{\pm};q,p)_m$),
the ratio
\begin{gather*}
R^{\ast}_{\mu}(a,b;q,t;p):=
\frac{P^{\ast(m)}_{\mu}\big(a,bq^{-m};q,t;p\big)}
{P^{\ast(m)}_0\big(a,bq^{-m};q,t;p\big)},
\end{gather*}
is independent of $m$ (provided $m\geq\mu_1)$ and
a degree-$0$ (hence elliptic) $\mathrm{BC}_n$-symmetric theta function.
The elliptic interpolation functions satisfy vanishing conditions analogous
to~\eqref{Eq_vanishing}:
\begin{gather}\label{Eq_R-vanishing}
R_{\mu}^{\ast}(a\spec{\la}_{n;q,t};a,b;q,t;p)=0
\qquad\text{if $\mu\not\subset\la$}
\end{gather}
for $\la\in P_{+}(n)$.
They also satisfy
\begin{gather*}
R^{\ast}_{\mu+(1^n)}(x;a,b;q,t;p)=R^{\ast}_{\mu}(x;aq,b/q;q,t;p)
\prod_{i=1}^n \frac{\theta\big(ax_i^{\pm};p\big)}{\theta\big(pqx_i^{\pm}/b;p\big)},
\end{gather*}
so that, once again, they can be extended to all $\mu\in P(n)$ via
\begin{gather}\label{Eq_Rast-weights}
R^{\ast}_{\mu}(x;a,b;q,t;p)
=R^{\ast}_{\nu}\big(x;aq^{\mu_n},bq^{-\mu_n};q,t;p\big)
\prod_{i=1}^n \frac{\big(ax_i^{\pm};q,p\big)_{\mu_n}}
{\big(pqx_i^{\pm}/b;q,p\big)_{\mu_n}},
\end{gather}
where $\nu:=(\mu_1-\mu_n,\dots,\mu_{n-1}-\mu_n,0)$.
We further extend this to pairs of weights
$\bmu=\big(\mu^{(1)},\mu^{(2)}\big)\in P(n)\times P(n)$ as
\begin{gather}\label{Eq_Rast-pairs}
R^{\ast}_{\bmu}(x;a,b;t;p,q):=
R^{\ast}_{\mu^{(1)}}(x;a,b;p,t;q)
R^{\ast}_{\mu^{(2)}}(x;a,b;q,t;p).
\end{gather}
In the limit the elliptic interpolation functions
simplify to the $\mathrm{BC}_n$-symmetric interpolation
polynomials:
\begin{gather}\label{Eq_Rast-limit}
\lim_{p\to 0} R^{\ast}_{\mu}\big(s,bp^{\alpha};q,t;p\big)=
\big({-}st^{n-1}\big)^{\abs{\mu}} q^{n(\mu')}t^{-2n(\mu)}
\frac{C^{-}_{\mu}(t;q,t)}{C^0_{\mu}\big(t^n;q,t\big)}
\bar{P}_{\mu}^{\ast}(q,t,s),
\end{gather}
for $0<\alpha<1$ and $\mu\in P(n)$.

In the following we need a number of identities for the
$\mathrm{BC}_n$ interpolation function from~\cite{Rains06}.
By~\eqref{Eq_Pmn-normalise} and~\eqref{Eq_Rast-weights},
\begin{gather}\label{Eq_Rast-PS}
R^{\ast}_{\mu}(z\spec{0}_{n;q,t};a,b;q,t;p)
=\Delta^0_{\mu}\big(at^{n-1}/b\vert t^{n-1}az,a/z;q,t;p\big)
\end{gather}
for $\mu\in P(n)$, which generalises~\eqref{Eq_PS-Pbar}.
The elliptic analogue of \eqref{Eq_Pbar-normalise} is given by
\begin{gather}
R^{\ast}_{\mu}(a\spec{\mu}_{n;q,t};a,b;q,t;p) \nonumber\\
\qquad{} =\frac{1}{\Delta^0_{\mu}\big(a^2t^{2n-2}\vert abt^{n-1};q,t;p\big)
\Delta_{\mu}\big(at^{n-1}/b\vert t^n;q,t;p\big)}
\frac{C^{+}_{\mu}\big(a^2t^{2n-2};q,t;p\big)}{C^{+}_{\mu}\big(at^{n-1}/b;q,t;p\big)},\label{Eq_P39}
\end{gather}
for $\mu\in P_{+}(n)$ and the analogue of \eqref{Eq_xniss} by
\begin{gather}
R^{\ast}_{\mu}(x_1,\dots,x_n,a;a,b;q,t;p) \nonumber\\
\qquad {} =\begin{cases} \displaystyle
\frac{C^0_{\mu}\big(t^n,apq/bt;q,t;p\big)}{C^0_{\mu}\big(t^{n+1},apq/b;q,t;p\big)}
R^{\ast}_{\mu}(x_1,\dots,x_n;at,b;q,t;p)
& \text{if $\mu\in P_{+}(n)$}, \\
0 & \text{otherwise},\label{Eq_xnisa}
\end{cases}
\end{gather}
for $\mu\in P(n+1)$.
We also require the symmetry
\begin{gather}\label{Eq_Rsqrtp}
R^{\ast}_{\mu}(x;a,b;q,t;p)
=\big(aqt^{n-1}/b\big)^{\abs{\mu}} q^{2n(\la')} t^{-2n(\mu)}
R^{\ast}_{\mu}\big(xp^{1/2};ap^{1/2},bp^{1/2};q,t;p\big),
\end{gather}
for $\mu\in P(n)$.

Given partitions $\la,\mu$ and $n$ an arbitrary integer such that
$n\geq l(\la),l(\mu)$, the elliptic analogue of the binomial
coefficient $\qbin{\la}{\mu}_{q,t,s}$ is defined as \cite{Rains06}
\begin{align*}
\obinomE{\la}{\mu}_{\![a,b];q,t;p}\!
&:=\frac{\Delta^0_{\la}(a\vert b;q,t;p)}
{\Delta^0_{\mu}(a\vert b;q,t;p)}
\frac{C^{+}_{\mu}(a;q,t;p)}{C^{+}_{\mu}(a/b;q,t;p)}
\frac{R^{\ast}_{\mu}\big(a^{1/2}t^{1-n} \spec{\la}_{q,t;n};
a^{1/2}t^{1-n},b/a^{1/2};q,t;p\big)}
{R^{\ast}_{\mu}\big(a^{1/2}t^{1-n} \spec{\mu}_{q,t;n};
a^{1/2}t^{1-n},b/a^{1/2};q,t;p\big)} \\
&\hphantom{:}=
\Delta^0_{\la}(a\vert b;q,t;p)
\Delta_{\mu}\big(a/b\vert t^n;q,t;p\big)
R^{\ast}_{\mu}\big(a^{1/2}t^{1-n} \spec{\la}_{q,t;n};
a^{1/2}t^{1-n},b/a^{1/2};q,t;p\big),
\end{align*}
where the equality of the two expressions on the right follows from~\eqref{Eq_P39}.
Since
\begin{gather*}
R^{\ast}_{\mu}(x;a,b;q,t;p)=R^{\ast}_{\mu}(-x;-a,-b;q,t;p),
\end{gather*}
the elliptic binomial coefficients are a function of $a$ (as opposed
to $a^{1/2}$) so that no choice of branch is required.
Moreover, by \eqref{Eq_xnisa} they are independent of the choice of
$n$ on the right as long as $n$ is sufficiently large, and by
\eqref{Eq_R-vanishing}, they vanish unless $\mu\subset\la$.
Since, for $\mu\subset\la$, $\Delta^0_{\la}(a\vert b;q,t;p)/
\Delta^0_{\mu}(a\vert b;q,t;p)|_{b=1}=\delta_{\la\mu}$ (and since
no poles are hit by taking $b=1$ in any of the other terms in the
definition of the elliptic binomial coefficients)
\begin{gather}\label{Eq_delta}
\obinomE{\la}{\mu}_{[a,1];q,t;p}=\delta_{\la\mu}.
\end{gather}
Furthermore, since $R^{\ast}_0=1$,
\begin{gather}\label{Eq_normalisation}
\obinomE{\la}{0}_{[a,b];q,t;p}=\Delta^0_{\la}(a\vert b;q,t;p)
\qquad\text{and}\qquad
\obinomE{\la}{\la}_{[a,b];q,t;p}=
\frac{C^{+}_{\la}(a;q,t;p)}{C^{+}_{\la}(a/b;q,t;p)}.
\end{gather}
By \eqref{Eq_Rast-PS}, also for $\la=(N^n)$ the binomial
coefficients factor:
\begin{gather}\label{Eq_binomE_rectangle}
\obinomE{(N^n)}{\mu}_{[a,b];q,t;p}
=\Delta^0_{(N^n)}(a\vert b;q,t;p)
\Delta_{\mu}\big(a/b\vert t^n,aq^Nt^{1-n},q^{-N};q,t;p\big).
\end{gather}

The elliptic binomial coefficients satisfy a large number of symmetries
and identities, and for a complete list of these the reader is referred to
the original papers~\cite{Rains06,Rains10,Rains12} or the survey~\cite{RW20}.
Here we state a selection of result needed later.

The most important summation for elliptic binomial coefficients
is the convolution-type formula
\begin{gather}
\sum_{\nu\subset\mu\subset\la}
\Delta^0_{\mu}(a\vert d,e;q,t;p) \obinomE{\la}{\mu}_{[ab,b];q,t;p}
\obinomE{\mu}{\nu}_{[a,c];q,t;p} \nonumber\\
\qquad{} =\frac{\Delta^0_{\la}(ab\vert bd,be;q,t;p)}
{\Delta^0_{\nu}(a/c\vert bd,be;q,t;p)} \obinomE{\la}{\nu}_{[ab,bc];q,t;p},\label{Eq_conv}
\end{gather}
provided $bcde=apq$.
For $bc=1$ the right-hand side trivialises by~\eqref{Eq_delta},
resulting in the inversion relation
\begin{gather}\label{Eq_inversion}
\sum_{\nu\subset\mu\subset\la} \obinomE{\la}{\mu}_{[ab,b];q,t;p}
\obinomE{\mu}{\nu}_{[a,1/b];q,t;p}=\delta_{\la\nu}.
\end{gather}
By \eqref{Eq_normalisation} and \eqref{Eq_binomE_rectangle} it
follows that the special case $\la=(N^n)$ and $\nu=0$ of
\eqref{Eq_conv} corresponds to the $\mathrm{(B)C}_n$-analogue of
the elliptic Jackson sum~\eqref{Eq_FT}, see, e.g., \cite{CG06,IN17,Rains06,Rosengren01,W02}.

Two important symmetries we will rely on in Section~\ref{Sec_Phi}
are the reciprocity and conjugation symmetries
\begin{align}
\obinomE{\la}{\mu}_{[1/a,1/b];1/q,1/t;p}
&=\obinomE{\la'}{\mu'}_{[1/aqt,1/b];t,q;p} \notag \\
&=(-aq)^{\abs{\mu}-2\abs{\la}} q^{n(\mu')-4n(\la')}t^{4n(\la)-n(\mu)}
\obinomE{\la}{\mu}_{[a,b/p];q,t;p}.\label{Eq_binomE-randc}
\end{align}

Finally, as follows from \eqref{Eq_qbin-qts} and \eqref{Eq_Rast-limit},
in the limit the elliptic binomial coefficients
reduce to the binomial coefficients \eqref{Eq_qbin-qts}:
\begin{gather}\label{Eq_qtbinom-limit}
\lim_{p\to 0} \obinomE{\la}{\mu}_{\big[s^2,b/p^{1/2}\big];q,t;p}
=\big(s^2q\big)^{\abs{\la/\mu}} q^{2n(\la'/\mu')}t^{-2n(\la/\mu)}
C^{+}_{\mu}\big(s^2;q,t\big) \qbin{\la}{\mu}_{q,t,s}.
\end{gather}

In Sections~\ref{Sec_beta-integrals} and \ref{Sec_Phi}
we also use the $p,q$-symmetric variant of the elliptic
binomial coefficients, defined as
\begin{gather*}
\obinomE{\bla}{\bmu}_{[a,b];t;p,q}:=\obinomE{\la^{(1)}}{\mu^{(1)}}_{[a,b];p,t;q}
\obinomE{\la^{(2)}}{\mu^{(2)}}_{[a,b];q,t;p}.
\end{gather*}

\section{Elliptic beta integrals and interpolation kernels}\label{Sec_beta-integrals}

\subsection{Elliptic beta integrals}

Let $n$ be a positive integer, $m$ an even nonnegative integer and
$p,q,t,t_0,\dots,t_{m-1}\in\mathbb{C}^{\ast}$
such that $\abs{p},\abs{q}<1$.
Then the elliptic Dixon (or type~I) and Selberg (or type~II) densities
are defined as \cite{Rains10,Rains18,RW20}
\begin{gather*}
\Delta_{\mathrm{D}}(z;t_0,\dots,t_{m-1};p,q):=
\kappa_n \prod_{1\leq i<j\leq n}
\frac{1}{\Gamma_{p,q}\big(z_i^{\pm}z_j^{\pm}\big)} \prod_{i=1}^n \frac{\prod_{r=0}^{m-1} \Gamma_{p,q}\big(t_r z_i^{\pm}\big)}
{\Gamma_{p,q}\big(z_i^{\pm 2}\big)}
\end{gather*}
and
\begin{gather*}
\Delta_{\mathrm{S}}(z;t_0,\dots,t_{m-1};t;p,q):=
\kappa_n \prod_{1\leq i<j\leq n}
\frac{\Gamma_{p,q}\big(tz_i^{\pm}z_j^{\pm}\big)}
{\Gamma_{p,q}\big(z_i^{\pm}z_j^{\pm}\big)}
\prod_{i=1}^n \frac{\Gamma_{p,q}(t)
\prod_{r=0}^{m-1} \Gamma_{p,q}\big(t_r z_i^{\pm}\big)}
{\Gamma_{p,q}\big(z_i^{\pm 2}\big)},
\end{gather*}
where
\begin{gather*}
\kappa_n=\kappa_n(p,q):=\frac{(p;p)_{\infty}^n(q;q)_{\infty}^n} {2^n n!(2\pi\iup)^n}.
\end{gather*}
The actual elliptic Dixon and Selberg integral evaluations are
given by \cite{vDS00,vDS01,Rains10,RW20}
\begin{gather}\label{Eq_Dixon}
\Int_C \Delta_{\mathrm{D}}(z;t_0,\dots,t_{2n+3};p,q)
\frac{\dup z_1}{z_1}\cdots\frac{\dup z_n}{z_n}
=\prod_{0\leq r<s\leq 2n+3} \Gamma_{p,q}(t_rt_s)
\end{gather}
for $t_0\cdots t_{2n+3}=pq$, and
\begin{align}
S_n(t_0,\dots,t_5;t;p,q)&:=
\Int_C \Delta_{\mathrm{S}}(z;t_0,\dots,t_5;t;p,q)
\frac{\dup z_1}{z_1}\cdots\frac{\dup z_n}{z_n} \notag \\
&\hphantom{:}=\prod_{i=1}^n \bigg(\Gamma_{p,q}\big(t^i\big)
\prod_{0\leq r<s\leq 5} \Gamma_{p,q}\big(t^{i-1}t_rt_s\big)\bigg),\label{Eq_e-Selberg}
\end{align}
for $t^{2n-2}t_0\cdots t_5=pq$.
In both integrals $C$ is a deformation of~$\mathbb{T}^n$,
separating the double geometric progressions of poles of the integrands
tending to zero from those tending to infinity.
For~\eqref{Eq_e-Selberg} we must also have that $tC$ is contained in $C$.
When $\abs{u_r},\abs{t_r}<1$ for all $r$ as well as $\abs{t}<1$ in the
elliptic Selberg case, we may simply take
$C=\mathbb{T}^n$.\footnote{For a more detailed analysis of the
contours and potential issues, including existence, see the appendix
of~\cite{Rains10}.} For $n=1$ the integrals~\eqref{Eq_Dixon} and~\eqref{Eq_e-Selberg} coincide and correspond to Spiridonov's
elliptic beta integral~\cite{Spiridonov01}.

Given a $\mathrm{BC}_n$-symmetric function $f$, we define its elliptic
Selberg average by
\begin{gather*}
\big\langle f\big\rangle_{t_0,\dots,t_5;t;p,q}^{(n)}
:=\frac{1}{S_n(t_0,\dots,t_5;t;p,q)}
\Int_C f(z) \Delta_{\mathrm{S}}(z;t_0,\dots,t_5;t;p,q)
\frac{\dup z_1}{z_1}\cdots\frac{\dup z_n}{z_n},
\end{gather*}
where $C=C_f$ is as above and $t^{2n-2}t_0\cdots t_5=pq$.

\subsection{The interpolation kernel}

Let $x,y\in (\mathbb{C}^{\ast})^n$ and $c,p,q,t\in\mathbb{C}^{\ast}$
such that $\abs{p},\abs{q}<1$.
Then the interpolation kernel $\mathcal{K}_c(x;y;t;p,q)$ is
a meromorphic $\mathrm{BC}_n$-symmetric function in both $x$
and $y$, satisfying
\begin{gather}\label{Eq_kernel-symm}
\mathcal{K}_c(x;y;t;p,q)=\mathcal{K}_c(x;y;t;q,p)=\mathcal{K}_c(y;x;t;p,q)=
\mathcal{K}_{-c}(-x;y;t;p,q)
\end{gather}
such that \cite{Rains18}
\begin{gather}\label{Eq_kernel-evaluate}
\mathcal{K}_c(x;a\spec{\bmu}_{n;t;p,q}/c;t;p,q)
=R^{\ast}_{\bmu}(x;a,b;q,t;p)
\prod_{i=1}^n \Big(\frac{pq}{ab}\Big)^{2\mu^{(1)}_i\mu^{(2)}_i}
\frac{\Gamma_{p,q}\big(ax_i^{\pm},bx_i^{\pm}\big)}
{\Gamma_{p,q}\big(t^i,abt^{n-i}\big)},
\end{gather}
where $c^2=abt^{n-1}$ and
\begin{gather*}
\spec{\bmu}_{n;t;p,q}:=
\big(p^{\mu^{(1)}_1}q^{\mu^{(2)}_1}t^{n-1},\dots,
p^{\mu^{(1)}_{n-1}}q^{\mu^{(2)}_{n-1}}t,p^{\mu^{(1)}_n}q^{\mu^{(2)}_n}\big)
\end{gather*}
is an elliptic spectral vector.
The interpolation kernel may recursively be defined using
the initial conditions
\begin{gather}\label{Eq_kernel-one}
\mathcal{K}_c(\leeg;\leeg;t,p,q)=1
\qquad\text{or}\qquad
\mathcal{K}_c(x_1;y_1;t,p,q)=
\frac{\Gamma_{p,q}\big(cx_1^{\pm}y_1^{\pm}\big)}{\Gamma_{p,q}\big(t,c^2\big)}
\end{gather}
and branching rule
\begin{gather}
\mathcal{K}_c(x;y;t;p,q)=
\frac{\prod_{i=1}^n \Gamma_{p,q}\big(cy_n^{\pm}x_i^{\pm}\big)}
{\Gamma^n_{p,q}(t)\Gamma_{p,q}\big(c^2\big)\prod_{1\leq i<j\leq n}
\Gamma_{p,q}\big(tx_i^{\pm}x_j^{\pm}\big)} \nonumber\\
\qquad{} \times \Int_{C} \mathcal{K}_{c/t^{1/2}}(z;\hat{y};t;p,q)
\Delta_{\mathrm{D}}\big(t^{1/2}x^{\pm},pqy_n^{\pm}/ct^{1/2};z;p,q\big)
\frac{\dup z_1}{z_1}\cdots\frac{\dup z_{n-1}}{z_{n-1}},\label{Eq_kernel-def}
\end{gather}
for $x,y\in(\mathbb{C}^{\ast})^n$, $\hat{y}=(y_1,\dots,y_{n-1})$,
$z\in(\mathbb{C}^{\ast})^{n-1}$ and a suitable subset of the
parameter space.
By~\eqref{Eq_kernel-symm}, making the substitution
\begin{gather*}
\big(c,t^{1/2},x\big)\mapsto \big({-}c,-t^{1/2},-x\big)
\end{gather*}
(and also negating the integration variables on the right)
leaves~\eqref{Eq_kernel-def} unchanged.
Hence there is no need to fix a branch of $t^{1/2}$.
We further note that the symmetry of the kernel in $y$ is
not manifest from the recursive definition, but follows
from a similar such symmetry for the ``formal interpolation
kernel'' $K_c(x;y;q,t;p)$ defined in~\cite{Rains18} as a
generalisation of the connection coefficients identity for
$R^{\ast}_{\mu}(x;a,b;q,t)$. (By~\eqref{Eq_kernel-one},
for $n=2$ the symmetry is an immediate consequence of
a special case of the elliptic integral transformation
\cite[Theorem~4.1]{Rains10}.)

\subsection{The dual Littlewood kernel}

Following \cite{Rains18} we consider two further kernels,
known as the dual Littlewood kernel and Kawana\-ka kernel.
For the dual Littlewood kernel, $\mathcal{L}'_c(x;t;p,q)$,
let $x\in(\mathbb{C}^{\ast})^n$ and $c,p,q,t,u,v\in\mathbb{C}^{\ast}$
such that $c^4uv=p$ and $\abs{p},\abs{q}<1$.
Then \cite{Rains18}
\begin{gather}
\mathcal{L}'_c(x;t;p,q):=
\frac{1}{\prod_{i=1}^n\Gamma_{p,q}\big(cux_i^{\pm},cvx_i^{\pm}\big)}\nonumber\\
\hphantom{\mathcal{L}'_c(x;t;p,q):=}{}\times
\int \mathcal{K}_c(z;x;t;p,q)
\Delta_{\mathrm{S}}\big(z;u,v;t;p,q^2\big)
\frac{\dup z_1}{z_1}\cdots\frac{\dup z_n}{z_n}.\label{Eq_dLitt}
\end{gather}
As shown in \cite[Corollary~7.3]{Rains18}, the right-hand side
depends on the product of $u$ and $v$ only so that the dual
Littlewood kernel is well-defined.
Like the interpolation kernel~$\mathcal{K}_c$,
$\mathcal{L}'_{-c}(-x;t,p,q)=\mathcal{L}'_c(-x;t,p,q)$.
It further satisfies the symmetry
\begin{gather*}
\mathcal{L}'_c(x;pq/t;p,q)=\mathcal{L}'_c(x;t;p,q)
\prod_{1\leq i<j\leq n} \Gamma_{p,q}\big(tx_i^{\pm}x_j^{\pm}\big),
\end{gather*}
and factorisation formula
\begin{gather}\label{Eq_Lp-factor}
\mathcal{L}'_{(p/qt)^{1/2}}\big(x;t^2;p^2,q^2\big)
=\Gamma^n_{p^2,q^2}(p/qt)
\prod_{i=1}^n\Gamma_{p^2,q^4}\big(pqx_i^{\pm 2}/t\big)
\prod_{1\leq i<j\leq n} \Gamma_{p^2,q^2}\big(pqx_i^{\pm} x_j^{\pm}/t\big).
\end{gather}

Evaluating \eqref{Eq_dLitt} at $x=a\spec{\bmu}_{n;t;p,q}/c$ and
using \eqref{Eq_kernel-evaluate} as well as the simple relations
$\Gamma_{p,q}(z)=\Gamma_{p,q^2}(z,zq)$ and
\begin{gather}\label{Eq_gamma-kl}
\frac{\Gamma_{p,q}\big(zp^k q^{\ell}\big)}{\Gamma_{p,q}(z)}
=(-z)^{-k\ell} p^{-\ell\binom{k}{2}} q^{-k\binom{\ell}{2}}
(z;p,q)_k (z;q,p)_{\ell},
\end{gather}
we find
\begin{gather*}
\frac{\mathcal{L}'_c(a\spec{\bmu}_{n;t;p,q}/c;t;p,q)}
{\mathcal{L}'_c(a\spec{\boldsymbol{0}}_{n;t;p,q}/c;t;p,q)}
=\big(uvt^{n-1}\big)^{2\sum_{i=1}^n \mu^{(1)}_i\mu^{(2)}_i}
\frac{\big\langle R^{\ast}_{\bmu}(z;a,b;q,t;p)
\big\rangle_{a,aq,b,bq,u,v;t;p,q^2}^{(n)}}
{\Delta^0_{\bmu}\big(at^{n-1}/b\vert aut^{n-1},avt^{n-1};t;p,q\big)},
\end{gather*}
where, as before, $c^4uv=p$ and $c^2=abt^{n-1}$.
Replacing $(p,q,t)\mapsto \big(p^2,q^2,t^2\big)$ and then
specialising $c=(p/qt)^{1/2}$ the left-hand side factors
thanks to~\eqref{Eq_Lp-factor}.
By \eqref{Eq_gamma-kl} this yields
\begin{gather}
 \big\langle R^{\ast}_{\bmu}\big(z;a,b;t^2;p^2,q^2\big)
\big\rangle_{a,aq^2,b,bq^2,u,v;t^2;p^2,q^4}^{(n)} \notag \\
\qquad{} = \Delta^0_{\bmu}\big(at^{2n-2}/b\vert
a^2q^2t^{2n-2},aut^{2n-2},avt^{2n-2};t^2;p^2,q^2\big) \notag\\
 \qquad\quad{} \times
 \frac{C^{-}_{\bmu}\big(pqt;t^2;p^2,q^2\big)
C^{+}_{\bmu}\big(a^2q^2t^{4n-4};t^2;p^2,q^2\big)}
{C^0_{(2,1)\bmu^2}\big(a^2q^4t^{4n-2};t^2;p^2,q^4\big)},\label{Eq_pre-Thm-integral}
\end{gather}
where $uv=(qt)^2$ and $abqt^{2n-1}=p$.
For later convenience we interchange $p$ and $q$
$\big($and $\big(\mu^{(1)},\mu^{(2)}\big)\mapsto \big(\mu^{(2)},\mu^{(1)}\big)\big)$.
Using the $p,q$-symmetry of the Selberg average,
and finally replacing $(a,b,u,v)\mapsto (q/bt,q/at,ptv,pt/v)$,
\eqref{Eq_pre-Thm-integral}~may equivalently be stated as
in our next theorem.

\begin{Theorem}\label{Thm_integral}
For $\bmu\in P_{+}(n)\times P_{+}(n)$ and
$a,b,v,p,q,t\in\mathbb{C}^{\ast}$ such that $ab=pqt^{2n-3}$ and
$\abs{p},\abs{q}<1$,
\begin{gather}
 \big\langle R^{\ast}_{\bmu}\big(z;q/bt,q/at;t^2;p^2,q^2\big)
\big\rangle^{(n)}_
{q/at,p^2q/at,q/bt,p^2q/bt,ptv,pt/v;t^2;p^4,q^2} \notag \\
\qquad =\Delta^0_{\bmu}\big(a^2t/pq\big\vert
a^2t^{2-2n},atv^{\pm};t^2;p^2,q^2\big)
\frac{C^{-}_{\bmu}\big(pqt;t^2;p^2,q^2\big)C^{+}_{\bmu}\big(a^2;t^2;p^2,q^2\big)}
{C^0_{(1,2)\bmu^2}\big((apt)^2;t^2;p^4,q^2\big)}.\label{Eq_weird}
\end{gather}
\end{Theorem}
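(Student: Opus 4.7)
The plan is to obtain Theorem~\ref{Thm_integral} by evaluating the defining integral representation \eqref{Eq_dLitt} of the dual Littlewood kernel at an elliptic spectral vector and then exploiting the factorisation \eqref{Eq_Lp-factor}. Concretely, I would start by rewriting \eqref{Eq_dLitt} as
\begin{gather*}
\prod_{i=1}^n \Gamma_{p,q}\big(cux_i^{\pm},cvx_i^{\pm}\big)\, \mathcal{L}'_c(x;t;p,q)
=\int \mathcal{K}_c(z;x;t;p,q)\,\Delta_{\mathrm{S}}\big(z;u,v;t;p,q^2\big)\,\frac{\dup z_1}{z_1}\cdots\frac{\dup z_n}{z_n},
\end{gather*}
where $c^4uv=p$ and $c^2=abt^{n-1}$, and then specialising $x=a\spec{\bmu}_{n;t;p,q}/c$.

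Inside the integral I would invoke the kernel evaluation \eqref{Eq_kernel-evaluate}, which pulls $R^{\ast}_{\bmu}(z;a,b;q,t;p)$ out of $\mathcal{K}_c$ together with a product of $\Gamma_{p,q}\big(az_i^{\pm},bz_i^{\pm}\big)$. Using $\Gamma_{p,q}(z)=\Gamma_{p,q^2}(z,zq)$, these absorb into the Selberg density in base $q^2$, so that the right-hand side becomes the elliptic Selberg average of $R^{\ast}_{\bmu}$ over the six parameters $\{a,aq,b,bq,u,v\}$ with parameter $t$ and bases $(p,q^2)$, multiplied by an explicit product of elliptic gamma functions. Dividing through by the $\bmu=0$ specialisation cancels the awkward gamma factors on the left.

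The key simplification now comes from rescaling $(p,q,t)\mapsto\big(p^2,q^2,t^2\big)$ and then setting $c=(p/qt)^{1/2}$; at this specific value of $c$, the left-hand side's $\mathcal{L}'$ collapses by \eqref{Eq_Lp-factor} to an explicit product of $\Gamma_{p^2,q^2}$ and $\Gamma_{p^2,q^4}$ factors. Evaluating the ratio at~$\bmu$ versus~$0$ and converting the resulting gamma ratios into $C$-symbols via~\eqref{Eq_gamma-kl} produces precisely the combination
$C^{-}_{\bmu}(pqt)C^{+}_{\bmu}(a^2)/C^0_{(2,1)\bmu^2}(a^2q^4t^{4n-2})$ appearing on the right, together with the $\Delta^0$ prefactor.

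The last step is bookkeeping: interchange $p\leftrightarrow q$ (and correspondingly swap the two components of $\bmu$), using the $(p,q)$-symmetry of the Selberg average, and then make the substitution $(a,b,u,v)\mapsto (q/bt,q/at,ptv,pt/v)$ to match the stated right-hand side. The hard part is the bookkeeping: verifying that the stray factors of $(uvt^{n-1})^{2\sum \mu^{(1)}_i\mu^{(2)}_i}$ produced by \eqref{Eq_kernel-evaluate} cancel correctly against the contribution from \eqref{Eq_gamma-kl}, and that the balancing conditions $ab=pqt^{2n-3}$ (after relabelling) are consistent with $c^4uv=p$ and $c^2=abt^{n-1}$ throughout the chain of substitutions.
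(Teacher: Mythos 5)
Your proposal is correct and follows essentially the same route as the paper: specialise the dual Littlewood kernel \eqref{Eq_dLitt} at $x=a\spec{\bmu}_{n;t;p,q}/c$ via \eqref{Eq_kernel-evaluate}, divide by the $\bmu=0$ case, rescale $(p,q,t)\mapsto(p^2,q^2,t^2)$ and set $c=(p/qt)^{1/2}$ so that \eqref{Eq_Lp-factor} factorises the left-hand side, then swap $p\leftrightarrow q$ and relabel $(a,b,u,v)\mapsto(q/bt,q/at,ptv,pt/v)$. The bookkeeping you flag (the $(uvt^{n-1})^{2\sum\mu^{(1)}_i\mu^{(2)}_i}$ factors and the consistency of $c^4uv=p$, $c^2=abt^{n-1}$ with $ab=pqt^{2n-3}$ after relabelling) does work out exactly as in the paper's intermediate identity \eqref{Eq_pre-Thm-integral}.
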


Eliminating $b$, let $f_{\mu}(a,v;t;p,q)$ denote the left-hand side
of~\eqref{Eq_weird} for $\bmu=(0,\mu)$ and \mbox{$\mu\in P(n)$}.
By~\eqref{Eq_Rast-weights} and~\eqref{Eq_gamma-kl}
\begin{gather*}
f_{\mu}(a,v;t;p,q)=f_{\nu}\big(aq^{2\mu_n},v;t;p,q\big),
\end{gather*}
where $\nu:=(\mu_1-\mu_n,\dots,\mu_{n-1}-\mu_n,0)$.
For this special choice of $\bmu$, the same functional equation is
satisfied by the right-side of~\eqref{Eq_Rast-weights} thanks to~\eqref{Eq_C-weights}, leading to the following corollary.

\begin{Corollary}\label{Cor_E-integral}
For $\mu\in P(n)$ and $a,b,v,p,q,t\in\mathbb{C}^{\ast}$ such that
$ab=pqt^{2n-3}$ and $\abs{p},\abs{q}<1$,
\begin{gather}
\big\langle R^{\ast}_{\mu}\big(z;q/bt,q/at;q^2,t^2;p^2\big)
\big\rangle^{(n)}_
{q/at,p^2q/at,q/bt,p^2q/bt,ptv,pt/v;t^2;p^4,q^2} \notag \\
\qquad {}=\Delta^0_{\mu}\big(a^2t/pq\big\vert
a^2t^{2-2n},atv^{\pm};q^2,t^2;p^2\big)
\frac{C^{-}_{\mu}\big(pqt;q^2,t^2;p^2\big)C^{+}_{\mu}\big(a^2;q^2,t^2;p^2\big)}
{C^0_{2\mu^2}\big((apt)^2;q^2,t^2;p^4\big)}.\label{Eq_weird2}
\end{gather}
\end{Corollary}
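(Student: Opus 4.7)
My plan is to obtain Corollary~\ref{Cor_E-integral} as a two-step specialization and extension of Theorem~\ref{Thm_integral}: first set $\bmu=(0,\mu)$ in \eqref{Eq_weird} to recover the identity for partitions $\mu\in P_{+}(n)$, then lift to arbitrary weights $\mu\in P(n)$ by matching a common functional equation on both sides.

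For the first step, I would unfold the pair-of-partitions conventions. On the left of \eqref{Eq_weird}, $R^{\ast}_{(0,\mu)}(z;q/bt,q/at;t^2;p^2,q^2)=R^{\ast}_0(z;q/bt,q/at;p^2,t^2;q^2)\cdot R^{\ast}_\mu(z;q/bt,q/at;q^2,t^2;p^2)$, and since $R^{\ast}_0=1$ this is exactly the integrand on the left of \eqref{Eq_weird2}. On the right, the fact that $C^{0,\pm}_0=\Delta^0_0=1$ collapses each pair-of-partitions symbol $C^{0,\pm}_{(0,\mu)}$ and $\Delta^0_{(0,\mu)}$ to the single-partition version, with the bases $(p,t;q)$ switched off and only $(q^2,t^2;p^2)$ surviving. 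Moreover $(1,2)(0,\mu)^2=(0,2\mu^2)$, so $C^0_{(1,2)\bmu^2}((apt)^2;t^2;p^4,q^2)=C^0_{2\mu^2}((apt)^2;q^2,t^2;p^4)$. This matches \eqref{Eq_weird2} for $\mu\in P_{+}(n)$ term by term.

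For the second step, given $\mu\in P(n)$ I would write $\mu=\nu+(\mu_n^n)$ with $\nu:=(\mu_1-\mu_n,\dots,\mu_{n-1}-\mu_n,0)\in P_{+}(n)$, and verify that both sides of \eqref{Eq_weird2}, viewed as functions $f_\mu(a,v;t;p,q)$ after eliminating $b=pqt^{2n-3}/a$, obey the same functional equation $f_\mu(a,v;t;p,q)=f_\nu(aq^{2\mu_n},v;t;p,q)$. On the integral side, \eqref{Eq_Rast-weights} applied with $(q,t,p)\mapsto(q^2,t^2,p^2)$ and $(A,B)=(q/bt,q/at)$ expresses $R^{\ast}_\mu$ as $R^{\ast}_\nu$ evaluated at the shifted parameters $(\tilde a,\tilde b)=(aq^{2\mu_n},bq^{-2\mu_n})$ (so that $\tilde a\tilde b=ab$, preserving the balancing condition), multiplied by a product of ratios of $(q^2,p^2)$-shifted factorials in $z_i^\pm$. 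Rewriting these as ratios of $\Gamma_{p^2,q^2}$ via \eqref{Eq_gamma-kl}, and then splitting each such gamma through $\Gamma_{p^2,q^2}(z)=\Gamma_{p^4,q^2}(z)\Gamma_{p^4,q^2}(p^2 z)$, absorbs the extra factor into the Selberg density and effects the parameter shift $a\to aq^{2\mu_n}$, $b\to bq^{-2\mu_n}$ exactly. On the right, applying \eqref{Eq_C-weights} to each of $\Delta^0_\mu$, $C^{-}_\mu$, $C^{+}_\mu$ and $C^0_{2\mu^2}$ reduces them to the $\nu$-indexed counterparts; a direct tally of the prefactors (using $\tilde a\tilde b=ab$) shows the same net shift $a\mapsto aq^{2\mu_n}$. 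Combining the two steps, the $\nu\in P_{+}(n)$ identity lifts to $\mu$ and establishes the corollary.

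The main obstacle is the bookkeeping in the second step. On the integral side, the extra factor from \eqref{Eq_Rast-weights} naturally lives in $\Gamma_{p^2,q^2}$, while the Selberg density is built from $\Gamma_{p^4,q^2}$; one must carefully pair up the parameters $(q/at,p^2q/at)$ and $(q/bt,p^2q/bt)$ so that they recombine cleanly with the extra factor and produce the shift $a\to aq^{2\mu_n}$, $b\to bq^{-2\mu_n}$. On the symbol side the arithmetic is entirely elementary but involves four distinct $C$-flavors (including the unusual $C^0_{2\mu^2}$ at nome $p^4$), each contributing a prefactor under \eqref{Eq_C-weights} that must be matched against the others; the miracle is that all of these cancel down to a single shift, which is already guaranteed by the parallel fact on the integral side.
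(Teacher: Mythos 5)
Your proposal is correct and is essentially identical to the paper's own proof: the paper also obtains \eqref{Eq_weird2} by setting $\bmu=(0,\mu)$ in \eqref{Eq_weird} (using $R^{\ast}_0=1$ to collapse the pair-of-partitions symbols) and then extends from $P_{+}(n)$ to $P(n)$ by checking that both sides satisfy $f_{\mu}(a,v;t;p,q)=f_{\nu}\big(aq^{2\mu_n},v;t;p,q\big)$ via \eqref{Eq_Rast-weights} and \eqref{Eq_gamma-kl} on the integral side and \eqref{Eq_C-weights} on the product side.
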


According to \cite[Corollary~4.14]{Rains12}, if $a^2b^2uvt^{2n-2}=p$
then\footnote{The statement of \cite[Corollary~4.14]{Rains12} contains
a minor typo in the argument of~$\Delta^0_{\bla}$.}
\begin{gather}
 \big\langle R^{\ast}_{\bla}(z;a,b;t;p,q)\big\rangle^{(n)}_
{a,aq,b,bq,u,v;t;p,q^2}
=\Delta_{\bla}^0\big(at^{n-1}/b\vert
a^2qt^{n-1},aut^{n-1},avt^{n-1};t;p,q\big) \notag \\
 \qquad{}\times\sum_{\bmu}
\frac{\Delta_{\bmu}\big(1/b^2\vert t^n,a^2qt^{n-1};t;p,q^2\big)}
{\Delta_{(1,2)\bmu}\big(1/b^2\vert t^n,a^2qt^{n-1};t;p,q\big)}
\obinomE{\bla}{(1,2)\bmu}_{[at^{n-1}/b,abt^{n-1}];t;p,q}. \label{Eq_Cor414}
\end{gather}
Combining this with \eqref{Eq_pre-Thm-integral} and using
\begin{gather*}
\frac{\Delta^0_{\bmu}\big(1/b^2\vert t^n,a^2qt^{n-1};t;p,q^2\big)}
{\Delta^0_{(1,2)\bmu}\big(1/b^2\vert t^n,a^2qt^{n-1};t;p,q\big)}
=\frac{1}{\Delta^0_{\bmu}\big(1/b^2\vert qt^n,a^2q^2t^{n-1};p,q^2\big)},
\end{gather*}
which is equal to $1$ if $a^2b^2qt^{2n-1}=p$,
leads to the following quadratic summation formula.

\begin{Corollary}\label{Cor_quadratic-sum}
For $\bla\in P_{+}(n)\times P_{+}(n)$,
\begin{gather*}
\sum_{\bmu} \frac{\Delta_{\bmu}\big(a\vert\leeg;t^2;p^2,q^4\big)}
{\Delta_{(1,2)\bmu}\big(a\vert\leeg;t^2;p^2,q^2\big)}
\obinomE{\bla}{(1,2)\bmu}_{[apq/t,p/qt];t^2;p^2,q^2}\\
\qquad{} = \frac{C^{-}_{\bla}\big(pqt;t^2;p^2,q^2\big)
C^{+}_{\bla}\big(ap^2/t^2;t^2;p^2,q^2\big)}
{C^0_{(2,1)\bla^2}\big(ap^2q^2;t^2;p^2,q^4\big)}.
\end{gather*}
\end{Corollary}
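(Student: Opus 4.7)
The strategy is to extract the identity by \emph{equating two distinct evaluations} of the elliptic Selberg average
\begin{gather*}
\big\langle R^{\ast}_{\bla}(z;a,b;t;p,q)\big\rangle^{(n)}_{a,aq,b,bq,u,v;t;p,q^2},
\end{gather*}
specialised to the Selberg balancing $a^2 b^2 u v t^{2n-2}=p$ together with $uv=(qt)^2$ (so that the two extra $q$-shifted factors in the Selberg weight align with the quadratic structure). Both evaluations are already in hand at this point of the paper: Rains' expansion \eqref{Eq_Cor414} gives one, and the closed form \eqref{Eq_pre-Thm-integral} (equivalently Theorem~\ref{Thm_integral}, derived from the factorisation of the dual Littlewood kernel at $c=(p/qt)^{1/2}$) gives the other.

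First, I would invoke \eqref{Eq_Cor414}, which writes the above average as a $\Delta^0_{\bla}$-prefactor times a sum over $\bmu$ whose summand is the product of a ratio of $\Delta$-symbols (of shapes $\bmu$ and $(1,2)\bmu$) with the elliptic binomial $\obinomE{\bla}{(1,2)\bmu}_{[at^{n-1}/b,abt^{n-1}];t;p,q}$. I would then impose the additional specialisation $a^2 b^2 q t^{2n-1}=p$. Under this constraint, the inner ratio of $\Delta^0$-parts,
\begin{gather*}
\frac{\Delta^0_{\bmu}(1/b^2\vert t^n,a^2qt^{n-1};t;p,q^2)}{\Delta^0_{(1,2)\bmu}(1/b^2\vert t^n,a^2qt^{n-1};t;p,q)}=\frac{1}{\Delta^0_{\bmu}(1/b^2\vert qt^n,a^2q^2t^{n-1};p,q^2)},
\end{gather*}
collapses to $1$ (a verification via the definition of $\Delta^0$ and the quasi-periodicity \eqref{Eq_quasi-C0}), so the sum on the right of \eqref{Eq_Cor414} reduces to exactly the shape $\sum_{\bmu}\Delta_{\bmu}/\Delta_{(1,2)\bmu}\cdot\obinomE{\bla}{(1,2)\bmu}$ appearing on the left-hand side of the corollary.

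Second, I would substitute the closed form supplied by \eqref{Eq_pre-Thm-integral} for the same Selberg average, divide through by the $\Delta^0_{\bla}$ prefactor of \eqref{Eq_Cor414}, and perform the rescaling $(p,q,t)\mapsto(p^2,q^2,t^2)$ together with the final one-parameter substitution that folds the residual dependence on $b$ and $n$ into the single variable $a$ of the statement. The chief technical obstacle lies precisely in this last bookkeeping step: one must check that the two balancing relations $a^2b^2uvt^{2n-2}=p$ and $a^2b^2qt^{2n-1}=p$ are mutually compatible (they force a one-parameter family, up to the gauge $uv=(qt)^2$), and that the $C^0$, $C^{-}$ and $C^{+}$ symbols arising from the $\Delta^0_{\bla}$ prefactor of \eqref{Eq_Cor414} combine with those from the right-hand side of \eqref{Eq_pre-Thm-integral} into the cleanly symmetric quotient $C^{-}_{\bla}(pqt;t^2;p^2,q^2)\,C^{+}_{\bla}(ap^2/t^2;t^2;p^2,q^2)\big/C^0_{(2,1)\bla^2}(ap^2q^2;t^2;p^2,q^4)$ of the corollary, with no residual $b$- or $n$-dependence remaining.
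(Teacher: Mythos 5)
Your proposal matches the paper's proof: Corollary~\ref{Cor_quadratic-sum} is obtained there precisely by equating the expansion \eqref{Eq_Cor414} with the closed-form evaluation \eqref{Eq_pre-Thm-integral} of the same Selberg average and observing that the ratio of $\Delta^0$-symbols collapses to $1$ under the balancing condition. The compatibility issue you flag as the chief obstacle is automatic once \eqref{Eq_Cor414} is read after the replacement $(p,q,t)\mapsto(p^2,q^2,t^2)$: its condition $a^2b^2uvt^{4n-4}=p^2$ combined with $uv=(qt)^2$ is exactly the square of the condition $abqt^{2n-1}=p$ appearing in \eqref{Eq_pre-Thm-integral}, so no further constraint is imposed.
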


\subsection{The Kawanaka kernel}

For the Kawanaka kernel, $\mathcal{L}^{-}_c(x;t;p,q)$,
we have a very similar definition and set of results as for
the dual Littlewood kernel.

Let $x\in(\mathbb{C}^{\ast})^n$ and $c,p,q,t,u,v\in\mathbb{C}^{\ast}$
such that $c^2uv=pq$ and $\abs{p},\abs{q}<1$.
Then the Kawanaka kernel is defined as \cite{Rains18}
\begin{gather}
\mathcal{L}^{-}_c(x;t;p,q) :=\prod_{i=1}^n \frac{1}
{\Gamma_{p^2,q^2}\big(cu^2x_i^{\pm},cv^2x_i^{\pm}\big)} \notag \\
\hphantom{\mathcal{L}^{-}_c(x;t;p,q) :=}{} \times \int \mathcal{K}_c\big(z^2;x;t^2;p^2,q^2\big)
\Delta_{\mathrm{S}}(z;u,v;t;p,q)
\frac{\dup z_1}{z_1}\cdots\frac{\dup z_n}{z_n},\label{Eq_Kawa}
\end{gather}
which once again does not depend on the individual choice of~$u$ and~$v$.
The Kawanaka kernel satisfies the symmetry
\begin{gather*}
\mathcal{L}^{-}_c(x;pq/t;p,q)=\mathcal{L}^{-}_c(x;-t;p,q)
\Gamma^n_{p^2,q^2}\big(t^2\big) \prod_{1\leq i<j\leq n}
\Gamma_{p^2,q^2}\big(t^2x_i^{\pm}x_j^{\pm}\big),
\end{gather*}
and factorisation formula
\begin{gather}
\mathcal{L}^{-}_{(pq/t)^{1/2}}(x;-t;p,q)
=\Gamma^n_{p^2,q^2}(pq/t)
\prod_{i=1}^n\Gamma_{p,q}\big({-}(pq/t)^{1/2}x_i^{\pm}\big)\!
\prod_{1\leq i<j\leq n} \Gamma_{p^2,q^2}\big(pqx_i^{\pm} x_j^{\pm}/t\big).\!\!\!\label{Eq_L-factor}
\end{gather}

Evaluating \eqref{Eq_Kawa} at $x=a\spec{\bmu}_{n;t^2;p^2,q^2}/c$ and
proceeded exactly as in the dual Littlewood case, also using
\begin{gather}\label{Eq_Gamma-square}
\Gamma_{p^2,q^2}\big(z^2\big)=\Gamma_{p,q}(z,-z),
\end{gather}
we find
\begin{gather*}
 \frac{\mathcal{L}^{-}_c\big(a\spec{\bmu}_{n;t^2;p^2,q^2}/c;t;p,q\big)}
{\mathcal{L}^{-}_c\big(a\spec{\boldsymbol{0}}_{n;t^2;p^2,q^2}/c;t;p,q\big)} \\
 \qquad{} =\big(u^2v^2t^{2n-2}\big)^{2\sum_{i=1}^n \mu^{(1)}_i\mu^{(2)}_i}
\frac{\big\langle R^{\ast}_{\bmu}\big(z^2;a,b;q^2,t^2;p^2\big)
\big\rangle_{a^{1/2},-a^{1/2},b^{1/2},-b^{1/2},u,v;t;p,q}^{(n)}}
{\Delta^0_{\bmu}\big(at^{2n-2}/b\vert au^2t^{2n-2},av^2t^{2n-2};t^2;p^2,q^2\big)},
\end{gather*}
where $c^2uv=pq$ and $c^2=abt^{2n-2}$.
Replacing $t\mapsto -t$ and then specialising $c=(pq/t)^{1/2}$, the
left-hand side factors by~\eqref{Eq_L-factor}, leading to our
next theorem.

\begin{Theorem}\label{Thm_integral-Kawa}
For $\bmu\in P_{+}(n)\times P_{+}(n)$ and
$a,b,u,v,p,q,t\in\mathbb{C}^{\ast}$ such that $abt^{2n-1}=pq$, $uv=t$
and $\abs{p},\abs{q}<1$,
\begin{gather}
\big\langle R^{\ast}_{\bmu}\big(z^2;a,b;t^2;p^2,q^2\big)
\big\rangle^{(n)}_
{a^{1/2},-a^{1/2},b^{1/2},-b^{1/2},u,v;-t;p,q} \notag \\
\qquad=\Delta^0_{\bmu}\big(at^{2n-2}/b\big\vert
a^2t^{2n-2},au^2t^{2n-2},av^2t^{2n-2};t^2;p^2,q^2\big) \notag \\
\qquad\quad{}\times
\frac{C^{-}_{\bmu}\big(pqt;t^2;p^2,q^2\big)C^{+}_{\bmu}\big(a^2t^{4n-4};t^2;p^2,q^2\big)}
{C^0_{2\bmu^2}(-at^{2n-1};-t;p,q)}.\label{Eq_weird-Kawa}
\end{gather}
\end{Theorem}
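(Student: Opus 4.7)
The plan is to mirror exactly the derivation just used for Theorem~\ref{Thm_integral}, with the Kawanaka kernel $\mathcal{L}^{-}_c$ playing the role of the dual Littlewood kernel $\mathcal{L}'_c$. Start from the defining integral~\eqref{Eq_Kawa} and evaluate at the elliptic spectral point $x = a\spec{\bmu}_{n;t^2;p^2,q^2}/c$ with $c^2 = abt^{2n-2}$. Because the interpolation kernel appearing under the integral sign is $\mathcal{K}_c(z^2;x;t^2;p^2,q^2)$, the evaluation formula~\eqref{Eq_kernel-evaluate} must be applied with bases $(p,q,t)\mapsto(p^2,q^2,t^2)$. This produces the factor $R^{\ast}_{\bmu}(z^2;a,b;t^2;p^2,q^2)$ inside the integrand, together with a product of elliptic gamma functions in $z_i^{\pm 2}$ and a power $(pq/ab)^{2\mu^{(1)}_i\mu^{(2)}_i}$ per variable.

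The key simplification is then to apply~\eqref{Eq_Gamma-square} to split each $\Gamma_{p^2,q^2}(z_i^{\pm 2})$ into $\Gamma_{p,q}(\pm z_i^{\pm})$. The Dixon-type factors so produced merge with the Selberg density $\Delta_{\mathrm{S}}(z;u,v;t;p,q)$ already present in~\eqref{Eq_Kawa} to give an ordinary elliptic Selberg density with parameters $\bigl(a^{1/2},-a^{1/2},b^{1/2},-b^{1/2},u,v\bigr)$ at base $(t;p,q)$, the balancing condition $abt^{2n-1}=pq$ being forced by $c^2uv=pq$ and $uv=t$. Dividing by the $\bmu=(0,0)$ case and expressing ratios of elliptic gammas in terms of generalised $C^0$-symbols via~\eqref{Eq_gamma-kl} yields
\begin{gather*}
\frac{\mathcal{L}^{-}_c\bigl(a\spec{\bmu}_{n;t^2;p^2,q^2}/c;t;p,q\bigr)}
{\mathcal{L}^{-}_c\bigl(a\spec{\boldsymbol{0}}_{n;t^2;p^2,q^2}/c;t;p,q\bigr)} \\
\quad = \bigl(u^2v^2t^{2n-2}\bigr)^{2\sum_i \mu^{(1)}_i\mu^{(2)}_i}\,
\frac{\bigl\langle R^{\ast}_{\bmu}\bigl(z^2;a,b;t^2;p^2,q^2\bigr)\bigr\rangle^{(n)}_{a^{1/2},-a^{1/2},b^{1/2},-b^{1/2},u,v;t;p,q}}
{\Delta^0_{\bmu}\bigl(at^{2n-2}/b\,\big\vert\,au^2t^{2n-2},av^2t^{2n-2};t^2;p^2,q^2\bigr)}.
\end{gather*}

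To close the identity, replace $t\mapsto -t$ and then specialise $c=(pq/t)^{1/2}$. At this value of $c$ the Kawanaka kernel factorises explicitly by~\eqref{Eq_L-factor}, so the left-hand side becomes a finite product of elliptic gammas; evaluating this product at the spectral point and cancelling the $\bmu=(0,0)$ denominator by one more use of~\eqref{Eq_gamma-kl} rewrites everything in terms of generalised $C$-symbols. The main obstacle is the bookkeeping at this last stage: the prefactor $(u^2v^2t^{2n-2})^{2\sum_i \mu^{(1)}_i\mu^{(2)}_i}$ must be absorbed together with the remaining $C^0$ prefactors using the quasi-periodicities~\eqref{Eq_quasi-C} and the doubling/squaring relations~\eqref{Eq_double-square} to consolidate into the single denominator $C^0_{2\bmu^2}(-at^{2n-1};-t;p,q)$, while signs arising from $t\mapsto -t$ and from the $\pm a^{1/2},\pm b^{1/2}$ parameters have to be tracked carefully to obtain the exact form of the $C^{-}$ and $C^{+}$ factors in~\eqref{Eq_weird-Kawa}. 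Modulo this calculation, the argument is a verbatim Kawanaka-side analogue of the one leading to Theorem~\ref{Thm_integral}.
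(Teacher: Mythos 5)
Your proposal is correct and follows exactly the route the paper takes: evaluate the Kawanaka kernel \eqref{Eq_Kawa} at $x=a\spec{\bmu}_{n;t^2;p^2,q^2}/c$ via \eqref{Eq_kernel-evaluate}, split $\Gamma_{p^2,q^2}\big(z_i^{\pm2}\big)$ using \eqref{Eq_Gamma-square} to merge with the Selberg density, form the ratio against $\bmu=(0,0)$, then substitute $t\mapsto -t$ and set $c=(pq/t)^{1/2}$ so that \eqref{Eq_L-factor} factorises the left-hand side; your intermediate ratio formula agrees with the paper's. The remaining bookkeeping with \eqref{Eq_gamma-kl}, \eqref{Eq_quasi-C} and \eqref{Eq_double-square} that you defer is likewise left implicit in the paper, so nothing essential is missing.
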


From \eqref{Eq_add-square}, \eqref{Eq_gamma-kl} and \eqref{Eq_Gamma-square}
it follows that~\eqref{Eq_weird-Kawa} for $\bmu=(0,\mu)$ again extends
from partitions to weights.

\begin{Corollary}For $\mu\in P(n)$ and $a,b,u,v,p,q,t\in\mathbb{C}^{\ast}$ such that $abt^{2n-1}=pq$, $uv=t$
and $\abs{p},\abs{q}<1$,
\begin{gather*}
 \big\langle R^{\ast}_{\mu}\big(z^2;a,b;q^2,t^2;p^2\big)
\big\rangle^{(n)}_
{a^{1/2},-a^{1/2},b^{1/2},-b^{1/2},u,v;-t;p,q} \\
 \qquad{}=\Delta^0_{\mu}\big(at^{2n-2}/b\big\vert
a^2t^{2n-2},au^2t^{2n-2},av^2t^{2n-2};q^2,t^2;p^2\big) \\
 \qquad\quad{}\times
\frac{C^{-}_{\mu}\big(pqt;q^2,t^2;p^2\big)C^{+}_{\mu}\big(a^2t^{4n-4};q^2,t^2;p^2\big)}
{C^0_{2\mu^2}\big({-}at^{2n-1};q,-t;p\big)}.
\end{gather*}
\end{Corollary}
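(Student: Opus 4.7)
The Corollary extends Theorem~\ref{Thm_integral-Kawa} in the special case $\bmu=(0,\mu)$ from partitions $\mu\in P_{+}(n)$ to arbitrary integral weights $\mu\in P(n)$. The plan is therefore to first deduce the case $\mu\in P_{+}(n)$ directly from \eqref{Eq_weird-Kawa}, and then show that both sides of the identity satisfy the same shift $\mu\mapsto\mu+(1^{n})$, enabling iterative passage across $P(n)$.

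Step one is immediate: substituting $\bmu=(0,\mu)$ with $\mu\in P_{+}(n)$ in \eqref{Eq_weird-Kawa}, the pair-indexed interpolation function collapses as $R^{\ast}_{(0,\mu)}(z^{2};a,b;t^{2};p^{2},q^{2})=R^{\ast}_{\mu}(z^{2};a,b;q^{2},t^{2};p^{2})$ (since $R^{\ast}_{0}=1$), and each of the pair-indexed $C$-symbols on the right likewise trivialises in its first component, reducing to the claimed single-index $C$-symbols of the Corollary with the correct base/nome data.

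For step two, write $\mu=\nu+(\mu_{n}^{n})$ with $\nu:=(\mu_{1}-\mu_{n},\dots,\mu_{n-1}-\mu_{n},0)\in P_{+}(n)$, and apply \eqref{Eq_Rast-weights} (with parameters substituted as $(x,q,t,p)\mapsto(z^{2},q^{2},t^{2},p^{2})$):
\begin{gather*}
R^{\ast}_{\mu}\bigl(z^{2};a,b;q^{2},t^{2};p^{2}\bigr)
=R^{\ast}_{\nu}\bigl(z^{2};aq^{2\mu_{n}},bq^{-2\mu_{n}};q^{2},t^{2};p^{2}\bigr)
\prod_{i=1}^{n}\frac{\bigl(az_{i}^{\pm 2};q^{2},p^{2}\bigr)_{\mu_{n}}}
{\bigl(p^{2}q^{2}z_{i}^{\pm 2}/b;q^{2},p^{2}\bigr)_{\mu_{n}}}.
\end{gather*}
Converting the shifted factorials into ratios of elliptic gamma functions via \eqref{Eq_gamma-kl}, applying the reflection formula $\Gamma_{p^{2},q^{2}}(z)\Gamma_{p^{2},q^{2}}(p^{2}q^{2}/z)=1$, and splitting each $\Gamma_{p^{2},q^{2}}(z^{2})$ as $\Gamma_{p,q}(z,-z)$ via \eqref{Eq_Gamma-square}, the extraneous product becomes precisely the ratio of elliptic Selberg densities corresponding to the parameter shift $(a^{1/2},-a^{1/2},b^{1/2},-b^{1/2})\mapsto(a^{1/2}q^{\mu_{n}},-a^{1/2}q^{\mu_{n}},b^{1/2}q^{-\mu_{n}},-b^{1/2}q^{-\mu_{n}})$. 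One checks that this shift preserves both balancing conditions $abt^{2n-1}=pq$ and $uv=t$. Absorbing the ratio into the Selberg average yields a functional equation of the form $\mathrm{LHS}(\mu;a,b)=R_{S}\cdot\mathrm{LHS}(\nu;aq^{2\mu_{n}},bq^{-2\mu_{n}})$, where $R_{S}$ is an explicit ratio of Selberg normalising constants $S_{n}$.

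It remains to verify that the right-hand side of the Corollary satisfies the matching shift identity. Using \eqref{Eq_add-square} iteratively (equivalently \eqref{Eq_C-weights}) for each of $\Delta^{0}_{\mu}$, $C^{-}_{\mu}(pqt;\,\cdot\,)$, $C^{+}_{\mu}(a^{2}t^{4n-4};\,\cdot\,)$ and $C^{0}_{2\mu^{2}}(-at^{2n-1};q,-t;p)$, one computes that the ratio $\mathrm{RHS}(\mu;a,b)/\mathrm{RHS}(\nu;aq^{2\mu_{n}},bq^{-2\mu_{n}})$ equals $R_{S}$. Combined with step one, this proves the Corollary for all $\mu\in P(n)$. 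The main technical difficulty lies in the sign and normalisation bookkeeping: tracking the signs introduced by the $\Gamma_{p,q}(-z)$ factors of \eqref{Eq_Gamma-square}, verifying the exact match between the Selberg-normalisation ratio on the left and the product of $C$-symbol ratios on the right, and handling negative $\mu_{n}$ via \eqref{Eq_q-shiftedn}; each reduction is ultimately a routine application of the functional equation \eqref{Eq_Gamma-fun}.
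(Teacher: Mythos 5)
Your proposal is correct and follows essentially the same route as the paper: specialise Theorem~\ref{Thm_integral-Kawa} to $\bmu=(0,\mu)$ for partitions, then extend to all of $P(n)$ by checking that both sides satisfy the same functional equation under $\mu\mapsto\mu+(N^n)$, using \eqref{Eq_Rast-weights}, \eqref{Eq_gamma-kl} and \eqref{Eq_Gamma-square} to turn the interpolation-function prefactor into a shift of the Selberg parameters $\big(a^{1/2},-a^{1/2},b^{1/2},-b^{1/2}\big)$, and \eqref{Eq_add-square}/\eqref{Eq_C-weights} on the right-hand side. The paper's own argument is exactly this (stated tersely in the sentence preceding the corollary), so no further comparison is needed.
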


The analogue of \eqref{Eq_Cor414} in the Kawanaka case is given by
\cite[Corollary~4.16]{Rains12}
\begin{gather*}
 \big\langle R^{\ast}_{\bla}\big(z^2;a,b;t^2;p^2,q^2\big)
\big\rangle^{(n)}_{a^{1/2},-a^{1/2},b^{1/2},-b^{1/2},u,v;-t;p,q}
 \\
 \qquad{}=\Delta^0_{\bla}\big(at^{2n-2}/b\vert
a^2t^{2n-2},au^2t^{2n-2},av^2t^{2n-2};t^2;p^2,q^2\big) \\
 \qquad\quad{} \times \sum_{\bmu}
\frac{\Delta_{\bmu}\big(1/b\vert (-t)^n,-a(-t)^{n-1};-t;p,q\big)}
{\Delta_{\bmu}\big(1/b^2\vert t^{2n},a^2t^{2n-2};t^2;p^2,q^2\big)}
\obinomE{\bla}{\bmu}_{[at^{2n-2}/b,abt^{2n-2}];t^2;p^2,q^2}
\end{gather*}
for $abuvt^{2n-2}=pq$.
Since for $abt^{2n-1}=pq$
\begin{gather*}
\frac{\Delta^0_{\bmu}\big(1/b\vert (-t)^n,-a(-t)^{n-1};-t;p,q\big)}
{\Delta^0_{\bmu}\big(1/b^2\vert t^{2n},a^2t^{2n-2};t^2;p^2,q^2\big)}
=\frac{1}
{\Delta^0_{\bmu}\big(1/b\vert {-}(-t)^n,a(-t)^{n-1};-t;p,q\big)}=1,
\end{gather*}
we thus obtain the quadratic summation formula of the next corollary.

\begin{Corollary}\label{Cor_Kawanaka-sum}
For $\bla\in P_{+}(n)\times P_{+}(n)$,
\begin{gather}
 \sum_{\bmu} \frac{\Delta_{\bmu}(a\vert\leeg;-t;p,q)}
{\Delta_{\bmu}\big(a^2\vert\leeg;t^2;p^2,q^2\big)}
\obinomE{\bla}{\bmu}_{[a^2pq/t,pq/t];t^2;p^2,q^2} \notag \\
 \qquad = \frac{C^{-}_{\bla}\big(pqt;t^2;p^2,q^2\big)
C^{+}_{\bla}\big(a^2p^2q^2/t^2;t^2;p^2,q^2\big)}
{C^0_{2\bla^2}(-apq;-t;p,q)}.\label{Eq_Kawanaka-sum}
\end{gather}
\end{Corollary}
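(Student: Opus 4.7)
The plan is to mirror the derivation of Corollary~\ref{Cor_quadratic-sum}, replacing the roles of \eqref{Eq_Cor414} and Theorem~\ref{Thm_integral} by their Kawanaka counterparts: the expansion \cite[Corollary~4.16]{Rains12} and Theorem~\ref{Thm_integral-Kawa}, both recorded in the paragraphs immediately preceding the statement. Applying \cite[Corollary~4.16]{Rains12} I would first express the elliptic Selberg average
\begin{gather*}
\big\langle R^{\ast}_{\bla}\big(z^2;a,b;t^2;p^2,q^2\big)\big\rangle^{(n)}_{a^{1/2},-a^{1/2},b^{1/2},-b^{1/2},u,v;-t;p,q}
\end{gather*}
(with $abuvt^{2n-2}=pq$) as a prefactor $\Delta^0_{\bla}\big(at^{2n-2}/b\vert a^2t^{2n-2},au^2t^{2n-2},av^2t^{2n-2};t^2;p^2,q^2\big)$ times a sum over $\bmu$ involving the elliptic binomial coefficient $\obinomE{\bla}{\bmu}_{[at^{2n-2}/b,\,abt^{2n-2}];t^2;p^2,q^2}$ and a ratio of $\Delta$-factors. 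Specializing $uv=t$, i.e., $abt^{2n-1}=pq$, is precisely the hypothesis of Theorem~\ref{Thm_integral-Kawa}, which evaluates the same Selberg average in closed form.

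The key simplification, explicitly computed in the paragraph preceding the corollary, is that under $abt^{2n-1}=pq$ the $\bmu$-dependent ratio
\begin{gather*}
\frac{\Delta^0_{\bmu}\big(1/b\vert (-t)^n,-a(-t)^{n-1};-t;p,q\big)}{\Delta^0_{\bmu}\big(1/b^2\vert t^{2n},a^2t^{2n-2};t^2;p^2,q^2\big)}=1,
\end{gather*}
so that only the $\Delta_{\bmu}$-portions of the two full $\Delta$'s survive inside the sum. Equating the two expressions for the Selberg average and cancelling the common prefactor $\Delta^0_{\bla}\big(at^{2n-2}/b\vert\dots;t^2;p^2,q^2\big)$ then yields a quadratic summation of exactly the shape of \eqref{Eq_Kawanaka-sum}, but with the elliptic binomial coefficient indexed by $[at^{2n-2}/b,\,abt^{2n-2}]$ and with $a^2t^{4n-4}$ and $-at^{2n-1}$ appearing on the right.

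The final step is a cosmetic reparametrization. Setting $\alpha:=1/b$ and using $abt^{2n-1}=pq$ to write $a=\alpha pq/t^{2n-1}$ converts the relevant quantities
\begin{gather*}
at^{2n-2}/b\mapsto \alpha^2pq/t,\qquad abt^{2n-2}\mapsto pq/t,\qquad a^2t^{4n-4}\mapsto \alpha^2p^2q^2/t^2,\qquad -at^{2n-1}\mapsto -\alpha pq,
\end{gather*}
eliminating all explicit $n$-dependence (as it must, since the elliptic binomial coefficient is $n$-independent for $n$ sufficiently large) and producing \eqref{Eq_Kawanaka-sum} after relabeling $\alpha$ back to $a$. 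The analytic content of the proof is entirely absorbed by Theorem~\ref{Thm_integral-Kawa} and \cite[Corollary~4.16]{Rains12}; the only nontrivial task is the parameter bookkeeping in these substitutions, which is where the main -- though purely cosmetic -- obstacle lies.
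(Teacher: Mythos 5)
Your proposal is correct and follows the paper's own derivation essentially verbatim: combine \cite[Corollary~4.16]{Rains12} with Theorem~\ref{Thm_integral-Kawa} under the specialisation $abt^{2n-1}=pq$, observe that the ratio of $\Delta^0_{\bmu}$'s equals $1$, cancel the common prefactor, and reparametrise $a\mapsto apqt^{1-2n}$ (equivalently your $\alpha=1/b$), which is exactly the substitution the paper records after the corollary. The parameter bookkeeping you give checks out.
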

Here we note that $a$ in \eqref{Eq_weird-Kawa} has been replaced by
$apqt^{1-2n}$.

\section{Transition coefficients via elliptic hypergeometric integrals}

\subsection{Transition coefficients}

Recall that the Macdonald polynomials $P_{\la}$ indexed by weights
$\la\in P(n)$ form a basis $\Lambda_{\mathrm{GL}(n)}$.
In particular, any $\mathrm{BC}_n$-symmetric polynomial can be expanded in
terms of Macdonald polynomials.
If $\{f_{\la}\}$ (for $\la\in P_{+}(n)$ or $\la\in P(n)$) is a basis of
$\Lambda_{\mathrm{BC}(n)}$ and $g$ an arbitrary element of
$\Lambda_{\mathrm{BC}(n)}$ which expands in this basis as
\begin{gather*}
g=\sum_{\la} c_{\la} f_{\la},
\end{gather*}
we will write $[f_{\la}] g$ to denote the coefficient $c_{\la}$.
By~\eqref{Eq_P-symmetry} it then follows that
\begin{gather}\label{Eq_Pf}
\big[P_{(\la_1,\dots,\la_n)}(q,t)\big] g =
\big[P_{(-\la_n,\dots,-\la_1)}(q,t)\big] g,
\end{gather}
so that it suffices to consider $\big[P_{\la}(q,t)\big]g$
for $\la\in P(n)$ such that $\la_1\geq 0$.

We are concerned with computing the transition coefficients
\begin{gather}\label{Eq_c-small}
\cc_{\la\mu}^{(n)}(q,t,s):=\big[P_{\la}(q,t)\big]
\bar{P}^{\ast}_{\mu}(q,t,s)\in \mathbb{Q}(q,t)\big[s,s^{-1}\big]
\end{gather}
for $\la\in P(n)$ and $\mu\in P_{+}(n)$.
Apart from
\begin{gather*}
\cc_{(\la_1,\dots,\la_n),\mu}^{(n)}(q,t,s)=
\cc_{(-\la_n,\dots,-\la_1),\mu}^{(n)}(q,t,s),
\end{gather*}
it follows from \eqref{Eq_interpolation-sym}, the homogeneity of
the Macdonald polynomials and \eqref{Eq_Mac-recip} that
$\cc_{\la\mu}^{(n)}$ satisfies the simple relations
\begin{gather*}
\cc_{\la\mu}^{(n)}(q,t,s)=\cc_{\la\mu}^{(n)}(1/q,1/t,1/s)=
(-1)^{\abs{\la}-\abs{\mu}}\cc_{\la\mu}^{(n)}(q,t,-s).
\end{gather*}

It will be convenient to scale $\cc_{\la\mu}^{(n)}(q,t,s)$ to a function
that depends polynomially on $s^2$.
To this end we define
\begin{gather}\label{Eq_C-large}
\CC_{\la\mu}^{(n)}(q,t,s):=(-st^{n-1})^{\abs{\mu}-\abs{\la}}
q^{n(\mu')-n(\la')} t^{2n(\la)-2n(\mu)}
\cc_{\la\mu}^{(n)}(q,t,s)\in \mathbb{Q}(q,t)\big[s^2\big],
\end{gather}
where we recall that $n(\la)$ and $n(\la')$ are defined for arbitrary
weights $\la$ on page~\pageref{P_part}.
Some of the above symmetries for $\cc_{\la\mu}^{(n)}$ translate to
\begin{gather}\label{C-sym}
\CC_{\la\mu}^{(n)}(q,t,s)=\CC_{\la\mu}^{(n)}(q,t,-s)=
\left(\frac{q}{s^2}\right)^{\abs{\la}}
\CC_{(-\la_n,\dots,-\la_1),\mu}^{(n)}(q,t,s).
\end{gather}

\begin{Lemma}Let $\la,\mu\in P_{+}(n)$. Then
\begin{gather*}
\CC_{\la\mu}^{(n)}(q,t,s)=0\qquad \text{if $\;\la\not\subset\mu$}
\end{gather*}
and $\CC_{\la\la}^{(n)}(q,t,s)=1$.
\end{Lemma}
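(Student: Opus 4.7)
My plan is to prove both assertions by analyzing the expansion $\bar{P}^{\ast}_{\mu}=\sum_{\la\in P(n)}\cc^{(n)}_{\la\mu}P_{\la}$ via the defining properties of the $\mathrm{BC}_n$-symmetric interpolation polynomial $\bar{P}^{\ast}_{\mu}$: its triangular monomial expansion, its vanishing at $x=s\spec{\nu}_{n;q,t}$ for $\mu\not\subset\nu$, and the coincidence of its top total-degree component with $P_{\mu}(q,t)$.

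To establish $\CC^{(n)}_{\la\la}=1$ I would use a top-degree argument. Since each $P_{\la}$ is homogeneous of total degree $\abs{\la}$, extracting the top-degree component from the expansion yields $P_{\mu}=\sum_{\abs{\la}=\abs{\mu}}\cc^{(n)}_{\la\mu}P_{\la}$. The left-hand side lies in the polynomial ring $\Lambda_n$, which coincides with $\bigoplus_{\la\in P_{+}(n)}\mathbb{Q}(q,t)P_{\la}$ inside $\Lambda_{\mathrm{GL}(n)}$ (since $P_{\la}$ is a polynomial precisely when $\la\in P_{+}(n)$), so only $\la\in P_{+}(n)$ contributes. Among such $\la$ with $\abs{\la}=\abs{\mu}$ the condition $\la\subset\mu$ forces $\la=\mu$, and the unitriangularity of $P_{\mu}$ in the monomial basis then gives $\cc^{(n)}_{\mu\mu}=1$. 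The scaling factor between $\cc^{(n)}$ and $\CC^{(n)}$ is trivial at $\la=\mu$, so $\CC^{(n)}_{\mu\mu}=1$; the same argument also gives $\CC^{(n)}_{\la\mu}=0$ for $\la\in P_{+}(n)$ with $\abs{\la}=\abs{\mu}$ and $\la\ne\mu$, reducing the vanishing claim to the case $\abs{\la}<\abs{\mu}$.

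For the remaining vanishing I would induct on $n$, with the base case $n=1$ immediate from $\bar{P}^{\ast}_{(m)}$ having Newton polytope contained in $[-m,m]$. For $l(\mu)\leq n-1$ I would substitute $x_n=s$ in the expansion and apply \eqref{Eq_xniss} to obtain
\begin{equation*}
\sum_{\la\in P(n)}\cc^{(n)}_{\la\mu}(q,t,s)P_{\la}(x_1,\dots,x_{n-1},s;q,t)=\bar{P}^{\ast}_{\mu}(x_1,\dots,x_{n-1};q,t,st).
\end{equation*}
Expanding each $P_{\la}(x_1,\dots,x_{n-1},s;q,t)$ via Macdonald's branching rule (extended from partitions to all weights using \eqref{Eq_gln-weights}), matching coefficients of $P_{\rho}(x_1,\dots,x_{n-1};q,t)$, and invoking the inductive hypothesis on the right-hand side (whose Macdonald expansion is supported on $\rho\subset\mu$) should produce linear relations that force $\cc^{(n)}_{\la\mu}=0$ for $\la\not\subset\mu$. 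The case $l(\mu)=n$ reduces to the previous one via the shift identity \eqref{Eq_P-addsquare}.

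The principal obstacle is the bookkeeping in the inductive step: Macdonald's branching rule is typically stated for partitions, and its extension to arbitrary weights introduces $s$-dependent factors that must be tracked carefully, while one also needs to verify that the interaction between the interlacing relation $\rho\preceq\la$ and the containment $\rho\subset\mu$ propagates to $\la\subset\mu$. An attractive alternative would be to establish directly from the vanishing conditions that the monomial expansion of $\bar{P}^{\ast}_{\mu}$ in $\{m^W_{\nu}\}$ is supported on $\nu\subset\mu$ (rather than merely on $\nu$ dominated by $\mu$); the lemma would then follow immediately, but this stronger triangularity is itself a nontrivial claim.
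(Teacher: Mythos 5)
Your top-degree argument is correct and is a genuinely more elementary route to part of the lemma than the paper takes: since $\bar{P}^{\ast}_{\mu}$ has no monomials of total degree exceeding $\abs{\mu}$ and its degree-$\abs{\mu}$ component is $P_{\mu}(q,t)$, extracting homogeneous components of degree $\geq\abs{\mu}$ from $\bar{P}^{\ast}_{\mu}=\sum_{\la\in P(n)}\cc^{(n)}_{\la\mu}P_{\la}$ and using that $\{P_{\la}\}_{\la\in P(n)}$ is a basis does give $\CC^{(n)}_{\mu\mu}=1$ and $\CC^{(n)}_{\la\mu}=0$ for all $\la\in P_{+}(n)$ with $\abs{\la}\geq\abs{\mu}$, $\la\neq\mu$. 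The genuine gap is the case $\abs{\la}<\abs{\mu}$, which is the heart of the lemma: containment $\la\subset\mu$ is strictly stronger than the dominance condition $\la\leq\mu$ that comes for free from the triangular $m^W$-expansion of $\bar{P}^{\ast}_{\mu}$ (e.g., $\la=(2,2)\leq\mu=(3,1)$ but $\la\not\subset\mu$), so no degree or triangularity argument alone can work and the vanishing conditions \eqref{Eq_vanishing} must enter in an essential way. Your proposed induction via $x_n=s$ does not supply this: setting $x_n=s$ and matching coefficients of $P_{\rho}(x_1,\dots,x_{n-1};q,t)$ yields, for each $\rho\in P(n-1)$, a single relation $\sum_{\la\colon\rho\prec\la}\cc^{(n)}_{\la\mu}\,\psi_{\la/\rho}\,s^{\abs{\la}-\abs{\rho}}=\cc^{(n-1)}_{\rho\mu}(q,t,st)$ in which many distinct $\la$ (both with $\la\subset\mu$ and with $\la\not\subset\mu$) contribute. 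The specialisation map is far from injective on the relevant finite-dimensional space, so these relations form an underdetermined system and cannot force the vanishing of an individual $\cc^{(n)}_{\la\mu}$. Concretely, for $\mu=(3,1)$ and $\la=(2,2)$ the only $\rho\prec\la$ is $\rho=(2)\subset\mu$, so the induction never ``sees'' an obstruction to $\cc^{(2)}_{(2,2),(3,1)}\neq 0$. Your flagged alternative (containment-support of the $m^W$-expansion) is indeed a correct strengthening, but, as you note, it is itself unproved and is essentially equivalent in difficulty to the statement at hand.

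For comparison, the paper's proof avoids both issues by citing \cite[Theorem~6.16]{Rains05}: the expansion of $\bar{P}^{\ast}_{\mu}(x;q,t,s)$ in the basis $\big\{P_{\la}\big(x^{\pm};q,t\big)\big\}_{\la\in P_{+}(n)}$ is supported on $\la\subset\mu$ with diagonal coefficient $1$. Combining this with the (separately recorded) fact that $\big[P_{\nu}(x;q,t)\big]P_{\la}\big(x^{\pm};q,t\big)$ vanishes unless $\abs{\nu_i}\leq\la_i$ for all $i$, one gets $\cc^{(n)}_{\nu\mu}\neq 0\Rightarrow\nu_i\leq\la_i\leq\mu_i$ for some $\la$, i.e., $\nu\subset\mu$, and the diagonal value $1$. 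If you want a self-contained argument you would need to reprove the cited theorem, which uses the extra vanishing conditions in an essential way; your top-degree computation can then be kept as a clean way of normalising the diagonal.
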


\begin{proof}\label{Lem_c-vanishing}
According to \cite[Theorem~6.16]{Rains05}, for $\la,\mu\in P_{+}(n)$,
\begin{gather*}
\bar{C}_{\la\mu}:= \big[P_{\la}\big(x^{\pm};q,t\big)\big]\bar{P}^{\ast}_{\mu}(x;q,t,s)=0
\end{gather*}
if $\la\not\subset\mu$
(where $\bar{C}_{\la\mu}\in\mathbb{Q}(q,t)[s^{\pm}]$),
and $\bar{C}_{\mu\mu}=1$.
We also have, for $\la\in P_{+}(n)$ and $\nu\in P(n)$, that
\begin{gather*}
\hat{C}_{\nu\la}:=\big[P_{\nu}(x;q,t)\big]P_{\la}\big(x^{\pm};q,t\big)=0
\end{gather*}
if $\abs{\la}-\abs{\nu}$ is odd, or if there exists an
$1\leq i\leq n$ such that $\abs{\nu_i}>\la_i$
(where $\hat{C}_{\nu\la}\in\mathbb{Q}(q,t)$),
and $\hat{C}_{\la\la}=1$.
Combining these two results the claim immediately follows.
\end{proof}

\begin{Corollary}\label{Cor_CC-vanish}
Let $N\in\mathbb{Z}$ and $\mu\in P_{+}(n)$. Then
\begin{gather*}
\CC_{(N^n),\mu}^{(n)}(q,t,s)=0\qquad \text{if $\big(\abs{N}^n\big)\not\subset\mu$}.
\end{gather*}
\end{Corollary}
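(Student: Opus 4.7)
The plan is to reduce the general-weight case to the partition case already handled by Lemma~\ref{Lem_c-vanishing} via the reflection symmetry recorded in~\eqref{C-sym}. The key point is that the rectangular weight $(N^n)$ behaves very simply under $\la\mapsto(-\la_n,\dots,-\la_1)$: it is sent to $(|N|^n)$, which is always a genuine partition. Thus no new analysis is required beyond combining a functional equation for $\CC$ with an existing vanishing theorem.

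First I would dispose of the easy case $N\geq 0$. Here $(N^n)\in P_{+}(n)$ is already a partition, and the condition $(|N|^n)\not\subset\mu$ reads $(N^n)\not\subset\mu$. The conclusion $\CC_{(N^n),\mu}^{(n)}(q,t,s)=0$ then follows immediately from Lemma~\ref{Lem_c-vanishing}.

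For $N<0$, I would apply the second equality in~\eqref{C-sym} with $\la=(N^n)$, so that $(-\la_n,\dots,-\la_1)=(|N|^n)\in P_{+}(n)$. This yields
\begin{gather*}
\CC_{(N^n),\mu}^{(n)}(q,t,s)
=\left(\frac{q}{s^2}\right)^{nN}\CC_{(|N|^n),\mu}^{(n)}(q,t,s).
\end{gather*}
Since the prefactor $(q/s^2)^{nN}$ is a nonzero element of $\mathbb{Q}(q,t)[s,s^{-1}]$, vanishing of $\CC_{(N^n),\mu}^{(n)}$ is equivalent to vanishing of $\CC_{(|N|^n),\mu}^{(n)}$. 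The hypothesis $(|N|^n)\not\subset\mu$ together with Lemma~\ref{Lem_c-vanishing} applied to the partition $(|N|^n)$ forces the latter to vanish, completing the reduction.

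There is essentially no obstacle to speak of: the corollary is pure bookkeeping, exploiting the fact that the reflection $\la\mapsto(-\la_n,\dots,-\la_1)$ fixes the shape $(N^n)$ up to sign, so that the definition~\eqref{C-large} of $\CC$ together with the inversion symmetry~\eqref{Eq_interpolation-sym} of $\bar P^{\ast}_\mu$ and of $P_\la$ already encodes everything needed. The only care required is to verify that all prefactors arising from the identifications in~\eqref{C-large} and~\eqref{C-sym} are nonvanishing, which is immediate.
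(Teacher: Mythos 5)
Your argument is correct and is exactly the intended derivation: the paper states the corollary without proof immediately after the symmetry \eqref{C-sym}, and the only step needed to pass from the lemma (which requires a dominant $\la$) to arbitrary $N\in\mathbb{Z}$ is the reflection $\CC_{(N^n),\mu}^{(n)}(q,t,s)=(q/s^2)^{nN}\,\CC_{(\abs{N}^n),\mu}^{(n)}(q,t,s)$, combined with the vanishing of $\CC_{(\abs{N}^n),\mu}^{(n)}$ when $(\abs{N}^n)\not\subset\mu$. Nothing is missing.
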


It seems to be a hard problem to get a handle on the general form
of $\CC_{\la\mu}^{(n)}(q,t,s)$.
When $n=1$ it is a straightforward exercise in
basic hypergeometric series to show that
for $N$ an integer and $k$ a nonnegative integer
\begin{gather}\label{Eq_nis1}
\CC_{(N),(k)}^{(1)}\big(q,t,sq^{1/2}\big)=
\sum_{i=0}^k s^{2i} q^{i(i+N)} \qbin{k}{i}_q\qbin{k}{i+N}_q.
\end{gather}
Here $\qbin{k}{i}_q$ is the standard $q$-binomial coefficient
\begin{gather}
\qbin{k}{i}_q=
\begin{cases} \displaystyle
\frac{(q;q)_k}{(q;q)_i(q;q)_{k-i}} & \text{for $0\leq i\leq k$}, \\
0 & \text{otherwise}.
\end{cases} \notag
\end{gather}
For $s=1$ the sum in \eqref{Eq_nis1} can be performed by the
$q$-Chu--Vandermonde summation \cite[equation~(II.7)]{GR04} to give
\begin{gather*}
\CC_{(N),(k)}^{(1)}\big(q,t,q^{1/2}\big)=\qbin{2k}{k+N}_q, \notag
\end{gather*}
which generalises nicely for $\la=(N^n)$.

\begin{Theorem}\label{Thm_PPbarast}
For $N$ a nonnegative integer and $\mu\in P_{+}(n)$ such that
$(N^n)\subset\mu$,
\begin{gather}\label{Eq_PPbarast}
\CC_{(N^n),\mu}^{(n)}\big(q,t,q^{1/2}\big)=
\frac{C^0_{\mu}\big(t^n;q,t\big)C^{+}_{\mu}\big(qt^{2n-2};q,t\big)}{C^{-}_{\mu}(t;q,t)}
\prod_{i=1}^n \frac{\big(qt^{n-i};q\big)_{\mu_i}}{\big(qt^{n-i};q\big)_{\mu_i-N}\big(qt^{n-i};q\big)_{\mu_i+N}}.
\end{gather}
\end{Theorem}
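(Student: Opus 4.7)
Since $P_{(N^n)}(x;q,t) = (x_1\cdots x_n)^N$ and the Macdonald norm \eqref{Eq_P-norm2} at $\la=(N^n)$ reduces to $\langle 1\rangle^{(n)}_{q,t}=1$, the orthogonality \eqref{Eq_P-ortho} yields the inner-product expression
\begin{gather*}
\cc^{(n)}_{(N^n),\mu}(q,t,s) = \big\langle \bar{P}^{\ast}_{\mu}(x;q,t,s)\,(x_1\cdots x_n)^{-N} \big\rangle^{(n)}_{q,t}.
\end{gather*}
The plan is to evaluate this Macdonald inner product at $s=q^{1/2}$ by degenerating an elliptic Selberg integral from Section~\ref{Sec_beta-integrals}.

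By \eqref{Eq_Rast-limit}, $\bar{P}^{\ast}_\mu(x;q,t,q^{1/2})$ arises (up to an explicit scalar) as the $p\to 0$ limit of $R^{\ast}_\mu(x;q^{1/2},bp^\alpha;q,t;p)$. I would therefore rewrite the Macdonald inner product above as a $p\to 0$ limit of the elliptic Selberg average of Corollary~\ref{Cor_E-integral} (equation \eqref{Eq_weird2}), tuning the six Selberg parameters $t_0,\ldots,t_5$ so that, in the limit: (i) some $t_r$'s scale as fractional powers of $p$ and trivialise the corresponding $\Gamma_{p,q}(t_r z_i^{\pm})$ factors; (ii) the remaining $\mathrm{BC}_n$ pair-factors degenerate onto the $\mathrm{GL}_n$ Macdonald density appearing in $\langle\cdot\rangle^{(n)}_{q,t}$; and (iii) a residual monomial $(x_1\cdots x_n)^{-N}$ emerges from subleading terms of the degenerating $\Gamma_{p,q}$'s. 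The balance $t^{2n-2}\prod_r t_r = pq$ together with $ab=pqt^{2n-3}$ jointly fix the scalings.

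With the parameters matched, the right-hand side of \eqref{Eq_weird2} supplies a closed-form product of elliptic $C$-symbols. Passing to $p\to 0$, absorbing the prefactor from \eqref{Eq_Rast-limit}, converting $\cc^{(n)}$ to $\CC^{(n)}$ via \eqref{Eq_C-large}, and simplifying through the identities \eqref{Eq_conj}, \eqref{Eq_double-square} and \eqref{Eq_add-square} then yields \eqref{Eq_PPbarast}. As a consistency check valid for all $s$, the case $\mu=(N^n)$ reduces via \eqref{Eq_P-addsquare} to $\bar{P}^{\ast}_{(N^n)}(x;q,t,s) = (-s)^{-nN}q^{-n\binom{N}{2}}\prod_i(sx_i^{\pm};q)_N$, and a direct $C$-symbol computation gives $\CC^{(n)}_{(N^n),(N^n)}(q,t,q^{1/2}) = 1$ in agreement with the right-hand side of \eqref{Eq_PPbarast}.

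The main obstacle is precisely this parameter matching: the elliptic Selberg integral is intrinsically $\mathrm{BC}_n$-symmetric while $\langle\cdot\rangle^{(n)}_{q,t}$ is $\mathrm{GL}_n$-type, so isolating the weight $(x_1\cdots x_n)^{-N}$ against the $\mathrm{GL}_n$ Macdonald density requires a delicate simultaneous degeneration of several elliptic parameters and their $p$-scalings. A potentially cleaner alternative route is via the Kawanaka-type integral \eqref{Eq_weird-Kawa}, whose integrand contains $R^{\ast}_\mu(z^2;\cdots)$: the squared variables convert certain $\mathrm{BC}_n$ pair-factors into a form much closer to the $\mathrm{GL}_n$ Macdonald density, potentially streamlining the matching.
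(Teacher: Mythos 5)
Your starting point is exactly the paper's: by orthogonality and $P_{(N^n)}(x;q,t)=(x_1\cdots x_n)^N$ one has $\cc^{(n)}_{(N^n),\mu}(q,t,s)=\big\langle \bar{P}^{\ast}_{\mu}(z;q,t,s)\prod_i z_i^{-N}\big\rangle^{(n)}_{q,t}$, and the paper likewise evaluates this at $s=q^{1/2}$ by degenerating the quadratic elliptic Selberg average of Corollary~\ref{Cor_E-integral}. So the route is the same; the issue is that the step you flag as ``the main obstacle'' is in fact the entire content of the proof, and your proposal does not supply the mechanism for it. The paper's resolution is Proposition~\ref{Prop_integral}: after substituting $a\mapsto st^{2n-2}$, $v\mapsto v/p$ in \eqref{Eq_weird2}, one does \emph{not} get the $\mathrm{GL}_n$ density merely by letting some $t_r$ scale as fractional powers of $p$. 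One must first break the $\mathrm{BC}_n$ symmetry of the integrand by inserting the theta-function partition of unity (the sum over $\sigma\in\{\pm1\}^n$ displayed in the proof, taken from \cite{Rains09}), then rescale the contour by $1/p$ and apply \eqref{Eq_Rsqrtp}; only then does the $p\to 0$ limit produce the $S_n$-symmetric Macdonald density together with the weight $\prod_i \theta(vz_i;q)/(sz_i^{\pm};q)_{\infty}$. Without this symmetry-breaking step the limit of the $\mathrm{BC}_n$ pair-factors $\Gamma_{p,q}(tz_i^{\pm}z_j^{\pm})/\Gamma_{p,q}(z_i^{\pm}z_j^{\pm})$ simply does not reduce to $(z_i/z_j,z_j/z_i;q)_{\infty}/(tz_i/z_j,tz_j/z_i;q)_{\infty}$, so item (ii) of your plan would fail as stated.

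A second, smaller inaccuracy: the monomial $(x_1\cdots x_n)^{-N}$ does not ``emerge from subleading terms of the degenerating $\Gamma_{p,q}$'s.'' In the paper it appears only after the limit has been taken, by specialising the two surviving free parameters to $s=v=q^{N+1/2}$ in Proposition~\ref{Prop_integral}, whereupon $\theta(vz_i;q)/(sz_i^{\pm};q)_{\infty}$ collapses, via \eqref{Eq_theta-fun} and \eqref{Eq_P-addsquare}, to $z_i^{-N}$ times the shift $\mu\mapsto\mu+(N^n)$, $s\mapsto q^{1/2}$ of the interpolation polynomial. This is why the theorem is naturally proved in the form \eqref{Eq_PbarPast-2}. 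Your suggested alternative via the Kawanaka integral \eqref{Eq_weird-Kawa} would not help here: the squared arguments $R^{\ast}_{\mu}(z^2;\cdots)$ and the density $\Delta_{\mathrm{S}}(z;\ldots;-t;p,q)$ lead (after the analogous degeneration) to Kawanaka-type averages, not to the $\mathrm{GL}_n$ Macdonald pairing against $\prod_i z_i^{-N}$ that the orthogonality argument requires. Your consistency check for $\mu=(N^n)$ is correct but does not probe the difficult part of the argument.
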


By \eqref{C-sym} with $s^2=q$, the restriction that $N$ is nonnegative
is non-essential, and it is of course clear that the right-hand side
of~\eqref{Eq_PPbarast} is invariant under negation of~$N$.
Replacing $\mu\mapsto\mu+(N^n)$ and using
\eqref{Eq_add-square} and~\eqref{Eq_P-addsquare},
it follows that~\eqref{Eq_PPbarast} may also be stated as
\begin{gather}\label{Eq_PbarPast-2}
\CC_{(N^n),\mu+(N^n)}^{(n)}\big(q,t,q^{1/2}\big)=
\frac{C^0_{\mu}\big(t^n;q,t\big)C^{+}_{\mu}\big(q^{2N+1}t^{2n-2};q,t\big)}
{C^0_{\mu}\big(qt^{n-1};q,t\big)C^{-}_{\mu}(t;q,t)}.
\end{gather}
By the connection coefficient formula for $\bar{P}^{\ast}_{\mu}(q,t,s)$,
see \cite[Theorem~3.12]{Rains05}, \eqref{Eq_PPbarast} leads to an
expression for the more general transition coefficient
$\CC_{(N^n),\mu}^{(n)}\big(q,t,sq^{1/2}\big)$.
Since this result is not needed later, we omit the details.
We do remark, however, that this expression does not (in an obvious manner)
generalise~\eqref{Eq_nis1}, but instead extends the alternative form
\begin{gather}\label{Eq_nis1-alt}
\CC_{(N),(k)}^{(n)}\big(q,t,sq^{1/2}\big)=
\sum_{i=0}^k s^{i-N} \frac{(sq;q)_k(s;q)_{k-i}}{(sq;q)_i}
\qbin{k}{i}_q\qbin{2i}{i+N}_q,
\end{gather}
which obscures the fact that this is polynomial in $s^2$.
The equality of~\eqref{Eq_nis1} and~\eqref{Eq_nis1-alt} follows from
the transformation formula
\begin{gather*}
\qhypc{2}{1}\left[\genfrac{}{}{0pt}{}
{q^{-N}/b,q^{-N}}{bq};q,a^2q^{2N+1}\right]
=(a/b,aq;q)_N \: \qhypc{4}{3}\left[\genfrac{}{}{0pt}{}
{bq^{1/2},-bq^{1/2},-bq,q^{-N}}{b^2q,aq,bq^{1-N}/a};q,q\right],
\end{gather*}
which we have not yet succeeded in generalising to the multivariable
setting. The more general expression for $\CC_{(N^n),\mu+(N^n)}^{(n)}\big(q,t,sq^{1/2}\big)$ does however show it to be
a~rational function in $\mathbb{Q}\big(q,t,s,t^n,q^Nt^n\big)$.

\subsection{Proof of Theorem~\ref{Thm_PPbarast}}\label{Sec_proof}

In this section we give a proof of Theorem~\ref{Thm_PPbarast} based on the elliptic hypergeometric integral~\eqref{Eq_weird2}.

\begin{Proposition}\label{Prop_integral}
For $\mu\in P(n)$ and $q,t,s\in\mathbb{C}$ such that
$0<\abs{q},\abs{t}<1$ and $\abs{sq^{\mu_n}}<1$,
\begin{gather}
\bigg\langle \bar{P}^{\ast}_{\mu}(z;q,t,s)
\prod_{i=1}^n \frac{\theta(vz_i;q)}{\big(sz_i^{\pm};q\big)_{\infty}}
\bigg\rangle_{q,t}^{(n)} \notag \\
\qquad {} =\big({-}st^{n-1}\big)^{-\abs{\mu}} q^{-n(\mu')} t^{2n(\mu)}
\frac{C^0_{\mu}\big(t^n,s^2t^{n-1};q,t\big)C^{+}_{\mu}\big(s^2t^{2n-2};q,t\big)}
{C^0_{\mu}\big(svt^{n-1},sv^{-1}qt^{n-1};q,t\big)C^{-}_{\mu}(t;q,t)} \notag \\
\qquad\quad{} \times
\prod_{i=1}^n \frac{\big(svt^{i-1},sv^{-1}qt^{i-1};q\big)_{\infty}}
{\big(qt^{i-1},s^2t^{i-1};q\big)_{\infty}}.\label{Eq_prop-limit}
\end{gather}
\end{Proposition}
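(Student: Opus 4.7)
The plan is to derive Proposition \ref{Prop_integral} as the $p \to 0$ limit of the elliptic Selberg integral evaluation in Corollary \ref{Cor_E-integral} (equation \eqref{Eq_weird2}). In \eqref{Eq_weird2} I would set $a = s$ and $b = pqt^{2n-3}/s$, maintaining the balance condition $ab = pqt^{2n-3}$. Under this choice, the six Selberg parameters become $(q/(st),\, p^2 q/(st),\, s/(pt^{2n-2}),\, ps/t^{2n-2},\, ptv,\, pt/v)$: one pair consists of an $O(1)$ parameter and its $p^2$-shift, another of an $O(p^{-1})$ parameter and its $p^2$-shift, and the last pair vanishes like $p$. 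This paired structure invites applying $\Gamma_{p^4,q^2}(z)\,\Gamma_{p^4,q^2}(p^2 z) = \Gamma_{p^2,q^2}(z)$ to reduce the nome of the density from $p^4$ to $p^2$, and the reflection formula $\Gamma_{p,q}(z)\,\Gamma_{p,q}(pq/z) = 1$ to tame the divergent parameter.

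For the interpolation function $R^\ast_\mu(z; s/(pt^{2n-2}), q/(st); q^2, t^2; p^2)$ on the left-hand side, I would first rewrite the divergent first argument using the scaling identity \eqref{Eq_Rsqrtp} (applied with $(q,t,p) \mapsto (q^2,t^2,p^2)$), transforming to a form in which the second argument vanishes like $p$. Then \eqref{Eq_Rast-limit} identifies the $p \to 0$ limit with an explicit prefactor times $\bar{P}^\ast_\mu$ at base $(q^2,t^2)$; the doubling identity $C^0_\mu(z,-z;q,t) = C^0_\mu(z^2;q^2,t^2)$ from \eqref{Eq_pm} together with the companion relations in \eqref{Eq_double-square} then reduce this to $\bar{P}^\ast_\mu(z; q, t, s)$ with the desired base. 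Simultaneously, after the gamma-function reorganization above and using $(z,-z;q)_\infty = (z^2;q^2)_\infty$ and $(z;q^2)_\infty(qz;q^2)_\infty = (z;q)_\infty$, the elliptic Selberg density factorizes in the limit as the Macdonald density of \eqref{Eq_f-Mac-density} times the integrand factor $\prod_i \theta(vz_i;q)/(sz_i^{\pm};q)_\infty$ plus explicit $p$-independent product prefactors that will appear on the right-hand side of \eqref{Eq_prop-limit}.

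The right-hand side of \eqref{Eq_weird2} degenerates termwise using $\theta(z;0) = 1-z$: each $C^{0,\pm}_\mu$ reduces to its non-elliptic version and $\Delta^0_\mu$ to the corresponding $C^0_\mu$ quotient, with the base-$(q^2,t^2)$ symbols converted via \eqref{Eq_double-square} to the base-$(q,t)$ symbols appearing in the stated RHS. Matching this with the prefactors pulled out of the $R^\ast_\mu$-limit and the Selberg-density degeneration, the target identity \eqref{Eq_prop-limit} emerges after simplification. The main obstacle is the bookkeeping in the previous paragraph: cleanly organizing the divergent and vanishing gamma factors against the $S_n$-normalization in the average to extract exactly the Macdonald density times $\theta(vz_i;q)/(sz_i^{\pm};q)_\infty$ without spurious residual $p$-dependence, and simultaneously aligning the accumulated scalar prefactors with the factorized form on the right-hand side.
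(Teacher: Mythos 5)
You start from the right elliptic input --- Corollary~\ref{Cor_E-integral} is exactly what the paper degenerates --- but the proposal treats the $p\to 0$ limit as termwise degeneration plus bookkeeping, and that is where it breaks down. The elliptic Selberg density contains the cross terms $\Gamma_{p^4,q^2}\big(t^2z_i^{\pm}z_j^{\pm}\big)/\Gamma_{p^4,q^2}\big(z_i^{\pm}z_j^{\pm}\big)$ and $1/\Gamma_{p^4,q^2}\big(z_i^{\pm 2}\big)$. Your use of \eqref{Eq_Rsqrtp} to tame the divergent argument of $R^{\ast}_{\mu}$ forces a rescaling $z_i\mapsto z_i/p$ of the integration variables (otherwise \eqref{Eq_Rast-limit} does not apply), and under that rescaling the arguments $z_iz_j$ and $z_i^2$ become divergent, of order $p^{-2}=(p^4)^{-1/2}$ relative to the nome; the nome-reduction identity and the reflection formula do not make these ratios tend to $1$ or to the Macdonald cross terms --- they leave residual theta functions in $z_iz_j$. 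The paper's proof inserts the $\mathrm{BC}_n\to S_n$ symmetry-breaking theta identity of \cite{Rains09} \emph{before} rescaling, which reorganises the density so that after $z\mapsto z/p$ every $z_iz_j$- and $z_i^2$-type gamma factor has argument of order $p^2=(p^4)^{1/2}$ and hence tends to $1$, leaving exactly the Macdonald density in the variables $z_i/z_j$. That identity is an essential extra idea, not bookkeeping, and the proposal has no substitute for it.

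A second concrete problem: your specialisation keeps the last pair of Selberg parameters as $ptv$ and $pt/v$, so the elliptic identity you are degenerating is invariant under $v\mapsto v^{-1}$, whereas the target \eqref{Eq_prop-limit} is not (it contains the asymmetric pair $svt^{n-1}$, $sv^{-1}qt^{n-1}$ and the factor $\theta(vz_i;q)$). A $v$-symmetric limit cannot produce it. The paper first substitutes $v\mapsto v/p$, turning the pair into $tv$ and $p^2t/v$; combined with the symmetry breaking and the rescaling this is what leaves the single surviving theta factor $\theta(qz_i/v;q^2)$ in the integrand, with $v\mapsto q/v$ at the very end. A minor further point: \eqref{Eq_Rast-limit} already yields $\bar{P}^{\ast}_{\mu}\big(q^2,t^2,s\big)$, so the identity is first obtained at base $\big(q^2,t^2\big)$ and then relabelled; the quadratic $C$-symbol identities \eqref{Eq_pm} and \eqref{Eq_double-square} cannot change the base of the interpolation polynomial itself as your second paragraph suggests.
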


The integrand on the left has simple poles at $z_i=\big(sq^{\mu_n+k}\big)^{\sigma}$
for $\sigma\in\{-1,1\}$, $1\leq i\leq n$ and~$k$ a nonnegative integer.
The condition $\abs{sq^{\mu_n}}<1$ ensures that the
poles at $z_i=sq^{\mu_n+k}$ lie in the interior of~$\mathbb{T}^n$
and the poles at $z_i=s^{-1}q^{-\mu_n-k}$ lie in the exterior.

Before showing how \eqref{Eq_prop-limit} follows from~\eqref{Eq_weird2},
we first use the former to prove Theorem~\ref{Thm_PPbarast}.

\begin{proof}[Proof of Theorem~\ref{Thm_PPbarast}]
Let $N$ be a nonnegative integer and replace $s,v\mapsto q^{N+1/2}$
in~\eqref{Eq_prop-limit}.
By~\eqref{Eq_theta-fun} and~\eqref{Eq_P-addsquare} this yields
\begin{gather}
 \big({-}q^{N+1/2}t^{n-1}\big)^{\abs{\mu}} q^{n(\mu')} t^{-2n(\mu)}
\bigg\langle \bar{P}^{\ast}_{\mu+(N^n)}\big(z;q,t,q^{1/2}\big)
\prod_{i=1}^n z_i^{-N} \bigg\rangle_{q,t}^{(n)} \notag \\
 \qquad{} =\frac{C^0_{\mu}(t^n;q,t)C^{+}_{\mu}\big(q^{2N+1}t^{2n-2};q,t\big)}
{C^0_{\mu}\big(qt^{n-1};q,t\big)C^{-}_{\mu}(t;q,t)}.\label{Eq_barPziN}
\end{gather}
By the orthogonality \eqref{Eq_P-ortho},
\begin{gather*}
\cc_{\la\mu}(q,t,s)=
\frac{\big\langle P_{\la}\big(z^{-1};q,t\big)\bar{P}^{\ast}_{\mu}(z;q,t,s)
\big\rangle_{q,t}^{(n)}} {\big\langle P_{\la}\big(z^{-1};q,t\big)P_{\la}(z;q,t)
\big\rangle_{q,t}^{(n)}}.
\end{gather*}
Since $P_{(N^n)}(z;q,t)=\prod_{i=1}^n z_i^N$, for $\la=(N^n)$ this
simplifies to
\begin{gather*}
\cc_{(N^n),\mu}(q,t,s)=\bigg\langle \bar{P}^{\ast}_{\mu}(z;q,t,s) \prod_{i=1}^n z_i^{-N} \bigg\rangle_{q,t}^{(n)}.
\end{gather*}
Shifting $\mu\mapsto\mu+(N^n)$ and using \eqref{Eq_C-large},
this yields
\begin{gather*}
\CC_{(N^n),\mu+(N^n)}\big(q,t,q^{1/2}\big)
=\frac{C^0_{\mu}\big(t^n;q,t\big)C^{+}_{\mu}\big(q^{2N+1}t^{2n-2};q,t\big)}
{C^0_{\mu}\big(qt^{n-1};q,t\big)C^{-}_{\mu}(t;q,t)}.
\end{gather*}
Equating this with \eqref{Eq_barPziN} we obtain \eqref{Eq_PbarPast-2}.
\end{proof}

\begin{proof}[Proof of Proposition~\ref{Prop_integral}]
We begin by making the substitutions $a\mapsto st^{2n-2}$ and $v\mapsto v/p$ in~\eqref{Eq_weird2}, resulting in
\begin{gather}
\big\langle R^{\ast}_{\mu}\big(z;s/p,qt^{1-2n}/s;q^2,t^2;p^2\big)
\big\rangle^{(n)}_
{qt^{1-2n}/s,p^2qt^{1-2n}/s,s/p,sp,tv,p^2t/v;t^2;p^4,q^2} \notag \\
\qquad{} = \Delta^0_{\mu}\big(s^2t^{4n-3}/pq\big\vert
s^2t^{2n-2},st^{2n-1}v/p,spt^{2n-1}/v;q^2,t^2;p^2\big) \notag \\
 \qquad\quad{}\times
\frac{C^{+}_{\mu}\big(s^2t^{4n-4};q^2,t^2;p^2\big)
C^{-}_{\mu}\big(pqt;q^2,t^2;p^2\big)}
{C^0_{2\mu^2}\big(sp^2t^{4n-2};q^2,t^2;p^4\big)}.\label{Eq_int-sub}
\end{gather}
The general method for taking the $p\to 0$ limit of integrals such as~\eqref{Eq_int-sub} was developed in~\cite{Rains09} and has also applied more recently in~\cite{ARW20}.
It relies on a trick to break the~$\mathrm{BC}_n$ symmetry, resulting in an
integral with $S_n$-symmetry only.
Denote the left-hand side of \eqref{Eq_int-sub} by~$\mathscr{L}$. Then, by
\begin{gather*}
\Gamma_{p^2,q}(z,pz)=\Gamma_{p,q}(z),
\end{gather*}
we have
\begin{gather*}
\mathscr{L} =\frac{1}
{S_n\big(qt^{1-2n}/s,p^2qt^{1-2n}/s,s/p,sp,tv,p^2t/v;t^2;p^4,q^2\big)} \\
\hphantom{\mathscr{L} =}{} \times \int
R^{\ast}_{\mu}\big(z;s/p,qt^{1-2n}/s;q^2,t^2;p^2\big)
\Delta_{\mathrm{S}}\big(z;tv,p^2t/v;t^2;p^4,q^2\big) \\
\hphantom{\mathscr{L} =\times \int}{} \times
\prod_{i=1}^n \Gamma_{p^2,q^2}\big(sz_i^{\pm}/p,qt^{-n}z_i^{\pm}/s\big)
\frac{\dup z_1}{z_1}\cdots\frac{\dup z_n}{z_n}.
\end{gather*}
By $\mathrm{BC}_n$-symmetry and the identity
\begin{gather*}
1=\sum_{\sigma\in\{\pm 1\}^n} \prod_{i=1}^n
\frac{\theta\big(t_0 z_i^{\sigma_i},t_1 z_i^{\sigma_i},t_2 z_i^{\sigma_i},
t^{n-1}t_0t_1t_2z_i^{-\sigma_i};q\big)}
{\theta\big(z_i^{2\sigma_i},t^{i-1}t_0t_1,t^{i-1}t_0t_2,t^{i-1}t_1t_2;q\big)}
\prod_{1\leq i<j\leq n} \frac{\theta\big(t z_i^{\sigma_i}z_j^{\sigma_j};q\big)}
{\theta\big(z_i^{\sigma_i}z_j^{\sigma_j};q\big)},
\end{gather*}
we may multiply the integrand by the symmetry-breaking factor
\begin{gather*}
2^n \prod_{i=1}^n
\frac{\theta\big(sz_i/p,qt^{1-2n}z_i/s,tvz_i,qvz_i^{-1}/p;q^2\big)}
{\theta\big(z_i^2,qt^{2i-2n-1}/p,
st^{2i-1}v/p,qt^{2i-2n}v/s;q^2\big)}
\prod_{1\leq i<j\leq n} \frac{\theta\big(t^2z_iz_j;q^2\big)}
{\theta\big(z_iz_j;q^2\big)}. \notag
\end{gather*}
By the functional equation~\eqref{Eq_Gamma-fun} for the elliptic
gamma function, this leads to
\begin{gather*}
\mathscr{L} =
\frac{2^n\kappa_n\big(p^4,q^2\big) \Gamma^n_{p^4,q^2}\big(t^2\big)}
{S_n\big(qt^{1-2n}/s,p^2qt^{1-2n}/s,sp,s/p,tv,p^2t/v;t^2;p^4,q^2\big)} \notag \\
\hphantom{\mathscr{L} =}{} \times\int
R^{\ast}_{\mu}\big(z;s/p,qt^{1-2n}/s;q^2,t^2;p^2\big)
\prod_{i=1}^n \theta\big(pqz_i/v;q^2\big) \\
\hphantom{\mathscr{L} =\times\int}{} \times
\prod_{i=1}^n \frac{\Gamma_{p^2,q^2}\big(s(pz_i)^{\pm},
pqt^{1-2n}(pz_i)^{\pm}/s\big)
\Gamma_{p^4,q^2}\big(ptv(pz_i)^{\pm},p^2tz_i^{\pm}/v\big)}
{\Gamma_{p^4,q^2}\big(p^2\big(p^2z_i^2\big)^{\pm}\big)} \\
\hphantom{\mathscr{L} =\times\int}{} \times \prod_{1\leq i<j\leq n}
\frac{\Gamma_{p^4,q^2}\big(p^2t^2\big(p^2z_iz_j\big)^{\pm},t^2(z_i/z_j)^{\pm}\big)}
{\Gamma_{p^4,q^2}\big(p^2\big(p^2z_iz_j\big)^{\pm},(z_i/z_j)^{\pm}\big)}
 \frac{\dup z_1}{z_1}\cdots\frac{\dup z_n}{z_n}.
\end{gather*}
We can scale the contour $C$ by a factor~$1/p$ without crossing any poles.
Then replacing $z_i\mapsto z_i/p$ (so that the contour is once again
given by $C$) and using~\eqref{Eq_Rsqrtp} with
\begin{gather*}
(x,a,b,p,q,t)\mapsto \big(z/p,s/p,qt^{1-2n}/s,p^2,q^2,t^2\big),
\end{gather*}
this yields
\begin{gather*}
\mathscr{L} = \frac{2^n q^{4n(\mu')} t^{-4n(\mu)} \big(s^2qt^{4n-3}/p\big)^{\abs{\mu}}
\kappa_n\big(p^4,q^2\big) \Gamma^n_{p^4,q^2}\big(t^2\big)}
{S_n\big(qt^{1-2n}/s,p^2qt^{1-2n}/s,sp,s/p,tv,p^2t/v;t^2;p^4,q^2\big)} \\
\hphantom{\mathscr{L} =}{} \times \int
R^{\ast}_{\mu}\big(z;s,pqt^{1-2n}/s;q^2,t^2;p^2\big)
\prod_{i=1}^n \theta\big(qz_i/v;q^2\big) \\
\hphantom{\mathscr{L} =\times \int}{} \times
\prod_{i=1}^n
\frac{\Gamma_{p^2,q^2}\big(sz_i^{\pm},pqt^{1-2n}z_i^{\pm}/s\big)
\Gamma_{p^4,q^2}\big(ptvz_i^{\pm},ptz_i/v,p^3tz_i^{-1}/v\big)}
{\Gamma_{p^4,q^2}\big(p^2z_i^{\pm 2}\big)} \notag \\
\hphantom{\mathscr{L} =\times \int}{} \times \prod_{1\leq i<j\leq n}
\frac{\Gamma_{p^4,q^2}\big(p^2t^2(z_iz_j)^{\pm},t^2(z_i/z_j)^{\pm}\big)}
{\Gamma_{p^4,q^2}\big(p^2(z_iz_j\big)^{\pm},(z_i/z_j)^{\pm})}
\frac{\dup z_1}{z_1}\cdots\frac{\dup z_n}{z_n}.
\end{gather*}
Taking the $p\to 0$ limit using \eqref{Eq_Rast-limit} and
\begin{gather*}
\lim_{p\to 0} \Gamma(p^{\alpha}z;p,q)=
\begin{cases}
1/(z;q)_{\infty} & \text{if $\alpha=0$}, \\
1 & \text{if $0<\alpha<1$}, \\
(q/z;q)_{\infty} & \text{if $\alpha=1$},
\end{cases}
\end{gather*}
we obtain
\begin{gather}
\lim_{p\to 0} \left(\frac{p}{qt}\right)^{\abs{\mu}}\mathscr{L} =
\big({-}st^{2n-2}\big)^{3\abs{\mu}} q^{6n(\mu')} t^{-8n(\mu)}
\prod_{i=1}^n \frac{\big(q^2t^{2i-2},s^2t^{2i-2};q^2\big)_{\infty}}
{\big(sqt^{2i-2}v^{\pm};q^2\big)_{\infty}} \notag \\
\hphantom{\lim_{p\to 0} \left(\frac{p}{qt}\right)^{\abs{\mu}}\mathscr{L} =}{} \times
\frac{C^{-}_{\mu}\big(t^2;q^2,t^2\big)}{C^0_{\mu}\big(t^{2n};q^2,t^2\big)}
\bigg\langle \bar{P}^{\ast}_{\mu}\big(z;q^2,t^2,s^2\big)
\prod_{i=1}^n \frac{\theta\big(qz_i/v;q^2\big)}{\big(sz_i^{\pm};q^2\big)_{\infty}}
\bigg\rangle_{q^2,t^2}^{(n)},\label{Eq_Llim}
\end{gather}
where we have also used the evaluation \eqref{Eq_Snqt}.

Taking the same limit in the right-hand side of \eqref{Eq_int-sub} yields
\begin{gather}\label{Eq_Rlim}
\lim_{p\to 0} \left(\frac{p}{qt}\right)^{\abs{\mu}} \mathscr{R}=
\big(st^{2n-2}\big)^{2\abs{\mu}}q^{4n(\mu')}t^{-4n(\mu)}
\frac{C^0_{\mu}\big(s^2t^{2n-2};q^2,t^2\big) C^{+}_{\mu}\big(s^2t^{4n-4};q^2,t^2\big)}
{C^0_{\mu}\big(sqt^{2n-2}v^{\pm};q^2,t^2\big)}.
\end{gather}
Equating the right-hand sides of~\eqref{Eq_Llim} and~\eqref{Eq_Rlim},
and replacing $v\mapsto q/v$ results in~\eqref{Eq_prop-limit}
with $(q,t)\mapsto\big(q^2,t^2\big)$, completing the proof.
\end{proof}

\section[The elliptic hypergeometric function Phi lambda]{The elliptic hypergeometric function $\boldsymbol{\Phi_{\la}}$}\label{Sec_Phi}

In this section we define a new elliptic hypergeometric function,
\begin{gather*}
\Phi_{\la}=\Phi_{\la}(q,t;p)=\Phi_{\la}(a;b,c,d;q,t;p),
\end{gather*}
study its symmetries and prove two summation formulas for one-parameter
specialisations of $\{a,b,c,d\}$.
The $p\to 0$ limit of $\Phi_{\la}(q,t;p)$ will play an important role
in proving the $q,t$-branching rules~\eqref{Eq_universal} and~\eqref{Eq_non-universal}.

For $\la$ a partition and $a,b,c,d,p,q,t\in\mathbb{C}^{\ast}$ such that
$\abs{p}<1$, the elliptic hypergeometric function $\Phi_{\la}(q,t;p)$ is defined as
\begin{gather}
\Phi_{\la}(a;b,c,d;q,t;p) \notag \\
\qquad{} := \frac{1}{\Delta^0_{\la}\big(e\vert f;q^2,t^2;p^2\big)}
\sum_{\mu\subseteq\la}
\frac{C^{-}_{\mu}\big(pqt;q^2,t^2;p^2\big)C_{\mu}^{+}\big(a^2p^2/q^2;q^2,t^2;p^2\big)}
{C^0_{\mu}\big(ap,bp,cp,dp;q^2,t^2;p\big)}
\obinomE{\la}{\mu}_{[e,f];q^2,t^2;p^2} \notag \\
\qquad \hphantom{:}{}=\frac{1}{\Delta^0_{\la}\big(e\vert f;q^2,t^2;p^2\big)}
\sum_{\mu\subseteq\la}\bigg(
\big({-}eq^2\big)^{\abs{\mu}} q^{2n(\mu')}t^{-2n(\mu)} \notag \\
 \qquad \qquad\qquad{} \times
\frac{C^{-}_{\mu}\big(pqt;q^2,t^2;p^2\big)C_{\mu}^{+}\big(a^2/q^2;q^2,t^2;p^2\big)}
{C^0_{\mu}\big(a,b,c,d;q^2,t^2;p\big)}
\obinomE{\la}{\mu}_{[e,f];q^2,t^2;p^2} \bigg),\label{Eq_PhiE}
\end{gather}
where $e:=bcd/aq^2$ and $f:=bcdq/a^3pt$.
The equality of the two expressions on the right of~\eqref{Eq_PhiE}
is a direct consequence of the quasi-periodicity~\eqref{Eq_quasi-C}
of the elliptic $C$-symbols.
We also note that $\Phi_{\la}(a;-a,c,d;q,t;p)$ is a function of
$a^2$ only, so that $\Phi_{\la}(a;-a,c,d;q,t;p)=\Phi_{\la}(-a;a,c,d;q,t;p)$.

For later use we also define the following basic hypergeometric analogue
of $\Phi_{\la}(q,t;p)$:
\begin{gather}\label{Eq_PhiB}
\Phi_{\la}(q,t)=\Phi_{\la}(a;b,c,d;q,t):=
\sum_{\mu\subseteq\la}
(-1)^{\abs{\mu}}q^{-n(\mu')}t^{n(\mu)}
\frac{C_{\mu}^{+}\big(a^2/q,s^2;q,t\big)}
{C^0_{\mu}(a,b,c,d;q,t)} \qbin{\la}{\mu}_{q,t,s},
\end{gather}
where $s^2:=bcd/aq$.
The reason for renaming $e$ in \eqref{Eq_PhiE} as $s^2$
is the convention of writing $\qbin{\la}{\mu}_{q,t,s}$.
Here we recall that the $q,t,s$-binomial coefficient
is a~function of~$s^2$ only.
For~$\la$ a partition of length at most one
$\Phi_{\la}(q,t)$ is independent of $t$ and
simplifies to a balanced, terminating $\qhypc{5}{4}$
basic hypergeometric series~\cite{GR04}:
\begin{gather}\label{Eq_5ph4}
{\Phi_{(N)}}(a;b,c,d;q,t)=
\qhypc{5}{4}\left[\genfrac{}{}{0pt}{}
{aq^{-1/2},-aq^{-1/2},-a,bcdq^{N-1}/a,q^{-N}}
{a^2q^{-1},b,c,d};q,q\right].
\end{gather}

\begin{Lemma}\label{Lem_PhiE-limit}
We have
\begin{gather*}
\lim_{p\to 0} \Phi_{\la}(a;b,c,d;q,t;p)=\Phi_{\la}\big(a;b,c,d;q^2,t^2\big).
\end{gather*}
\end{Lemma}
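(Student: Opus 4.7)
The plan is to verify the limit term-by-term in the finite sum defining $\Phi_{\la}(q,t;p)$, using the second form of \eqref{Eq_PhiE}. Since the sum ranges over $\mu\subseteq\la$ and has finitely many terms, interchanging the limit with summation is automatic, and it suffices to compute the pointwise limit of each factor in the summand together with the prefactor $1/\Delta^0_{\la}(e\vert f;q^2,t^2;p^2)$.

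The ``easy'' factors are essentially immediate: setting $p=0$ in $C^0_{\mu}(a,b,c,d;q^2,t^2;p)$ and $C^+_{\mu}(a^2/q^2;q^2,t^2;p^2)$ yields the basic hypergeometric $C$-symbols $C^0_{\mu}(a,b,c,d;q^2,t^2)$ and $C^+_{\mu}(a^2/q^2;q^2,t^2)$, while in $C^-_{\mu}(pqt;q^2,t^2;p^2)$ both the argument $pqt$ and the nome $p^2$ tend to $0$, so each theta factor tends to $1$ and the whole factor to $1$. The limit of the elliptic binomial coefficient is provided directly by \eqref{Eq_qtbinom-limit}: after the substitution $(p,q,t)\mapsto(p^2,q^2,t^2)$ and the identification $s^2=e$, $b/p=f$ (so that $b=fp=bcdq/(a^3t)$ is the $p$-independent parameter),
\begin{gather*}
\lim_{p\to 0}\obinomE{\la}{\mu}_{[e,f];q^2,t^2;p^2}
=\big(eq^2\big)^{\abs{\la/\mu}} q^{4n(\la'/\mu')}t^{-4n(\la/\mu)}
C^+_{\mu}\big(e;q^2,t^2\big)\qbin{\la}{\mu}_{q^2,t^2,\sqrt{e}}.
\end{gather*}

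The main obstacle is the prefactor $\Delta^0_{\la}(e\vert f;q^2,t^2;p^2)$, because $f=bcdq/(a^3pt)$ diverges as $p^{-1}$. To handle it I would apply the theta inversion identity $\theta(z;p)=\theta(p/z;p)$, a consequence of \eqref{Eq_theta-fun}, to rewrite the denominator and obtain
\begin{gather*}
\Delta^0_{\la}(e\vert f;q^2,t^2;p^2)
=\prod_{(i,j)\in\la}\frac{\theta(fX_{ij};p^2)}{\theta(f/(eq^2 X_{ij});p^2)},
\end{gather*}
where $X_{ij}:=q^{2(j-1)}t^{2(1-i)}$. Setting $u:=bcdq/(a^3t)$ so that $f=u/p$, the leading behaviour of each theta is $\theta(fX_{ij};p^2)\sim -uX_{ij}/p$ and $\theta(f/(eq^2 X_{ij});p^2)\sim -u/(eq^2 X_{ij}p)$ as $p\to 0$, and their ratio tends to $eq^2 X_{ij}^2$. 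Taking the product over $(i,j)\in\la$ yields
\begin{gather*}
\lim_{p\to 0}\Delta^0_{\la}(e\vert f;q^2,t^2;p^2)
=\big(eq^2\big)^{\abs{\la}} q^{4n(\la')}t^{-4n(\la)}.
\end{gather*}

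Combining all these limits with the explicit factor $(-eq^2)^{\abs{\mu}}q^{2n(\mu')}t^{-2n(\mu)}$ from the summand, the $(eq^2)^{\abs{\la}}$ in the inverse prefactor cancels against $(eq^2)^{\abs{\la/\mu}}$ from the binomial and $(eq^2)^{\abs{\mu}}$ from the explicit factor, leaving the sign $(-1)^{\abs{\mu}}$ and the exponents $q^{-2n(\mu')}$, $t^{2n(\mu)}$. Recalling that $s^2=bcd/(aq^2)=e$, we have $C^+_{\mu}(a^2/q^2;q^2,t^2)C^+_{\mu}(e;q^2,t^2)=C^+_{\mu}(a^2/q^2,s^2;q^2,t^2)$ and $\qbin{\la}{\mu}_{q^2,t^2,\sqrt{e}}=\qbin{\la}{\mu}_{q^2,t^2,s}$, so each summand matches the corresponding term in the definition \eqref{Eq_PhiB} of $\Phi_{\la}(a;b,c,d;q^2,t^2)$, completing the identification.
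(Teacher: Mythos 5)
Your proof is correct and follows essentially the same route as the paper's: a term-by-term limit of the second form of \eqref{Eq_PhiE}, using \eqref{Eq_qtbinom-limit} for the binomial coefficient and the vanishing arguments for the remaining $C$-symbols. The only (cosmetic) difference is that you evaluate $\lim_{p\to 0}\Delta^0_{\la}(e\vert f;q^2,t^2;p^2)$ directly from the leading asymptotics of the theta functions, whereas the paper first invokes the quasi-periodicity \eqref{Eq_quasi-C0} to replace $f$ by the harmless $fp^2$; both yield the same factor $(eq^2)^{\abs{\la}}q^{4n(\la')}t^{-4n(\la)}$.
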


\begin{proof}By \eqref{Eq_quasi-C0},
\begin{gather}
\Delta^0_{\la}(e\vert f;q,t;p)=
(eq)^{\abs{\la}} q^{2n(\la)}t^{-2n(\la)}
\Delta^0_{\la}(e\vert fp;q,t;p). \notag
\end{gather}
Replacing $(p,q,t)\mapsto \big(p^2,q^2,t^2\big)$ and
substituting the above in the second form for
$\Phi$, the $p\to 0$ limit can be taken using~\eqref{Eq_qtbinom-limit} and
$\lim_{p\to 0} C_{\la}^{0,\pm}\big(zp;q^2,t^2;p^2\big)=1$,
resulting in the claim.
\end{proof}

The function $\Phi_{\la}$ satisfies the following symmetries.

\begin{Lemma}\label{Eq_PhiE-sym}For $\la$ a partition
\begin{subequations}
\begin{gather}
\Phi_{\la}(1/a;1/b,1/c,1/d;1/q,1/t;p)=
\Phi_{\la}(a;b,c,d;q,t;p)
\end{gather}
and
\begin{gather}\label{Eq_PhiE-conj}
\Phi_{\la'}(a;b,c,d;t,q;p)=
\Phi_{\la}(1/a;1/b,1/c,1/d;q,t;p).
\end{gather}
\end{subequations}
\end{Lemma}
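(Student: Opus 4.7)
The plan is to verify both symmetries by inspecting the defining sum \eqref{Eq_PhiE} summand-by-summand. Since the prefactor $\Delta_\la^0$, the four $C$-symbols and the elliptic binomial coefficient each obey an analogous reciprocity or conjugation rule, the strategy is to apply the relevant building-block identities from Section~\ref{Sec_factorials} and Section~\ref{Sec_Lifted} and then absorb any leftover powers of $p$ via the quasi-periodicities \eqref{Eq_quasi-C}.

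For the reciprocity identity, I plan to apply \eqref{Eq_recip} to each of $C_\mu^-(pqt;q^2,t^2;p^2)$, $C_\mu^+(a^2p^2/q^2;q^2,t^2;p^2)$ and $C_\mu^0(ap,bp,cp,dp;q^2,t^2;p)$, and to the ratio of $C^0$-symbols making up $\Delta_\la^0(e\vert f;q^2,t^2;p^2)$, combined with the second equality in \eqref{Eq_binomE-randc} for the binomial coefficient $\obinomE{\la}{\mu}_{[e,f];q^2,t^2;p^2}$. Writing out the transformed summand at parameters $(1/a,1/b,1/c,1/d,1/q,1/t)$ in terms of $C$-symbols at parameters $(a,b,c,d,q,t)$ shifted by appropriate powers of $p$, and then using \eqref{Eq_quasi-C} to strip those $p$-shifts, leaves the summand of $\Phi_\la(a;b,c,d;q,t;p)$ multiplied by a scalar depending on $\abs{\mu}$, $n(\mu)$, $n(\mu')$; a parallel calculation for $\Delta_\la^0$ contributes a $\la$-dependent scalar. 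One then verifies that all such scalars, together with the explicit monomial prefactor in \eqref{Eq_binomE-randc}, cancel.

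For the conjugation identity \eqref{Eq_PhiE-conj}, the same mechanism is used but with the three $C$-symbol conjugations \eqref{Eq_C0-conj}--\eqref{Eq_Cp-conj} and the first equality in \eqref{Eq_binomE-randc}. The conjugation $\mu\mapsto\mu'$ maps the sum over $\mu\subseteq\la'$ bijectively to the sum over $\mu\subseteq\la$, so it suffices to check that the transformed summand matches term-wise. A direct computation shows that the new parameters $e',f'$ appearing on the left-hand side are consistent with the rules $C^0_{\mu'}(z;t,q;p) = C^0_\mu(p/z;q,t;p)$ and $C^+_{\mu'}(z;t,q;p) = C^+_\mu(p/(zqt);q,t;p)$ applied to the summand, once again modulo $p$-shifts cleared by \eqref{Eq_quasi-C}.

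The main obstacle is the accounting of these quasi-periodicity factors. Each of the $C^-$, $C^+$ and $C^0$ quasi-periodicities introduces powers of $p$, $q$, $t$ and a sign with intricate dependence on $\abs{\mu}$ and the moment statistics $n(\mu), n(\mu')$, while the analogous $\la$-statistics arise from $\Delta_\la^0$ and from the scalar prefactor in \eqref{Eq_binomE-randc}. Showing that these combine to give an overall factor of $1$, rather than some residual monomial, is the arithmetic heart of the proof. A useful consistency check during the bookkeeping is the $p\to 0$ limit: by Lemma~\ref{Lem_PhiE-limit} both identities should reduce to the corresponding symmetries for the basic hypergeometric $\Phi_\la(a;b,c,d;q^2,t^2)$ of \eqref{Eq_PhiB}, which follow at once from \eqref{Eq_qbin-sym} and \eqref{Eq_binom-conj}; any systematic error in the $p$-, $q$-, $t$-exponent bookkeeping would already be visible in that limit.
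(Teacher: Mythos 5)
Your strategy coincides with the paper's proof: both identities are verified termwise by applying the $C$-symbol reciprocity \eqref{Eq_recip} (resp.\ conjugation \eqref{Eq_conj}) together with \eqref{Eq_binomE-randc} for the elliptic binomial coefficients, and then clearing the residual $p$-shifts with the quasi-periodicities \eqref{Eq_quasi-C0} and \eqref{Eq_quasi-Cm}, including the final check that $e$ and $f$ transform consistently. The exponent bookkeeping you flag as the arithmetic heart is exactly what the paper carries out explicitly, and your plan contains no gap beyond leaving that computation unexecuted.
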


Combining these two results further yields
\begin{gather}\label{Eq_PhiE-conj2}
\Phi_{\la}(a;b,c,d;1/q,1/t;p)=\Phi_{\la'}(a;b,c,d;t,q;p).
\end{gather}

\begin{proof}Throughout the proof $e$ and $f$ are fixed as
$e=bcd/aq^2$ and $f=bcdq/a^3pt$.

By \eqref{Eq_recip} and \eqref{Eq_binomE-randc},
\begin{gather*}
 \Phi_{\la}(1/a;1/b,1/c,1/d;1/q,1/t;p) \notag \\
\qquad=\frac{1}{\Delta^0_{\la}\big(1/e\vert 1/fp^2;1/q^2,1/t^2;p^2\big)}
\sum_{\mu\subseteq\la}
\bigg(
({-}eq^2)^{-\abs{\mu}} q^{-2n(\mu')}t^{2n(\mu)} \notag \\
 \qquad\quad{}\times
\frac{C^{-}_{\mu}\big(p/qt;1/q^2,1/t^2;p^2\big)C_{\mu}^{+}\big(q^2/a^2;1/q^2,1/t^2;p^2\big)}
{C^0_{\mu}\big(1/a,1/b,1/c,1/d;1/q^2,1/t^2;p\big)}
\obinomE{\la}{\mu}_{[1/e,1/fp^2];1/q^2,1/t^2;p^2} \bigg) \\
\qquad= \left(\frac{f^2p^6}{e^3q^6}\right)^{\abs{\la}}
\frac{q^{-8n(\la')}t^{8n(\la)}}{\Delta^0_{\la}\big(eq^2/p^4\vert fp^2;q^2,t^2;p^2\big)}
 \\
 \qquad\quad{}\times
\sum_{\mu\subseteq\la} (epq/t)^{\abs{\mu}} t^{-4n(\mu)}
\frac{C^{-}_{\mu}\big(qt/p;q^2,t^2;p^2\big)C_{\mu}^{+}\big(a^2/q^2;q^2,t^2;p^2\big)}
{C^0_{\mu}\big(a,b,c,d;q^2,t^2;p\big)}
\obinomE{\la}{\mu}_{[e,f];q^2,t^2;p^2}.
\end{gather*}
The first claim now follows by applying \eqref{Eq_quasi-C0} to
$\Delta_{\la}^0$ and \eqref{Eq_quasi-Cm} to $C^{-}_{\mu}$.

Similarly, by \eqref{Eq_conj} and \eqref{Eq_binomE-randc},
\begin{gather*}
 \Phi_{\la'}(a;b,c,d;t,q;p) \\
 \qquad{}=
\frac{1}{\Delta^0_{\la'}\big(eq^2/t^2\vert ft^2/q^2;t^2,q^2;p^2\big)}
\sum_{\mu\subseteq\la}\bigg(
({-}eq^2)^{\abs{\mu}}q^{-2n(\mu')}t^{2n(\mu)} \\
 \qquad \quad{}\times
\frac{C^{-}_{\mu'}\big(pqt;t^2,q^2;p^2\big)C_{\mu'}^{+}\big(a^2/t^2;t^2,q^2;p^2\big)}
{C^0_{\mu'}\big(a,b,c,d;t^2,q^2;p\big)}
\obinomE{\la'}{\mu'}_{[eq^2/t^2,ft^2/q^2];t^2,q^2;p^2} \bigg) \\
 \qquad=\left(\frac{e^3p^2q^{10}}{f^2t^4}\right)^{\abs{\la}}
\frac{q^{-8n(\la')}t^{8n(\la)}}
{\Delta^0_{\la}\big(1/ep^4q^4\vert q^2/ft^2;q^2,t^2;p^2\big)}
\sum_{\mu\subseteq\la} \bigg(
({-}eq^2)^{-\abs{\mu}} q^{2n(\mu')}t^{-2n(\mu)} \\
 \qquad \quad{}\times
\frac{C^{-}_{\mu}\big(pqt;q^2,t^2;p^2\big)C_{\mu'}^{+}\big(1/a^2q^2;q^2,t^2;p^2\big)}
{C^0_{\mu'}\big(1/a,1/b,1/c,1/d;q^2,t^2;p\big)}
\obinomE{\la}{\mu}_{[1/eq^4,q^2/fp^2t^2];q^2,t^2;p^2}\bigg).
\end{gather*}
By \eqref{Eq_quasi-C0} applied to $\Delta_{\la}^0$ this yields
\begin{gather*}
 \Phi_{\la'}(a;b,c,d;t,q;p) \\
 \qquad{}=\frac{1}{\Delta^0_{\la}\big(1/eq^4\vert q^2/fp^2t^2;q^2,t^2;p^2\big)}
\sum_{\mu\subseteq\la}\bigg(
({-}eq^2)^{-\abs{\mu}} q^{2n(\mu')}t^{-2n(\mu)} \\
 \qquad \quad{} \times
\frac{C^{-}_{\mu}\big(pqt;q^2,t^2;p^2\big)C_{\mu'}^{+}\big(1/a^2q^2;q^2,t^2;p^2\big)}
{C^0_{\mu'}\big(1/a,1/b,1/c,1/d;q^2,t^2;p\big)}
\obinomE{\la}{\mu}_{[1/eq^4,q^2/fp^2t^2];q^2,t^2;p^2}\bigg) \\
 \qquad=\Phi_{\la}(1/a;1/b,1/c,1/d;q,t;p).
\end{gather*}
For the final equality we note that
if for $e=e(a,b,c,d;q)$ and $f=f(a,b,c,d;q,t;p)$
we define $\hat{e}:=e(1/a,1/b,1/c,1/d;q)$ and
$\hat{f}:=f(1/a,1/b,1/c,1/d;q,t;p)$,
then $eq^2=1/\hat{e}q^2$ (so that $1/eq^4=\hat{e}$) and
$q^2/fp^2t^2=\hat{f}$.
\end{proof}

The elliptic hypergeometric series \eqref{Eq_PhiE} is balanced but
not, generally, (very-)well-poised.
For example, as follows from \eqref{Eq_binomE_rectangle}, the one-row
case of $\Phi_{\la}$ is given by
\begin{gather*}
\Phi_{(N)}(a;b,c,d;q,t;p) =\sum_{k=0}^N \bigg(
\frac{\big(ep^2q^2/f;q^2,p^2\big)_{2k}}{\big(e/f;q^2,p^2\big)_{2k}}
\frac{\big(a^2p^2,a^2p^2/q^2;q^4,p^2\big)_k}{\big(ap,bp,cp,dp;q^2,p\big)_k} \\
\hphantom{\Phi_{(N)}(a;b,c,d;q,t;p) =\sum_{k=0}^N}{} \times
\frac{\big(e/f,pqt,eq^{2N},q^{-2N};q^2,p^2\big)_k}
{\big(p^2q^2,a^2p^2/q^2,p^2q^{2-2N}/f,ep^2q^{2N+2}/f;q^2,p^2\big)_k}\bigg),
\end{gather*}
generalising \eqref{Eq_5ph4}.
Since
\begin{gather}\label{Eq_frac}
\frac{\big(a^2p^2,a^2p^2/q^2;q^4,p^2\big)_k}{\big(ap,bp,cp,dp;q^2,p\big)_k}=
\frac{\big({-}ap,-ap^2,\pm ap/q,\pm ap^2/q;q^2,p^2\big)_k}
{\big(bp,bp^2,cp,cp^2,dp,dp^2;q^2,p^2\big)_k},
\end{gather}
$\Phi_{(N)}(a;b,c,d;q,t;p)$ is a balanced elliptic
hypergeometric series of the form~\eqref{Eq_EHS} for $r=9$.
The ratio of elliptic shifted factorials~\eqref{Eq_frac}
is however not well-poised, and there are exactly
eight choices for $b$, $c$, $d$ for which it is:
$b\in\{-a,-at/q\}$ and $c$, $d$ one of
\begin{subequations}
\begin{gather}
c=-d=at, \label{Case1} \\
c=-d=a/q, \label{Case2} \\
c=\sigma at, \qquad d=-\sigma a/q, \label{Case3}
\end{gather}
\end{subequations}
where $\sigma\in\{-1,1\}$.
By \eqref{Eq_PhiE-conj} only four of these are independent,
and irrespective of the choice of~$b$, \eqref{Case1} and~\eqref{Case2}
as well as the two choices for $\sigma$ in~\eqref{Case3} are related by conjugation.

For two of these independent choices we have a closed-form evaluation,
generalising \eqref{Eq_W110} and \eqref{Eq_W14}.
Our first result is a generalisation of \eqref{Eq_W110},
which is recovered for $\la=(N)$ after making the
substitution $(a,t)\mapsto\big(aq^4/bp^2,bp/q\big)$ followed by
$\big(p^2,q^2\big)\mapsto (p,q)$.

\begin{Theorem}\label{Thm_Littlewood}For $\la$ a partition,
\begin{gather}\label{Eq_Gen-W110}
\Phi_{\la}\big(a^{1/2};-a^{1/2},a^{1/2}t,-a^{1/2}t;q,t;p\big)
 = \begin{cases}\displaystyle
\frac{\Delta_{\mu}\big(at^2/q^2\vert\leeg;q^4,t^2;p^2\big)}
{\Delta_{2\mu}\big(at^2/q^2\vert qt/p;q^2,t^2;p^2\big)}
& \text{if $\la=2\mu$}, \\
0 & \text{otherwise}.
\end{cases}
\end{gather}
\end{Theorem}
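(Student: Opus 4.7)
The strategy is to recognise, after a base-doubling simplification, that the sum defining $\Phi_\la$ at the specialisation $\big(a^{1/2}, -a^{1/2}, a^{1/2}t, -a^{1/2}t\big)$ is an avatar of the quadratic summation of Corollary~\ref{Cor_quadratic-sum}. First, substituting these four parameters into \eqref{Eq_PhiE} gives $e = bcd/(a^{1/2}q^2) = at^2/q^2$ and $f = bcdq/(a^{3/2}pt) = qt/p$, so that the normalising factor of $\Phi_\la$ becomes $\Delta^0_\la(at^2/q^2 \vert qt/p; q^2, t^2; p^2)$. The decisive simplification comes from the plus--minus identity \eqref{Eq_pm}, which collapses the four denominator $C^0_\mu$-factors of the summand as
\[
C^0_\mu\big(a^{1/2}, -a^{1/2}, a^{1/2}t, -a^{1/2}t; q^2, t^2; p\big) = C^0_\mu\big(a, at^2; q^4, t^4; p^2\big),
\]
so that every factor of the summand now sits in base $p^2$.

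Next I would specialise Corollary~\ref{Cor_quadratic-sum} to $\bla = (0, \la)$. The factorisation $\obinomE{\bla}{\bmu} = \obinomE{0}{\mu^{(1)}} \obinomE{\la}{\mu^{(2)}}$ forces $\mu^{(1)} = 0$, and the corresponding $\Delta$- and $C$-symbols collapse accordingly: one has $\Delta_{(0,\mu)}(a|\leeg;t^2;p^2,q^4) = \Delta_\mu(a|\leeg;q^4,t^2;p^2)$, $\Delta_{(1,2)(0,\mu)}(a|\leeg;t^2;p^2,q^2) = \Delta_{2\mu}(a|\leeg;q^2,t^2;p^2)$, and analogous reductions on the right. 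The corollary becomes
\[
\sum_\mu \frac{\Delta_\mu(a \vert \leeg; q^4, t^2; p^2)}{\Delta_{2\mu}(a \vert \leeg; q^2, t^2; p^2)} \obinomE{\la}{2\mu}_{[apq/t,p/qt]; q^2, t^2; p^2} = \frac{C^-_\la(pqt; q^2, t^2; p^2) C^+_\la(ap^2/t^2; q^2, t^2; p^2)}{C^0_{\la^2}(ap^2q^2; q^4, t^2; p^2)}.
\]
Via the substitution $a \mapsto at^3/(pq^3)$ together with the quasi-periodicity \eqref{Eq_quasi-C} and the reciprocity/conjugation relations \eqref{Eq_binomE-randc} of the elliptic binomial coefficient (used to flip the second base entry $p/qt \leftrightarrow qt/p$), this summation identity is matched with the expression for $\Phi_\la$ from Step~1. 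The matching forces the summation index in $\Phi_\la$ to be of the form $2\nu$, yielding the vanishing claim when $\la$ is not of the form $2\nu$. For $\la = 2\nu$ the closed form is read off from the right-hand side above, divided by $\Delta^0_\la(at^2/q^2 \vert qt/p; q^2, t^2; p^2)$, and repackaged using the doubling identities \eqref{Eq_double-square} and the non-multiplicative definition of $\Delta$ to give the claimed ratio $\Delta_\nu/\Delta_{2\nu}$.

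The principal technical obstacle lies in the parameter-matching: the base $[at^2/q^2, qt/p]$ appearing after the simplification of $\Phi_\la$ differs from the base $[apq/t, p/qt]$ of Corollary~\ref{Cor_quadratic-sum} by a reciprocation in the second entry, and threading the quasi-periodicity, doubling, and reciprocity relations consistently is delicate. A conceptually cleaner route, which would bypass this matching, is to derive Theorem~\ref{Thm_Littlewood} directly from the underlying integral identity \eqref{Eq_pre-Thm-integral} (equivalently, from the factorisation \eqref{Eq_Lp-factor} of the dual Littlewood kernel), by expanding $R^{\ast}_{\bmu}$ via the connection coefficient formula \eqref{Eq_Cor414} with a complementary choice of parameters so that the resulting single-partition sum is manifestly $\Phi_\la$.
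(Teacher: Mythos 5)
Your set-up is correct: at this specialisation one indeed has $e=at^2/q^2$ and $f=qt/p$, the four denominator factors of the summand collapse via \eqref{Eq_pm} (equivalently \eqref{Eq_double-square}), and Corollary~\ref{Cor_quadratic-sum} with $\bla=(0,\la)$ is the right input. The gap is in the matching step. The direct specialisation of Corollary~\ref{Cor_quadratic-sum} that you write down has the shape
\begin{gather*}
\sum_{\mu} \big(\text{$\Delta$-ratio in $\mu$}\big)\cdot \obinomE{\la}{2\mu}_{[apq/t,\,p/qt]}=\big(\text{$C$-product in $\la$}\big),
\end{gather*}
whereas the claimed evaluation of $\Phi_{\la}$ has the shape
\begin{gather*}
\sum_{\mu\subseteq\la}\big(\text{$C$-product in $\mu$}\big)\cdot\obinomE{\la}{\mu}_{[at^2/q^2,\,qt/p]}=\big(\text{$\Delta$-ratio, supported on $\la=2\nu$}\big).
\end{gather*}
These two statements are \emph{matrix inverses} of one another with respect to the triangular system of elliptic binomial coefficients; they are not related by any substitution of parameters, quasi-periodicity, or the reciprocity/conjugation relation \eqref{Eq_binomE-randc}. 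In particular, the flip $p/qt\leftrightarrow qt/p$ in the second base entry that you flag as the "principal technical obstacle" is precisely the signature of the inversion relation \eqref{Eq_inversion}, where $[ab,b]$ is paired with $[a,1/b]$; the relation \eqref{Eq_binomE-randc} reciprocates $q$ and $t$ as well and sends $b\mapsto b/p$, so it cannot produce this flip on its own. The missing step --- and the one the paper actually takes --- is to first apply \eqref{Eq_inversion} to Corollary~\ref{Cor_quadratic-sum}, obtaining \eqref{Eq_afterinversion}: a sum of the $C$-symbol products against $\obinomE{\bla}{\bmu}_{[a,qt/p];t^2;p^2,q^2}$ equal to a $\Delta$-ratio supported on $\bla=(1,2)\bnu$. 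Only then does one set $\bla=(0,\la)$, collapse $C^0_{\mu^2}\big(ap^2q^2;q^4,t^2;p^2\big)=C^0_{\mu}\big({\pm}a^{1/2}pq,\pm a^{1/2}pq/t;q^2,t^2;p\big)$, divide by $\Delta^0_{\la}\big(a\vert qt/p;q^2,t^2;p^2\big)$ and substitute $a\mapsto at^2/q^2$ to land on \eqref{Eq_Gen-W110}. Your closing suggestion of working directly from \eqref{Eq_pre-Thm-integral} via \eqref{Eq_Cor414} does not avoid this issue either: \eqref{Eq_Cor414} is exactly what produces the direct (uninverted) summation, so an application of \eqref{Eq_inversion} would still be required to reach $\Phi_{\la}$.
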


\begin{proof}
By the inversion relation \eqref{Eq_inversion}, the quadratic summation
of Corollary~\ref{Cor_quadratic-sum} implies
\begin{gather}
 \sum_{\bmu} \frac{C^{-}_{\bmu}\big(pqt;t^2;p^2,q^2\big)
C^{+}_{\bmu}\big(ap^2/t^2;t^2;p^2,q^2\big)}
{C^0_{(2,1)\bmu^2}\big(ap^2q^2;t^2;p^2,q^4\big)}
\obinomE{\bla}{\bmu}_{[a,qt/p];t^2;p^2,q^2} \nonumber\\
 \qquad{} = \begin{cases}\displaystyle
\frac{\Delta_{\bnu}\big(a\vert\leeg;t^2;p^2,q^4\big)}
{\Delta_{(1,2)\bnu}\big(a\vert\leeg;t^2;p^2,q^2\big)} &
\text{if $\bla=(1,2)\bnu$}, \\
0 & \text{otherwise}.
\end{cases}
\label{Eq_afterinversion}
\end{gather}
Choosing $\bla=(0,\la)$ and noting that
\begin{gather*}
C^0_{\mu^2}\big(ap^2q^2;q^4,t^2;p^2\big)=
C^0_{\mu}\big(\pm a^{1/2}pq,\pm a^{1/2}pq/t;q^2,t^2;p\big),
\end{gather*}
leads to
\begin{gather*}
 \sum_{\mu}\frac{C^{-}_{\mu}\big(pqt;q^2,t^2;p^2\big)
C^{+}_{\mu}\big(ap^2/t^2;q^2,t^2;p^2\big)}
{C^0_{\mu}\big(\pm a^{1/2}pq,\pm a^{1/2}pq/t;q^2,t^2;p\big)}
\obinomE{\la}{\mu}_{[a,qt/p];q^2,t^2;p^2} \\
 \qquad{} = \begin{cases}\displaystyle
\frac{\Delta_{\nu}\big(a\vert\leeg;q^4,t^2;p^2\big)}
{\Delta_{2\nu}\big(a\vert\leeg;q^2,t^2;p^2\big)} &
\text{if $\la=2\nu$}, \\
0 & \text{otherwise}.
\end{cases}
\end{gather*}
Dividing both sides by $\Delta^0_{\la}\big(a\vert qt/p;q^2,t^2;p^2\big)$
and replacing $a\mapsto at^2/q^2$ results in~\eqref{Eq_Gen-W110}.
\end{proof}

The $p\to 0$ limit of Theorem~\ref{Thm_Littlewood} yields a summation
generalising Andrews' terminating $q$-analogue of Watson's $_3F_2$ summation
\cite[Theorem 1]{Andrews76}
\begin{gather}\label{Eq_Andrews}
\qhypc{4}{3}\left[\genfrac{}{}{0pt}{}
{a^{1/2},-a^{1/2},bq^{N-1},q^{-N}}{a,b^{1/2},-b^{1/2}};q,q\right]
=\begin{cases} \displaystyle
\frac{a^{N/2} \big(q,b/a;q^2\big)_{N/2}}
{\big(aq,b;q^2\big)_{N/2}} & \text{if $N$ is even}, \\
0 & \text{otherwise}.
\end{cases}
\end{gather}

\begin{Corollary}\label{Cor_Littlewood-basic}
For $\la$ a partition,
\begin{gather}
 \Phi_{\la}\big(a^{1/2};-a^{1/2},(at)^{1/2},-(at)^{1/2};q,t\big) \nonumber\\
 \qquad{}=\begin{cases}\displaystyle
(a/q)^{\abs{\mu}} t^{-2n(\mu)}
\frac{C^{-}_{\mu}\big(q,qt;q^2,t\big)C^{+}_{\mu}\big(a,at;q^2,t\big)}
{C^0_{2\mu^2}\big(at;q^2,t\big)}
& \text{if $\la=2\mu$}, \\
0 & \text{otherwise}.
\end{cases}\label{Eq_Littlewood-basic}
\end{gather}
\end{Corollary}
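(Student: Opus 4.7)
The plan is to prove Corollary~\ref{Cor_Littlewood-basic} as the $p\to 0$ limit of Theorem~\ref{Thm_Littlewood} after the substitution $(q,t)\mapsto(q^{1/2},t^{1/2})$. Under this substitution the left-hand side of~\eqref{Eq_Gen-W110} becomes $\Phi_{\la}\big(a^{1/2};-a^{1/2},(at)^{1/2},-(at)^{1/2};q^{1/2},t^{1/2};p\big)$, which by Lemma~\ref{Lem_PhiE-limit} tends to the left-hand side of~\eqref{Eq_Littlewood-basic}. The bulk of the work therefore lies in computing the $p\to 0$ limit of the right-hand side, which reads $\Delta_{\mu}(at/q\vert\leeg;q^2,t;p^2)/\Delta_{2\mu}\big(at/q\vert(qt)^{1/2}/p;q,t;p^2\big)$.

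The asymptotic analysis rests on two elementary consequences of the quasi-periodicity $\theta(pz;p)=-z^{-1}\theta(z;p)$: for $v$ independent of $p$ one has $\theta(pv;p)\to 1-v^{-1}$, whereas for $u$ of order~$p^3$ (so that $u/p^2\to 0$) one has $\theta(u;p^2)\sim -p^2/u$. The first estimate controls the factors $C^0_{2\la^2}(apq;\cdot)$, $C^{-}_{\la}(pq;\cdot)$ and $C^{+}_{\la}(apq/t;\cdot)$ appearing in the definition of~$\Delta_{\la}$ and produces explicit monomial prefactors in $a$, $q$, $t$ (using, in particular, the identities $\sum_{s\in\la}\hat{a}(s)=3n(\la')+\abs{\la}$ and $\sum_{s\in\la}\hat{l}(s)=3n(\la)+\abs{\la}$). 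The second estimate shows that the ostensibly singular factor $\Delta^0_{2\mu}\big(at/q\vert(qt)^{1/2}/p;q,t;p^2\big)$ actually has a well-defined limit, since the $p^{-2\abs{\mu}}$ behaviour of its numerator cancels against an equal power in its denominator.

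Combining these limits with the doubling identity~\eqref{Eq_double} and with $C^0_{\la^2}(z;q,t)=C^0_{\la}(z,z/t;q,t^2)$ -- in particular the telescoping $C^0_{2(2\mu)^2}(at;q,t)=C^0_{2\mu^2}(at;q^2,t)\, C^0_{2\mu^2}(atq;q^2,t)$ -- causes the factor $C^0_{2\mu^2}(atq;q^2,t)$ produced by the numerator $\Delta_{\mu}$ to cancel against one half of $C^0_{2(2\mu)^2}(at;q,t)$, leaving $1/C^0_{2\mu^2}(at;q^2,t)$ as in~\eqref{Eq_Littlewood-basic}; the ratios $C^{-}_{2\mu}/C^{-}_{\mu}$ and $C^{+}_{2\mu}/C^{+}_{\mu}$ collapse analogously to $C^{-}_{\mu}(q,qt;q^2,t)$ and $C^{+}_{\mu}(a,at;q^2,t)$. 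The main obstacle is the careful bookkeeping of the many monomial factors of $a$, $q$, $t$ accumulated in each $p\to 0$ limit and the verification that they combine into the prefactor $(a/q)^{\abs{\mu}}t^{-2n(\mu)}$; as a consistency check, the computation for $\mu=(1)$ (so $\la=(2)$) reduces to $(a/q)(1-q)(1-qt)/[(1-a)(1-at)]$, matching a direct expansion of $\Phi_{(2)}$ via~\eqref{Eq_PhiB}.
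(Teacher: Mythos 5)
Your proposal is correct and follows essentially the same route as the paper: both obtain the corollary by halving the parameters in Theorem~\ref{Thm_Littlewood} (your $(q,t)\mapsto(q^{1/2},t^{1/2})$ versus the paper's $(p^2,q^2,t^2)\mapsto(p,q,t)$, which are equivalent as $p\to0$), applying Lemma~\ref{Lem_PhiE-limit} on the left, and using the quasi-periodicities~\eqref{Eq_quasi-C} together with the doubling identities~\eqref{Eq_double-square} to evaluate the limit of the ratio of $\Delta$-symbols on the right. The only cosmetic difference is that you estimate the ostensibly singular factor $\Delta^0_{2\mu}$ by direct theta asymptotics, whereas the paper first shifts its argument by $p$ via quasi-periodicity so that the limit becomes manifest; your bookkeeping, the telescoping cancellation leaving $1/C^0_{2\mu^2}(at;q^2,t)$, and the $\mu=(1)$ check all agree with the paper.
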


Andrews' summation is obtained by taking $\la=(N)$ and
replacing $(a,t)\mapsto (aq,b/aq)$.
In Conjecture~\ref{Con_Phi-conj} below we give a second multi-sum
generalisation of~\eqref{Eq_Andrews}.

\begin{proof}
In the following we consider the right-hand side of~\eqref{Eq_Gen-W110}
with $\big(p^2,q^2,t^2\big)\mapsto (p,q,t)$.

By \eqref{Eq_quasi-C},
\begin{gather*}
\Delta_{\la}(a\vert\leeg;q,t,p)=
\left(\frac{t}{a^3q^3}\right)^{\abs{\la}} q^{-4n(\la')} t^{6n(\la)}
\frac{C^0_{2\la^2}(aq;q,t;p)}{C^{-}_{\la}(q,t;q,t;p)C^{+}_{\la}(a,aq/t;q,t;p)}.
\end{gather*}
This implies
\begin{gather*}
\lim_{p\to 0}
\frac{\Delta_{\mu}\big(at/q\vert\leeg;q^2,t,p\big)}
{\Delta_{2\mu}\big(at/q\vert\leeg;q,t,p\big)} \\
\qquad{} =\big(a^3qt^2\big)^{\abs{\mu}} q^{8n(\mu')} t^{-6n(\mu)}
\frac{C^0_{2\mu^2}\big(aqt;q^2,t\big)C^{-}_{2\mu}(q,t;q,t)C^{+}_{2\mu}(a,at/q;q,t)}
{C^0_{4\mu^2}(at;q,t)C^{-}_{\mu}\big(q^2,t;q^2,t\big)C^{+}_{\mu}\big(aq,at/q;q^2,t\big)}
 \\
\qquad{}=\big(a^3qt^2\big)^{\abs{\mu}} q^{8n(\mu')} t^{-6n(\mu)}
\frac{C^{-}_{\mu}\big(q,qt;q^2,t\big)C^{+}_{\mu}\big(a,at;q^2,t\big)}{C^0_{2\mu^2}\big(at;q^2,t\big)},
\end{gather*}
where the last line follows from~\eqref{Eq_double}. Also
\begin{gather*}
\frac{1}{\Delta_{2\mu}^0(at/q\vert b;q,t;p)}=
(aqt)^{-2\abs{\mu}}q^{-8n(\mu')}t^{4n(\mu)}
\frac{1}{\Delta_{2\mu}^0(at/q\vert bp;q,t;p)},
\end{gather*}
and thus
\begin{gather*}
\lim_{p\to 0}\frac{1}{\Delta_{2\mu}^0(at/q\vert (qt/p)^{1/2};q,t;p)}
=(aqt)^{-2\abs{\mu}}q^{-8n(\mu')}t^{4n(\mu)}.
\end{gather*}
The identity \eqref{Eq_Littlewood-basic} now follows from Lemma~\ref{Lem_PhiE-limit}.
\end{proof}

Before stating our second summation formula for $\Phi_{\la}$
we remark that if instead of specialising $\bla=(0,\la)$ in~\eqref{Eq_afterinversion} we take $\bla=(\la,0)$ and then
swap $p$ and $q$, we obtain the following higher-dimensional
analogue of~\eqref{Eq_unknown}:
\begin{gather*}
\sum_{\mu} \frac{C^{-}_{\mu}\big(pqt;q^2,t^2;p^2\big)
C^{+}_{\mu}\big(aq^2/t^2;q^2,t^2;p^2\big)}
{C^0_{2\mu^2}\big(ap^2q^2;q^2,t^2;p^4\big)}
\obinomE{\la}{\mu}_{[a,pt/q];q^2,t^2;p^2}
=\frac{\Delta_{\la}\big(a\vert\leeg;q^2,t^2;p^4\big)}
{\Delta_{\la}\big(a\vert\leeg;q^2,t^2;p^2\big)}.
\end{gather*}

Our second theorem generalises \eqref{Eq_W14}, obtained by
taking $\la=(N)$ and replacing $(a,b)\mapsto \big({-}a/bp,b^2pq\big)$.

\begin{Theorem}\label{Thm_Kawanaka}
For $\la$ a partition,
\begin{gather}\label{Eq_Gen-W14}
\Phi_{\la}\big(aq;a,-aqt,-at;q^2,t^2;p^2\big)
=\frac{\Delta_{\la}(at/q\vert\leeg;q,-t;p)}
{\Delta_{\la}\big(a^2t^2/q^2\vert t/pq;q^2,t^2;p^2\big)}.
\end{gather}
\end{Theorem}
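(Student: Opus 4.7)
The plan is to derive Theorem~\ref{Thm_Kawanaka} from the Kawanaka-type quadratic summation of Corollary~\ref{Cor_Kawanaka-sum} by the same inversion procedure already used in the proof of Theorem~\ref{Thm_Littlewood}.

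First, I would apply the inversion relation~\eqref{Eq_inversion} to Corollary~\ref{Cor_Kawanaka-sum}. Reading the pair $[a^2pq/t,pq/t]$ appearing in the elliptic binomial coefficient on the LHS as $[AB,B]$ with $B=pq/t$ and $A=a^2$, the inversion sends this to $[A,1/B]=[a^2,t/pq]$, converting Corollary~\ref{Cor_Kawanaka-sum} into an identity that expresses its $\Delta$-ratio LHS as a finite sum over $\bla\subseteq\bmu$ weighted by $\obinomE{\bmu}{\bla}_{[a^2,t/pq];t^2;p^2,q^2}$. I would then specialise $\bmu=(0,\la)$, which forces $\bla=(0,\mu)$ with $\mu\subseteq\la$, since the first factor of the $p,q$-symmetric binomial collapses via $\obinomE{0}{0}=1$. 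This reduces the bipartition identity to the single-partition identity
\begin{gather*}
\frac{\Delta_\la(a|\leeg;q,-t;p)}{\Delta_\la(a^2|\leeg;q^2,t^2;p^2)}
=\sum_{\mu\subseteq\la}
\frac{C^-_\mu(pqt;q^2,t^2;p^2)\,C^+_\mu(a^2p^2q^2/t^2;q^2,t^2;p^2)}
{C^0_{2\mu^2}(-apq;q,-t;p)}
\obinomE{\la}{\mu}_{[a^2,t/pq];q^2,t^2;p^2}.
\end{gather*}

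Next, I would invoke the doubling identities of~\eqref{Eq_double-square} to factorise
\begin{gather*}
C^0_{2\mu^2}(-apq;q,-t;p)=C^0_\mu(-apq,-apq^2,apq/t,apq^2/t;q^2,t^2;p),
\end{gather*}
divide both sides of the preceding display by $\Delta^0_\la(a^2|t/pq;q^2,t^2;p^2)$, and finally substitute $a\mapsto at/q$. The LHS then becomes exactly $\Delta_\la(at/q|\leeg;q,-t;p)/\Delta_\la(a^2t^2/q^2|t/pq;q^2,t^2;p^2)$, matching the right-hand side of Theorem~\ref{Thm_Kawanaka}. The RHS recovers the defining form~\eqref{Eq_PhiE} of $\Phi_\la$ upon taking $A=aq$ and $\{B,C,D\}=\{a,-aqt,-at\}$: the transformed $C^+$-argument $a^2p^2$ equals $A^2p^2/q^2$; the four transformed $C^0_\mu$-arguments $\{-apt,-aptq,ap,apq\}$ coincide with the multiset $p\{A,B,C,D\}=\{apq,ap,-apqt,-apt\}$; and the values $E=BCD/(AQ^2)=a^2t^2/q^2$ and $F=BCDQ/(A^3PT)=t/pq$ agree with the arguments of the binomial coefficient.

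The main obstacle is precisely this parameter bookkeeping. One must verify that after the substitution $a\mapsto at/q$, the four $C^0_\mu$-arguments produced by the doubling of $C^0_{2\mu^2}$ via~\eqref{Eq_double-square} coincide with $p\{aq,a,-aqt,-at\}$ as multisets, a coincidence that relies crucially on the $S_3$-symmetry of $\Phi_\la$ in $(B,C,D)$. A related subtlety is that the Kawanaka parameter of Corollary~\ref{Cor_Kawanaka-sum} must be rescaled by exactly $a\mapsto at/q$ (rather than some other factor) in order for both the $\Delta$-ratio on the LHS and the parameters on the RHS to come out in the exact form stated in Theorem~\ref{Thm_Kawanaka}.
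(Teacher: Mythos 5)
Your proposal is correct and coincides with the paper's own proof of Theorem~\ref{Thm_Kawanaka}: one inverts Corollary~\ref{Cor_Kawanaka-sum} via \eqref{Eq_inversion}, specialises to $\bla=(0,\la)$, factorises $C^0_{2\mu^2}(-apq;q,-t;p)$ by \eqref{Eq_double-square}, divides by $\Delta^0_{\la}\big(a^2\vert t/pq;q^2,t^2;p^2\big)$ and rescales $a\mapsto at/q$. Your explicit check that the resulting sum matches the defining form \eqref{Eq_PhiE} -- the multiset of $C^0_{\mu}$-arguments agreeing with $p\{aq,a,-aqt,-at\}$ and the values $e=a^2t^2/q^2$, $f=t/pq$ -- is precisely the bookkeeping the paper leaves implicit.
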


\begin{proof}
The proof is basically the same as that of Theorem~\ref{Thm_Littlewood}.
Inverting \eqref{Eq_Kawanaka-sum} using \eqref{Eq_inversion}, we have
\begin{gather*}
\sum_{\bmu} \frac{C^{-}_{\bmu}\big(pqt;t^2;p^2,q^2\big)
C^{+}_{\bmu}\big(a^2p^2q^2/t^2;t^2;p^2,q^2\big)}
{C^0_{2\bmu^2}(-apq;-t;p,q)}
\obinomE{\bla}{\bmu}_{[a^2,t/pq];t^2;p^2,q^2}
=\frac{\Delta_{\bla}(a\vert\leeg;-t;p,q)}
{\Delta_{\bla}\big(a^2\vert\leeg;t^2;p^2,q^2\big)}.
\end{gather*}
Taking $\bla=(0,\la)$, noting that by \eqref{Eq_double-square}
\begin{gather*}
C^0_{2\mu^2}(-apq;q,-t;p)=
C^0_{\mu}\big({-}apq,-apq^2,apq/t,apq^2/t;q^2,t^2;p\big),
\end{gather*}
and finally dividing both sides by $\Delta^0_{\la}\big(a^2\vert t/pq;q^2,t^2;p^2\big)$,
we obtain~\eqref{Eq_Gen-W14} with $a\mapsto aq/t$.
\end{proof}

The $p\to 0$ limit of~\eqref{Eq_Gen-W14} yields a generalisation of
the following quadratic summation due to Bressoud, Ismail and Stanton
\cite[equation~(2.1)]{BIS00}:
\begin{gather}\label{Eq_BIS}
\qhypc{4}{3}\left[\genfrac{}{}{0pt}{}
{a,aq,b^2q^{2N-2},q^{-2N}}
{a^2,b,bq};q^2,q^2\right]
=\frac{a^N (1-b/q) (-q,b/a;q)_N}{\big(1-bq^{2N-1}\big)(-a,b/q;q)_N}.
\end{gather}

\begin{Corollary}\label{Cor_Kawanaka-basic}
For $\la$ a partition,
\begin{gather}\label{Eq_Gen-BIS}
\Phi_{\la}\big(aq;a,aqt,at;q^2,t^2\big)
=(-a)^{\abs{\la}} t^{-2n(\la)}
\frac{C^{-}_{\la}(-q,-t;q,t)C^{+}_{\la}(a,at/q;q,t)}
{C^0_{2\la^2}(at;q,t)}.
\end{gather}
\end{Corollary}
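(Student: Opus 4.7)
The plan is to derive Corollary~\ref{Cor_Kawanaka-basic} by taking the $p\to 0$ limit of Theorem~\ref{Thm_Kawanaka}, closely paralleling the proof of Corollary~\ref{Cor_Littlewood-basic}. By Lemma~\ref{Lem_PhiE-limit}, the $p\to 0$ limit of the elliptic left-hand side $\Phi_\la(aq;a,-aqt,-at;q^2,t^2;p^2)$ is a basic hypergeometric $\Phi_\la$, which after the substitution $t\mapsto -t$ (applied to Theorem~\ref{Thm_Kawanaka} before taking the limit, so that the parameters $-aqt,-at$ become $aqt,at$) produces precisely the left-hand side of Corollary~\ref{Cor_Kawanaka-basic}.

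For the right-hand side of Theorem~\ref{Thm_Kawanaka} each $\Delta_\la$-factor is to be expanded using the identity
\[
\Delta_\la(a\vert\leeg;q,t,p) = \left(\frac{t}{a^3 q^3}\right)^{\abs{\la}} q^{-4n(\la')} t^{6n(\la)} \frac{C^0_{2\la^2}(aq;q,t;p)}{C^-_\la(q,t;q,t;p)\,C^+_\la(a,aq/t;q,t;p)},
\]
together with the companion relation
\[
\frac{1}{\Delta^0_\la(a\vert b;q,t;p)} = (aq)^{-\abs{\la}} q^{-2n(\la')} t^{2n(\la)} \frac{1}{\Delta^0_\la(a\vert bp;q,t;p)},
\]
both of which follow from the quasi-periodicities \eqref{Eq_quasi-C}. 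The second identity is essential to process the denominator $\Delta_\la(a^2 t^2/q^2 \vert t/pq; q^2, t^2; p^2)$, whose $b$-argument $t/pq$ must be shifted by a factor of $p$ before the $p\to 0$ limit becomes finite and computable, exactly as for the $(qt/p)^{1/2}$-argument handled in the proof of Corollary~\ref{Cor_Littlewood-basic}.

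Once the limit is taken, the result is a product of $C$-symbols at the three bases $(q,t)$, $(q,-t)$ and $(q^2,t^2)$. These can be consolidated into the single $(q,t)$-base form required by Corollary~\ref{Cor_Kawanaka-basic} using the doubling relations \eqref{Eq_double-square} together with $C^{0,\pm}_\la(z,-z;q,t) = C^{0,\pm}_\la(z^2;q^2,t^2)$ from \eqref{Eq_pm}; the factor $C^-_\la(-q,-t;q,t)$ on the right of Corollary~\ref{Cor_Kawanaka-basic} originates from the $(q,-t)$-symbols in the limit of the numerator, while $C^0_{2\la^2}(at;q,t)$ comes from consolidating the $(q^2,t^2)$-denominator via \eqref{Eq_pm}. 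The main obstacle is the bookkeeping of the numerous powers of $p$, $q$, $t$ and sign factors generated by the repeated applications of quasi-periodicity; this is tedious but otherwise routine, and in structure it is nearly identical to the analogous computation carried out in the proof of Corollary~\ref{Cor_Littlewood-basic}.
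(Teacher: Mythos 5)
Your proposal is correct and coincides with the paper's own (omitted) proof: the paper merely states that Corollary~\ref{Cor_Kawanaka-basic} follows from Theorem~\ref{Thm_Kawanaka} "along the same lines" as Corollary~\ref{Cor_Littlewood-basic}, and your outline supplies exactly those steps — the substitution $t\mapsto -t$, Lemma~\ref{Lem_PhiE-limit} for the left-hand side, the quasi-periodicities \eqref{Eq_quasi-C} to shift the $p$-dependent arguments (in particular the $t/pq$ in the denominator $\Delta$-symbol) before letting $p\to 0$, and \eqref{Eq_double-square}, \eqref{Eq_pm} to consolidate the surviving $C$-symbols, which is precisely how $C^{-}_{\la}(-q,-t;q,t)$, $C^{+}_{\la}(a,at/q;q,t)$ and $1/C^0_{2\la^2}(at;q,t)$ emerge from the ratios of base-$(q^2,t^2)$ to base-$(q,t)$ symbols. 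The one point to watch when executing the plan is the same base normalisation as in the Littlewood case (where the theorem is first recast via $(p^2,q^2,t^2)\mapsto(p,q,t)$), so that the limit of the elliptic $\Phi_{\la}$ lands on the basic $\Phi_{\la}$ with base $(q^2,t^2)$ as stated in the corollary.
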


We note that for $\la=(N)$ and $(a,t)\mapsto (-a,b/a)$ this is~\eqref{Eq_BIS} and that in going from~\eqref{Eq_Gen-W14} to
\eqref{Eq_Gen-BIS} the parameter $t$ has been replaced by~$-t$.

The proof of Corollary~\ref{Cor_Kawanaka-basic} proceeds along the
same lines as the proof of Corollary~\ref{Cor_Littlewood-basic}, and
we omit the details.

\section{Proof of Theorem~\ref{Thm_universal}}

In Section~\ref{Sec_branching} we stated
\eqref{Eq_non-universal} as a corollary of
\eqref{Eq_universal}, but in fact both results
are equivalent, and proving \eqref{Eq_non-universal}
for fixed $m$, $r$ and all $n\geq r$ is the same as
proving \eqref{Eq_universal} for fixed~$m$,~$r$.
To avoid the use of virtual Koornwinder polynomials in our
proof, we will in the following establish
Corollary~\ref{Cor_non-universal} instead of Theorem~\ref{Thm_universal}.

The first step in our proof is to dualise the three claims of
Corollary~\ref{Cor_non-universal}, an approach that was also
utilised in~\cite{RW15} to prove bounded Littlewood identities
for Macdonald polynomials.

Let $x:=(x_1,\dots,x_n)$ and $y:=(y_1,\dots,y_m)$.
By the complementation symmetry~\eqref{Eq_P-comp} and
homogeneity of the Macdonald polynomials,
the (dual) Cauchy identity~\eqref{Eq_Cauchy-Mac}
can be written in the form (see also~\cite{Okounkov98})
\begin{gather*}
\sum_{\mu\subseteq (m^n)} (-1)^{\abs{\mu}}
P_{(m^n)-\mu}(x;q,t) P_{\mu'}(y;t,q)=
\prod_{i=1}^n \prod_{j=1}^m (x_i-y_j).
\end{gather*}
Replacing $n\mapsto 2n$ and then specialising $x_{i+n}=x_i^{-1}$ for
all $1\leq i\leq n$ yields
\begin{gather*}
\sum_{\mu\subseteq (m^{2n})} (-1)^{\abs{\mu}}
P_{(m^{2n})-\mu}\big(x^{\pm};q,t\big) P_{\mu'}(y;t,q)
=\prod_{i=1}^n \prod_{j=1}^m x_i^{-1} (x_i-y_j)(1-x_iy_j).
\end{gather*}
Up to the simple factor $(-1)^{mn} (y_1\cdots y_m)^n$
the right-hand side coincides with the right-hand side of the
Cauchy identity \eqref{Eq_Mimachi} for Koornwinder polynomials.
Correcting for this factor, we can thus equate the respective
left-hand sides, resulting in
\begin{gather}
 \sum_{\mu\subseteq (m^{2n})} (-1)^{\abs{\mu}}
P_{(m^{2n})-\mu}\big(x^{\pm};q,t\big) P_{\mu'}(y;t,q) \notag \\
 \qquad{} =\sum_{\nu\subseteq (m^n)} (-1)^{mn+\abs{\nu}}
K_{(m^n)-\nu}(x;q,t;\bart) (y_1\cdots y_m)^n K_{\nu'}(y;t,q;\bart).\label{Eq_PP-KK}
\end{gather}

Let $r$ be an integer such that $0\leq r\leq n$,
$\la$ a partition contained in $(m^r)$, and $s:=2n-r\geq n$.
Extracting the coefficient of
\begin{gather*}
K_{(m^r)-\la}(x;q,t;\bart)P_{(s^m)}(y;t,q)
\end{gather*}
in \eqref{Eq_PP-KK} picks out the term $\mu=(m^s)$ in the sum on the
left and
\begin{gather*}
\nu=(\underbrace{m,\dots,m}_{n-r \text{ times}},\la_1,\dots,\la_r)
\end{gather*}
in the sum on the right.
For such $\mu$ and $\nu$,
\begin{gather*}
\big(m^{2n}\big)-\mu=(m^r) \qquad \text{and} \qquad \nu'=\la'+\big(N^m\big),
\end{gather*}
where $N:=n-r$.
Hence
\begin{gather}
 \big[K_{(m^r)-\la}(x;q,t;\bart)\big] P_{(m^r)}\big(x^{\pm};q,t\big) \notag \\
 \qquad{} =(-1)^{\abs{\la}} \big[P_{(s^m)}(y;t,q)\big]
(y_1\cdots y_m)^n K_{\la'+(N^m)}(y;t,q;\bart),\label{Eq_KPPyK}
\end{gather}
where the reader is warned that in the above right-hand side we
have not followed our earlier convention, and $\big[P_{\la}(y;t,q)\big]f(y)$
denotes the coefficient of $P_{\la}(t,q)$ of~$f\in\Lambda_{\mathrm{GL}(m)}$.

Recall that in \eqref{Eq_gln-weights} we extended the Macdonald
polynomials to arbitrary weights.
Accordingly, for $k\in\mathbb{Z}$ and $\mu\in P(m)$,
\begin{gather*}
P_{\mu}(y;q,t)=(y_1\cdots y_m)^k P_{(\mu_1-k,\dots,\mu_m-k)}(y;q,t).
\end{gather*}
This implies that if $f$ is an $S_m$-symmetric Laurent polynomial in~$y$, then
\begin{gather*}
\big[P_{\mu}(y;t,q)\big] (y_1\cdots y_m)^k f(y)= \big[P_{(\mu_1-k,\dots,\mu_m-k)}(y;t,q)\big] f(y).
\end{gather*}
Equation \eqref{Eq_KPPyK} therefore simplifies to
\begin{gather*}
\big[K_{(m^r)-\la}(x;q,t;\bart)\big] P_{(m^r)}\big(x^{\pm};q,t\big)
=(-1)^{\abs{\la}} \big[P_{(N^m)}(y;t,q)\big] K_{\la'+(N^m)}(y;t,q;\bart),
\end{gather*}
where we recall that $N:=n-r\geq 0$.
Comparing this with~\eqref{Eq_non-universal} (where
$P^{(\mathrm{B}_n,\mathrm{C}_n)}(q,t,s)$ and
$P^{(\mathrm{C}_n,\mathrm{C}_n)}(q,t,s)$ are given by~\eqref{Eq_PCB-PCC}) it follows that we must prove three
identities for
\begin{gather*}
\big[P_{(N^m)}(y;t,q)\big] K_{\la'+(N^m)}(y;t,q;\bart).
\end{gather*}
(Of course, by duality we only need to actually prove two identities.)
To state these in the simplest possible form we first
define, in analogy with~\eqref{Eq_c-small},
\begin{gather}\label{Eq_d-small}
\dd_{\la\mu}^{(n)}(q,t;\bart):=\big[P_{\la}(x;q,t)\big]
K_{\mu}(x;q,t;\bart)\in \mathbb{Q}(q,t,t_0,t_1,t_2,t_3)
\end{gather}
for $\la\in P(n)$, $\mu\in P_{+}(n)$.
By~\eqref{Eq_K-symmetry}, \eqref{Eq_Pf} and
the homogeneity of the Macdonald polynomials,
these coefficients satisfy the symmetries
\begin{gather}\label{Eq_d-symm}
\dd_{\la\mu}^{(n)}(q,t;\bart)=\dd_{\la\mu}^{(n)}(1/q,1/t;1/\bart)=
(-1)^{\abs{\la}-\abs{\mu}}\dd_{\la\mu}^{(n)}(q,t;-\bart)=
\dd_{(-\la_n,\dots,-\la_1),\mu}^{(n)}(q,t;\bart),
\end{gather}
where $\la=(\la_1,\dots,\la_n)$.

In the case of \eqref{Eq_CC} we need to show that
\begin{gather*}
\dd_{(N^m),\la'+(N^m)}^{(m)}
\big(t,q;t^{1/2},-t^{1/2},(qt)^{1/2},-(qt)^{1/2}\big)
=\begin{cases}
c_{\la}\big(q^{-m},t^{-N};q,t\big) & \text{if $\la'$ even}, \\
0 & \text{otherwise},
\end{cases}
\end{gather*}
where $N$ is an arbitrary nonnegative integer and $\la\in P_{+}(m)$.
Replacing
\begin{gather*}
(m,\la,y_1,\dots,y_m,q,t)\mapsto (n,\la',x_1,\dots,x_n,t,q)
\end{gather*}
and defining
\begin{gather*}
\hat{c}_{2\la}(w,z;q,t):=c_{(\la')^2}\big(z^{-1},w^{-1};t,q\big),
\end{gather*}
this can be rewritten as
\begin{gather}\label{Eq_PK-c}
\dd_{(N^n),\la+(N^n)}^{(n)}
\big(q,t;q^{1/2},-q^{1/2},(qt)^{1/2},-(qt)^{1/2}\big)
=\begin{cases}
\hat{c}_{\la}\big(q^N,t^n;q,t\big) & \text{if $\la$ even}, \\
0 & \text{otherwise},
\end{cases}
\end{gather}
where now $\la\in P_{+}(n)$.
By \eqref{Eq_c-def} and \eqref{Eq_conj},
\begin{gather*}
\hat{c}_{2\la}(w,z;q,t)=
\frac{C^0_{2\la}(z;q,t)}{C^0_{2\la}(qz/t;q,t)}
\frac{C^{-}_{\la}\big(q;q^2,t\big)}{C_{\la}^{-}\big(t;q^2,t\big)}
\frac{C^{+}_{\la}\big(q^2w^2z^2/t^2;q^2,t\big)}
{C_{\la}^{+}\big(qw^2z^2/t;q^2,t\big)}.
\end{gather*}
Similarly, the dual case of \eqref{Eq_K} translates to
\begin{gather}\label{Eq_PK-d}
\dd_{(N^n),\la+(N^n)}^{(n)}\big(q,t;1,-1,q^{1/2},-q^{1/2}\big)=
\begin{cases}
\hat{d}_{\la}\big(q^N,t^n;q,t\big) & \text{if $\la'$ even}, \\
0 & \text{otherwise},
\end{cases}
\end{gather}
with $N$ and $\la$ as above, and
\begin{gather*}
\hat{d}_{\la^2}(w,z;q,t) =d_{2\la'}\big(z^{-1},w^{-1};t,q\big)
 =\frac{C^0_{\la^2}(z;q,t)}{C^0_{\la^2}(qz/t;q,t)}
\frac{C^{-}_{\la}\big(qt;q,t^2\big)}{C_{\la}^{-}\big(t^2;q,t^2\big)}
\frac{C^{+}_{\la}\big(qw^2z^2/t^4;q,t^2\big)}
{C_{\la}^{+}\big(w^2z^2/t^3;q,t^2\big)}.
\end{gather*}
Finally, in the case of \eqref{Eq_BC} we get
\begin{gather}\label{Eq_PK-e}
\dd_{(N^n),\la+(N^n)}^{(n)}\big(q^2,t^2;-1,-q,-t,-qt\big)= \hat{e}_{\la}\big(q^N,t^n;q,t\big),
\end{gather}
where
\begin{align*}
\hat{e}_{\la}(w,z;q,t)&=(-1)^{\abs{\la}} e_{\la'}\big(z^{-1},w^{-1};t,q\big) \\
&=\frac{C^0_{\la}\big(z^2;q^2,t^2\big)}{C^0_{\la}\big(q^2z^2/t^2;q^2,t^2\big)}
\frac{C^{-}_{\la}(-q;q,t)}{C_{\la}^{-}(t;q,t)}
\frac{C^{+}_{\la}\big(qw^2z^2/t^2;q,t\big)}{C_{\la}^{+}\big({-}w^2z^2/t;q,t\big)}.
\end{align*}
By \eqref{Eq_d-symm} we also have the companion identity
\begin{gather*}
\dd_{(N^n),\la+(N^n)}^{(n)}\big(q^2,t^2;1,q,t,qt\big)=
(-1)^{\abs{\la}} \hat{e}_{\la}\big(q^N,t^n;q,t\big).
\end{gather*}

In the following it will be convenient to define
\begin{gather}
 \DD_{\la}^{(N,n)}(q,t;t_0\!:\!t_1,t_2,t_3) := \big(t_0q^Nt^{n-1}\big)^{\abs{\la}} t^{-n(\la)}
\dd_{(N^n),\la+(N^n)}^{(n)}(q,t;\bart) \notag \\
\qquad\qquad\qquad{}\times
\frac{C^0_{\la}\big(qt^{n-1};q,t\big)C_{\la}^{-}(t;q,t)
C_{\la}^{+}\big(t_0t_1t_2t_3q^{2N-1}t^{2n-2};q,t\big)}
{C^0_{\la}\big(t^n,q^{N+1}t^{n-1},t_0t_1q^Nt^{n-1},t_0t_2q^Nt^{n-1},
t_0t_3q^Nt^{n-1};q,t\big)},
\label{Eq_DD-dd}
\end{gather}
for $\la\in P_{+}(n)$.
The colon between $t_0$ and $t_1$, $t_2$, $t_3$ indicates the
absence of full $S_4$-symmetry. Combining~\eqref{Eq_binomial-formula} with Corollary~\ref{Cor_CC-vanish} then gives
\begin{gather*}
 \DD_{\la}^{(N,n)}(q,t;t_0\!:\!t_1,t_2,t_3) \\
 \qquad{}=\sum_{\mu\subseteq\la} \bigg(
(-1)^{\abs{\mu}} q^{-n(\mu')} t^{n(\mu)}
\frac{C^0_{\mu}\big(qt^{n-1};q,t\big)C_{\mu}^{-}(t;q,t)
C_{\mu}^{+}\big(t_0t_1t_2t_3q^{2N-1}t^{2n-2};q,t\big)}
{C^0_{\mu}\big(t^n,q^{N+1}t^{n-1},t_0t_1q^Nt^{n-1},
t_0t_2q^Nt^{n-1},t_0t_3q^Nt^{n-1};q,t\big)} \\
 \qquad\hphantom{=\sum_{\mu\subseteq\la}}{} \times
\qbin{\la}{\mu}_{q,t,(t_0t_1t_2t_3q^{2N-1}t^{2n-2})^{1/2}}
 \CC_{(N^n),\mu+(N^n)}^{(n)}(q,t,t_0)\bigg),
\end{gather*}
where we have also used \eqref{Eq_add-square} (with $p=0$) and~\eqref{Eq_binom-add-square}, as well as the definitions
\eqref{Eq_c-small} and~\eqref{Eq_C-large}.
Recalling that $\CC_{\la,\mu}^{(n)}(q,t,t_0)$ is a function of~$t_0^2$, it follows from \eqref{Eq_PbarPast-2} that
\begin{gather}
 \DD_{\la}^{(N,n)}
\big(q,t;-q^{1/2}\!:\!-q^{1/2}t_1,-q^{1/2}t_2,-q^{1/2}t_3\big) \notag \\
 \qquad{}=\sum_{\mu\subseteq\la} \bigg(
(-1)^{\abs{\mu}} q^{-n(\mu')} t^{n(\mu)}
\qbin{\la}{\mu}_{q,t,(t_1t_2t_3q^{2N+1}t^{2n-2})^{1/2}} \notag \\
 \qquad \hphantom{=\sum_{\mu\subseteq\la}}{} \times
\frac{C_{\mu}^{+}\big(q^{2N+1}t^{2n-2},t_1t_2t_3q^{2N-1}t^{2n-2};q,t\big)}
{C^0_{\mu}(q^{N+1}t^{n-1},t_1q^{N+1}t^{n-1},
t_2q^{N+1}t^{n-1},t_3q^{N+1}t^{n-1};q,t)} \bigg) \notag \\
 \qquad{}=\Phi_{\la}\big(q^{N+1}t^{n-1};t_1q^{N+1}t^{n-1},
t_2q^{N+1}t^{n-1},t_3q^{N+1}t^{n-1};q,t\big),\label{Eq_PK-Phi}
\end{gather}
with $\Phi_{\la}$ defined in \eqref{Eq_PhiB}.
If we specialise $\{t_1,t_2,t_3\}=\big\{{-}1,t^{1/2},-t^{1/2}\big\}$
then, by Corollary~\ref{Cor_Littlewood-basic},
\begin{gather*}
 \DD_{\la}^{(N,n)}
\big(q,t;-q^{1/2}\!:\!q^{1/2},-(qt)^{1/2},(qt)^{1/2}\big) \\
\qquad{} =\big(q^{2N+1}t^{2n-2}\big)^{\abs{\mu}} t^{-2n(\mu)}
\frac{C^{-}_{\mu}\big(q,qt;q^2,t\big)
C^{+}_{\mu}\big(q^{2N+2}t^{2n-2},q^{2N+2}t^{2n-1};q^2,t\big)}
{C^0_{2\mu^2}\big(q^{2N+2}t^{2n-1};q^2,t\big)}
\end{gather*}
if $\la=2\mu$ and $0$ otherwise.
By \eqref{Eq_DD-dd} the non-vanishing case finally gives
\begin{gather*}
 \dd_{(N^n),\la+(N^n)}^{(n)}
\big(q,t;q^{1/2},-q^{1/2},(qt)^{1/2},-(qt)^{1/2}\big) \\
 \qquad{} =\frac{C^0_{2\mu}\big(t^n;q,t\big)C^{-}_{\mu}\big(q;q^2,t\big)
C^{+}_{\mu}\big(q^{2N+2}t^{2n-2};q^2,t\big)}
{C^0_{2\mu}\big(qt^{n-1};q,t\big)C^{-}_{\mu}\big(t;q^2,t\big)
C^{+}_{\mu}\big(q^{2N+1}t^{2n-1};q^2,t\big)} \\
 \qquad=\hat{c}_{\la}\big(q^N,t^n;q,t\big),
\end{gather*}
where we have also used \eqref{Eq_double-square} to simplify the $C$-symbols in the second line.
This completes the proof of~\eqref{Eq_PK-c} and, by duality, that of~\eqref{Eq_PK-d}.

To prove the identity \eqref{Eq_PK-e}, we replace $(q,t)\mapsto\big(q^2,t^2\big)$
in~\eqref{Eq_PK-Phi} and then make the specialisation $\{t_1,t_2,t_3\} = \{1/q,t,t/q\}$. This gives
\begin{gather*}
 \DD_{\la}^{(N,n)}\big(q^2,t^2;-q\!:\!-1,-t,-qt\big) \\
 \qquad{}=\Phi_{\la}\big(q^{2N+2}t^{2n-2};q^{2N+1}t^{2n-2},q^{2N+2}t^{2n-1},q^{2N+1}t^{2n-1};q^2,t^2\big).
\end{gather*}
By Corollary~\ref{Cor_Kawanaka-basic} this series on the right once
again can be summed to yield
\begin{gather*}
 \DD_{\la}^{(N,n)}\big(q^2,t^2;-q\!:\!-1,-qt,-t\big) \\
 \qquad{}=\big({-}q^{2N+1}t^{2n-2}\big)^{\abs{\la}} t^{-2n(\la)}
\frac{C^{-}_{\la}(-q,-t;q,t)C^{+}_{\la}\big(q^{2N+1}t^{2n-2},q^{2N}t^{2n-1};q,t\big)}
{C^0_{2\la^2}\big(q^{2N+1}t^{2n-1};q,t\big)},
\end{gather*}
and hence by \eqref{Eq_DD-dd} and \eqref{Eq_double-square},
\begin{align*}
\dd_{(N^n),\la+(N^n)}^{(n)}\big(q^2,t^2;-1,-q,-t,-qt\big)
&=\frac{C^0_{\la}\big(t^{2n};q^2,t^2\big) C^{-}_{\la}(-q;q,t)
C^{+}_{\la}(a/q;q,t)}{C^0_{\la}\big(q^2t^{2n-2};q^2,t^2\big)
C_{\la}^{-}(t;q,t)C_{\la}^{+}\big({-}at/q^2;q,t\big)} \\
&=\hat{e}_{\la}\big(q^N,t^n;q,t\big).
\end{align*}

\section{Open problems}\label{Sec_Open}

The problem with proving the branching rule of
Conjecture~\ref{Con_selfdualcase} is that it requires the following
curious identity for the basic hypergeometric function
\begin{gather*}
\Phi_{\la}\big((aq)^{1/2};-(aq)^{1/2},(at)^{1/2},-(at)^{1/2};q,t\big).
\end{gather*}

\begin{Conjecture}\label{Con_Phi-conj}
For $\la$ a partition
\begin{gather}
\sum_{\mu\subseteq\la} (-1)^{\abs{\mu}}q^{-n(\mu')}t^{n(\mu)}
\frac{C_{\mu}^{+}(a,at/q;q,t)}
{C^0_{\mu}\big(aq,at;q^2,t^2\big)} \qbin{\la}{\mu}_{q,t,(at/q)^{1/2}} \nonumber\\
=\begin{cases}\displaystyle a^{\abs{\la}/2}
q^{2\hat{n}^{\textup{o}}(\la')-2\hat{n}^{\textup{e}}(\la')}
t^{-2n^{\textup{o}}(\la)}
\frac{C^{-,\textup{e}}_{\la}(q,t;q,t)C^{+,\textup{e}}_{\la}(a,at/q;q,t)}
{C^0_{\la}\big(aq,at;q^2,t^2\big)} & \text{if $\twocore{\la}=0$}, \vspace{1mm}\\
0 & \text{otherwise}.
\end{cases}\label{Eq_Phi-conj}
\end{gather}
\end{Conjecture}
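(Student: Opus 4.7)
The plan is to mirror the strategy used for Theorems~\ref{Thm_Littlewood} and~\ref{Thm_Kawanaka}: lift Conjecture~\ref{Con_Phi-conj} to an elliptic identity obtainable from an evaluation at an elliptic spectral vector of a suitable interpolation kernel. The parameter configuration $\{(aq)^{1/2},-(aq)^{1/2},(at)^{1/2},-(at)^{1/2}\}$ is the Koornwinder self-dual specialization, so the natural kernel to consider is a ``self-dual'' analogue of $\mathcal{L}'_c$ and $\mathcal{L}^{-}_c$ in which the integrated density is itself of self-dual Koornwinder form. Note however that this parameter choice falls outside the eight well-poised specializations~\eqref{Case1}--\eqref{Case3}, so no ``off the shelf'' well-poised elliptic lift is available and a genuinely new elliptic beta integral seems to be required.

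As a first sanity check I would verify the one-row case: for $\la=(N)$ the sum in~\eqref{Eq_Phi-conj} reduces via~\eqref{Eq_5ph4} (after the factor $-(aq)^{1/2}$ cancels between numerator and denominator of the $\qhypc{5}{4}$) to
\begin{gather*}
\qhypc{4}{3}\left[\genfrac{}{}{0pt}{}{a^{1/2},-a^{1/2},at\,q^{N-1},q^{-N}}{a,(at)^{1/2},-(at)^{1/2}};q,q\right],
\end{gather*}
which is precisely Andrews' $q$-Watson summation~\eqref{Eq_Andrews} with $b=at$. Since $\twocore{(N)}=0$ exactly when $N$ is even, the vanishing pattern matches automatically; comparing the non-vanishing sides then pins down the normalisation of the conjectured right-hand side and confirms that~\eqref{Eq_Phi-conj} is a legitimate multivariable extension of Andrews' summation.

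The main obstacle will be the $\twocore{\la}=0$ condition. Unlike the parity constraints $\la=2\mu$ and $\la'=2\mu$ arising in the proven cases, which factor through the columns or rows of~$\la$, an empty $2$-core is a genuinely global property of the Young diagram, which is why the even/odd statistics $n^{\textup{e/o}}$, $\hat{n}^{\textup{e/o}}$ and the split $C$-symbols $C^{\pm,\textup{e/o}}_{\la}$ must appear on the right of~\eqref{Eq_Phi-conj}. I expect the cleanest elliptic lift to involve either a sum or a product over the $2$-quotient $(\la^{(0)},\la^{(1)})$ of $\la$, with the right-hand side factoring through this pair via the $p,q$-symmetrised $C$-symbols and binomial coefficients of Section~\ref{Sec_factorials}; correctly identifying this combinatorial shape, guided by Lemma~\ref{Lem_two-core} and the relations between the three families of even/odd $n$-statistics, is the key open problem.

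As a backup strategy I would attempt a direct inductive proof following the $2$-core removal procedure in the proof of Lemma~\ref{Lem_2core}. When $\twocore{\la}\neq 0$ one must exhibit a sign-reversing involution on the summation index $\mu\subseteq\la$ to prove vanishing; when $\twocore{\la}=0$ one removes a horizontal or vertical domino to reduce to $\nu\subset\la$ with $\abs{\nu}=\abs{\la}-2$, and then matches the resulting ratio using the explicit case-by-case formulas for $n^{\textup{e/o}}(\la)-n^{\textup{e/o}}(\nu)$, $\hat{n}^{\textup{e/o}}(\la)-\hat{n}^{\textup{e/o}}(\nu)$, and $\bar{n}^{\textup{e/o}}(\la)-\bar{n}^{\textup{e/o}}(\nu)$ recorded in the proof of Lemma~\ref{Lem_two-core}. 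The hardest part of this route will be establishing a clean one-box Pieri-type recurrence for the binomial coefficient $\qbin{\la}{\mu}_{q,t,s}$ that is compatible with the even/odd splitting; once that is available, the induction on $\abs{\la}$ should close.
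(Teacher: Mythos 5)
The statement you are addressing is stated in the paper as a \emph{conjecture} (Conjecture~\ref{Con_Phi-conj}), and the paper offers no proof of it; the authors only verify the cases $\la=(N)$ (Andrews' $q$-Watson summation~\eqref{Eq_Andrews} with $b\mapsto at$), $\la=(1^n)$ (via the conjugation symmetry~\eqref{Eq_PhiE-conj2}), and record a determinantal reformulation at $t=q$ whose vanishing they also cannot explain. Your proposal correctly reproduces this state of affairs --- your one-row sanity check is exactly the paper's, and your observation that the specialization $\{(aq)^{1/2},-(aq)^{1/2},(at)^{1/2},-(at)^{1/2}\}$ lies outside the eight well-poised cases~\eqref{Case1}--\eqref{Case3} matches the authors' own remark that no elliptic analogue of the identity is even known. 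But reproducing the obstacles is not the same as overcoming them, and neither of your two strategies is carried out.

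Concretely: your first route requires constructing a new ``self-dual'' kernel and an associated elliptic beta integral evaluation playing the role of Theorems~\ref{Thm_integral} and~\ref{Thm_integral-Kawa}; you do not exhibit such an integral, and since the series is not very-well-poised there is no reason to expect the $\mathcal{L}'_c$/$\mathcal{L}^{-}_c$ machinery to produce one --- this is precisely where the problem is open. Your second route hinges on two unproven ingredients: a sign-reversing involution on $\{\mu\subseteq\la\}$ establishing vanishing when $\twocore{\la}\neq 0$, and a domino-removal recurrence for $\qbin{\la}{\mu}_{q,t,s}$ compatible with the even/odd splitting of the $C$-symbols. The difference formulas in the proof of Lemma~\ref{Lem_two-core} control only the exponents $n^{\textup{e/o}}$, $\hat{n}^{\textup{e/o}}$, $\bar{n}^{\textup{e/o}}$ on the right-hand side; they say nothing about how the left-hand sum transforms under domino removal, and no Pieri-type recurrence of the required form for the $q,t,s$-binomial coefficients is known. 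As it stands your proposal is a research plan, not a proof, and it leaves the conjecture exactly as open as the paper does.
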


For $\la=(N)$ this is \eqref{Eq_Andrews} with $b$ replaced by $at$.
By the $p=0$ case of~\eqref{Eq_PhiE-conj2} it follows that,
up to a rescaling of~$a$, \eqref{Eq_Phi-conj} is invariant under conjugation of~$\la'$.
Hence it also holds for $\la=(1^n)$.
Since the elliptic hypergeometric series
\begin{gather*}
\Phi_{\la}\big((aq)^{1/2};-(aq)^{1/2},(at)^{1/2},-(at)^{1/2};q,t;p\big)
\end{gather*}
is not very-well poised, it remains unclear what the elliptic analogue
Conjecture~\ref{Con_Phi-conj} should be.
Another obvious special case arises when $t=q$, in which case we can use
the determinantal expression~\eqref{Eq_determinantal} for the generalised binomial coefficients.
Up to an overall factor, the left-hand side of~\eqref{Eq_Phi-conj}
for $a\mapsto at^{2n-2}$ may then be written as
\begin{gather*}
\sum_{\nu_1,\dots,\nu_n=0}^{\kappa_1}
\prod_{1\leq i<j\leq n} \frac{\big(q^{\nu_i}-q^{\nu_j+1}\big)}
{\big(1-aq^{\nu_i+\nu_j}\big)} \det_{1\leq i,j\leq n}\left(
\frac{\big(aq^{\kappa_i},q^{-\kappa_i};q\big)_{\nu_j}\big(a;q^2\big)_{\nu_j}}
{(q,a;q)_{\nu_j}\big(aq;q^2\big)_{\nu_j}} q^{\nu_j} \right),
\end{gather*}
where $\kappa_i:=\la_i+n-i$. It is again unclear why this vanishes unless $\la\in P_{+}(n)$ has empty $2$-core.

We conclude with several conjectures closely related to
Conjecture~\ref{Con_Phi-conj}, such as new vanishing integrals in the sense
of~\cite{Rains05,RV07} and a number of new Littlewood-type identities.

Let
\begin{gather*}
\dup T(x):=\frac{1}{2^n n!(2\pi\iup)^n} \frac{\dup x_1}{x_1}\cdots\frac{\dup x_n}{x_n}
\end{gather*}
and, for $\abs{a},\abs{b},\abs{q},\abs{t}<1$,
\begin{align*}
Z_n(a,b;q,t)&:=\Int_{\mathbb{T}^n}
\prod_{i=1}^n \frac{\big(x_i^{\pm 2};q\big)_{\infty}}
{\big(ax_i^{\pm 2},bx_i^{\pm 2};q^2\big)_{\infty}}
\prod_{1\leq i<j\leq n} \frac{\big(x_i^{\pm} x_j^{\pm};q\big)_{\infty}}
{\big(tx_i^{\pm} x_j^{\pm};q\big)_{\infty}} \dup T(x) \\
&\hphantom{:}=\prod_{i=1}^n \frac{\big(t,abt^{n+i-2};q\big)_{\infty}}
{\big(q,t^i,-at^{i-1},-bt^{i-1};q\big)_{\infty}\big(abt^{2i-2};q^2\big)_{\infty}^2},
\end{align*}
where the explicit evaluation is a special case of Gustafson's $\mathrm{BC}_n$ analogue of the Askey--Wilson integral~\cite{Gustafson90}.

\begin{Conjecture}[vanishing integral]
Let $\la\in P_{+}(2n)$ and $a,b,q,t\in\mathbb{C}$
such that $\abs{a},\abs{b},\abs{q},\allowbreak \abs{t}<1$. Then
\begin{gather}
I_{\la}\big(a,b;q,t,t^n\big):=\frac{1}{Z_n(a,b;q,t)} \nonumber\\
\qquad{} \times
\Int_{\mathbb{T}^n} P_{\la}\big(x_1^{\pm},\dots,x_n^{\pm};q,t\big)
\prod_{i=1}^n \frac{\big(x_i^{\pm 2};q\big)_{\infty}}
{\big(ax_i^{\pm 2},bx_i^{\pm 2};q^2\big)_{\infty}}
\prod_{1\leq i<j\leq n} \frac{\big(x_i^{\pm} x_j^{\pm};q\big)_{\infty}}
{\big(tx_i^{\pm} x_j^{\pm};q\big)_{\infty}} \dup T(x)\label{Eq_vanishing-Iab}
\end{gather}
vanishes unless $\twocore{\la}=0$. Moreover
\begin{subequations}\label{Eq_nonvanishingcases}
\begin{gather}\label{Eq_IqtqtT}
I_{\la}(q,t;q,t,T) =
q^{n^{\textup{e}}(\la')-n^{\textup{o}}(\la')}
t^{2\hat{n}^{\textup{o}}(\la)-2\hat{n}^{\textup{e}}(\la)}
\frac{C^{0,\textup{e}}_{\la}\big(T^2;q,t\big)}
{C^{0,\textup{o}}_{\la}\big(qT^2/t;q,t\big)}
\frac{C_{\la}^{-,\textup{e}}(q;q,t)}{C_{\la}^{-,\textup{o}}(t;q,t)}
\intertext{and}
\label{Eq_I1qtqtT}
I_{\la}(1,qt;q,t,T) =
\frac{u_{\la}(q,t)+v_{\la}(q,t) T}{1+T}
\frac{C^{0,\textup{e}}_{\la}(T^2;q,t)}
{C^{0,\textup{o}}_{\la}\big(qT^2/t;q,t\big)}
\frac{C_{\la}^{-,\textup{e}}(q;q,t)}{C_{\la}^{-,\textup{o}}(t;q,t)},
\end{gather}
\end{subequations}
where
\begin{gather*}
u_{\la}(q,t) :=
q^{2\hat{n}^{\textup{o}}(\la')-2\hat{n}^{\textup{e}}(\la')}
t^{n^{\textup{e}}(\la)-n^{\textup{o}}(\la)}, \\
v_{\la}(q,t) :=
q^{2n^{\textup{e}}(\la')+2\hat{n}^{\textup{e}}(\la')
-2n^{\textup{o}}(\la')-2\hat{n}^{\textup{o}}(\la')}
t^{4\hat{n}^{\textup{o}}(\la)-4\hat{n}^{\textup{e}}(\la)
+n^{\textup{o}}(\la)-n^{\textup{e}}(\la)}.
\end{gather*}
\end{Conjecture}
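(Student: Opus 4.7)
The plan is to identify $I_\la$ as a coefficient in a Koornwinder expansion of the doubled Macdonald polynomial $P_\la\big(x^\pm;q,t\big)$ and then apply the machinery of Sections~\ref{Sec_beta-integrals}--\ref{Sec_Phi}. The first observation is that via the factorisation $\big(cx_i^\pm;q\big)_\infty\big({-}cx_i^\pm;q\big)_\infty = \big(c^2 x_i^{\pm 2};q^2\big)_\infty$, the density in \eqref{Eq_vanishing-Iab} coincides with the Koornwinder density for parameters $\bart = \big(a^{1/2},-a^{1/2},b^{1/2},-b^{1/2}\big)$. Hence by orthogonality of the Koornwinder basis, $I_\la\big(a,b;q,t,t^n\big)$ equals the coefficient of the trivial Koornwinder polynomial $K_\emptyset(x;q,t;\bart) = 1$ in the Koornwinder expansion of $P_\la\big(x^\pm;q,t\big)$.

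The next step is to adapt the dualisation strategy of Section~\ref{Sec_proof} to arbitrary $\la$: fix $m$ large enough that $\la \subseteq \big(m^{2n}\big)$ and extract the coefficient of $K_\emptyset(x;q,t;\bart) = 1$ from both sides of \eqref{Eq_PP-KK}. This yields the single identity
\begin{gather*}
\sum_{\la \subseteq (m^{2n})} (-1)^{\abs{\la}} I_\la\big(a,b;q,t,t^n\big) P_{((m^{2n})-\la)'}(y;t,q) = P_{(n^m)}(y;t,q) K_{(n^m)}(y;t,q;\bart),
\end{gather*}
so that each value of $I_\la$ can be read off as a transition coefficient $[P_\kappa(y;t,q)] K_{(n^m)}(y;t,q;\bart)$ in $m$ variables, where $\kappa = ((m^{2n})-\la)'-(n^m)$ is viewed as a weight. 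Okounkov's binomial formula \eqref{Eq_binomial-formula} then expresses $K_{(n^m)}(y;t,q;\bart)$ as a sum of interpolation polynomials $\bar{P}^{\ast}_\sigma(y;t,q,t_0)$ over $\sigma \subseteq (n^m)$, reducing the problem to computing the Macdonald coefficients $[P_\kappa(y;t,q)] \bar{P}^{\ast}_\sigma(y;t,q,t_0)$.

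Assembling these coefficients via a suitable generalisation of Theorem~\ref{Thm_PPbarast} would recast $I_\la$ as a basic hypergeometric series of $\Phi_\la$ type. Specifically, the specialisation $\bart = \big(a^{1/2},-a^{1/2},b^{1/2},-b^{1/2}\big)$ matches the parameter configuration of Conjecture~\ref{Con_Phi-conj} after rescaling, so that the sum acquires the shape $\Phi_\la\big((aq)^{1/2};-(aq)^{1/2},(at)^{1/2},-(at)^{1/2};q,t\big)$ up to a shift by a power of $qt^{n-1}$ reflecting the ambient rectangle. Conjecture~\ref{Con_Phi-conj} then yields simultaneously the desired vanishing for $\twocore{\la}\neq 0$ and the product evaluation \eqref{Eq_IqtqtT}. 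The second special case $(a,b)=(1,qt)$ producing \eqref{Eq_I1qtqtT} would require additional care because $t_0=1$ is a boundary value of the binomial formula, where the splitting of a double pole generates the two competing contributions $u_\la(q,t)$ and $v_\la(q,t) T$ and combines them into the weighted average $(u_\la+v_\la T)/(1+T)$.

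The main obstacle is that both ingredients invoked above are themselves unresolved. The extension of Theorem~\ref{Thm_PPbarast} from the rectangular target $P_{(N^n)}$ to an arbitrary $P_\nu$ appears to demand a new elliptic beta integral in which the theta-function insertion of Proposition~\ref{Prop_integral} is replaced by a general Macdonald polynomial in the integrand. And Conjecture~\ref{Con_Phi-conj} itself is balanced but not very-well poised, placing it outside the reach of both the quadratic summations of Theorems~\ref{Thm_Littlewood} and \ref{Thm_Kawanaka} and the existing Macdonald--Koornwinder elliptic machinery. As a reassuring consistency check, the rectangular case $\la=(m^r)$ of the vanishing integral at $(a,b)=(q,t)$ is equivalent to the rectangular case of Conjecture~\ref{Con_selfdualcase}, and the $q=t$ specialisation reduces to the classical 2-quotient vanishing of $s_\la\big(x_1,-x_1,\ldots,x_n,-x_n\big)$, confirming the conjectured pattern in every tractable subcase.
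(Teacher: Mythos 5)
This statement is a conjecture; the paper offers no proof of it, and the only piece it actually establishes is the $t=q$ case of the vanishing (Proposition~\ref{Prop_ab-vanishing}), obtained by writing $s_{\la}\big(x^{\pm}\big)$ as a determinant, converting the integral into a pfaffian via de Bruijn's formula, and reading off the resulting parity condition through Lemma~\ref{Lem_2core}. Your proposal does not prove the conjecture either, as you yourself concede, but it is worth separating what is correct from what fails. The opening moves are sound: the density in \eqref{Eq_vanishing-Iab} is the Koornwinder density at $\bart=\big(a^{1/2},-a^{1/2},b^{1/2},-b^{1/2}\big)$, so $I_{\la}$ is the coefficient of $K_0=1$ in the Koornwinder expansion of $P_{\la}\big(x^{\pm};q,t\big)$, and extracting that coefficient from \eqref{Eq_PP-KK} reproduces the known equivalence with the bounded Littlewood identities of Conjecture~\ref{Con_new-Littlewood-bounded} (this is \cite[Proposition~4.9]{RW15}, which the paper cites).

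The argument breaks down at the next step. Your extraction yields $I_{\la}=\pm\big[P_{\kappa}(y;t,q)\big]K_{(n^m)}(y;t,q;\bart)$ with $\kappa$ a \emph{general} weight and the Koornwinder index rectangular. This is the transpose of the situation the paper's machinery handles: in \eqref{Eq_PK-Phi} the Macdonald index is the rectangle $(N^n)$ and the Koornwinder index is general, and it is precisely the rectangularity of the Macdonald index that allows Theorem~\ref{Thm_PPbarast} to factor the transition coefficients and collapse the double sum coming from Okounkov's binomial formula into a single $\Phi$-type series. For general $\kappa$ the coefficients $\CC^{(m)}_{\kappa\sigma}$ do not factor --- the paper explicitly describes their general form as a hard open problem --- so your claim that the sum ``acquires the shape'' of $\Phi_{\la}\big((aq)^{1/2};\dots\big)$ is unsupported; note also that the paper links Conjecture~\ref{Con_Phi-conj} to the rectangular branching rule of Conjecture~\ref{Con_selfdualcase}, not to this vanishing integral. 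Even granting both open ingredients you invoke, your treatment of $(a,b)=(1,qt)$ and of the $T$-dependence in \eqref{Eq_I1qtqtT} is speculation rather than argument. Finally, your route does not recover the one case that can actually be proved, $t=q$, where the pfaffian computation of Proposition~\ref{Prop_ab-vanishing} works for all $a$ and $b$ and makes no use of Macdonald--Koornwinder transition coefficients; any serious attempt on this conjecture should at least reproduce that case.
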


That the integral on the right of \eqref{Eq_vanishing-Iab} depends on~$n$
only through $t^n$ follows from the fact that this integral is equal to
the $T=t^n$ instance of
\begin{gather*}
\big[\tilde{K}_0\big(x_1,\dots,x_n;q,t,T;a^{1/2},-a^{1/2},
b^{1/2},-b^{1/2}\big)\big] P_{\la}\big(x_1^{\pm},\dots,x_n^{\pm};q,t\big)
\end{gather*}
with $\tilde{K}_{\la}$ the lifted Koornwinder polynomial~\eqref{Eq_liftedK},
see~\cite{Rains05}.

When $t=q$ we have a proof of the vanishing part of the conjecture
and alternative expressions for the right-hand sides of
\eqref{Eq_IqtqtT} and \eqref{Eq_I1qtqtT} in terms of pfaffians.
Define $I_{\la}(a,b;q,T):=I_{\la}(a,b;q,q,T)$ and
$Z_n(a,b;q):=Z_n(a,b;q,q)$.

\begin{Proposition}\label{Prop_ab-vanishing}
Let $\la\in P_{+}(2n)$ and $a,b,q\in\mathbb{C}$ such that
$\abs{a},\abs{b},\abs{q}<1$. Then
\begin{gather}\label{Eq_vanish-ab}
\Int_{\mathbb{T}^n} s_{\la}\big(x_1^{\pm},\dots,x_n^{\pm}\big)
\prod_{i=1}^n \frac{\big(x_i^{\pm 2};q\big)_{\infty}}
{\big(ax_i^{\pm 2},bx_i^{\pm 2};q^2\big)_{\infty}}
\prod_{1\leq i<j\leq n} \big(1-x_i^{\pm} x_j^{\pm}\big) \dup T(x)
\end{gather}
vanishes unless $\la$ has empty $2$-core. Moreover,
\begin{subequations}
\begin{gather}
I_{\la}\big(q,q;q,q^n\big) =
\prod_{i=1}^n \frac{\big(1-q^{2i-1}\big)^{2n-2i+1}}{\big(1-q^{2i}\big)^{2n-2i}} \notag\\
\hphantom{I_{\la}\big(q,q;q,q^n\big) =}{}
\times\pf_{1\leq i,j\leq 2n}\left(
\frac{q^{(\la_i-\la_j-i+j-1)/2}}{1-q^{\la_i-\la_j-i+j}}
\chi(\la_i-\la_j-i+j \text{ is odd})\right) \label{Eq_Iqqq}
\intertext{and}
I_{\la}\big(1,q^2;q,q^n\big) =\frac{1}{2^n}
\frac{2}{1+q^n}
\prod_{i=1}^n \frac{(1-q^{2i-1})^{2n-2i+1}}{(1-q^{2i})^{2n-2i}}\notag\\
\hphantom{I_{\la}\big(1,q^2;q,q^n\big) =}{}\times\pf_{1\leq i,j\leq 2n}\left(
\frac{1+q^{\la_i-\la_j-i+j}}
{1-q^{\la_i-\la_j-i+j}}
\chi(\la_i-\la_j-i+j \text{ is odd})\right).\label{Eq_I1q2q}
\end{gather}
\end{subequations}
\end{Proposition}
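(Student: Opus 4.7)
The plan is to reduce the integral \eqref{Eq_vanish-ab} to the Pfaffian of an antisymmetric $2n\times 2n$ matrix via the Weyl character formula and a de~Bruijn-type identity, and then to analyse this Pfaffian.

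Since $\la\in P_{+}(2n)$, setting $y_{2i-1}:=x_i$, $y_{2i}:=x_i^{-1}$ and $\mu_j:=\la_j+2n-j$, the Weyl determinant formula gives
\begin{equation*}
s_{\la}\big(x_1^{\pm},\dots,x_n^{\pm}\big)
=\frac{\det_{1\leq i,j\leq 2n}\big(y_i^{\mu_j}\big)}
{\prod_{1\leq i<j\leq 2n}(y_i-y_j)}.
\end{equation*}
A short computation shows that the Weyl denominator factorises, up to sign, as $\prod_{i=1}^n(x_i-x_i^{-1})\cdot\prod_{1\leq i<j\leq n}(1-x_i^{\pm}x_j^{\pm})$, so
\begin{equation*}
s_{\la}\big(x^{\pm}\big)\prod_{i<j}\big(1-x_i^{\pm}x_j^{\pm}\big)
=\frac{\det_{1\leq i,j\leq 2n}\big(y_i^{\mu_j}\big)}{\prod_{i=1}^n(x_i-x_i^{-1})}.
\end{equation*}

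Writing $w(x):=(x^{\pm 2};q)_{\infty}/(ax^{\pm 2},bx^{\pm 2};q^2)_{\infty}$, note that $w(x^{-1})=w(x)$ while $1/(x-x^{-1})$ is antisymmetric under $x\mapsto x^{-1}$. Expanding the determinant and grouping rows $2i-1,2i$ with the variable $x_i$, the standard de~Bruijn/Stembridge Pfaffian identity then yields
\begin{equation*}
\Int_{\mathbb{T}^n}s_{\la}\big(x^{\pm}\big)\textstyle\prod_i w(x_i)\prod_{i<j}\big(1-x_i^{\pm}x_j^{\pm}\big)\,\dup T(x)
=2^n\pf_{1\leq i,j\leq 2n}\big(M_{jk}\big),
\end{equation*}
where
\begin{equation*}
M_{jk}:=\frac{1}{4\pi\iup}\oint_{\mathbb{T}}\frac{x^{\mu_j-\mu_k}\,w(x)}{x-x^{-1}}\,\frac{\dup x}{x},
\end{equation*}
the antisymmetry of $(M_{jk})$ following from the substitution $x\mapsto x^{-1}$.

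For the vanishing claim, observe that $w$ and $1/(x^2-1)$ are both functions of $x^2$, so the integrand of $M_{jk}$ is a function of $x^2$ multiplied by $x^{\mu_j-\mu_k+1}$. Hence $M_{jk}=0$ unless $\mu_j$ and $\mu_k$ have opposite parity. Sorting the indices by parity of $\mu_j$ puts $(M_{jk})$ into block anti-diagonal form, with non-trivial blocks of sizes $\#\{j\colon\mu_j\text{ even}\}$ and $\#\{j\colon\mu_j\text{ odd}\}$. By Lemma~\ref{Lem_2core} applied with $m=2n$, these two counts both equal $n$ precisely when $\twocore{\la}=0$; otherwise the matrix is rank-deficient and $\pf(M)=0$, which gives the vanishing statement.

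For the explicit evaluations it remains to compute $M_{jk}$ at the two parameter choices. For $(a,b)=(q,q)$, the identity $(x^{\pm 2};q)_\infty=(x^{\pm 2};q^2)_\infty(qx^{\pm 2};q^2)_\infty$ reduces $w$ to $(x^{\pm 2};q^2)_\infty/(qx^{\pm 2};q^2)_\infty$, which has double zeros at $x=\pm 1$ so the integrand is regular on $\mathbb{T}$; substituting $y=x^2$ and summing residues inside $\mathbb{T}$ gives a geometric series collapsing to $q^{(\mu_j-\mu_k-1)/2}/(1-q^{\mu_j-\mu_k})$ as required by \eqref{Eq_Iqqq}. For $(a,b)=(1,q^2)$, the analogous simplification gives $w(x)=(qx^{\pm 2};q^2)_\infty/(q^2 x^{\pm 2};q^2)_\infty$, finite and nonzero at $x=\pm 1$, so the integrand has simple poles on $\mathbb{T}$ at these points; a principal-value/half-residue prescription produces $(1+q^{\mu_j-\mu_k})/(1-q^{\mu_j-\mu_k})$ as in~\eqref{Eq_I1q2q}. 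The common prefactor $\prod_i(1-q^{2i-1})^{2n-2i+1}/(1-q^{2i})^{2n-2i}$ then arises from combining the $2^n$ from the de~Bruijn reduction with $Z_n(a,b;q,q)^{-1}$ via the identity $(-1;q)_\infty=2(-q;q)_\infty$.

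The main obstacle is the $(a,b)=(1,q^2)$ case: the boundary poles at $x=\pm 1$ require careful regularisation, and the additional $n$-dependent factor $2/(1+q^n)$ in~\eqref{Eq_I1q2q} must be tracked. Since the Pfaffian entries $M_{jk}$ are themselves $n$-independent, this factor must emerge entirely from the normalisation $Z_n(1,q^2;q,q)^{-1}$ combined with the boundary residues; tracking this $n$-dependence carefully through the principal-value prescription is expected to be the most delicate step of the proof.
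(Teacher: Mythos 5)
Your proposal follows essentially the same route as the paper: the Weyl/Schur determinant in the $2n$ variables $x_i^{\pm}$, de~Bruijn's formula to reduce the $n$-fold integral to a $2n\times 2n$ Pfaffian of one-dimensional integrals, the parity argument combined with Lemma~\ref{Lem_2core} (applied to $\{\la_j+2n-j\}$) for the vanishing, and an explicit evaluation of the entries at the two parameter specialisations. One remark, though: the difficulty you identify at $(a,b)=(1,q^2)$ is self-inflicted. De~Bruijn's identity does not produce entries $\int x^{\mu_j-\mu_k}\,\dup\mu(x)$ but the antisymmetrised combinations $\int\big(x^{\mu_j-\mu_k}-x^{\mu_k-\mu_j}\big)\dup\mu(x)$, and since the entry is nonzero only when $\mu_j-\mu_k=2k+1$ is odd, the factor $x^{2k+1}-x^{-2k-1}$ vanishes at $x=\pm1$ and exactly cancels the simple poles of the density there. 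No principal-value or half-residue prescription is needed, and the paper evaluates the resulting regular integral by two applications of the $q$-binomial theorem followed by the $q$-Gauss sum (your residue computation is an equivalent route for the $(q,q)$ case). Also, the factor $2/(1+q^n)$ comes entirely from the normalisation $Z_n\big(q^{1-\alpha},q^{\alpha+1};q\big)=\frac{1+q^{\alpha n}}{2}(\cdots)$, not from any boundary contribution; once you work with the antisymmetrised entries there are no boundary residues to track.
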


Before we prove this, we remark that for any fixed choice of partition $\la$
with empty $2$-core each of the above two pfaffians can be written as
an $n\times n$ determinant containing the non-vanishing entries, up to sign. For example,
\begin{align*}
I_{(r^{2n})}\big(q,q;q,q^n\big) &=
\prod_{i=1}^n \frac{\big(1-q^{2i-1}\big)^{2n-2i+1}}{\big(1-q^{2i}\big)^{2n-2i}}
\pf_{1\leq i,j\leq 2n}\left(\frac{q^{(j-i-1)/2}}{1-q^{j-i}}
\chi(j-i \text{ is odd})\right) \\
&=\prod_{i=1}^n \frac{(1-q^{2i-1})^{2n-2i+1}}{(1-q^{2i})^{2n-2i}}
\det_{1\leq i,j\leq n}\left(
\frac{(-1)^{j-i} q^{j-i}}{1-q^{2j-2i+1}}\right) \\
&=1.
\end{align*}
We do not know, however, how to compute the pfaffian for arbitrary $\la$.
Of course, from~\eqref{Eq_IqtqtT} it follows that we must have
\begin{gather*}
 \prod_{i=1}^n \frac{\big(1-q^{2i-1}\big)^{2n-2i+1}}{\big(1-q^{2i}\big)^{2n-2i}}
 \pf_{1\leq i,j\leq 2n}\left(
\frac{q^{(\la_i-\la_j-i+j-1)/2}}{1-q^{\la_i-\la_j-i+j}}
\chi(\la_i-\la_j-i+j \text{ is odd})\right) \\
 \qquad{} =q^{n^{\textup{e}}(\la')-n^{\textup{o}}(\la')+2\hat{n}^{\textup{o}}(\la)
-2\hat{n}^{\textup{e}}(\la)}
\frac{C^{0,\textup{e}}_{\la}\big(q^{2n};q,q\big)}
{C^{0,\textup{o}}_{\la}\big(q^{2n};q,q\big)}
\frac{C_{\la}^{-,\textup{e}}(q;q,q)}{C_{\la}^{-,\textup{o}}(q;q,q)},
\end{gather*}
with a similar result for the second pfaffian.

\begin{proof}[Proof of Proposition \ref{Prop_ab-vanishing}]
Using~\eqref{Eq_Schur} to express the Schur function~$s_{\la}$
as a determinant, we have
\begin{gather}\label{Eq_Schur-int}
I_{\la}\big(a,b;q,q^n\big)=\frac{1}{Z_n(a,b;q)}
\Int_{\mathbb{T}^n} \det_{1\leq i,j\leq 2n}\big(y_i^{\la_j+2n-j}\big)
\prod_{i=1}^n \frac{x_i^{-1}\theta\big(x_i^2;q\big)}
{\big(ax_i^{\pm 2},bx_i^{\pm 2};q^2\big)_{\infty}} \dup T(x),
\end{gather}
where $y=(x_1,x_1^{-1},\dots,x_n,x_n^{-1})$.
Let $\big(a_{ij}(x)\big)$ be a $2n\times 2n$ matrix such that
$a_{2i-1,j}(x)=\phi_j(x_i)$ and $a_{2i,j}(x)=\psi_j(x_i)$.
Then \cite[equation~(7.3)]{deBruijn56}
\begin{gather*}
\frac{1}{n!} \int \det_{1\leq i,j\leq 2n}
\big(a_{ij}(x)\big)\dup\mu(x_1)\cdots\dup\mu(x_n)
=\pf_{1\leq i,j\leq 2n}\bigg( \int
\big(\phi_i(x)\psi_j(x)-\phi_j(x)\psi_i(x)\big)\dup\mu(x)\bigg).
\end{gather*}
Applying this to \eqref{Eq_Schur-int} yields
\begin{gather*}
I_{\la}\big(a,b;q,q^n\big) =\frac{1}{2^n Z_n(a,b;q)} \\
\hphantom{I_{\la}\big(a,b;q,q^n\big) =}{} \times
\pf_{1\leq i,j\leq 2n}\bigg(\frac{1}{2\pi\iup} \Int_{\mathbb{T}}
\big(z^{\la_i-\la_j-i+j}-z^{\la_j-\la_i+i-j}\big)
\frac{z^{-1}\theta(z^2;q)}
{\big(az^{\pm 2},bz^{\pm 2};q^2\big)_{\infty}} \frac{\dup z}{z}\bigg).
\end{gather*}
The $(i,j)$-entry of above pfaffian vanishes if $\la_i-\la_j-i+j$ is even.
For the pfaffian to not vanish the set
\begin{gather*}
\{\la_1+2n-1,\dots,\la'_{2n-1}+1,\la_{2n}\}
\end{gather*}
must thus have $n$ even and $n$ odd elements.
By Lemma~\ref{Lem_2core} this is exactly the case if $\la$ has empty $2$-core, which settles the non-vanishing part of the proposition.

For the second part, we take $a=q^{1-\alpha}$ and $b=q^{\alpha+1}$ with
$\alpha\in\{0,1\}$ so that
\begin{gather*}
\frac{\theta(z^2;q)}{\big(az^{\pm 2},bz^{\pm 2};q^2\big)_{\infty}^2}=
\frac{\theta\big(q^{\alpha}z^2;q^2\big)}{\theta\big(q^{\alpha+1}z^2;q^2\big)_{\infty}}.
\end{gather*}
\looseness=-1 Assume now that $\la_i-\la_j-i+j$ is odd, say $2k+1$.
Then the $(i,j)$-entry of the pfaffian is given by
\begin{gather*}\allowdisplaybreaks
 \frac{1}{2\pi\iup} \Int_{\mathbb{T}}
\big(z^{2k+1}-z^{-2k-1}\big)
\frac{z^{-1}\theta\big(q^{\alpha}z^2;q^2\big)}
{\theta\big(q^{\alpha+1}z^2;q^2\big)} \frac{\dup z}{z} \\
 \qquad {}= \frac{1}{2\pi\iup} \sum_{i,j=0}^{\infty} \Int_{\mathbb{T}}
\big(z^{2k+1}-z^{-2k-1}\big) z^{2i-2j-1} q^{i+j+\alpha(i-j)}
\frac{\big(1/q;q^2\big)_i\big(q;q^2\big)_j}{\big(q^2;q^2\big)_i\big(q^2;q^2\big)_j}
 \frac{\dup z}{z} \\
 \qquad{} = \sum_{i=0}^{\infty} q^{2i-(\alpha-1)k}
\frac{\big(1/q;q^2\big)_i\big(q;q^2\big)_{i+k}}{\big(q^2;q^2\big)_i\big(q^2;q^2\big)_{i+k}}-
\sum_{i=0}^{\infty} q^{2i+(\alpha-1)(k+1)}
\frac{\big(1/q;q^2\big)_i\big(q;q^2\big)_{i-k-1}}{\big(q^2;q^2\big)_i\big(q^2;q^2\big)_{i-k-1}} \\
 \qquad{} = \frac{q^{-(\alpha-1)k}}{1-q^{2k+1}}
\frac{\big(q;q^2\big)_{\infty}}{\big(q^2;q^2\big)_{\infty}}
-\frac{q^{(\alpha-1)(k+1)}}{1-q^{-2k-1}}
\frac{\big(q;q^2\big)_{\infty}}{\big(q^2;q^2\big)_{\infty}} \notag \\
 \qquad{} =\frac{q^{-(\alpha-1)k}+q^{\alpha+(\alpha+1)k}}{1-q^{2k+1}}
\frac{\big(q;q^2\big)_{\infty}^2}{\big(q^2;q^2\big)_{\infty}^2}, \notag
\end{gather*}
where the second equality follows from double use of the $q$-binomial theorem
\cite[equation~(II.3)]{GR04}
\begin{gather*}
{_1\phi_0}(a;\text{--}\,;q,z)=\frac{(az;q)_{\infty}}{(z;q)_{\infty}}
\end{gather*}
and the second-last equality from the $q$-Gauss sum \cite[equation~(II.8)]{GR04}
\begin{gather*}
{_2\phi_1}(a,b;c;q,z)=\frac{(c/a,c/b;q)_{\infty}}{(c,c/ab;q)_{\infty}}.
\end{gather*}
Since
\begin{gather*}
Z_n\big(q^{1-\alpha},q^{\alpha+1};q\big)=\frac{1+q^{\alpha n}}{2}
\left(\frac{\big(q;q^2\big)_{\infty}}{\big(q^2;q^2\big)_{\infty}}\right)^{2n}
\prod_{i=1}^n \frac{\big(1-q^{2i}\big)^{2n-2i}}{\big(1-q^{2i-1}\big)^{2n-2i+1}},
\end{gather*}
we obtain~\eqref{Eq_Iqqq} and~\eqref{Eq_I1q2q}.
\end{proof}

By \cite[Proposition~4.9]{RW15} the integrals~\eqref{Eq_IqtqtT}
and~\eqref{Eq_I1qtqtT} are equivalent to a pair of bounded Littlewood
identities.
To state these we define, for $\la$ a partition such that $\twocore{\la}=0$,
\begin{gather*}
\kappa^{(1)}_{\la}(z;q,t) :=q^{n^{\text{e}}(\la')-n^{\text{o}}(\la')}
t^{2\hat{n}^{\text{o}}(\la)-2\hat{n}^{\text{e}}(\la)}
\left(\frac{q}{t}\right)^{\abs{\la}/2}
\frac{C^{0,\text{e}}_{\la}\big(z^2;q,t\big)}{C^{0,\text{o}}_{\la}\big(z^2q/t;q,t\big)}
 \frac{C^{-,\text{e}}_{\la}(t;q,t)}{C^{-,\text{o}}_{\la}(q;q,t)}, \\
\kappa^{(2)}_{\la}(z;q,t) :=
\frac{q^{2\hat{n}^{\text{o}}(\la')-2\hat{n}^{\text{e}}(\la')}
t^{n^{\text{e}}(\la)-n^{\text{o}}(\la)}+
z q^{4\hat{n}^{\text{e}}(\la')-4n^{\text{o}}(\la')}
t^{2n^{\text{o}}(\la)+3\hat{n}^{\text{o}}(\la)-5\hat{n}^{\text{e}}(\la)}}{1+z}
\notag \\
\hphantom{\kappa^{(2)}_{\la}(z;q,t) :=}{} \times \left(\frac{q}{t}\right)^{\abs{\la}/2}
\frac{C^{0,\text{e}}_{\la}\big(z^2;q,t\big)}{C^{0,\text{o}}_{\la}\big(z^2q/t;q,t\big)}
\frac{C^{-,\text{e}}_{\la}(t;q,t)}{C^{-,\text{o}}_{\la}(q;q,t)},
\end{gather*}
and set $\kappa_{\la}^{(1)}(z;q,t)=\kappa_{\la}^{(2)}(z;q,t)=0$ if $\la$ has a non-trivial $2$-core.
As follows from Lemmas~\ref{Lem_two-core} and~\ref{Lem_C-conjugate-eo},
in the non-vanishing case these two functions are related to
$I_{\la}(q,t;q,t,T)$ and $I_{\la}(1,qt;q,t,T)$ as conjectured on the
right-hand side of~\eqref{Eq_nonvanishingcases} via
\begin{gather*}
\kappa_{\la}^{(1)}(z;q,t)=I_{\la'}(t,q;t,q,1/z)
\qquad\text{and}\qquad
\kappa_{\la}^{(2)}(z;q,t)=I_{\la'}(1,qt;t,q,1/z).
\end{gather*}

\begin{Conjecture}[bounded Littlewood identities] \label{Con_new-Littlewood-bounded}
For $m$, $n$ nonnegative integers,
\begin{gather*}
\sum_{\la\in P_{+}(n)} \kappa^{(1)}_{\la}(q^{-m};q,t) P_{\la}(x;q,t)
=(x_1\cdots x_n)^m K_{(m^n)}\big(x;q,t;q^{1/2},-q^{1/2},t^{1/2},-t^{1/2}\big)
\end{gather*}
and
\begin{gather*}
\sum_{\la\in P_{+}(n)} \kappa^{(2)}_{\la}(q^{-m};q,t) P_{\la}(x;q,t)
=(x_1\cdots x_n)^m K_{(m^n)}\big(x;q,t;1,-1,(qt)^{1/2},-(qt)^{1/2}\big).
\end{gather*}
\end{Conjecture}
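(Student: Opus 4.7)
The plan is to prove Conjecture \ref{Con_new-Littlewood-bounded} conditionally on Conjecture \ref{Con_Phi-conj}, following closely the template of the proof of Theorem \ref{Thm_universal}. The first bounded Littlewood identity is the natural Macdonald-side companion of the Koornwinder parameter choice $\bart=(q^{1/2},-q^{1/2},t^{1/2},-t^{1/2})$ from the $(\mathrm{C}_n,\mathrm{B}_n)$ case of Conjecture \ref{Con_selfdualcase}, and this parallelism drives the whole argument.

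First I would dualise. Expanding the Koornwinder side in Macdonald polynomials via the transition coefficients \eqref{Eq_d-small} and using Macdonald orthogonality together with $(x_1\cdots x_n)^m P_\la(x^{-1};q,t)=P_{(m^n)-\la}(x;q,t)$ for $\la\subseteq(m^n)$ yields
\[
\kappa^{(1)}_\la(q^{-m};q,t)=\dd^{(n)}_{(m^n)-\la,(m^n)}(q,t;\bart),
\]
while the identity $(x_1\cdots x_n)^m K_{(m^n)}(x;q,t;\bart)=(x_1\cdots x_n)^{2m}(x_1\cdots x_n)^{-m}K_{(m^n)}(x^{-1};q,t;\bart)$ (using the $\mathrm{BC}_n$-invariance of $K$) encodes the symmetry $\kappa^{(1)}_\la=\kappa^{(1)}_{(2m^n)-\la}$ and covers the remaining range $\la\subseteq(2m^n)$. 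Combining this with Mimachi's Cauchy identity \eqref{Eq_Mimachi} exactly in the form \eqref{Eq_KPPyK} used in the proof of Theorem \ref{Thm_universal} then trades the non-rectangular Koornwinder index for a rectangular Macdonald one, reducing the problem to evaluating $\dd^{(n)}_{(N^n),\la+(N^n)}(q,t;\bart)$ for the new parameter choice.

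Next I would apply the master identity \eqref{Eq_PK-Phi}. Using the $S_4$-symmetry of $K_\la$ in $\bart$ to place $t_0=-q^{1/2}$ and identifying the remaining parameters as $\{t_1,t_2,t_3\}=\{-1,(t/q)^{1/2},-(t/q)^{1/2}\}$, the binomial formula \eqref{Eq_binomial-formula} combined with Theorem \ref{Thm_PPbarast} gives
\[
\DD^{(N,n)}_\la\bigl(q,t;-q^{1/2}\!:\!q^{1/2},t^{1/2},-t^{1/2}\bigr)=\Phi_\la\bigl((aq)^{1/2};-(aq)^{1/2},(at)^{1/2},-(at)^{1/2};q,t\bigr),
\]
where $a=q^{2N+1}t^{2n-2}$. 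This is exactly the series evaluated in Conjecture \ref{Con_Phi-conj}. Inserting the conjectured evaluation, reverting to $\dd^{(n)}_{(N^n),\la+(N^n)}$ via \eqref{Eq_DD-dd}, and matching the result with $\kappa^{(1)}_\la(q^{-m};q,t)$ is a routine $C$-symbol computation relying on Lemma \ref{Lem_two-core} and the even/odd dissections of Lemmas \ref{Lem_C-conjugate-eo}--\ref{Lem_C-recip-eo}; the vanishing for $\twocore{\la}\neq 0$ is inherited directly from Conjecture \ref{Con_Phi-conj}.

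The second identity, with $\bart=(1,-1,(qt)^{1/2},-(qt)^{1/2})$, contains no $\pm q^{1/2}$, so \eqref{Eq_PK-Phi} cannot be invoked as stated. The natural remedy is to extend \eqref{Eq_PK-Phi} by substituting the more general evaluation of $\CC^{(n)}_{(N^n),\mu+(N^n)}(q,t,sq^{1/2})$ alluded to after Theorem \ref{Thm_PPbarast} (a rational function in $\mathbb{Q}(q,t,s,t^n,q^Nt^n)$) and then choosing $t_0=(qt)^{1/2}$; the two-term $u_\la+v_\la T$ structure of $\kappa^{(2)}_\la$, as opposed to the single-monomial structure of $\kappa^{(1)}_\la$, matches the two contributions that this less symmetric transition coefficient would produce. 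The principal obstacle throughout is Conjecture \ref{Con_Phi-conj} itself: the series in question is balanced but not very-well poised, so the inversion-of-elliptic-sum strategies of Section \ref{Sec_Phi} that proved Theorems \ref{Thm_Littlewood} and \ref{Thm_Kawanaka} do not produce a summation of this shape, and a genuinely new quadratic transformation extending Andrews' sum \eqref{Eq_Andrews} to the multivariable setting seems unavoidable.
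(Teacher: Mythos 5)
First, be aware that the statement you are addressing is an \emph{open conjecture}: the paper offers no proof, only the observation that, by \cite[Proposition~4.9]{RW15}, it is equivalent to the (equally conjectural) integral evaluations \eqref{Eq_IqtqtT} and \eqref{Eq_I1qtqtT}, via the stated relations $\kappa^{(1)}_{\la}(z;q,t)=I_{\la'}(t,q;t,q,1/z)$ and $\kappa^{(2)}_{\la}(z;q,t)=I_{\la'}(1,qt;t,q,1/z)$. Your argument is therefore at best conditional on another open conjecture; but in fact the reduction you propose does not go through.

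The gap is in the dualisation step. The Cauchy-kernel duality \eqref{Eq_KPPyK}, extracted from \eqref{Eq_PP-KK}, relates the coefficient of a general Koornwinder polynomial in a \emph{rectangular} Macdonald polynomial $P_{(m^r)}(x^{\pm};q,t)$ to the coefficient of a \emph{rectangular} Macdonald polynomial in a general Koornwinder polynomial, i.e., to $\dd^{(n)}_{(N^n),\la+(N^n)}$. That is exactly why the combination of \eqref{Eq_PK-Phi} with Conjecture~\ref{Con_Phi-conj} would prove Conjecture~\ref{Con_selfdualcase} (the $(\mathrm{C}_n,\mathrm{B}_n)$ branching rule) --- precisely what the opening sentence of Section~\ref{Sec_Open} asserts, and your parameter bookkeeping ($t_0=-q^{1/2}$, $\{t_1,t_2,t_3\}=\{-1,\pm(t/q)^{1/2}\}$, $a=q^{2N+1}t^{2n-2}$) correctly lands on the series of Conjecture~\ref{Con_Phi-conj}. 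The bounded Littlewood identities, however, require $\kappa^{(1)}_{\la}(q^{-m};q,t)=\dd^{(n)}_{(m^n)-\la,(m^n)}(q,t;\bart)$ for \emph{arbitrary} $\la\subseteq((2m)^n)$, i.e., the coefficient of an arbitrary Macdonald polynomial in the single rectangular Koornwinder polynomial $K_{(m^n)}$. No coefficient extraction from \eqref{Eq_PP-KK} converts this family into the family $\dd^{(n)}_{(N^n),\mu}$: in \eqref{Eq_KPPyK} the Macdonald label is forced to be rectangular on \emph{both} sides, and the two families of transition coefficients are logically independent (Okounkov's binomial formula would instead require $\cc^{(n)}_{\mu\nu}(q,t,t_0)$ for general $\mu$ and $\nu$, which Theorem~\ref{Thm_PPbarast} does not provide). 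The correct dual object is the vanishing integral $I_{\la'}=\big[\tilde{K}_0\big]P_{\la'}(x^{\pm})$, the coefficient of the \emph{trivial} Koornwinder polynomial in a general $P_{\la'}(x^{\pm})$, and this is not computed by the $\Phi_{\la}$ machinery of Section~\ref{Sec_Phi}. Your treatment of the second identity compounds the problem: besides resting on the same faulty reduction, it appeals to an unproved extension of \eqref{Eq_PK-Phi} to general $t_0$ whose shape is only guessed at from the two-term structure of $\kappa^{(2)}_{\la}$.
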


Both $\kappa^{(1)}_{\la}(q^{-m};q,t)$ and
$\kappa^{(2)}_{\la}(q^{-m};q,t)$ vanish if $\la_1>2m$ so that
only partitions $\la$ contained in the rectangle $((2m)^n)$ with
$\twocore{\la}=0$ contribute to the sum.
We also remark that bounded Littlewood identities and integrals such as~\eqref{Eq_vanishing-Iab} satisfy a duality
in which the Koornwinder parameters $t_r$ (for $0\leq r\leq t_3$)
are mapped to $-(qt)^{1/2}/t_r$, see~\cite{RW15}.
With respect to this duality both vanishing integrals and
bounded Littlewood identities are self-dual.

Finally, define
\begin{gather*}
\kappa^{(1)}_{\la}(q,t) :=
\lim_{z\to\infty} \kappa^{(1)}_{\la}(z;q,t)=
q^{2 \hat{n}^{\text{o}}(\la')-2\hat{n}^{\text{e}}(\la')}
t^{n^{\text{e}}(\la)-n^{\text{o}}(\la)}
\frac{C^{-,\text{e}}_{\la}(t;q,t)}{C^{-,\text{o}}_{\la}(q;q,t)}, \\
\kappa^{(2)}_{\la}(q,t) :=
\lim_{z\to\infty} \kappa^{(2)}_{\la}(z;q,t)=
q^{n^{\text{e}}(\la')-n^{\text{o}}(\la')}
t^{2\hat{n}^{\text{o}}(\la)-2\hat{n}^{\text{e}}(\la)}
\frac{C^{-,\text{e}}_{\la}(t;q,t)}{C^{-,\text{o}}_{\la}(q;q,t)}.
\end{gather*}
In the large-$m,n$ limit, Conjecture~\ref{Con_new-Littlewood-bounded} then simplifies to the following pair of unbounded Littlewood identities.

\begin{Conjecture}[Littlewood identities]
We have
\begin{gather*}
\sum_{\la} \kappa^{(1)}_{\la}(q,t) P_{\la}(x;q,t)=
\prod_{i\geq 1} \frac{\big(tx_i^2;q^2\big)_{\infty}}{\big(x_i^2;q^2\big)_{\infty}}
\prod_{i<j} \frac{(tx_ix_j;q)_{\infty}}{(x_ix_j;q)_{\infty}}
\end{gather*}
and
\begin{gather*}
\sum_{\la} \kappa^{(2)}_{\la}(q,t) P_{\la}(x;q,t)=
\prod_{i\geq 1} \frac{\big(qtx_i^2;q^2\big)_{\infty}}{\big(qx_i^2;q^2\big)_{\infty}}
\prod_{i<j} \frac{(tx_ix_j;q)_{\infty}}{(x_ix_j;q)_{\infty}}.
\end{gather*}
\end{Conjecture}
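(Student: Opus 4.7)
The plan is to derive the two unbounded Littlewood identities as the $m, n \to \infty$ limit of the bounded Littlewood identities in Conjecture~\ref{Con_new-Littlewood-bounded}, which is precisely the route indicated by the authors. On the coefficient side one checks term-by-term that $\lim_{m\to\infty} \kappa^{(i)}_{\la}(q^{-m};q,t) = \kappa^{(i)}_{\la}(q,t)$, using that the ratio $C_{\la}^{0,\textup{e}}(q^{-2m};q,t)/C_{\la}^{0,\textup{o}}(q^{1-2m}/t;q,t)$ stabilises up to the explicit prefactors appearing in $\kappa^{(i)}_\la(q,t)$. On the kernel side one shows that $(x_1\cdots x_n)^m K_{(m^n)}(x;q,t;\bart)$, for either of the two specialisations of $\bart$, tends (in the topology of formal power series in finitely many $x_i$) to the claimed infinite product; this reduces, via Okounkov's binomial formula \eqref{Eq_binomial-formula}, to a standard Macdonald--Morris type product computation of the same character as in \cite{RW15}.

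So the essential task is to prove Conjecture~\ref{Con_new-Littlewood-bounded}. Following the strategy of Section~\ref{Sec_proof}, one dualises via the paired Cauchy identity \eqref{Eq_PP-KK}, converting each bounded Littlewood identity into an identity for the transition coefficients $\dd^{(n)}_{(N^n),\la+(N^n)}(q,t;\bart)$ at $\bart=(q^{1/2},-q^{1/2},t^{1/2},-t^{1/2})$ and at $\bart=(1,-1,(qt)^{1/2},-(qt)^{1/2})$. Using the binomial formula \eqref{Eq_binomial-formula} and the vanishing of $\CC^{(n)}_{(N^n),\mu}$ (Corollary~\ref{Cor_CC-vanish}), these transition coefficients collapse to a single terminating basic hypergeometric series $\Phi_{\la}$ at a specific four-parameter specialisation. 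For the first bounded Littlewood identity this specialisation is precisely the one appearing in Conjecture~\ref{Con_Phi-conj}; for the second it is the dual specialisation related to the first by the $q,t$-involution $\omega_{q,t}$ of \eqref{Eq_omegaqt}, which at the level of $\Phi_\la$ amounts to the conjugation symmetry~\eqref{Eq_PhiE-conj2}. Matching the evaluation predicted by Conjecture~\ref{Con_Phi-conj} against the claimed $\kappa^{(1)}_\la(q^N;q,t)$ then yields the bounded identity.

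The main obstacle is therefore Conjecture~\ref{Con_Phi-conj}. The case $\la=(N)$ of~\eqref{Eq_Phi-conj} coincides with Andrews's terminating $_4\phi_3$ sum~\eqref{Eq_Andrews} (with $b\mapsto at$), and the case $\la=(1^n)$ follows from~\eqref{Eq_PhiE-conj2}; but unlike the sums treated in Theorems~\ref{Thm_Littlewood} and \ref{Thm_Kawanaka}, the series $\Phi_\la\big((aq)^{1/2};-(aq)^{1/2},(at)^{1/2},-(at)^{1/2};q,t\big)$ is not very-well-poised, so it does not fit the framework of the elliptic Dixon/Selberg integrals of Section~\ref{Sec_beta-integrals}, and no elliptic lift of the identity is evident. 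A promising route is induction on $\abs{\la}$ exploiting the domino-removal recursion underlying the empty-$2$-core hypothesis: as in the proof of Lemma~\ref{Lem_two-core}, one peels off a horizontal or vertical domino to pass from $\la$ to a smaller $\mu$ with $\twocore{\mu}=0$, at which point the claimed right-hand side multiplies by an explicit factor determined by the even/odd statistics $n^{\textup{e/o}}, \hat{n}^{\textup{e/o}}$. Matching this factor against the corresponding change in $\qbin{\la}{\mu}_{q,t,(at/q)^{1/2}}$ via a Pieri-type recurrence for the generalised $q,t,s$-binomial coefficients at the self-dual specialisation $s^2=at/q$ (where the determinantal formula~\eqref{Eq_determinantal} when $t=q$ suggests the right shape of the recursion) should telescope the sum to the trivial base case $\la=\emptyset$. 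A combinatorial proof interpreting both sides as generating functions over domino tableaux would be an equally attractive alternative.
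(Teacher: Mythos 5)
This statement is a conjecture in the paper; the authors give no proof, only the observation that it is the formal large-$m,n$ limit of the (equally conjectural) bounded identities of Conjecture~\ref{Con_new-Littlewood-bounded}. Your proposal does not close this gap: every step of your reduction chain ultimately rests on Conjecture~\ref{Con_Phi-conj}, and for that key identity you offer only a speculative strategy (``should telescope'', ``a promising route''), not an argument. The difficulty is real and is flagged by the authors themselves: the series $\Phi_{\la}\big((aq)^{1/2};-(aq)^{1/2},(at)^{1/2},-(at)^{1/2};q,t\big)$ is not very-well-poised, so it falls outside the elliptic machinery that proves Theorems~\ref{Thm_Littlewood} and~\ref{Thm_Kawanaka}, and no Pieri-type recurrence for $\qbin{\la}{\mu}_{q,t,(at/q)^{1/2}}$ compatible with domino removal is established anywhere (the determinantal formula~\eqref{Eq_determinantal} only covers $t=q$ and does not by itself supply such a recursion). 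As written, your argument reduces one open conjecture to another open conjecture.

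There is also a concrete error in the reduction of the second identity. The two Koornwinder specialisations $\bart=\big(q^{1/2},-q^{1/2},t^{1/2},-t^{1/2}\big)$ and $\bart=\big(1,-1,(qt)^{1/2},-(qt)^{1/2}\big)$ are each \emph{self-dual} under the duality $t_r\mapsto -(qt)^{1/2}/t_r$ (as the paper notes), and they are not exchanged by conjugation or by $\omega_{q,t}$; it is only the \emph{unbounded} identities that form an $\omega_{q,t}$-dual pair. Moreover the conjectured coefficient $\kappa^{(2)}_{\la}(z;q,t)$ contains the non-factorised combination $\big(u_{\la}(q,t)+v_{\la}(q,t)z\big)/(1+z)$ (equivalently, the right-hand side of~\eqref{Eq_I1qtqtT} is a sum of two monomials in $T$), so the corresponding $\Phi_{\la}$-evaluation cannot be obtained from Conjecture~\ref{Con_Phi-conj} by the conjugation symmetry~\eqref{Eq_PhiE-conj2}, which would produce a single factorised expression. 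A genuinely different evaluation would be needed for the second case, and your proposal does not identify it.
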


The above identities are no longer self-dual and, as follows from
Lemma~\ref{Lem_C-conjugate-eo} and \eqref{Eq_omegaPQ} as well as
Lemma~\ref{Lem_omegaqt} below, they form a dual pair with respect
to $\omega_{q,t}$.

\begin{Lemma}\label{Lem_omegaqt}
Let $\omega_{q,t}$ be the automorphism of $\Lambda_{\mathbb{Q}(q,t)}$
given by \eqref{Eq_omegaqt}.
Then
\begin{gather*}
\omega_{q,t} \bigg(\prod_{i\geq 1}
\frac{\big(tx_i^2;q^2\big)_{\infty}}{\big(x_i^2;q^2\big)_{\infty}}
\prod_{i<j} \frac{(tx_ix_j;q)_{\infty}}{(x_ix_j;q)_{\infty}}\bigg)
=\prod_{i\geq 1} \frac{\big(qtx_i^2;t^2\big)_{\infty}}{\big(tx_i^2;t^2\big)_{\infty}}
\prod_{i<j} \frac{(qx_ix_j;t)_{\infty}}{(x_ix_j;t)_{\infty}}.
\end{gather*}
\end{Lemma}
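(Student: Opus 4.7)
The plan is to take logarithms on both sides, expand everything in power sums $p_r$, and apply $\omega_{q,t}$ term by term, checking that the two resulting expansions agree.

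First I would establish the elementary identity
\begin{gather*}
\log\frac{(tz;q)_{\infty}}{(z;q)_{\infty}}
=\sum_{r\geq 1}\frac{(1-t^r)z^r}{r(1-q^r)},
\end{gather*}
obtained by expanding $\log(1-u)=-\sum_{r\geq 1}u^r/r$ and summing the resulting double geometric series. Applying this with $z=x_i^2$ and $q\mapsto q^2$ in the first product and with $z=x_ix_j$ in the second, then using $\sum_{i<j}(x_ix_j)^r=(p_r^2-p_{2r})/2$, a short manipulation (in which the key simplification
$1/(1-q^{2r})-1/(2(1-q^r))=1/(2(1+q^r))$ appears) gives
\begin{gather*}
\log F(q,t)=\sum_{r\geq 1}\frac{(1-t^r)}{2r(1-q^r)}\,p_r^{\,2}
+\sum_{r\geq 1}\frac{(1-t^r)}{2r(1+q^r)}\,p_{2r},
\end{gather*}
where $F(q,t)$ denotes the left-hand side of the claim (without $\omega_{q,t}$).

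Next I would apply $\omega_{q,t}$, using $\omega_{q,t}(p_r^2)=(1-q^r)^2 p_r^2/(1-t^r)^2$ and $\omega_{q,t}(p_{2r})=-(1-q^{2r})p_{2r}/(1-t^{2r})$. The factor $(1-q^{2r})/(1+q^r)=1-q^r$ combines with $1/(1-t^{2r})=1/((1-t^r)(1+t^r))$ to yield
\begin{gather*}
\omega_{q,t}\bigl(\log F(q,t)\bigr)
=\sum_{r\geq 1}\frac{(1-q^r)}{2r(1-t^r)}\,p_r^{\,2}
-\sum_{r\geq 1}\frac{(1-q^r)}{2r(1+t^r)}\,p_{2r}.
\end{gather*}

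Finally I would repeat the logarithmic expansion for the right-hand side $G(t,q)$ of the claim. The first factor contributes $\sum_{r\geq 1} t^r(1-q^r)p_{2r}/(r(1-t^{2r}))$ (one takes $z=tx_i^2$ and base $t^2$, with multiplier $q$), and the second factor contributes the analogue of the $F$-computation with $q$ and $t$ swapped. The $p_{2r}$ coefficient simplifies via $t^r/(1-t^{2r})-1/(2(1-t^r))=-1/(2(1+t^r))$, producing exactly the expression obtained for $\omega_{q,t}(\log F)$ above. Exponentiating gives the lemma.

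The computation is routine; the only point that requires attention is keeping track of the two separate simplifications of the $p_{2r}$ coefficient on each side, which must match with the right sign. No step is genuinely difficult, so I do not expect any real obstacle beyond careful bookkeeping.
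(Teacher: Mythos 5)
Your proposal is correct and follows essentially the same route as the paper: expand the logarithm of the product in power sums via $\log(1-u)=-\sum_r u^r/r$ and $\sum_{i<j}(x_ix_j)^r=(p_r^2-p_{2r})/2$, apply $\omega_{q,t}$ termwise, and match against the analogous expansion of the right-hand side. The two key simplifications of the $p_{2r}$ coefficients that you flag are exactly the ones that occur in the paper's computation, and both check out.
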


\begin{proof} We have
\begin{gather*}
\mathcal{L}(a;q,t) :=\prod_{i\geq 1} \frac{(atx_i^2;q^2)_{\infty}}{(ax_i^2;q^2)_{\infty}}
\prod_{i<j} \frac{(tx_ix_j;q)_{\infty}}{(x_ix_j;q)_{\infty}} \notag \\
\hphantom{\mathcal{L}(a;q,t)}\; =\sum_{r\geq 0} \bigg(
\sum_{i\geq 1}\big(
\log\big(1-q^{2r}atx_i^2\big)-\log\big(1-aq^{2r}x_i^2\big)\big) \\
\hphantom{\mathcal{L}(a;q,t) :=}\hphantom{\sum_{r\geq 0}}{} + \sum_{i<j}\big(
\log\big(1-q^rtx_ix_j\big)-\log\big(1-q^rx_ix_j\big)\big)\bigg).
\end{gather*}
Using $\log(1-x)=-\sum_{n\geq 1} x^n/n$, the sum over $r$ can be carried out by the geometric series.
Since $\sum_{i<j} (x_ix_j)^n=(p_n^2-p_{2n})/2$, we thus find
\begin{gather*}
\mathcal{L}(a;q,t)=\sum_{n\geq 1}\frac{1-t^n}{n} \left(
\frac{a^n p_{2n}}{1-q^{2n}}+\frac{1}{2}
\frac{p_n^2-p_{2n}}{1-q^n}\right).
\end{gather*}
In particular,
\begin{gather*}
\mathcal{L}(q;q,t)=\frac{1}{2} \sum_{n\geq 1}\frac{1-t^n}{n}\left(\frac{p_n^2}{1-q^n}-\frac{p_{2n}}{1+q^n}\right).
\end{gather*}
and
\begin{gather*}
\mathcal{L}(1;q,t)=\frac{1}{2}\sum_{n\geq 1}\frac{1-t^n}{n} \left(\frac{p_n^2}{1-q^n}+ \frac{p_{2n}}{1+q^n}\right).
\end{gather*}
Applying $\omega_{q,t}$ to this last expression yields
\begin{gather*}
\mathcal{L}(1;q,t)=\frac{1}{2}\sum_{n\geq 1}\frac{1-q^n}{n}
\left(\frac{p_n^2}{1-t^n}-\frac{p_{2n}}{1+t^n}\right)=\mathcal{L}(t;t,q),
\end{gather*}
completing the proof.
\end{proof}

\section*{Postscript} \label{page_PS-start}
One of the referees posed the question as to how the branching rule~\eqref{Eq_CC} compares to conjectural branching formulas of Hoshino and Shiraishi~\cite{HS18} between certain type~$\mathrm{A}$ and~$\mathrm{C}$
asymptotically-free eigenfunctions of Macdonald operators. In this postscript we address the referee's question.

Hoshino and Shiraishi \cite[Section 9]{HS18} considered functions
\begin{gather*}
\tilde{\varphi}^{(\mathrm{A}_{2n-1})}(s\vert x\vert q,t)
\qquad\text{and}\qquad \varphi^{(\mathrm{C}_n)}(s\vert x\vert q,t),
\end{gather*}
where $s=(s_1,\dots,s_n)$ and $x=(x_1,\dots,x_n)$, such that for $\la\in P_{+}(n)$
\begin{subequations}\label{Eq_HS-AC}
\begin{gather}\label{Eq_HS-A}
x^{\la} \tilde{\varphi}^{(\mathrm{A}_{2n-1})}
\big(t^n q^{\la_1},\dots,t q^{\la_n}\vert x\vert q,t\big) =P_{\la}\big(x^{\pm};q,t\big), \\
x^{\la} \varphi^{(\mathrm{C}_n)} \big(t^n q^{\la_1},\dots,t q^{\la_n}\vert x\vert q,t\big)
 =P_{\la}^{(\mathrm{C}_n,\mathrm{C}_n)}(x;q,t,t),\label{Eq_HS-C}
\end{gather}
where $x^{\la}:=x_1^{\la_1}\cdots x_n^{\la_n}$.
\end{subequations}
Here we note that the function $\tilde{\varphi}^{(\mathrm{A}_{2n-1})}$
arises by folding the as\-ymp\-tot\-i\-cal\-ly-free solution of
the trigonometric Ruijsenaars model~\cite{NS12,Shiraishi05} of rank $2n-1$,
see~\cite{HS18} for more details. In the rank-one case,
\begin{gather}\label{Eq_rank-one}
\tilde{\varphi}^{(\mathrm{A}_1)}(s_1\vert x_1\vert q,t)=
\varphi^{(\mathrm{C}_1)}(s_1\vert x_1\vert q,t)=
\qhypc{2}{1}\left[\genfrac{}{}{0pt}{}
{t,ts_1}{qs_1};q,\frac{qx_1^2}{t}\right].
\end{gather}

Let $\theta=\{\theta_{ij}\}_{1\leq i<j\leq n}$ be a set of nonnegative
integers and, for $1\leq i\leq n$, define
\begin{gather*}
\phi_i:=\sum_{j=1}^{i-1} \theta_{ji}+\sum_{j=i+1}^n \theta_{ij}
\end{gather*}
and $\phi:=(\phi_1,\dots,\phi_n)$.
Hoshino and Shiraishi considered the problem of determining the rational
functions $e_n(s;\theta;q,t)$ for $n\geq 1$, such that the following
branching rule holds:
\begin{gather}\label{Eq_HS-branching}
\tilde{\varphi}^{(\mathrm{A}_{2n-1})}(s\vert x\vert q,t)
=\sum_{\theta} e_n(s;\theta;q,t)
\varphi^{(\mathrm{C}_n)}\big(s q^{-\phi}\big\vert x\big\vert q,t\big)
x^{-\phi},
\end{gather}
where $sq^{-\phi}:=\big(s_1q^{-\phi_1},\dots,s_nq^{-\phi_n}\big)$ and
$x^{-\phi}:=x_1^{-\phi_1}\cdots x_n^{-\phi_n}=
\prod_{1\leq i<j\leq n} (x_ix_j)^{-\theta_{ij}}$.

By \eqref{Eq_rank-one}, $e_1(s_1;\leeg;q,t)=1$.
In \cite[Conjectures~9.12 and 9.13]{HS18} Hoshino and Shiraishi made the
following two conjectures.

\begin{Conjecture}\label{Conj_HS}
We have
\begin{gather*}
e_2(s_1,s_2;\theta_{12};q,t)=
\frac{\big(t,t/s_1,t/s_2,q^{\theta_{12}+1}/ts_1s_2;q\big)_{\theta_{12}}}
{\big(q,q/s_1,q/s_2,q^{\theta_{12}}/s_1s_2;q\big)_{\theta_{12}}}
\left(\frac{q}{t}\right)^{\theta_{12}},
\end{gather*}
and
\begin{gather*}
 e_3(s_1,s_2,s_3;\theta_{12},\theta_{13},\theta_{23};q,t)
 = \prod_{i=1}^3 \frac{(t/s_i;q)_{\phi_i}}{(q/s_i;q)_{\phi_i}}\\
\qquad{}\times
\prod_{1\leq i<j\leq 3}
\frac{(ts_j/s_i,q^{1-\theta_{j,6-i-j}}s_j/ts_i;q)_{\theta_{1,5-j}}}
{\big(qs_j/s_i,q^{-\theta_{j,6-i-j}}s_j/s_i;q\big)_{\theta_{1,5-j}}}
\frac{\big(t,q^{1+\sum_{1\leq k<l\leq 3}\theta_{kl}}/ts_is_j;q\big)_{\theta_{ij}}}
{\big(q,q^{\sum_{1\leq k<l\leq 3}\theta_{kl}}/s_is_j;q\big)_{\theta_{ij}}}
\left(\frac{q}{t}\right)^{\theta_{ij}},
\end{gather*}
where $\theta_{ji}:=\theta_{ij}$ for $1\leq i<j\leq 3$.
\end{Conjecture}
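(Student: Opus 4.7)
The conjecture proposes explicit closed-form Pochhammer-product formulas for the branching coefficients $e_2(s_1,s_2;\theta_{12};q,t)$ and $e_3(s_1,s_2,s_3;\theta_{12},\theta_{13},\theta_{23};q,t)$ appearing in the Hoshino--Shiraishi expansion \eqref{Eq_HS-branching} of the type-$\mathrm{A}_{2n-1}$ asymptotically-free eigenfunction in terms of its type-$\mathrm{C}_n$ analogue.

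The plan is to exploit the bridge between the Hoshino--Shiraishi branching and the already-proved branching rule \eqref{Eq_CC} of Corollary~\ref{Cor_non-universal}. When the spectral parameters are specialized to a Macdonald spectral vector $s_i=t^{n-i+1}q^{\la_i}$ with $\la\in P_{+}(n)$, \eqref{Eq_HS-AC} forces the left side of \eqref{Eq_HS-branching} to collapse to $x^{-\la}P_\la\big(x^{\pm};q,t\big)$, while each term on the right either vanishes or becomes $x^{-(\la-\phi)}P^{(\mathrm{C}_n,\mathrm{C}_n)}_{\la-\phi}(x;q,t,t)$. For the rectangular choice $\la=(m^r,0^{n-r})$ the branching reduces precisely to \eqref{Eq_CC}, and the coefficients $e_n$ evaluated at this specialization must coincide with $c_\mu\big(q^{-m},t^{-(n-r)};q,t\big)$ under the correspondence $\mu\leftrightarrow\theta$ that identifies $\mu$ with the complementary partition determined by the $\theta_{ij}$. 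My first step would be a mechanical verification that the conjectured formulas for $e_2$ and $e_3$ yield precisely these known coefficients $c_\mu$ at the Macdonald spectral vector --- this is a finite Pochhammer-symbol check with no hypergeometric content.

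The second step is to promote the specialized identity to a generic identity in $s$. Both sides of \eqref{Eq_HS-branching} are well-defined asymptotically-free power series whose coefficients are rational in $s$, so equality on a Zariski-dense family of spectral vectors (together with control of pole structure, which is transparent from the denominator products $(q/s_i;q)_{\phi_i}$ and $(qs_j/s_i;q)_{\bullet}$ in the conjectured formulas) forces equality throughout parameter space. For $n=2$ the bookkeeping is essentially univariate in $\theta_{12}$, and the verification should reduce to a terminating balanced $_4\phi_3$ (or, equivalently, after Bailey-pair manipulation, a very-well-poised $_8W_7$) evaluation, consistent with the very-well-poised shape of the proposed $e_2$.

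The hard part will be $n=3$. Here $e_3$ couples the three indices $\theta_{12},\theta_{13},\theta_{23}$ and its product formula contains crossed Pochhammer factors mixing ratios $s_j/s_i$ with $\theta$-shifts, signalling a genuinely multivariable identity. The most workable route is probably an iterated branching $\mathrm{A}_5\to(\mathrm{A}_3\times\mathrm{A}_1)\to(\mathrm{C}_2\times\mathrm{C}_1)\to\mathrm{C}_3$, expressing $e_3$ as a convolution of lower-rank $e_2$'s plus an adjustment from the coupling across the two sub-branches. The main obstacle, I expect, is extracting the closed product form from such an iterated convolution --- this will require either a multivariable hypergeometric transformation (analogous in spirit to the quadratic elliptic summations of Theorems~\ref{Thm_Littlewood}--\ref{Thm_Kawanaka}) or a direct residue/recursion matching on the power series defining $\varphi^{(\mathrm{C}_3)}$. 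Rigorously controlling the intermediate $(\mathrm{A}_3\times\mathrm{A}_1)\to(\mathrm{C}_2\times\mathrm{C}_1)$ stage, and in particular verifying that no cross-terms are lost, is the technical sticking point.
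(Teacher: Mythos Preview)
The paper does not prove this statement: it is quoted verbatim as Conjectures~9.12 and~9.13 of Hoshino--Shiraishi~\cite{HS18}, and the only thing established in the postscript is a \emph{consistency check}, namely that the conjectured $e_2$ and $e_3$ satisfy \eqref{Eq_en-c} at the rectangular specialisations $s=(q^mt^n,\dots,q^mt^{n-r+1},t^{n-r},\dots,t)$. No proof of the full conjecture is offered, and none is claimed.

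Your plan founders at the second step. The rectangular specialisations $s_i=t^{n-i+1}q^{\la_i}$ with $\la=(m^r,0^{n-r})$ form only a two-parameter family (indexed by $m$ and $r$), not a Zariski-dense subset of the $n$-dimensional $s$-space. For $n=2$ you get $(s_1,s_2)\in\big\{(t^2q^m,tq^m),(t^2q^m,t)\big\}$, and for $n=3$ a similar one-parameter list---nowhere near enough to pin down the rational functions $e_2$ and $e_3$ in all their variables. Your first step is exactly the consistency check the paper carries out, and it is correct as far as it goes; but the leap from ``agrees on rectangular spectral vectors'' to ``agrees identically'' is unjustified. The branching rule~\eqref{Eq_CC} only applies to rectangular~$\la$, so you have no access to the generic $\la$ that would furnish a dense family. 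The iterated-branching idea for $n=3$ is a reasonable heuristic, but without a closed-form $\mathrm{A}_{2n-1}\to\mathrm{C}_n$ branching for non-rectangular shapes---which is not available---it cannot be made rigorous along the lines you sketch.
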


To now answer the question of the referee, we first notice that
\eqref{Eq_HS-C} can be used to define
$P_{\la}^{(\mathrm{C}_n,\mathrm{C}_n)}(x;q,t,t)$ for integer sequences
$\la=(\la_1,\dots,\la_n)$ that are not necessarily partitions.
In~\eqref{Eq_HS-branching} we can then specialise $s_i=q^{\la_i}t^{n-i+1}$ for
$1\leq i\leq n$, multiply both sides by $x^{\la}$, and use~\eqref{Eq_HS-A}
and~\eqref{Eq_HS-C}, to obtain
\begin{gather*}
P_{\la}(x^{\pm};q,t)=\sum_{\theta}
e_n\big(q^{\la_1}t^n,\dots,q^{\la_n}t;\theta;q,t\big)
P^{(\mathrm{C}_n,\mathrm{C}_n)}_{(\la_1-\phi_1,\dots,\la_n-\phi_n)}(x;q,t,t).
\end{gather*}
Assuming Conjecture~\ref{Conj_HS},
\begin{gather*}
e_2\big(q^{\la_1}t^2,q^{\la_2}t;\theta_{12};q,t\big)=0
\end{gather*}
unless $\theta_{12}\leq\la_2$, and
\begin{gather*}
e_3\big(q^{\la_1}t^3,q^{\la_2}t^2,q^{\la_3}t; \theta_{12},\theta_{13},\theta_{23};q,t\big)=0
\end{gather*}
unless $\theta_{12}\leq\la_2-\la_3$, $\theta_{13}\leq\la_1-\la_2$ and
$\theta_{13}+\theta_{23}\leq\la_3$.
For $n=2$ this implies that $\la_1-\phi_1=\la_1-\theta_{12}\geq 0$, and for
$n=3$ it implies
\begin{gather*}
(\la_1-\phi_1)-(\la_2-\phi_2)=\la_1-\la_2-\theta_{13}+\theta_{23}\geq \theta_{23}\geq 0, \\
(\la_2-\phi_2)-(\la_3-\phi_3)=\la_2-\la_3-\theta_{12}+\theta_{13} \geq \theta_{13}\geq 0, \\
\la_3-\phi_3=\la_3-\theta_{13}-\theta_{23}\geq 0.
\end{gather*}
It is thus reasonable to conjecture that
\begin{gather*}
e_n\big(q^{\la_1}t^n,\dots,q^{\la_n}t;\theta;q,t\big)=0
\end{gather*}
if $(\la_1-\phi_1,\dots,\la_n-\phi_n)\notin P_{+}(n)$, so that
\begin{gather*}
P_{\la}(x^{\pm};q,t)=\sum_{\substack{\theta \\
(\la_1-\phi_1,\dots,\la_n-\phi_n)\in P_{+}(n)}}
e_n\big(q^{\la_1}t^n,\dots,q^{\la_n}t;\theta;q,t\big)
P^{(\mathrm{C}_n,\mathrm{C}_n)}_{(\la_1-\phi_1,\dots,\la_n-\phi_n)}(x;q,t,t).
\end{gather*}
If $\la=(m^r)$ for $0\leq r\leq n$ we may compare the above with the branching rule~\eqref{Eq_CC}.
Let $\rho:=\floor{r/2}$, $I_n:=\{(i,j)\colon 1\leq i<j\leq n\}$,
\begin{gather*}
J_r:=\{(r-2i+1,r-2i+2)\colon 1\leq i\leq \rho\} \qquad\text{and}\qquad \bar{J}_{r,n}:=I_n\setminus J_r.
\end{gather*}
It then follows from \eqref{Eq_CC} that
\begin{gather*}
e_n\big(q^mt^n,\dots,q^m t^{n-r+1},t^{n-r},\dots,t;\theta;q,t\big)
\end{gather*}
is non-vanishing if and only if
\begin{gather*}
\theta_{ij}=0 \quad \text{for $(i,j)\in\bar{J}_{n,r}$}, \\
m\geq\theta_{r-1,r}\geq\theta_{r-3,r-2}\geq\cdots\geq\theta_{r-2\rho+1,r-2\rho+2}\geq 0,
\end{gather*}
in which case
\begin{alignat*}{3}
& \phi_{r-2i+1}=\phi_{2-2i+2}=\theta_{r-2i+1,2i-2i+2} \qquad && \text{for $1\leq i\leq\rho$},& \\
& \phi_i=0 \qquad && \text{otherwise}. &
\end{alignat*}
Moreover, for such non-vanishing $\theta$ we have the following identification
of $e_n$ and the function~$c_{\la^2}$ defined in~\eqref{Eq_c-def}.
Let $\mu\subset (m^r)$ such that $\mu'$ is even (i.e., $\mu=\la^2$)
and let $\theta$ be fixed in terms of $\mu$ (or $\la$) as
\begin{alignat*}{3}
& \theta_{r-2i+1,r-2i+2}=\mu_{2i-1}=\la_i \qquad &&\text{for $1\leq i\leq\rho$}, & \\
& \theta_{ij}=0\qquad && \text{for $(i,j)\in\bar{J}_{n,r}$}. &
\end{alignat*}
Then
\begin{gather}\label{Eq_en-c}
e_n\big(q^mt^n,\dots,q^mt^{n-r+1},t^{n-r},\dots,t;\theta;q,t\big)
=c_{\la^2}\big(q^{-m},t^{-(n-r)};q,t\big).
\end{gather}
It is readily checked that the rational functions $e_2$ and $e_3$ of
Conjecture~\ref{Conj_HS} satisfy~\eqref{Eq_en-c}, showing that the
conjecture is consistent with the branching rule~\eqref{Eq_CC}.
In particular, we have the slightly more general
\begin{gather*}
e_2\big(t^2s,t;\theta_{12};q,t\big) =\delta_{\theta_{12},0}, \\
e_2\big(t^2s,ts;\theta_{12};q,t\big) =c_{(\theta_{12},\theta_{12})}(1/s,1;q,t)
\end{gather*}
and\label{page_PS-end}
\begin{gather*}
e_3\big(t^3s,t^2,t;\theta_{12},\theta_{13},\theta_{23};q,t\big)=
\delta_{\theta_{12},0} \delta_{\theta_{13},0} \delta_{\theta_{23},0}, \\
e_3\big(t^3s,t^2s,t;\theta_{12},\theta_{13},\theta_{23};q,t\big)=
c_{(\theta_{12},\theta_{12})}(1/s,1/t;q,t)
\delta_{\theta_{13},0} \delta_{\theta_{23},0},\\
e_3\big(t^3s,t^2s,ts;\theta_{12},\theta_{13},\theta_{23};q,t\big) =
c_{(\theta_{23},\theta_{23})}(1/s,1;q,t)
\delta_{\theta_{12},0} \delta_{\theta_{13},0}.
\end{gather*}

\subsection*{Acknowledgements}
We thank one of the referees of our paper for suggesting we compare the branching
rule~\eqref{Eq_CC} with \cite[Conjectures~9.12 and 9.13]{HS18} by
Hoshino and Shiraishi. This work was supported by the Australian Research Council Discovery Grant DP170102648
and a KIAS Individual Grant (MG067302) at Korea Institute for Advanced Study.

\pdfbookmark[1]{References}{ref}
\LastPageEnding


\begin{thebibliography}{99}
\footnotesize\itemsep=0pt

\bibitem{ARW20}
Albion S.P., Rains E.M., Warnaar S.O., AFLT-type Selberg integrals,
 \href{https://arxiv.org/abs/2001.05637}{arXiv:2001.05637}.

\bibitem{Andrews76}
Andrews G.E., On {$q$}-analogues of the {W}atson and {W}hipple summations,
 \href{https://doi.org/10.1137/0507026}{\textit{SIAM~J. Math. Anal.}} \textbf{7} (1976), 332--336.

\bibitem{BF99}
Baker T.H., Forrester P.J., Transformation formulas for multivariable basic
 hypergeometric series, \href{https://doi.org/10.4310/MAA.1999.v6.n2.a2}{\textit{Methods Appl. Anal.}} \textbf{6} (1999),
 147--164, \href{https://arxiv.org/abs/math.QA/9803146}{arXiv:math.QA/9803146}.

\bibitem{BIS00}
Bressoud D., Ismail M.E.H., Stanton D., Change of base in {B}ailey pairs,
 \href{https://doi.org/10.1023/A:1009824218230}{\textit{Ramanujan~J.}} \textbf{4} (2000), 435--453, \href{https://arxiv.org/abs/math.CO/9909053}{arXiv:math.CO/9909053}.

\bibitem{CG06}
Coskun H., Gustafson R.A., Well-poised {M}acdonald functions {$W_\lambda$} and
 {J}ackson coefficients {$\omega_\lambda$} on {$BC_n$}, in Jack,
 {H}all--{L}ittlewood and {M}acdonald polynomials, \textit{Contemp. Math.},
 Vol.~417, \href{https://doi.org/10.1090/conm/417/07919}{Amer. Math. Soc.}, Providence, RI, 2006, 127--155,
 \href{https://arxiv.org/abs/math.CO/0412153}{arXiv:math.CO/0412153}.

\bibitem{deBruijn56}
de~Bruijn N.G., On some multiple integrals involving determinants,
 \textit{J.~Indian Math. Soc. (N.S.)} \textbf{19} (1955), 133--151.

\bibitem{vDE11}
van Diejen J.F., Emsiz E., Pieri formulas for {M}acdonald's spherical functions
 and polynomials, \href{https://doi.org/10.1007/s00209-010-0727-0}{\textit{Math.~Z.}} \textbf{269} (2011), 281--292,
 \href{https://arxiv.org/abs/1009.4482}{arXiv:1009.4482}.

\bibitem{vDE18b}
van Diejen J.F., Emsiz E., Branching formula for {M}acdonald--{K}oornwinder
 polynomials, \href{https://doi.org/10.1016/j.jalgebra.2015.08.009}{\textit{J.~Algebra}} \textbf{444} (2015), 606--614,
 \href{https://arxiv.org/abs/1408.2280}{arXiv:1408.2280}.

\bibitem{vDE18a}
van Diejen J.F., Emsiz E., Branching rules for symmetric hypergeometric
 polynomials, in Representation Theory, Special Functions and {P}ainlev\'e
 Equations~-- {RIMS}~2015, \textit{Adv. Stud. Pure Math.}, Vol.~76, \href{https://doi.org/10.2969/aspm/07610125}{Math. Soc.
 Japan}, Tokyo, 2018, 125--153, \href{https://arxiv.org/abs/1601.06186}{arXiv:1601.06186}.

\bibitem{vDS00}
van Diejen J.F., Spiridonov V.P., An elliptic {M}acdonald--{M}orris conjecture
 and multiple modular hyper\-geo\-metric sums, \href{https://doi.org/10.4310/MRL.2000.v7.n6.a6}{\textit{Math. Res. Lett.}}
 \textbf{7} (2000), 729--746.

\bibitem{vDS01}
van Diejen J.F., Spiridonov V.P., Elliptic {S}elberg integrals, \href{https://doi.org/10.1155/S1073792801000526}{\textit{Int.
 Math. Res. Not.}} \textbf{2001} (2001), 1083--1110.

\bibitem{FT97}
Frenkel I.B., Turaev V.G., Elliptic solutions of the {Y}ang--{B}axter equation
 and modular hypergeometric functions, in The {A}rnold--{G}elfand Mathematical
 Seminars, \href{https://doi.org/10.1007/978-1-4612-4122-5_9}{Birkh\"auser Boston}, Boston, MA, 1997, 171--204.

\bibitem{GR04}
Gasper G., Rahman M., Basic hypergeometric series, 2nd~ed., \textit{Encyclopedia of
 Mathematics and its Applications}, Vol.~96, \href{https://doi.org/10.1017/CBO9780511526251}{Cambridge University
 Press}, Cambridge, 2004.

\bibitem{Gustafson90}
Gustafson R.A., A generalization of {S}elberg's beta integral, \href{https://doi.org/10.1090/S0273-0979-1990-15852-5}{\textit{Bull.
 Amer. Math. Soc. (N.S.)}} \textbf{22} (1990), 97--105.

\bibitem{HS18}
Hoshino A., Shiraishi J., Macdonald polynomials of type {$C_n$} with one-column
 diagrams and deformed {C}atalan numbers, \href{https://doi.org/10.3842/SIGMA.2018.101}{\textit{SIGMA}} \textbf{14} (2018),
 101, 33~pages, \href{https://arxiv.org/abs/1801.09939}{arXiv:1801.09939}.

\bibitem{HS20}
Hoshino A., Shiraishi J., Branching rules for {K}oornwinder polynomials with
 one column diagrams and matrix inversions, \href{https://doi.org/10.3842/SIGMA.2020.084}{\textit{SIGMA}} \textbf{16} (2020),
 084, 28~pages, \href{https://arxiv.org/abs/2002.02148}{arXiv:2002.02148}.

\bibitem{IN17}
Ito M., Noumi M., Derivation of a {$BC_n$} elliptic summation formula via the
 fundamental invariants, \href{https://doi.org/10.1007/s00365-016-9340-8}{\textit{Constr. Approx.}} \textbf{45} (2017), 33--46,
 \href{https://arxiv.org/abs/1504.07108}{arXiv:1504.07108}.

\bibitem{King75}
King R.C., Branching rules for classical {L}ie groups using tensor and spinor
 methods, \href{https://doi.org/10.1088/0305-4470/8/4/004}{\textit{J.~Phys.~A: Math. Gen.}} \textbf{8} (1975), 429--449.

\bibitem{KT87}
Koike K., Terada I., Young-diagrammatic methods for the representation theory
 of the classical groups of type $B_n$, $C_n$, $D_n$, \href{https://doi.org/10.1016/0021-8693(87)90099-8}{\textit{J.~Algebra}}
 \textbf{107} (1987), 466--511.

\bibitem{Koornwinder92}
Koornwinder T.H., Askey--{W}ilson polynomials for root systems of type {$BC$},
 in Hypergeometric Functions on Domains of Positivity, {J}ack Polynomials, and
 Applications ({T}ampa, {FL}, 1991), \textit{Contemp. Math.}, Vol.~138, \href{https://doi.org/10.1090/conm/138/1199128}{Amer.
 Math. Soc.}, Providence, RI, 1992, 189--204.

\bibitem{Krattenthaler98}
Krattenthaler C., Identities for classical group characters of nearly
 rectangular shape, \href{https://doi.org/10.1006/jabr.1998.7531}{\textit{J.~Algebra}} \textbf{209} (1998), 1--64,
 \href{https://arxiv.org/abs/math.RT/9808118}{arXiv:math.RT/9808118}.

\bibitem{Kwon18}
Kwon J.H., Combinatorial extension of stable branching rules for classical
 groups, \href{https://doi.org/10.1090/tran/7104}{\textit{Trans. Amer. Math. Soc.}} \textbf{370} (2018), 6125--6152,
 \href{https://arxiv.org/abs/1512.01877}{arXiv:1512.01877}.

\bibitem{LLMS13}
Lam T., Lapointe L., Morse J., Shimozono M., The poset of {$k$}-shapes and
 branching rules for {$k$}-{S}chur functions, \href{https://doi.org/10.1090/S0065-9266-2012-00655-1}{\textit{Mem. Amer. Math. Soc.}}
 \textbf{223} (2013), vi+101~pages.

\bibitem{LW11}
Lascoux A., Warnaar S.O., Branching rules for symmetric functions and
 {$\mathfrak{sl}_n$} basic hypergeometric series, \href{https://doi.org/10.1016/j.aam.2010.01.012}{\textit{Adv. in Appl. Math.}}
 \textbf{46} (2011), 424--456, \href{https://arxiv.org/abs/0903.3996}{arXiv:0903.3996}.

\bibitem{Littlewood50}
Littlewood D.E., The theory of group characters and matrix representations of
 groups, Oxford University Press, New York, 1940.

\bibitem{Macdonald95}
Macdonald I.G., Symmetric functions and {H}all polynomials, 2nd~ed., \textit{Oxford
 Mathematical Monographs}, The Clarendon Press, Oxford University Press, New
 York, 1995.

\bibitem{Macdonald00}
Macdonald I.G., Orthogonal polynomials associated with root systems,
 \textit{S\'em. Lothar. Combin.} \textbf{45} (2000), Art.~B45a, 40~pages,
 \href{https://arxiv.org/abs/math.QA/0011046}{arXiv:math.QA/0011046}.

\bibitem{Mimachi01}
Mimachi K., A duality of {M}ac{D}onald--{K}oornwinder polynomials and its
 application to integral representations, \href{https://doi.org/10.1215/S0012-7094-01-10723-0}{\textit{Duke Math.~J.}} \textbf{107}
 (2001), 265--281.

\bibitem{NS05}
Naito S., Sagaki D., An approach to the branching rule from
 {${\mathfrak{sl}}_{2n}(\mathbb C)$} to {${\mathfrak{sp}}_{2n}(\mathbb C)$}
 via {L}ittelmann's path model, \href{https://doi.org/10.1016/j.jalgebra.2004.12.014}{\textit{J.~Algebra}} \textbf{286} (2005),
 187--212.

\bibitem{NS12}
Noumi M., Shiraishi J., A direct approach to the bispectral problem for the
 {R}uijsenaars--{M}acdonald $q$-difference operators, \href{https://arxiv.org/abs/1206.5364}{arXiv:1206.5364}.

\bibitem{Okada98}
Okada S., Applications of minor summation formulas to rectangular-shaped
 representations of classical groups, \href{https://doi.org/10.1006/jabr.1997.7408}{\textit{J.~Algebra}} \textbf{205} (1998),
 337--367.

\bibitem{Okounkov98}
Okounkov A., {${\rm BC}$}-type interpolation {M}acdonald polynomials and
 binomial formula for {K}oornwinder polynomials, \href{https://doi.org/10.1007/BF01236432}{\textit{Transform. Groups}}
 \textbf{3} (1998), 181--207, \href{https://arxiv.org/abs/q-alg/9611011}{arXiv:q-alg/9611011}.

\bibitem{OO98}
Okounkov A., Olshanski G., Shifted {S}chur functions. {II}.~{T}he binomial
 formula for characters of classical groups and its applications, in
 Kirillov's Seminar on Representation Theory, \textit{Amer. Math. Soc. Transl.
 Ser.~2}, Vol.~181, \href{https://doi.org/10.1090/trans2/181/08}{Amer. Math. Soc.}, Providence, RI, 1998, 245--271.

\bibitem{Proctor83}
Proctor R.A., Shifted plane partitions of trapezoidal shape, \href{https://doi.org/10.2307/2045516}{\textit{Proc.
 Amer. Math. Soc.}} \textbf{89} (1983), 553--559.

\bibitem{Rains05}
Rains E.M., {${\rm BC}_n$}-symmetric polynomials, \href{https://doi.org/10.1007/s00031-005-1003-y}{\textit{Transform. Groups}}
 \textbf{10} (2005), 63--132, \href{https://arxiv.org/abs/math.QA/0112035}{arXiv:math.QA/0112035}.

\bibitem{Rains06}
Rains E.M., {$BC_n$}-symmetric {A}belian functions, \href{https://doi.org/10.1215/S0012-7094-06-13513-5}{\textit{Duke Math.~J.}}
 \textbf{135} (2006), 99--180, \href{https://arxiv.org/abs/math.CO/0402113}{arXiv:math.CO/0402113}.

\bibitem{Rains09}
Rains E.M., Limits of elliptic hypergeometric integrals, \href{https://doi.org/10.1007/s11139-007-9055-3}{\textit{Ramanujan~J.}}
 \textbf{18} (2009), 257--306, \href{https://arxiv.org/abs/math.CA/0607093}{arXiv:math.CA/0607093}.

\bibitem{Rains10}
Rains E.M., Transformations of elliptic hypergeometric integrals, \href{https://doi.org/10.4007/annals.2010.171.169}{\textit{Ann.
 of Math.}} \textbf{171} (2010), 169--243, \href{https://arxiv.org/abs/math.QA/0309252}{arXiv:math.QA/0309252}.

\bibitem{Rains12}
Rains E.M., Elliptic {L}ittlewood identities, \href{https://doi.org/10.1016/j.jcta.2012.03.001}{\textit{J.~Combin. Theory Ser.~A}}
 \textbf{119} (2012), 1558--1609, \href{https://arxiv.org/abs/0806.0871}{arXiv:0806.0871}.

\bibitem{Rains18}
Rains E.M., Multivariate quadratic transformations and the interpolation
 kernel, \href{https://doi.org/10.3842/SIGMA.2018.019}{\textit{SIGMA}} \textbf{14} (2018), 019, 69~pages, \href{https://arxiv.org/abs/1408.0305}{arXiv:1408.0305}.

\bibitem{RV07}
Rains E.M., Vazirani M., Vanishing integrals of {M}acdonald and {K}oornwinder
 polynomials, \href{https://doi.org/10.1007/s00031-007-0058-3}{\textit{Transform. Groups}} \textbf{12} (2007), 725--759.

\bibitem{RW15}
Rains E.M., Warnaar S.O., Bounded {L}ittlewood identities, \textit{Mem. Amer.
 Math. Soc.}, {t}o appear, \href{https://arxiv.org/abs/1506.02755}{arXiv:1506.02755}.

\bibitem{Rosengren01}
Rosengren H., A proof of a multivariable elliptic summation formula conjectured
 by {W}arnaar, in {$q$}-series with applications to combinatorics, number
 theory, and physics ({U}rbana, {IL}, 2000), \textit{Contemp. Math.}, Vol.~291, \href{https://doi.org/10.1090/conm/291/04903}{Amer. Math. Soc.}, Providence, RI, 2001, 193--202,
 \href{https://arxiv.org/abs/math.CA/0101073}{arXiv:math.CA/0101073}.

\bibitem{RS20}
Rosengren H., Schlosser M.J., Multidimensional matrix inversions and elliptic
 hypergeometric series on root systems, \href{https://doi.org/10.3842/SIGMA.2020.088}{\textit{SIGMA}} \textbf{16} (2020),
 088, 21~pages, \href{https://arxiv.org/abs/2005.02203}{arXiv:2005.02203}.

\bibitem{RW20}
Rosengren H., Warnaar S.O., Elliptic hypergeometric functions associated with
 root systems, in Encyclopedia of Special Functions: The Askey--Bateman
 Project, Vol.~2, Multivariable Special Functions, Cambridge University Press,
 Cambridge, 2020, 159--186, \href{https://arxiv.org/abs/1704.08406}{arXiv:1704.08406}.

\bibitem{Ruijsenaars97}
Ruijsenaars S.N.M., First order analytic difference equations and integrable
 quantum systems, \href{https://doi.org/10.1063/1.531809}{\textit{J.~Math. Phys.}} \textbf{38} (1997), 1069--1146.

\bibitem{ST18}
Schumann B., Torres J., A non-{L}evi branching rule in terms of {L}ittelmann
 paths, \href{https://doi.org/10.1112/plms.12175}{\textit{Proc. Lond. Math. Soc.}} \textbf{117} (2018), 1077--1100,
 \href{https://arxiv.org/abs/1607.08225}{arXiv:1607.08225}.

\bibitem{Shiraishi05}
Shiraishi J., A conjecture about raising operators for {M}acdonald polynomials,
 \href{https://doi.org/10.1007/s11005-005-7648-6}{\textit{Lett. Math. Phys.}} \textbf{73} (2005), 71--81,
 \href{https://arxiv.org/abs/math.QA/0503727}{arXiv:math.QA/0503727}.

\bibitem{Spiridonov01}
Spiridonov V.P., On the elliptic beta function, \href{https://doi.org/10.1070/rm2001v056n01ABEH000374}{\textit{Russian Math. Surveys}}
 \textbf{56} (2001), 185--186.

\bibitem{Spiridonov04}
Spiridonov V.P., Theta hypergeometric series, in Asymptotic Combinatorics with
 Application to Mathematical Physics ({S}t.~{P}etersburg, 2001), \textit{NATO
 Sci. Ser.~II Math. Phys. Chem.}, Vol.~77, \href{https://doi.org/10.1007/978-94-010-0575-3_15}{Kluwer Acad. Publ.}, Dordrecht,
 2002, 307--327, \href{https://arxiv.org/abs/math.CA/0303204}{arXiv:math.CA/0303204}.

\bibitem{Stanley99}
Stanley R.P., Enumerative combinatorics, {V}ol.~2, \textit{Cambridge Studies in
 Advanced Mathematics}, Vol.~62, \href{https://doi.org/10.1017/CBO9780511609589}{Cambridge University Press}, Cambridge, 1999.

\bibitem{W02}
Warnaar S.O., Summation and transformation formulas for elliptic hypergeometric
 series, \href{https://doi.org/10.1007/s00365-002-0501-6}{\textit{Constr. Approx.}} \textbf{18} (2002), 479--502,
 \href{https://arxiv.org/abs/math.QA/0001006}{arXiv:math.QA/0001006}.

\bibitem{W03}
Warnaar S.O., Extensions of the well-poised and elliptic well-poised {B}ailey
 lemma, \href{https://doi.org/10.1016/S0019-3577(03)90061-9}{\textit{Indag. Math. (N.S.)}} \textbf{14} (2003), 571--588,
 \href{https://arxiv.org/abs/math.CA/0309241}{arXiv:math.CA/0309241}.

\bibitem{W05}
Warnaar S.O., Summation formulae for elliptic hypergeometric series,
 \href{https://doi.org/10.1090/S0002-9939-04-07558-6}{\textit{Proc. Amer. Math. Soc.}} \textbf{133} (2005), 519--527,
 \href{https://arxiv.org/abs/math.CA/0309242}{arXiv:math.CA/0309242}.

\end{thebibliography}
\end{document}